\newcommand{\doublespace}
   {\addtolength{\baselineskip}{0.25\baselineskip}}
\newtheorem{thm}{Theorem}[section]
\newtheorem{cor}[thm]{Corollary}
\newtheorem{lem}[thm]{Lemma}
\newtheorem{prop}[thm]{Proposition}
\theoremstyle{definition}
\newtheorem{pdef}[thm]{Definition}
\newtheorem{remark}[thm]{Remark}
\newtheorem{exam}[thm]{Example}
\newtheorem{cond}[thm]{Condition}
\numberwithin{equation}{section}
\def\bitimes{{\boxtimes\boxtimes}}  % Used for subscripts, superscripts and for e.g. \mathcal{ID}(\bitimes) and $\bitimes$-idempotent
\def\bitimet{{\hspace{1pt}\boxtimes\boxtimes\hspace{1pt}}}  % Used for e.g. \mu \bitimet \nu in the normal script
\def\origin{{\boldsymbol x_0}}
\def\biplus{{\boxplus\boxplus}}
\def\biplut{{\hspace{1pt}\boxplus\boxplus\hspace{1pt}}}
\title{Limit Theorems and Wrapping Transforms in Bi-free Probability Theory}
\author{Takahiro Hasebe and Hao-Wei Huang}
\address{Department of Mathematics, Hokkaido University, North 10 West 8, Kita-Ku, Sapporo 060-0810, Japan.}
\email{thasebe@math.sci.hokudai.ac.jp}
\address{Department of Applied Mathematics, National Sun Yat-sen University,
No. 70, Lienhai Road, Kaohsiung 80424, Taiwan, R.O.C.}
\email{hwhuang@math.nsysu.edu.tw}
\date{\today}
\definecolor{RED}{rgb}{1,0,0}\definecolor{BLUE}{rgb}{0,0,1} %DIF PREAMBLE
\providecommand{\DIFadd}[1]{{\protect\color{blue}\uwave{#1}}} %DIF PREAMBLE
\providecommand{\DIFaddbegin}{} %DIF PREAMBLE
\providecommand{\DIFaddend}{} %DIF PREAMBLE
\begin{document}

\maketitle
\doublespace
\pagestyle{myheadings}
%\thispagestyle{plain}
%\markboth{   }{ }
%

\begin{abstract} In this paper, we characterize idempotent distributions with respect to the bi-free multiplicative convolution on the bi-torus. Also, the bi-free analogous L\'{e}vy triplet of an infinitely divisible distribution on the bi-torus without non-trivial idempotent factors is obtained. This triplet is unique and generates a homomorphism from the bi-free multiplicative semigroup of infinitely divisible distributions to the classical one. The relevances of the limit theorems associated with four convolutions, classical and bi-free additive convolutions and classical and bi-free multiplicative convolutions, are analyzed. The analysis relies on the convergence criteria for limit theorems and the use of push-forward measures induced by the wrapping map from the plane to the bi-torus. Different from the bi-free circumstance, the classical multiplicative L\'{e}vy triplet is not always unique. Due to this, some conditions are furnished to ensure uniqueness.
\end{abstract}
\footnotetext[1]{{\it 2000 Mathematics Subject Classification:}\,
Primary 46L54} \footnotetext[2]{{\it Key words and
phrases.}\,infinite divisibility, multiplicative convolution, wrapping transformation.}

\section{Introduction} The main aim of the present paper is to build the association among various limit theorems and their convergence criteria in classical and bi-free probability theories.

\emph{Bi-free probability theory}, introduced by Voiculescu in \cite{V14}, is an outspread research field of free probability theory, which grew out to intend to simultaneously study the left and right actions of algebras over reduced free product spaces. Since its creation, a great deal of research work has been conducted to better understand this theory and its connections to other parts of mathematics \cite{Sko16}\cite{Sko18}\cite{V15}\cite{V15ST}. Aside from the combinatorial means, the utilization of analytic functions as transformations and the bond to classical probability theory also play crucial roles in the study and comprehension of this theory \cite{HW1}\cite{HW2}. Especially, recent developments of bi-free harmonic analysis enable one to investigate bi-free limit theorems and other related topics from the probabilistic point of view \cite{HHW}.

To work in the probabilistic framework, we thereby consider the family $\mathscr{P}_\mathbf{X}$ of Borel probability measures on a complete separable metric space $\mathbf{X}$ and endow this family with a commutative and associative binary operation $\lozenge$. Classical and \emph{bi-free convolutions}, respectively denoted by $*$ and $\biplus$, are two examples of such operations performed on $\mathscr{P}_{\mathbb{R}^2}$. In probabilistic terms, $\mu_1*\mu_2$ is the probability distribution of the sum of two independent bivariate random vectors respectively having distributions $\mu_1$ and $\mu_2$. When restricted to compactly supported measures in $\mathscr{P}_{\mathbb{R}^2}$, $\mu_1\biplut\mu_2$ is the distribution of the sum of two bi-free bipartite self-adjoint pairs with distributions $\mu_1$ and $\mu_2$, respectively \cite{V14}. This new notion of convolution was later extended, without any limitation, to the
whole class $\mathscr{P}_{\mathbb{R}^2}$ by the continuity theorem of transforms \cite{HHW}. The product of two independent random vectors having distributions on the bi-torus $\mathbb{T}^2$ gives rise to the classical multiplicative convolution $\circledast$, and the bi-free analog of multiplicative convolution $\bitimes$ is defined in a similar manner \cite{V15ST}.

In (non-commutative) probability theory, the limit theorem and its related subject, the notion of \emph{infinite divisibility} of distributions, have attracted much attention. By saying that a distribution in $(\mathscr{P}_\mathbf{X},\lozenge)$ is infinitely divisible we mean that it can be expressed as the operation $\lozenge$ of an arbitrary number of copies of identical distributions from $\mathscr{P}_\mathbf{X}$.
The collection of measures having this infinitely divisible feature forms a semigroup and will be denoted by $\mathcal{ID}(\mathbf{X},\lozenge)$, or simply by $\mathcal{ID}(\lozenge)$ if the identification of the metric space is unnecessary. Any measure satisfying $\mu=\mu\lozenge\mu$, known as \emph{idempotent}, is an instance of infinitely divisible distributions. In the case of $\mathbf{X}=\mathbb{R}$, these topics have been thoroughly studied in classical probability by the efforts of de Finetti, Kolmogorov, L\'{e}vy, and Khintchine \cite{Par67}, and the same themes in the free contexts have also been deeply explored in the literature \cite{BerVoicu93}.

Bi-free probability, as expected, also parallels perfectly aspects of classical and free probability theories \cite{BerPata99}. For example, the theory of bi-freely infinitely divisible distributions generalizes \emph{bi-free central limit theorem} as they also serve as the limit laws for sums of bi-freely independent and identically distributed faces. Specifically, it was shown in \cite{HHW} that for some infinitesimal triangular array $\{\mu_{n,k}\}_{n\geq1,1\leq k\leq n_k}\subset\mathscr{P}_{\mathbb{R}^2}$ and sequence $\{\boldsymbol v_n\}\subset\mathbb{R}^2$, the sequence
\begin{equation} \label{munclass}
\delta_{\boldsymbol v_n}*\mu_{n1}*\cdots*\mu_{nk_n}
\end{equation} converges weakly
if and only if so does the sequence
\begin{equation} \label{munbifree}
\delta_{\boldsymbol v_n}\biplut\mu_{n1}\biplut\cdots\biplut\mu_{nk_n}.
\end{equation} The limiting distributions in (\ref{munclass}) and (\ref{munbifree}) respectively belong to the semigroups $\mathcal{ID}(*)$ and $\mathcal{ID}(\biplus)$, and their classical and bi-free \emph{L\'{e}vy triplets} agree. This conformity consequently brings out an isomorphism $\Lambda$ between these two semigroups.

Same tasks are performed in the case of bi-free multiplicative convolution in this paper. We
determine $\bitimes$-idempotent elements, which happen to be the distributions of \emph{bi-free $\mathrm{R}$-diagonal pairs} of operators in a bipartite system addressed in \cite{diagonal}, and
identify measures in $\mathscr{P}_{\mathbb{T}^2}$ bearing no non-trivial $\bitimes$-idempotent factors. Specifically, we demonstrate that $\nu\in\mathcal{ID}(\bitimes)$
has no non-trivial $\bitimes$-idempotent factor if and only if it belongs to $\mathscr{P}_{\mathbb{T}^2}^\times$, the sub-collection of $\mathscr{P}_{\mathbb{T}^2}$ with the attributes
\[\int_{\mathbb{T}^2}s_j\;d\nu(s_1,s_2)\neq0,\;\;\;\;\;j=1,2.\]

Fix an infinitesimal triangular array $\{\nu_{nk}\}_{n\geq1,1\leq k\leq k_n}\subset\mathscr{P}_{\mathbb{T}^2}$ and a sequence $\{\boldsymbol\xi_n\}\subset\mathbb{T}^2$.
We also manifest that the weak convergence of the sequence
\begin{equation} \label{bifreemul}
\delta_{\boldsymbol\xi_n}\bitimet\nu_{n1}\bitimet\cdots\bitimet\nu_{nk_n}
\end{equation} to some element in $\mathscr{P}_{\mathbb{T}^2}^\times$ yields the same property of the sequence
\begin{equation} \label{classicalmul}
\delta_{\boldsymbol\xi_n}\circledast\nu_{n1}\circledast\cdots\circledast\nu_{nk_n},
\end{equation}
and that their limiting distributions are both infinitely divisible. This is done by distinct types of equivalent convergence criteria offered in the present paper. As in the case of addition, there exists a triplet concurrently serving as
the classical and bi-free multiplicative L\'{e}vy triplets of the limiting distributions in (\ref{classicalmul}) and (\ref{bifreemul}). The consistency of their L\'{e}vy triplets, together with the description of $\mathcal{ID}(\bitimes)\backslash\mathscr{P}_{\mathbb{T}^2}^\times$, consequently produces a homomorphism $\Gamma$ from $\mathcal{ID}(\bitimes)$ to
$\mathcal{ID}(\circledast)$.

Because of the nature of $\mathcal{ID}(\bitimes)\backslash\mathscr{P}_{\mathbb{T}^2}^\times$ and that
the limit in (\ref{classicalmul}) may generally not have a unique L\'{e}vy measure, the homomorphism stated above is neither surjective nor injective. However, postulating the uniqueness of the L\'{e}vy measure, the weak convergence of (\ref{classicalmul}) derives that of (\ref{bifreemul}). The previously presented issue is addressed in the last section, where the uniqueness of L\'{e}vy measures is obtained under certain circumstances.

In addition to the previously mentioned conjunctions, what we would like to point out is that measures in $\mathscr{P}_{\mathbb{R}^2}$ and $\mathscr{P}_{\mathbb{T}^2}$ can be linked through the wrapping map $W\colon\mathbb{R}^2\to\mathbb{T}^2$, $(x,y)\mapsto(e^{ix},e^{iy})$. This wrapping map induces a map $W_\ast\colon \mathscr{P}_{\mathbb{R}^2} \to \mathscr{P}_{\mathbb{T}^2}$ so that the measure $\nu_{nk}=W_\ast(\mu_{nk})=\mu_{nk}W^{-1}$ enjoys the property: the weak convergence of (\ref{munclass}) or (\ref{munbifree}) yields the weak convergence of (\ref{bifreemul}) and (\ref{classicalmul}) with $\boldsymbol\xi_n=W(\boldsymbol v_n)$. Furthermore, the synchronous convergence allows one to construct a homomorphism $W_{\biplus}\colon\mathcal{ID}(\biplus)\to\mathcal{ID}(\bitimes)$ making the following diagram commute:
\begin{equation}\label{diagram}
\xymatrix@=40pt{
 \mathcal{ID}(\ast) \ar[r]^-{W_\ast} \ar[d]^-\Lambda & \mathcal{ID}(\circledast)  \\
 \mathcal{ID}(\biplus) \ar[r]^-{W_{\biplus}} & \mathcal{ID}(\bitimes) \ar[u]^-\Gamma &
}\end{equation}
This diagram is a two-dimensional analog of \cite[Theorem 1]{Ceb16}.

The rest of the paper is organized as follows. In Section \ref{sec2} we provide the necessary background in classical and non-commutative probability theories. In Section \ref{sec3} we characterize $\bitimes$-idempotent distributions. In Section \ref{sec4} we make comparisons of the convergence criteria of limit theorems, as well as those through wrapping transforms. Section \ref{sec5} is devoted to offering bi-free multiplicative L\'{e}vy triplets of infinitely divisible distributions and investigating the relationships among limit theorems in additive and multiplicative cases. Section \ref{sec6} covers the classical limit theorem on higher dimensional tori. Section \ref{sec7} provides the derivation of the diagram (\ref{diagram}). In the last section we examine the uniqueness of L\'{e}vy measures of infinitely divisible distributions on the circle.

\section{Preliminary}\label{sec2}
\subsection{Convergence of measures} Let $\mathscr{B}_\mathbf{X}$ be the collection of Borel sets on a complete separable metric space $(\mathbf{X},\mathrm{d})$. A point is selected from $\mathbf{X}$ and fixed, named the \emph{origin} and denoted by $\origin$ in the following. In the present paper, we will be mostly concerned with the abelian groups $\mathbf{X}=\mathbb R^d$ and $\mathbf{X}=\mathbb T^d$ endowed with the relative topology from $\mathbb{C}^d$, where the origin is chosen to be the unit. They are respectively the $d$-dimensional Euclidean metric space and the $d$-dimensional torus (or the \emph{$d$-torus} for short). The $1$-torus is just the unit circle $\mathbb{T}$ on the complex plane. A set contained in $\{\boldsymbol x\in\mathbf{X}:\mathrm{d}(\boldsymbol x,\origin)\geq r\}$ for some $r>0$ is colloquially said to be \emph{bounded away from the origin}.

Next, let us introduce several types of measures on $\mathbf{X}$ that will be discussed in the present paper. The first one is the collection $\mathscr{M}_\mathbf{X}$ of finite positive Borel measures on $\mathbf{X}$. We shall also consider the set $\mathscr{M}_\mathbf{X}^\origin$ of positive Borel measures whose each member confined on any Borel set bounded away from the origin is a finite measure. Clearly, we have $\mathscr{M}_\mathbf{X}\subset\mathscr{M}_\mathbf{X}^\origin$. Another assortment concerned herein is the collection $\mathscr{P}_\mathbf{X}$ of elements in $\mathscr{M}_\mathbf{X}$ having unit total mass.

The set $C_b(\mathbf{X})$ of bounded continuous functions on $\mathbf{X}$ induces the weak topology on $\mathscr{M}_\mathbf{X}$.
Likewise, $\mathscr{M}_\mathbf{X}^\origin$ is equipped with the
topology generated by $C_b^\origin(\mathbf{X})$, bounded continuous functions having support bounded away from the origin. Concretely, basic neighborhoods of a $\tau\in\mathscr{M}_\mathbf{X}^\origin$ are of the form
\[\bigcap_{j=1,\ldots,n}\left\{\widetilde{\tau}\in\mathscr{M}_\mathbf{X}^\origin:\left|\int f_j\,d\widetilde{\tau}-\int f_j\,d\tau\right|<\epsilon\right\},\] where $\epsilon>0$ and each $f_j\in C_b^\origin(\mathbf{X})$. Putting it differently, a sequence $\{\tau_n\}\subset\mathscr{M}_\mathbf{X}^\origin$ converges to some $\tau$ in $\mathscr{M}_\mathbf{X}^\origin$, written as $\tau_n\Rightarrow_\origin\tau$, if and only if
\[\lim_{n\to\infty}\int f\,d\tau_n=\int f\,d\tau,\;\;\;\;\;f\in C_b^\origin(\mathbf{X}).\] We remark that $\tau$ is not unique as it may assign arbitrary mass to the origin. Nevertheless, any weak limit in $\mathscr{M}_\mathbf{X}^\origin$ that comes across in our discussions will serve as the so-called \emph{L\'{e}vy measure}, which does not charge the origin.

\emph{Portmanteau theorem} and \emph{continuous mapping theorem} in the framework of $\mathscr{M}_\mathbf{X}^\origin$ are presented below (see \cite{Portmanteau06,mapping}). Recall that the push-forward measure $\tau h^{-1}\colon\mathscr{B}_{\mathbf{X}'}\to[0,+\infty]$ of  $\tau\in\mathscr{M}_\mathbf{X}^\origin$ provoked by a measurable mapping $h\colon(\mathbf{X},\mathscr{B}_\mathbf{X})\to(\mathbf{X}',\mathscr{B}_{\mathbf{X}'})$
is defined as
\begin{equation} \label{pushforward}
(\tau h^{-1})(B')=\tau(\{\boldsymbol x\in\mathbf{X}:h(\boldsymbol x)\in B'\}),\;\;\;\;\;B'\in\mathscr{B}_{\mathbf{X}'}.
\end{equation}

\begin{prop} \label{Portman}
The following are equivalent for $\{\tau_n\}$ and $\tau$ in $\mathscr{M}_\mathbf{X}^\origin$.
\begin{enumerate} [$\quad(1)$]
\item {$\tau_n\Rightarrow_\origin\tau$;}
\item {for any $f\in C_b(\mathbf{X})$ and any $B\subset\mathscr{B}_\mathbf{X}$,
which is bounded away from the origin and satisfies
$\tau(\partial B)=0$,
\[\lim_{n\to\infty}\int_Bf\,d\tau_n=\int_Bf\,d\tau;\]}
\item {for every closed set $F$ and open set $G$ of $\mathbf{X}$ that are bounded away from
$\origin$, $\limsup_{n\to\infty}\tau_n(F)\leq\tau(F)$ and $\liminf_{n\to\infty}\tau_n(G)\geq\tau(G)$.}
\end{enumerate}
If $h\colon (\mathbf{X},\mathrm{d})\to(\mathbf{X}',\mathrm{d}')$ is measurable
so that $h$ is continuous at $\origin$, $h(\origin)=\boldsymbol{x}'_0$, and the set of discontinuities of $h$ has $\tau$-measure zero, then $\tau_n\Rightarrow_\origin\tau$ implies $\tau_nh^{-1}\Rightarrow_{\boldsymbol{x}'_0}\tau h^{-1}$.
\end{prop}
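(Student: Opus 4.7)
The plan is to adapt the standard Portmanteau and continuous mapping theorems to the class $\mathscr{M}_\mathbf{X}^\origin$, exploiting the fact that every $\tau\in\mathscr{M}_\mathbf{X}^\origin$ is finite on each closed region $A_r:=\{\boldsymbol x\in\mathbf{X}:\mathrm{d}(\boldsymbol x,\origin)\geq r\}$. I would establish the three equivalences cyclically as $(1)\Rightarrow(3)\Rightarrow(2)\Rightarrow(1)$, and then deduce the continuous mapping statement from (3).

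For $(1)\Rightarrow(3)$, given a closed set $F$ with $F\subset A_r$, I use the tent function $g_\epsilon(\boldsymbol x):=\max(1-\epsilon^{-1}\mathrm{d}(\boldsymbol x,F),0)$, which is continuous, equals $1$ on $F$, and vanishes outside the $\epsilon$-enlargement of $F$; for $\epsilon$ small enough it is supported in $A_{r/2}$, hence lies in $C_b^\origin(\mathbf{X})$. Since $\tau(A_{r/2})<\infty$ and $g_\epsilon\downarrow\mathbf{1}_F$, dominated convergence gives $\int g_\epsilon\,d\tau\downarrow\tau(F)$, so $\limsup_n\tau_n(F)\leq\lim_n\int g_\epsilon\,d\tau_n=\int g_\epsilon\,d\tau$ can be made arbitrarily close to $\tau(F)$; the open-set inequality is dual. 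For $(3)\Rightarrow(2)$, the standard argument carries over: applying (3) to $\overline{B}$ and $B^\circ$ yields $\tau_n(B)\to\tau(B)$ whenever $\tau(\partial B)=0$ and $B$ is bounded away from $\origin$, after which a uniform approximation of $f\in C_b(\mathbf{X})$ by simple functions whose level sets avoid the at most countable set $\{t:\tau(\{f=t\})>0\}$ extends the convergence to $\int_B f\,d\tau_n\to\int_B f\,d\tau$. For $(2)\Rightarrow(1)$, a function $f\in C_b^\origin(\mathbf{X})$ with support in some $A_r$ can be integrated over $B=A_{r'}$ for a slightly smaller radius $r'<r$ chosen outside the countable exceptional set where $\tau(\{\mathrm{d}(\cdot,\origin)=r'\})>0$; hypothesis (2) then supplies the convergence $\int f\,d\tau_n\to\int f\,d\tau$.

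For the continuous mapping theorem, I first record two structural properties of $\phi:=f'\circ h$ for a test function $f'\in C_b^{\boldsymbol{x}'_0}(\mathbf{X}')$: its support is bounded away from $\origin$, because continuity of $h$ at $\origin$ with $h(\origin)=\boldsymbol{x}'_0$ sends a neighborhood of $\origin$ into the zero set of $f'$; and its discontinuity set is contained in that of $h$, hence is $\tau$-null. It therefore suffices to show $\int\phi\,d\tau_n\to\int\phi\,d\tau$ for every bounded measurable $\phi$ enjoying these two properties. Via the layer-cake formula this reduces to the convergence $\tau_n(B)\to\tau(B)$ for every $\tau$-continuity set $B$ bounded away from $\origin$, and such convergence has already been established during the proof of $(3)\Rightarrow(2)$.

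The main obstacle is this last reduction: we cannot feed $\phi$ directly into the definition of weak convergence in $\mathscr{M}_\mathbf{X}^\origin$ because $\phi$ is merely a.e.\ continuous, and the customary workaround of sandwiching $\phi$ between continuous functions from above and below would force the upper envelope to be nonzero on every neighborhood of $\origin$, violating the $C_b^\origin$ support condition. Routing through set-level convergence on $\tau$-continuity sets bounded away from $\origin$ is the correct substitute, and it explains why (3) is the pivotal intermediate form in the chain of equivalences.
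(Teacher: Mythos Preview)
The paper does not actually supply a proof of this proposition; it states the result and refers the reader to \cite{Portmanteau06} and \cite{mapping}. Your reconstruction is essentially the standard argument found in those references: tent-function approximation for $(1)\Rightarrow(3)$, the closure/interior squeeze for $(3)\Rightarrow(2)$, a radius chosen outside a countable bad set for $(2)\Rightarrow(1)$, and the layer-cake reduction for the continuous mapping statement. All of these steps are correct as written.

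One minor remark on your closing paragraph: the ``obstacle'' you describe is slightly overstated. Since $\phi=f'\circ h$ vanishes identically on an open neighbourhood of $\origin$, its upper and lower semicontinuous envelopes also vanish there, so the usual usc/lsc sandwich would in fact stay inside $C_b^\origin$ after a further approximation by continuous functions supported away from $\origin$. Your alternative route via $\tau$-continuity sets is nonetheless valid and arguably cleaner; the key observation you need (and implicitly use) is that $\partial\{\phi>t\}\subset\{\phi=t\}\cup D_h$, where $D_h$ is the discontinuity set of $h$, so that for all but countably many $t>0$ the super-level set is a $\tau$-continuity set bounded away from $\origin$, and bounded convergence over $[0,\|\phi\|_\infty]$ finishes the job once you bound $\tau_n(\{\phi>t\})$ by $\tau_n(A_{r''})$ for a suitably chosen $r''$.
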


Finally, let us introduce the subset $\tilde{\mathscr{M}}_\mathbf{X}^\origin$ consisting of measures in $\mathscr{M}_\mathbf{X}^\origin$ that do not charge the origin $\boldsymbol x_0$. This set is metrizable and becomes a separable complete metric space \cite[Theorem 2.2]{mapping}. In particular, the relative compactness of a subset $Y$ of $\tilde{\mathscr{M}}_\mathbf{X}^\origin$ is equivalent to that any sequence of $Y$ has a subsequence convergent in $\tilde{\mathscr{M}}_\mathbf{X}^\origin$. We refer the reader to \cite[Theorem 2.7]{mapping} for an analog of Prokhorov's theorem, which characterizes the relative compactness of subsets in $\tilde{\mathscr{M}}_\mathbf{X}^\origin$.

\subsection{Notations} Below, we collect notations that will be commonly used in the sequel. The customary symbol $\arg s\in(-\pi,\pi]$ stands for the principal argument of a point $s\in\mathbb{T}$, while $\Re s$ and $\Im s$ respectively represent the real and imaginary parts of $s$. Here and elsewhere, points in a multi-dimensional space will be written in bold letters, for instance, ${\boldsymbol s}=(s_1,\ldots,s_d)\in\mathbb{T}^d$ and
$\boldsymbol p=(p_1,\ldots,p_d)\in\mathbb{Z}^d$ with each $s_j\in\mathbb{T}$ and $p_j\in\mathbb{Z}$.
For any $\epsilon>0$, we shall use
\[\mathscr{V}_\epsilon=\{\boldsymbol x\in\mathbb{R}^d:\|\boldsymbol x\|<\epsilon\}\;\;\;\;\;\mathrm{and}\;\;\;\;\;\mathscr{U}_\epsilon=\{{\boldsymbol s}\in\mathbb{T}^d:\|\arg{\boldsymbol s}\|<\epsilon\}\] to respectively express open neighborhoods of origins $\mathbf{0}\in\mathbb{R}^d$ and $\mathbf{1}\in\mathbb{T}^d$, where we use the notation  $\arg{\boldsymbol s}=(\arg s_1,\ldots,\arg s_d)\in\mathbb{R}^d$. Analogous expressions also apply to vectors $\Re\boldsymbol s=(\Re s_1,\dots,\Re s_d)$ and $\Im\boldsymbol s=(\Im s_1,\dots,\Im s_d)$. Besides, we adopt the operational conventions in multi-dimensional spaces in the sequel, such as ${\boldsymbol s}^{\boldsymbol p}=s_1^{p_1} \cdots s_d^{p_d}$, $\boldsymbol s\boldsymbol t=(s_1t_1,\ldots,s_dt_d)$, $\boldsymbol s^{-1}=(1/s_1,\ldots,1/s_d)$, and $e^{i{\boldsymbol s}}=(e^{is_1},\ldots,e^{is_d})$.

The push-forward probabilities $\mu^{(j)}=\mu\pi_j^{-1}$, $j=1,\ldots,d$, on the real line
induced by projections $\pi_j:\mathbb{R}^d\to\mathbb{R}$, $\boldsymbol x\mapsto x_j$, are called \emph{marginals} of $\mu\in\mathscr{P}_{\mathbb{R}^d}$.
Marginals of probability measures on $\mathbb{T}^d$ are defined and displayed in the same way.
On a high-dimensional torus, especial on $\mathbb{T}^2$, we shall also consider the (right) coordinate-flip transform $h_\mathrm{op}:\mathbb{T}^2\to\mathbb{T}^2$ defined as $h_\mathrm{op}(\boldsymbol s)=(s_1,1/s_2)$. Denote by $\boldsymbol s^\star=h_\mathrm{op}(\boldsymbol s)$ and $B^\star=\{\boldsymbol s^\star:\boldsymbol s\in B\}$ if $\boldsymbol s\in\mathbb{T}^2$ and $B\subset\mathbb{T}^2$. By the (right) coordinate-flip measure of $\rho\in\mathscr{M}_{\mathbb{T}^2}^\mathbf{1}$, we mean the push-forward measure $\rho^\star=\rho h_\mathrm{op}^{-1}$, alternatively defined as
\[\rho^\star(B)=\rho(B^\star),\;\;\;\;\;B\in\mathscr{B}_{\mathbb{T}^2}.\]

\subsection{Free probability and bi-free probability} \label{sec2.3}
 Aside from the classical convolution on $\mathscr{P}_{\mathbb{R}^2}$, we shall also consider the bi-free convolution $\biplus$, where the \emph{bi-free $\phi$-transform} takes the place of Fourier transform \cite{HHW}: for $\mu_1,\mu_2\in\mathscr{P}_{\mathbb{R}^2}$,
\[\phi_{\mu_1\biplut\mu_2}=\phi_{\mu_1}+\phi_{\mu_2}.\]
All information about marginals of the bi-free convolution is carried over to the free convolution: $(\mu_1\biplut\mu_2)^{(j)}=\mu_1^{(j)}\boxplus\mu_2^{(j)}$ for $j=1,2$.

Now, we turn to probability measures on the $d$-torus. The sequence
\[m_{\boldsymbol p}(\nu)=\int_{\mathbb{T}^d}\boldsymbol s^{\boldsymbol p}\,d\nu(\boldsymbol s),\;\;\;\;\;\boldsymbol p\in\mathbb{Z}^d,\] is called the \emph{$d$-moment sequence} of $\nu\in\mathscr{P}_{\mathbb{T}^d}$. In some circumstances, \emph{characteristic function} and $\widehat{\nu}(\boldsymbol p)$
are the precise terminology and notation used for this sequence. Owing to Stone-Weierstrass theorem, every measure in $\mathscr{P}_{\mathbb{T}^d}$ is uniquely determined by its $d$-moment sequence, namely, $m_{\boldsymbol p}(\nu)\equiv m_{\boldsymbol p}(\nu')$ only when $\nu=\nu'$. The classical convolution $\circledast$ of distributions on $\mathbb{T}^d$ is characterized by
\[m_{\boldsymbol p}(\nu_1\circledast\nu_2)=m_{\boldsymbol p}(\nu_1)m_{\boldsymbol p}(\nu_2),\;\;\;\;\;\nu_1,\nu_2\in\mathscr{P}_{\mathbb{T}^d}.\]

The \emph{bi-free multiplicative convolution} of $\nu_1,\nu_2\in\mathscr{P}_{\mathbb{T}^2}^\times$ is determined by its marginals ${(\nu_1\bitimet \nu_2)^{(j)}}=\nu_1^{(j)}\boxtimes\nu_2^{(j)}$ and the bi-free multiplicative formula
\[\Sigma_{\nu_1\bitimet\nu_2}(z,w)=\Sigma_{\nu_1}(z,w)\cdot\Sigma_{\nu_2}(z,w)\] for points $(z,w)\in\mathbb{C}^2$ in a neighborhood of $(0,0)$ and $(0,\infty)$.
Here the free multiplicative convolution can be rephrased by means of
the free $\Sigma$-transform $\Sigma_{\nu_1^{(j)}\boxtimes\nu_2^{(j)}}=\Sigma_{\nu_1^{(j)}}\cdot\Sigma_{\nu_2^{(j)}}$ valid in a neighborhood of the origin of the complex plane. The reader is referred to \cite{BerVoicu92,BerVoicu93,HW1,HW2,Sko16,Sko18,V15,V15ST} for more details along with properties of the transforms in (bi)-free probability theory.

Fix $\nu_1,\nu_2\in\mathscr{P}_{\mathbb{T}^2}$, and let $\nu=\nu_1\bitimet\nu_2$. In order to analyze $\nu$ in-depth here and elsewhere, it will be convenient to treat it as the distribution of a certain bipartite pair $(u_1u_2,v_1v_2)$, where
$(u_1,v_1)$ and $(u_2,v_2)$ are bi-free bipartite unitary pairs in some $C^*$-probability space having distributions $\nu_1$ and $\nu_2$,
respectively. Below, we briefly introduce the construction of such pairs carrying the mentioned properties. For more information, we refer the reader to \cite{HW2,V14,V15ST}.

Associating each $\nu_j$ with the Hilbert space $\mathcal{H}_j=L^2(\nu_j)$ with specified unit vector $\xi_j$, the constant function one in $\mathcal{H}_j$, one considers the Hilbert space free product $(\mathcal{H},\xi)=*_{j=1,2}(\mathcal{H}_j,\xi_j)$. The left
and right factorizations of $\mathcal{H}_j$ from $\mathcal{H}$ can be respectively done via isomorphisms $V_j:\mathcal{H}_j\otimes\mathcal{H}(\ell,j)\to\mathcal{H}$ and $W_j:\mathcal{H}(r,j)\otimes\mathcal{H}_j\to\mathcal{H}$. Then for any $T\in B(\mathcal{H}_j)$, these isomorphisms induce the so-called \emph{left} and \emph{right operators} $\lambda_j(T)=V_j(T\otimes I_{\mathcal{H}(\ell,j)})V_j^{-1}$ and $\rho_j(T)=W_j(I_{\mathcal{H}(r,j)}\otimes T)W_j^{-1}$ on $\mathcal{H}$. For any $S_j,T_j\in B(\mathcal{H}_j)$, pairs $(\lambda_1(S_1),\rho_1(T_1))$ and $(\lambda_2(S_2),\rho_2(T_2))$ are, by definition, bi-free in the $C^*$-probability space $(B(\mathcal{H}),\varphi_\xi)$, where $\varphi_\xi(\cdot)=\langle\cdot\xi,\xi\rangle$. Particularly, the multiplication operators $(S_jf)(s,t)=sf(s,t)$ and $(T_jf)(s,t)=tf(s,t)$ for $f\in\mathcal{H}_j$ furnish the desired pairs $(u_1,v_1)$ and $(u_2,v_2)$, where $u_j=\lambda_j(S_j)$ and $v_j=\rho_j(T_j)$.

Recall from \cite{HW2} that one can perform the \emph{opposite bi-free multiplicative convolution} of $\nu_1$ and $\nu_2$:
\begin{equation} \label{opdef}
\nu_1\bitimet^{\mathrm{op}}\nu_2=(\nu_1^\star\bitimet\nu_2^\star)^\star.
\end{equation} It turns out that $\nu_1\bitimet^{\mathrm{op}}\nu_2$ is the distribution of $(u_1u_2,v_2v_1)$, which is the pair obtained by performing the opposite multiplication on the right face $(u_1,v_1)\cdot^{\mathrm{op}}(u_2,v_2)=(u_1u_2,v_2v_1)$. The coordinate-flip map $h_\mathrm{op}$ gives rise to a homeomorphism
from the semigroup $(\mathscr{P}_{\mathbb{T}^2},\bitimes)$ to another $(\mathscr{P}_{\mathbb{T}^2},\bitimes^{\mathrm{op}})$ satisfying \[(\nu_1\bitimet\nu_2)h_\mathrm{op}^{-1}=(\nu_1h_\mathrm{op}^{-1})
\bitimet^{\mathrm{op}}(\nu_2h_\mathrm{op}^{-1}),\] which is the distribution of
\[h_\mathrm{op}((u_1,v_1)(u_2,v_2))=(u_1u_2,v_2^{-1}v_1^{-1})=h_\mathrm{op}((u_1,v_1))\cdot^{\mathrm{op}}
h_\mathrm{op}((u_2,v_2)).\]
Passing to the transform
\[\Sigma_\nu^{\mathrm{op}}(z,w)=\Sigma_{\nu^{\star}}(z,1/w),\] the equation (\ref{opdef}) is translated into
\[\Sigma_{\nu_1\bitimes^{\mathrm{op}}\nu_2}^{\mathrm{op}}(z,w)
=\Sigma_{\nu_1}^{\mathrm{op}}(z,w)\cdot\Sigma_{\nu_1}^{\mathrm{op}}(z,w).\]

\subsection{Limit Theorem} \label{LTcond}
Either in classical or in (bi-)free probability theory, one is concerned with the asymptotic behavior of the sequence
\begin{equation} \label{asymptotic}
\delta_{\boldsymbol x_n}\lozenge\mu_{n1}\lozenge\cdots\lozenge\mu_{nk_n},\;\;\;\;\;n=1,2,\ldots,
\end{equation}
where $\delta_{\boldsymbol x}$ is the Dirac measure concentrated at $\boldsymbol x\in\mathbf{X}$ and
$\{\mu_{nk_n}\}_{n\geq1,1\leq k\leq k_n}$ is an infinitesimal triangular array in $\mathscr{P}_\mathbf{X}$. The infinitesimality of $\{\mu_{nk}\}$, by definition, means that $k_1<k_2<\cdots$ and that for any $\epsilon>0$, we have
\[\lim_{n\to\infty}\max_{1\leq k\leq k_n}\mu_{nk}(\{\boldsymbol x\in\mathbf{X}:\mathrm{d}(\boldsymbol x,\boldsymbol x_0)\geq\epsilon\})=0.\]
One phenomenon related to (\ref{asymptotic}) is the concept of infinite divisibility: $\mu\in(\mathscr{P}_\mathbf{X},\lozenge)$ is said to be infinitely divisible if for any $n\in\mathbb{N}$, it coincides with the $n$-fold $\lozenge$-operation $\mu_n^{\lozenge n}$ of some $\mu_n\in\mathscr{P}_\mathbf{X}$.

Commutative and associative binary operations to be considered throughout the paper are classical convolutions $*$ and $\circledast$ on $\mathscr{P}_{\mathbb{R}^d}$ and
$\mathscr{P}_{\mathbb{T}^d}$, respectively, and bi-free additive and multiplicative convolutions $\biplus$ and $\bitimes$ on $\mathscr{P}_{\mathbb{R}^2}$ and $\mathscr{P}_{\mathbb{T}^2}$, respectively.

The following convergence criteria play an essential role in the asymptotic analysis of limit theorems of  $\mathscr{P}_{\mathbb{R}^d}$.

\begin{cond} \label{cond+1}
Let $\{\tau_n\}$ be a sequence in $\mathscr{M}_{\mathbb{R}^d}^\mathbf{0}$.

\begin{enumerate}[\rm(I)]
\item\label{item:condI} For $j=1,\ldots,d$, the sequence $\{\sigma_{nj}\}_{n\geq1}$ defined as
\[d\sigma_{nj}(\boldsymbol x)=\frac{x_j^2}{1+x_j^2}\,d\tau_n(\boldsymbol x)\] belongs to $\mathscr{M}_{\mathbb{R}^d}$ and converges weakly to some $\sigma_j\in\mathscr{M}_{\mathbb{R}^d}$.

\item\label{item:condII} For $j,\ell=1,\ldots,d$, the following limit exists in $\mathbb{R}$:
\[L_{j\ell}=\lim_{n\to\infty}\int_{\mathbb{R}^2}\frac{x_jx_\ell}{(1+x_j^2)(1+x_\ell^2)}\,d\tau_n(\boldsymbol x).\]
\end{enumerate}
\end{cond}

\begin{cond} \label{cond+2}
Let $\{\tau_n\}$ be a sequence in $\mathscr{M}_{\mathbb{R}^d}^\mathbf{0}$.

\begin{enumerate}[\rm(I)]
\setcounter{enumi}{2}

\item\label{item:condIII} There is some $\tau\in\mathscr{M}_{\mathbb{R}^d}^\mathbf{0}$ with $\tau(\{\mathbf{0}\})=0$ so that $\tau_n\Rightarrow_\mathbf{0}\tau$. \\

\item\label{item:condIV} For any vector $\boldsymbol u\in\mathbb{R}^d$, the following limits exist in $\mathbb{R}$:
\[\lim_{\epsilon\to0}\limsup_{n\to\infty}\int_{\mathscr{V}_\epsilon}\langle\boldsymbol u,\boldsymbol x\rangle^2\,d\tau_n(\boldsymbol x)=Q(\boldsymbol u)=
\lim_{\epsilon\to0}\liminf_{n\to\infty}\int_{\mathscr{V}_\epsilon}\langle\boldsymbol u,\boldsymbol x\rangle^2\,d\tau_n(\boldsymbol x).\]
\end{enumerate}
\end{cond}

Although we describe the conditions in a higher dimension setup, the reader can effortlessly mimic the proof in \cite{HHW} to obtain the equivalence of Condition \ref{cond+1} and Condition \ref{cond+2}, and draw the following consequences:
\begin{enumerate}[\rm\hspace{5mm}(1)]

\item The function $Q(\cdot)=\langle\mathbf{A}\cdot,\cdot\rangle$ in \eqref{item:condIV}
defines a non-negative quadratic form on $\mathbb{R}^d$, where the matrix $\mathbf{A}=(a_{j\ell})$ is given by
\[a_{j\ell}=L_{j\ell}-\int_{\mathbb{R}^d}\frac{x_jx_\ell}{(1+x_j^2)(1+x_\ell^2)}\,d\tau(\boldsymbol x),\;\;\;\;\;j,\ell=1,\ldots,d.\] In particular, $a_{jj}=\sigma_j(\{\mathbf{0}\})$ for $j=1,\ldots,d$.

\item Measures $\tau$ and $\sigma_1,\ldots,\sigma_d$ are uniquely determined by the relations
\[d\sigma_j(\boldsymbol x)=\frac{x_j^2}{1+x_j^2}\,d\tau(\boldsymbol x)+Q(\boldsymbol e_j)\delta_\mathbf{0}(d\boldsymbol x),\] where $\{\boldsymbol e_j\}$ is the standard basis of $\mathbb{R}^d$.
\item The function $\boldsymbol x\mapsto\min\{1,\|\boldsymbol x\|^2\}$ is $\tau$-integrable.
\end{enumerate}

Now, let us briefly introduce the limit theorem. Throughout our discussions in the paper,
\begin{equation} \label{thetarange}
\theta\in(0,1)
\end{equation}
is an arbitrary but fixed quantity. To meet the purpose, consider the shifted triangular array \[\mathring\mu_{nk}(B)=\mu_{nk}(B+{\boldsymbol v}_{nk}),\;\;\;\;\;B\in\mathscr{B}_{\mathbb{R}^d},\] associated with an infinitesimal triangular array $\{\mu_{nk}\}_{n\geq1,1\leq k\leq k_n}\subset\mathscr{P}_{\mathbb{R}^d}$ and the vector
\[{\boldsymbol v}_{nk}=\int_{\mathscr{V}_\theta}\boldsymbol x\;d\mu_{nk}(\boldsymbol x).\] Due to $\lim_{n\to\infty}\max_{k\leq k_n}\|{\boldsymbol v}_{nk}\|=0$, $\{\mathring\mu_{nk}\}$ so obtained is also infinitesimal. In conjunction with this centered triangular array, we focus on the positive measures
\begin{equation} \label{taundef}
\tau_n=\sum_{k=1}^{k_n}\mathring\mu_{nk}.
\end{equation}

It turns out that the sequence in (\ref{munclass}) converges weakly to a certain $\mu_*\in\mathscr{P}_{\mathbb{R}^d}$ if and only if $\tau_n$ defined in (\ref{taundef}) meets Condition \ref{cond+1} (or, equivalently, Condition \ref{cond+2}) and
the limit
\begin{equation} \label{limitv}
\boldsymbol v=\lim_{n\to\infty}\left[\boldsymbol v_n+\sum_{k=1}^{k_n}\left({\boldsymbol v}_{nk}
+\int_{\mathbb{R}^d}\frac{\boldsymbol x}{1+\|\boldsymbol x\|^2}\,d\mathring\mu_{nk}(\boldsymbol x)\right)\right]
\end{equation}
exists in $\mathbb{R}^d$. Additionally, $\mu_*$ is $*$-infinitely divisible and possesses the characteristic function $\widehat{\mu}_*(\boldsymbol u)$ read as
\begin{equation} \label{LKrepre}
\exp\left[i\langle\boldsymbol u,\boldsymbol v\rangle-\frac{1}{2}\langle\mathbf{A}\boldsymbol u,\boldsymbol u\rangle+\int_{\mathbb{R}^d}\left(e^{i\langle\boldsymbol u,\boldsymbol x\rangle}-1-\frac{i\langle\boldsymbol u,\boldsymbol x\rangle}{1+\|\boldsymbol x\|^2}\right)d\tau(\boldsymbol x)\right],
\end{equation} which is known as the
\emph{L\'{e}vy-Khintchine representation}. The limiting distribution is uniquely determined by this formula and denoted by $\mu_*^{(\boldsymbol v,\mathbf{A},\tau)}$, and $(\boldsymbol v,\mathbf{A},\tau)$ is referred to as its \emph{L\'{e}vy triplet}. Those triplets $(\boldsymbol v,\mathbf{A},\tau)$, where
\begin{equation}\label{Levy_triplet}
\begin{split}
&\text{$\boldsymbol v\in\mathbb R^d$, $\mathbf{A}\in M_d(\mathbb{R})$ is positive semi-definite, and $\tau$ is a positive measure} \\
&\text{on $\mathbb{R}^d$ satisfying $\tau(\{\mathbf0\})=0$ and $\min\{1,\|\boldsymbol x\|^2\} \in L^1(\tau)$,}
\end{split}
\end{equation}
completely parameterize $\mathcal{ID}(\ast)$.

As a matter of fact, when $d=2$, the same convergence criteria are also necessary and sufficient to assure the weak convergence of (\ref{munbifree}). Paralleling to the classical case, the limiting distribution of (\ref{munbifree}) is $\biplus$-infinitely divisible and owns the bi-free $\phi$-transform, called \emph{bi-free L\'{e}vy-Khintchine representation}, of the form
\[\phi(z,w)=\frac{v_1}{z}+\frac{v_2}{w}+\left(\frac{a_{11}}{z^2}+\frac{a_{12}}{zw}+\frac{a_{22}}{w^2}\right)+
\int_{\mathbb{R}^2}\left[\frac{zw}{(z-x_1)(w-x_2)}-1 - \frac{x_1z^{-1} +x_2 w^{-1}}{1+\|\boldsymbol x\|^2}\right]d\tau(\boldsymbol x).\]
Analogically, this limiting distribution is always expressed as $\mu_{\biplus}^{(\boldsymbol v,\mathbf{A},\tau)}$ and said to own the \emph{bi-free L\'{e}vy triplet} $(\boldsymbol v,\mathbf{A},\tau)$. The bi-freely infinitely divisible distribution $\mu_{\biplus}^{(\boldsymbol0,\mathbf{I},0)}$
is known as the standard bi-free Gaussian distribution \cite{HW1}.

Those triplets $(\boldsymbol v,\mathbf{A},\tau)$ satisfying \eqref{Levy_triplet} give a complete parametrization of the set $\mathcal {ID}(\biplus)$ as well as of $\mathcal{ID}(\ast)$, and therefore output a bijective homomorphism $\Lambda$ from $\mathcal{ID}(*)$ onto $\mathcal{ID}(\biplus)$, sending an element $\mu_*^{(\boldsymbol v,\mathbf{A},\tau)}$ in the first set to the distribution $\mu_{\biplus}^{(\boldsymbol v,\mathbf{A},\tau)}$ in the second one. No matter in the classical or bi-free probability, $*$- and $\biplus$-infinitely divisible distributions both appear as limiting distributions in the limit theorem.

Next, we turn our attention to the limit theorem on the $d$-torus, on which the
Borel probability measures of interest are sometimes imposed the mean conditions:
\[\int_{\mathbb{T}^d}s_j\;d\nu({\boldsymbol s})\neq0,\;\;\;\;\;j=1,\ldots,d.\]
For convenience, we adopt the symbol $\mathscr{P}_{\mathbb{T}^d}^\times$ to signify the collection of probability measures carrying such features. As will be shown in Theorem \ref{IDmoment}, when $d=2$, these conditions turn out to be necessary and sufficient for a $\bitimes$-infinitely divisible distribution to contain no non-trivial $\bitimes$-idempotent factor. We would also like to remind the reader that the symbol $\mathscr{P}_{\mathbb{T}^2}^\times$ introduced here is distinct from that in \cite{HW2} as Theorem \ref{characterID} of the present paper designates that the requirement $m_{1,1}(\nu)\neq0$ in the limit theorem is redundant.

Given an infinitesimal triangular array $\{\nu_{nk}\}_{n\geq1,1\leq k\leq k_n}$ in $\mathscr{P}_{\mathbb{T}^d}$, one works with
the rotated probability measures
\[d\mathring\nu_{nk}({\boldsymbol s})=d\nu_{nk}({\boldsymbol b}_{nk}{\boldsymbol s}),\] where
\begin{equation} \label{bnk}
\boldsymbol b_{nk}=\exp\left[i\int_{\mathscr{U}_\theta}(\arg{\boldsymbol s})\,d\nu_{nk}({\boldsymbol s})\right].
\end{equation} Once again, $\{\mathring\nu_{nk}\}$ is infinitesimal
because of $\lim_n\max_k\|\arg{\boldsymbol b}_{nk}\|=0$. Given a sequence $\{\boldsymbol\xi_n\}\subset\mathbb{T}^d$, further define vectors in $\mathbb{T}^d$:
\begin{equation} \label{gamman}
\boldsymbol\gamma_n=\boldsymbol\xi_n\exp\left[i\sum_{k=1}^{k_n}\left(\arg{\boldsymbol b}_{nk}+\int_{\mathbb{T}^2}(\Im\boldsymbol s)
\,d\mathring\nu_{nk}({\boldsymbol s})\right)\right].
\end{equation}

The bi-free multiplicative limit theorem on the bi-torus has been shown in \cite[Theorem 3.4]{HW2}:

\begin{thm} \label{limitthmX}
The necessary and sufficient condition for
the sequence \emph{(\ref{bifreemul})} to converge weakly to a certain $\nu_{\boxtimes\boxtimes}\in\mathscr{P}_{\mathbb{T}^2}^\times$ is that the limit
\begin{equation} \label{gamma}
\lim_{n\to\infty}\boldsymbol\gamma_n=\boldsymbol\gamma
\end{equation} exists and the following positive measures satisfy \emph{Conditions \ref{condX1}} for $d=2$:
\begin{equation} \label{rhon}
\rho_n=\sum_{k=1}^{k_n}\mathring\nu_{nk}.
\end{equation}
\end{thm}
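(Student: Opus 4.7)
My vehicle is the bi-free $\Sigma$-transform. Because $\mathscr{P}_{\mathbb{T}^2}^\times$ is exactly the class on which $\Sigma$ is well-defined, Section \ref{sec2.3} gives
\[
\Sigma_{\delta_{\boldsymbol\xi_n}\bitimet\nu_{n1}\bitimet\cdots\bitimet\nu_{nk_n}}(z,w)=\Sigma_{\delta_{\boldsymbol\xi_n}}(z,w)\prod_{k=1}^{k_n}\Sigma_{\nu_{nk}}(z,w)
\]
on a common bidisk around $(0,0)\times(0,\infty)$. The overall strategy is first to establish that pointwise (hence locally uniform) convergence of this product to some $\Sigma_{\nu_{\boxtimes\boxtimes}}$ with $\nu_{\boxtimes\boxtimes}\in\mathscr{P}_{\mathbb{T}^2}^\times$ is equivalent to weak convergence of (\ref{bifreemul}), and then to translate that analytic convergence into the stated hypotheses on $\boldsymbol\gamma_n$ and $\rho_n$. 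The marginal reduction $(\nu_1\bitimet\nu_2)^{(j)}=\nu_1^{(j)}\boxtimes\nu_2^{(j)}$ means that each marginal sub-problem is handled by the already available free multiplicative limit theorem on $\mathbb{T}$.

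By infinitesimality, each $\Sigma_{\nu_{nk}}-1$ is uniformly small on a fixed bidisk, so the logarithm can be taken term by term and linearized:
\[
\log\Sigma_{\delta_{\boldsymbol\xi_n}\bitimet\nu_{n1}\bitimet\cdots\bitimet\nu_{nk_n}}(z,w)=\log\Sigma_{\delta_{\boldsymbol\xi_n}}(z,w)+\sum_{k=1}^{k_n}\bigl(\Sigma_{\nu_{nk}}(z,w)-1\bigr)+o(1).
\]
Absorbing each $\boldsymbol b_{nk}$ in (\ref{bnk}) into $\mathring\nu_{nk}$ kills the first-order mass and rewrites the sum as
\[
\sum_{k=1}^{k_n}\bigl(\Sigma_{\nu_{nk}}(z,w)-1\bigr)=\int_{\mathbb{T}^2}F_{z,w}(\boldsymbol s)\,d\rho_n(\boldsymbol s)+i\sum_{k=1}^{k_n}\Bigl(\arg\boldsymbol b_{nk}+\int_{\mathbb{T}^2}(\Im\boldsymbol s)\,d\mathring\nu_{nk}(\boldsymbol s)\Bigr)\cdot(\text{drift kernel})+o(1),
\]
for a bounded continuous $F_{z,w}$ on $\mathbb{T}^2\setminus\{\mathbf 1\}$ with at most quadratic behaviour at $\mathbf 1$. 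The accumulated rotations combine with $\boldsymbol\xi_n$ to form precisely $\boldsymbol\gamma_n$ of (\ref{gamman}), and the quadratic-at-$\mathbf 1$ piece of $F_{z,w}$ is what will pair against the Gaussian part of the limit.

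The translation from analytic convergence to Condition \ref{condX1} then parallels the Euclidean equivalence between Condition \ref{cond+1} and Condition \ref{cond+2}: decomposing $F_{z,w}$ into its behaviour near $\mathbf 1$ and far from $\mathbf 1$ and applying Proposition \ref{Portman} shows that convergence of $\int F_{z,w}\,d\rho_n$ on the whole bidisk is equivalent to convergence of $\rho_n$ in $\mathscr{M}_{\mathbb{T}^2}^\mathbf{1}$ together with the quadratic form / covariance control near $\mathbf 1$ analogous to \eqref{item:condI}--\eqref{item:condIV}. The limit function so obtained is then the bi-free multiplicative L\'{e}vy-Khintchine exponent of a $\bitimes$-infinitely divisible distribution in $\mathscr{P}_{\mathbb{T}^2}^\times$. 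The step I expect to be the main obstacle is maintaining the analytic machinery uniformly in $n$: showing that the products $\prod_{k\leq j}\Sigma_{\nu_{nk}}$ extend to, and remain bounded away from $0$ on, a common bidisk so that logarithms and their linearizations are legitimate. This is where the mean condition defining $\mathscr{P}_{\mathbb{T}^2}^\times$ interacts with infinitesimality most delicately and where the assumption that the limit already lies in $\mathscr{P}_{\mathbb{T}^2}^\times$ (not merely in $\mathscr{P}_{\mathbb{T}^2}$) is essential.
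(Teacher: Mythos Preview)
The paper does not prove this theorem. It is stated as a quotation of \cite[Theorem 3.4]{HW2}; see the sentence immediately preceding the theorem: ``The bi-free multiplicative limit theorem on the bi-torus has been shown in \cite[Theorem 3.4]{HW2}.'' There is therefore no in-paper proof to compare your proposal against.

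That said, your outline is the natural approach and is essentially what \cite{HW2} does: exploit the multiplicativity of the bi-free $\Sigma$-transform, linearize the logarithm using infinitesimality, center each $\nu_{nk}$ via the rotations $\boldsymbol b_{nk}$ so that the first-order terms assemble into $\boldsymbol\gamma_n$, and identify the remaining integral against $\rho_n$ with the data in Condition~\ref{condX1}. You have also correctly located the genuine technical work: obtaining a uniform domain on which all the $\Sigma_{\nu_{nk}}$ are defined, close to $1$, and on which the partial products stay bounded away from zero, so that the logarithm-plus-linearization step is legitimate uniformly in $n$. One point to sharpen in your sketch: the ``drift kernel'' and the integrand $F_{z,w}$ are not quite as separable as you write; in practice one uses the decomposition identity (\ref{decomeq}) (and its two-variable analogue appearing in the definition of $u(z,w)$) to split the integrand into a piece controlled by $(1-\Re s_j)\,d\rho_n = d\lambda_{nj}$ and a piece controlled by $(\Im s_j)(\Im s_\ell)\,d\rho_n$, which is exactly how Conditions~\ref{condX1}\eqref{item:condi}--\eqref{item:condii} enter. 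That is the concrete link between the analytic convergence and the stated measure-theoretic conditions.
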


\begin{cond} \label{condX1}
Let $\{\rho_n\}$ be a sequence in $\mathscr{M}_{\mathbb{T}^d}^\mathbf{1}$.

\begin{enumerate}[(i)]
\item\label{item:condi} For $j=1,\ldots,d$, the sequence $\{\lambda_{nj}\}_{n\geq1}$ defined as
\[d\lambda_{nj}({\boldsymbol s})=(1-\Re s_j)\,d\rho_n({\boldsymbol s})\] belongs to $\mathscr{M}_{\mathbb{T}^d}$ and converges weakly to some $\lambda_j\in\mathscr{M}_{\mathbb{T}^d}$.

\item\label{item:condii} For $1\leq j,\ell\leq d$, the following limit exists in $\mathbb{R}$:
\[L_{j\ell}=\lim_{n\to\infty}\int_{\mathbb{T}^d}(\Im s_j)(\Im s_\ell)\,d\rho_n({\boldsymbol s}).\]
\end{enumerate}
\end{cond}

The limiting distribution $\nu=\nu_\bitimes$ in Theorem \ref{limitthmX} is $\bitimes$-infinitely divisible, as expected, and uniquely determined by the formulas
\begin{equation} \label{bi-freeLK}
\Sigma_{\nu^{(j)}}(\xi)=\exp[u_j(\xi)]\;\;\;\;\;\mathrm{and}\;\;\;\;\;
\Sigma_\nu(z,w)=\exp[u(z,w)],
\end{equation}
where
\[
u_j(\xi)=-i\arg\gamma_j+\int_{\mathbb{T}^2}\frac{1+\xi s_j}{1-\xi s_j}\,d\lambda_j({\boldsymbol s}),\;\;\;\;\;\xi\in\mathbb{D},\]
for $j=1,2$, and for $(z,w)\in(\mathbb{C}\backslash\mathbb{D})^2$,
\begin{align*}
\frac{(1-z)(1-w)}{1-zw}&u(z,w)=\int_{\mathbb{T}^2}\frac{1+zs_1}{1-zs_1}\frac{1+ws_2}{1-ws_2}(1-\Re s_2)\,d\lambda_1({\boldsymbol s}) \\
&-i\int_{\mathbb{T}^2}\frac{1+zs_1}{1-zs_1}(\Im s_2)\,d\lambda_1({\boldsymbol s})-
i\int_{\mathbb{T}^2}\frac{1+ws_2}{1-ws_2}(\Im s_1)\,d\lambda_2({\boldsymbol s})-L_{12}.
\end{align*}
In turn, any measure in $\mathcal{ID}(\bitimes)\cap\mathscr{P}_{\mathbb{T}^2}^\times$ truly arises as a weak-limit point of (\ref{bifreemul}).

\begin{remark} \label{urepre}
Suppose that $\nu\in\mathcal{ID}(\bitimes)\backslash\mathscr{P}_{\mathbb{T}^2}^\times$, and let $m_j=\int s_jd\nu^{(j)}$ for $j=1,2$. Then $\Sigma_{\nu^{(j)}}(0)=1/m_j$, $\arg\gamma_j=\arg m_j$, and $\lambda_j(\mathbb{T}^2)=-\log|m_j|\in[0,\infty)$. We remind the reader that the parameter $\gamma_j$ in $u_j(\xi)$
and that appearing in \cite{HW2} are conjugate complex numbers. With the help of the equation
\begin{equation} \label{decomeq}
\frac{1+\xi s}{1-\xi s}(1-\Re s)=i\Im s+\frac{(1-\xi)(1-s)}{1-\xi s},\;\;\;\;(\xi,s)\in\mathbb{D}\times\mathbb{T},
\end{equation} one can see that
\[u_j(\xi)=-i\arg\gamma_j+\lim_{n\to\infty}\int_{\mathbb{T}^2}\frac{1+\xi s_j}{1-\xi s_j}(1-\Re s_j)\,d\rho_n({\boldsymbol s})\] and
\[u(z,w)=\lim_{n\to\infty}\int_{\mathbb{T}^2}\frac{(1-zw)(1-s_1)(1-s_2)}{(1-zs_1)(1-ws_2)}\,d\rho_n({\boldsymbol s})\] for some sequence $\{\rho_n\}\subset\mathscr{M}_{\mathbb{T}^2}^\mathbf{1}$ satisfying Condition \ref{condX1}.
\end{remark}

\section{$\bitimes$-Idempotent Distributions} \label{sec3}
Let $\mu\in\mathscr{P}_\mathbf{X}$. A measure $\mu'\in\mathscr{P}_\mathbf{X}$ is called a $\lozenge$-factor of $\mu$ if $\mu=\mu'\lozenge\mu''$ for some $\mu''\in\mathscr{P}_\mathbf{X}$. Particularly, $\mu$ is said to be $\lozenge$-idempotent when $\mu'=\mu=\mu''$. Idempotent distributions and other related subjects in classical probability have been extensively studied \cite{Par67}. It is to questions of these sorts in the bi-free probability theory that the present section is devoted.

The normalized Lebesgue measure $\mathrm{m}=d\theta/(2\pi)$ on $\mathbb{T}$ is the only $\boxtimes$-idempotent element except for the trivial one, the Dirac measure at $1$. On $\mathbb{T}^2$, the probability measure
\[P(B)=\mathrm{m}\big(\{s\in\mathbb{T}:(s,\bar{s})\in B\}\big),\;\;\;\;\;B\in\mathscr{B}_{\mathbb{T}^2},\] is $\circledast$-idempotent because $m_{p,q}(P)=1$ for $p=q\in\mathbb{Z}$ and zero otherwise. As a matter of fact, this singularly continuous measure is also $\bitimes$-idempotent:

\begin{prop} \label{idempotnet1}
A $\bitimes$-idempotent distribution in $\mathscr{P}_{\mathbb{T}^2}$ is one of five types $\delta_{(1,1)}$, $\mathrm{m}\times\delta_1$, $\delta_1\times\mathrm{m}$, $\mathrm{m}\times\mathrm{m}$, and $P$. A measure in $\mathscr{P}_{\mathbb{T}^2}$ is $\bitimes^{\mathrm{op}}$-idempotent if and only if it is $\delta_{(1,1)}$, $\mathrm{m}\times\delta_1$, $\delta_1\times\mathrm{m}$, $\mathrm{m}\times\mathrm{m}$, or $P^\star$.
\end{prop}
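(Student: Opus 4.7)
The plan is to reduce the classification to the one-dimensional marginals and then handle the genuine two-dimensional case. By the marginal identity $(\nu_1 \bitimet \nu_2)^{(j)} = \nu_1^{(j)} \boxtimes \nu_2^{(j)}$ recorded in Section~\ref{sec2.3}, every marginal of a $\bitimes$-idempotent $\nu$ is $\boxtimes$-idempotent on $\mathbb{T}$, and hence equals either $\delta_1$ or the Haar measure $\mathrm{m}$. This leaves four possibilities for the pair $(\nu^{(1)}, \nu^{(2)})$ to examine.

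If at least one marginal is $\delta_1$, say $\nu^{(1)} = \delta_1$, then $\nu = \delta_1 \times \nu^{(2)}$. Realizing bi-free bipartite unitary pairs $(u_i, v_i)$ of distribution $\nu$ through the construction in Section~\ref{sec2.3} collapses each $u_i$ to the identity and leaves $v_1, v_2$ as freely independent unitaries with distribution $\nu^{(2)}$. Consequently $\nu \bitimet \nu = \delta_1 \times (\nu^{(2)} \boxtimes \nu^{(2)})$, and $\bitimes$-idempotence degenerates to $\boxtimes$-idempotence of $\nu^{(2)}$, yielding $\delta_{(1,1)}$, $\delta_1 \times \mathrm{m}$, and, by the symmetric argument, $\mathrm{m} \times \delta_1$.

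The essential case is $\nu^{(1)} = \nu^{(2)} = \mathrm{m}$. Since $\nu \notin \mathscr{P}_{\mathbb{T}^2}^\times$ the $\Sigma$-transform is not available, so I would argue directly on the free product Hilbert space $L^2(\nu) * L^2(\nu)$ with bi-free bipartite unitaries $(u_i, v_i)$ of distribution $\nu$, analysing
\[
m_{p,q}(\nu) \;=\; \varphi\bigl((u_1 u_2)^p (v_1 v_2)^q\bigr), \qquad (p, q) \in \mathbb{Z}^2.
\]
The Haar property $\varphi(u_i^n) = \varphi(v_i^n) = \delta_{n,0}$ places the centered vectors $u_i^n \xi$ and $v_i^n \xi$ (for $n \neq 0$) in $L^2(\nu) \ominus \mathbb{C}\xi$, so Fock-space orthogonality forces the surviving terms in the expansion of the above moment to come only from contributions where the left exponents of the two pairs match and the right exponents of the two pairs match. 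Imposing $\nu \bitimet \nu = \nu$ for every $(p,q)$ then locates the off-diagonal coefficients $m_{p,q}(\nu)$ at zero for $p \neq q$ and constrains the diagonal coefficients $m_{p,p}(\nu)$ coherently across $p$ into the two regimes: uniformly zero for $p \neq 0$, giving $\nu = \mathrm{m} \times \mathrm{m}$, or $m_{p,p}(\nu) = 1$ for every $p$, in which case unimodularity of $(s_1 s_2)^p$ and the Haar first marginal pin $\nu = P$. Alternatively, this case can be finished by recognising $\bitimes$-idempotents as the distributions of bi-free $R$-diagonal pairs of unitaries and invoking the classification from \cite{diagonal}.

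The $\bitimes^{\mathrm{op}}$ statement follows at once from the identity $(\nu_1 \bitimet \nu_2) h_\mathrm{op}^{-1} = (\nu_1 h_\mathrm{op}^{-1}) \bitimet^{\mathrm{op}} (\nu_2 h_\mathrm{op}^{-1})$ from Section~\ref{sec2.3}, which tells us that $\nu$ is $\bitimes^{\mathrm{op}}$-idempotent iff $\nu^\star$ is $\bitimes$-idempotent. A direct check confirms that $h_\mathrm{op}$ fixes each of $\delta_{(1,1)}$, $\mathrm{m} \times \delta_1$, $\delta_1 \times \mathrm{m}$, $\mathrm{m} \times \mathrm{m}$ and sends $P$ to $P^\star$, producing the announced second list. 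The main obstacle is the Fock-space analysis underlying the Haar-marginal case: both the off-diagonal vanishing and the coupling forcing every $m_{p,p}(\nu)$ into the same regime have to be extracted from the interaction between left operators $u_i$ and right operators $v_j$ belonging to different bi-free pairs, without the $\Sigma$-transform shortcut available on $\mathscr{P}_{\mathbb{T}^2}^\times$.
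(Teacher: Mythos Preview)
Your proposal is correct and follows essentially the same route as the paper: reduce to the marginals (each $\boxtimes$-idempotent, hence $\delta_1$ or $\mathrm{m}$), dispose of the degenerate cases, analyse the Haar--Haar case via the free-product Fock space, and deduce the $\bitimes^{\mathrm{op}}$ list from the flip map. One refinement worth noting: in the Haar--Haar case the paper does not expand $(u_1u_2)^p(v_1v_2)^q$ combinatorially but exploits that $\varphi_\xi(u_j)=\varphi_\xi(v_j)=0$ forces $u^p\xi=((S_1\xi_1)\otimes(S_2\xi_2))^{\otimes p}$ and $v^q\xi=((T_2\xi_2)\otimes(T_1\xi_1))^{\otimes q}$ to be \emph{single} pure tensors, so $m_{p,q}(\nu\bitimet\nu)=\langle v^q\xi,u^{-p}\xi\rangle=\delta_{p,q}\,m_{1,1}(\nu)^{2p}$ drops out in one line; idempotence then gives $m_{1,1}(\nu)=m_{1,1}(\nu)^2\in\{0,1\}$, yielding $\mathrm{m}\times\mathrm{m}$ or $P$ immediately.
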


\begin{proof} Suppose that $\nu$ is $\bitimes$-idempotent. Since each marginal satisfies $\nu^{(j)}=\nu^{(j)}\boxtimes\nu^{(j)}$, it follows that $\nu^{(j)}$ is $\boxtimes$-infinitely divisible. If $\nu^{(j)}$ has non-zero mean, then $\Sigma_{\nu^{(j)}}(0)=1$, which yields $\nu^{(j)}=\delta_1$ by \cite[Lemma 2.7]{BerVoicu92}. Otherwise, we can infer from \cite[Lemma 6.1]{BerVoicu92} that $\nu^{(j)}=\mathrm{m}$. Thus, consideration given to the case $\nu^{(1)}=\mathrm{m}=\nu^{(2)}$ is sufficient to complete the proof.
To continue the proof, we realize $\nu=\nu_1\bitimet\nu_2$ as the distribution of $(u,v)=(u_1u_2,v_1v_2)$, where $(u_j,v_j)=(\lambda_j(S_j),\rho_j(T_j))$, $j=1,2$, are bi-free unitary faces respectively having distributions $\nu_j=\nu$ in the $C^*$-probability space $(B(\mathcal{H}),\varphi_\xi)$, as constructed in Section \ref{sec2.3}.

From $\varphi_\xi(u_j)=0$ for $j=1,2$, it follows that $S_j^{\pm1}\xi_j\in\mathcal{\mathring H}_j=\mathcal{H}_j\ominus\mathbb{C}\xi_j$, which supplies a simplistic representation for $u^p\xi$ for any $p\in\mathbb{N}$, namely,
\begin{equation} \label{up}u^p\xi=\big((S_1\xi_1)\otimes(S_2\xi_2)\big)^{\otimes p}\;\;\;\;\;\mathrm{and}\;\;\;\;\;
u^{-p}\xi=\big((S_2^{-1}\xi_2)\otimes(S_1^{-1}\xi_1)\big)^{\otimes p}
\end{equation} respectively lying in spaces $(\mathcal{\mathring H}_1\otimes\mathcal{\mathring H}_2)^{\otimes p}$ and $(\mathcal{\mathring H}_2\otimes\mathcal{\mathring H}_1)^{\otimes p}$.
Similarly, $\varphi_\xi(v_1)=0=\varphi_\xi(v_2)$ implies that
\begin{equation} \label{vp}
v^q\xi=\big((T_2\xi_2)\otimes(T_1\xi_1)\big)^{\otimes q}\in(\mathcal{\mathring H}_2\otimes\mathcal{\mathring H}_1)^{\otimes q},\;\;\;\;\;q\in\mathbb{N}.
\end{equation} We consequently arrive at that for $(p,q)\in(\mathbb{Z}\backslash\{0\})\times(\mathbb N \cup\{0\})$,
\[m_{p,q}(\nu)=\varphi_\xi(u^pv^q)=\langle v^q\xi,u^{-p}\xi\rangle=\delta_{p,q}\big[\varphi_\xi(u_1v_1)\varphi_\xi(u_2v_2)\big]^p=
\delta_{p,q}m_{1,1}(\nu)^{2p},\] and that $m_{0,q}(\nu)=\varphi_\xi(v^q)=\delta_{0,q}$ for $q\in\mathbb N \cup\{0\}$. If $m_{1,1}(\nu)=0$,
then $m_{p,q}(\nu)=0$ for any $(p,q)\in\mathbb{Z}^2\backslash\{(0,0)\}$, which occurs only when $\nu=\mathrm{m}\times\mathrm{m}$. If $m_{1,1}(\nu)\neq0$, then the equation $m_{1,1}(\nu)=m_{1,1}(\nu)^2$ results in $m_{1,1}(\nu)=1$, yielding $\nu=P$ as they have a common $2$-moment sequence.

The $\bitimes^{\mathrm{op}}$-idempotent elements can be easily ascertained by formula (\ref{opdef}) and established results.
This finishes the proof.
\end{proof}

It is known that for any $\nu_1,\nu_2\in\mathscr{P}_{\mathbb{T}^2}$, $m_{p,q}(\nu_1\bitimet\nu_2)=m_{p,q}(\nu_1)m_{p,q}(\nu_2)$ holds when $(p,q)=(0,1),(1,0)$. We also have:

\begin{lem} \label{basiclem}
For any $\nu_1,\nu_2\in\mathscr{P}_{\mathbb{T}^2}$, it holds that
\[m_{1,1}(\nu_1\bitimet\nu_2)=m_{1,1}(\nu_1)m_{1,1}(\nu_2) \quad\text{and}\quad m_{1,-1}(\nu_1\bitimet^{\mathrm{op}}\nu_2)=m_{1,-1}(\nu_1)m_{1,-1}(\nu_2).
\]
\end{lem}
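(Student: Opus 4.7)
The plan is to realize $\nu_1 \bitimet \nu_2$ via the bi-free construction of Section~\ref{sec2.3} as the joint distribution of the commuting unitary pair $(u_1u_2, v_1v_2)$ in $(B(\mathcal{H}), \varphi_\xi)$, where $(u_j, v_j) = (\lambda_j(S_j), \rho_j(T_j))$ are bi-free bipartite unitary pairs with distribution $\nu_j$, and then compute the mixed moment by a direct Hilbert-space calculation in the style of the proof of Proposition~\ref{idempotnet1}. This reduces the first claim to evaluating the inner product $\langle v_1 v_2\, \xi,\, u_2^* u_1^*\, \xi\rangle$ in $\mathcal{H}$.

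Write $\alpha_j = m_{1,0}(\nu_j)$ and $\beta_j = m_{0,1}(\nu_j)$, and set $\mathring{\eta}_j' = S_j^* \xi_j - \bar{\alpha}_j \xi_j$ and $\mathring{\zeta}_j = T_j \xi_j - \beta_j \xi_j$ in $\mathring{\mathcal{H}}_j$. Unlike in Proposition~\ref{idempotnet1}, the means $\alpha_j, \beta_j$ need not vanish, so each of $v_1 v_2 \xi$ and $u_2^* u_1^* \xi$ admits a four-term decomposition in $\mathcal{H}$. Applying the same factorization recipe via $V_j$ and $W_j$ used there, one obtains
\begin{align*}
v_1 v_2\, \xi &= \beta_1 \beta_2\, \xi + \beta_2\, \mathring{\zeta}_1 + \beta_1\, \mathring{\zeta}_2 + \mathring{\zeta}_2 \otimes \mathring{\zeta}_1, \\
u_2^* u_1^*\, \xi &= \bar{\alpha}_1 \bar{\alpha}_2\, \xi + \bar{\alpha}_2\, \mathring{\eta}_1' + \bar{\alpha}_1\, \mathring{\eta}_2' + \mathring{\eta}_2' \otimes \mathring{\eta}_1',
\end{align*}
where the two-letter tensors both sit in $\mathring{\mathcal{H}}_2 \otimes \mathring{\mathcal{H}}_1$, the reversed index order mirroring the appearance of $(T_2\xi_2) \otimes (T_1\xi_1)$ in the proof of Proposition~\ref{idempotnet1}. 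The four summands of each vector lie in the mutually orthogonal subspaces $\mathbb{C}\xi$, $\mathring{\mathcal{H}}_1$, $\mathring{\mathcal{H}}_2$ and $\mathring{\mathcal{H}}_2 \otimes \mathring{\mathcal{H}}_1$, so the inner product collapses to four scalar contributions. A short calculation gives $\langle \mathring{\zeta}_j, \mathring{\eta}_j' \rangle = m_{1,1}(\nu_j) - \alpha_j \beta_j$, and the resulting expression factors neatly as
\[
\bigl(\alpha_1 \beta_1 + (m_{1,1}(\nu_1) - \alpha_1\beta_1)\bigr)\bigl(\alpha_2 \beta_2 + (m_{1,1}(\nu_2) - \alpha_2\beta_2)\bigr) = m_{1,1}(\nu_1)\, m_{1,1}(\nu_2),
\]
which is the first identity.

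The second identity is then a formal consequence of the first. Using (\ref{opdef}) together with the elementary change of variables $m_{1,-1}(\rho) = m_{1,1}(\rho^\star)$ valid for any $\rho \in \mathscr{P}_{\mathbb{T}^2}$, one obtains
\[
m_{1,-1}(\nu_1 \bitimet^{\mathrm{op}} \nu_2) = m_{1,1}(\nu_1^\star \bitimet \nu_2^\star) = m_{1,1}(\nu_1^\star)\, m_{1,1}(\nu_2^\star) = m_{1,-1}(\nu_1)\, m_{1,-1}(\nu_2).
\]
The principal technical burden is the bookkeeping behind the two vector decompositions above: one must carefully identify the pre-images of each residual vector under $V_j$ and $W_j$ and track the tensor orderings in $\mathcal{H}(\ell,j)$ and $\mathcal{H}(r,j)$. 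Once these decompositions are in hand, the remaining algebra is elementary, because the residuals live in single-index subspaces of $\mathcal{H}$ where the inner products are directly computable from the defining moments of $\nu_j$.
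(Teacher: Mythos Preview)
Your proof is correct and follows essentially the same approach as the paper's own proof: both realize $m_{1,1}(\nu_1\bitimet\nu_2)$ as the inner product $\langle v_1v_2\xi,\,u_2^{-1}u_1^{-1}\xi\rangle$, decompose each vector into four mutually orthogonal components in $\mathbb{C}\xi\oplus\mathring{\mathcal H}_1\oplus\mathring{\mathcal H}_2\oplus(\mathring{\mathcal H}_2\otimes\mathring{\mathcal H}_1)$, and then observe that the resulting four-term sum factors as $m_{1,1}(\nu_1)m_{1,1}(\nu_2)$; the second identity is derived from the first via (\ref{opdef}) in exactly the same way. The only difference is cosmetic---the paper sets $\alpha_j=\langle S_j^{-1}\xi_j,\xi_j\rangle$ whereas you set $\alpha_j=m_{1,0}(\nu_j)$, so your $\bar\alpha_j$ is the paper's $\alpha_j$---but the computations line up perfectly.
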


\begin{proof} Following the notations in Section \ref{sec2.3}, let
$\alpha_j=\langle S_j^{-1}\xi_j,\xi_j\rangle$, $\beta_j=\langle T_j\xi_j,\xi_j\rangle$, $h_j=S_j^{-1}\xi_j-\alpha_j\xi_j$, and $k_j=T_j\xi_j-\beta_j\xi_j$ for $j=1,2$.
Then
\[m_{1,1}(\nu_j)=\langle T_j\xi_j,S_j^{-1}\xi_j\rangle=\langle \beta_j\xi_j+k_j,\alpha_j\xi_j+h_j\rangle=\overline{\alpha_j}\beta_j+\langle k_j,h_j\rangle.\] On the other hand, we have $u_2^{-1}u_1^{-1}\xi=\alpha_1\alpha_2\xi+\alpha_2h_1+\alpha_1h_2+h_2\otimes h_1$ and $v_1v_2\xi=\beta_1\beta_2\xi+\beta_2k_1+\beta_1k_2+k_2\otimes k_1$.
Thus, the first desired result follows from the representation of $m_{1,1}(\nu_j)$ given above and the following computations
\begin{align*}
m_{1,1}(\nu_1\bitimet\nu_2)&=\langle v_1v_2\xi,u_2^{-1}u_1^{-1}\xi\rangle \\
&=
\overline{\alpha_1\alpha_2}\beta_1\beta_2+\overline{\alpha_2}\beta_2\langle k_1,h_1\rangle+\overline{\alpha_1}\beta_1\langle k_2,h_2\rangle+\langle k_1,h_1\rangle\langle k_2,h_2\rangle.
\end{align*}

Thanks to (\ref{opdef}) and the first conclusion, we have
\[
m_{1,-1}(\nu_1\bitimes^{\mathrm{op}}\nu_2)
=m_{1,1}(\nu_1^\star\bitimet\nu_2^\star)=m_{1,1}(\nu_1^\star)m_{1,1}(\nu_2^\star)
=m_{1,-1}(\nu_1)m_{1,-1}(\nu_2),
\]
 as desired.
\end{proof}

In the  sequel, except for $\delta_{(1,1)}$, the other four $\bitimes$-idempotent distributions are called non-trivial. The abusing notation $0^0=1$ is used in the following proposition and elsewhere.

\begin{prop} \label{idempotnet2}
Let $\nu\in\mathscr{P}_{\mathbb{T}^2}$.
\begin{enumerate} [\hspace{5mm}\rm(1)]
\item\label{item:idem1} {$\nu$ has the $\bitimes$-factor $\mathrm{m}\times\delta_1$ if and only if
$\nu=\mathrm{m}\times\nu^{(2)}$.}

\item {$\nu$ has the $\bitimes$-factor $\delta_1\times\mathrm{m}$ if and only if
$\nu=\nu^{(1)}\times\mathrm{m}$.}

\item\label{item:idem3} {$\nu$ has the $\bitimes$-factor $\mathrm{m}\times\mathrm{m}$
if and only if $\nu=\mathrm{m}\times\mathrm{m}$.}

\item {$P$ is a $\bitimes$-factor of $\nu$ if and only if
\begin{equation} \label{mixedm}
m_{p,q}(\nu)=\delta_{p,q}m_{1,1}(\nu)^p,\;\;\;\;\;(p,q)\in\mathbb{Z} \times (\mathbb N \cup\{0\}),
\end{equation} where $\delta_{p,q}$ is the Kronecker function of $p$ and $q$.}
\end{enumerate}
Statements \eqref{item:idem1}--\eqref{item:idem3} remain true if the convolution $\bitimes$ is replaced with $\bitimes^{\mathrm{op}}$. Moreover, $P^\star$ is a $\bitimes^{\mathrm{op}}$-factor of $\nu$ if and only if
\begin{equation} \label{opmixedm}
m_{p,q}(\nu)=\delta_{p,-q}m_{1,-1}(\nu)^p,\;\;\;\;\;(p,q)\in\mathbb{Z} \times (-\mathbb N \cup\{0\}).
\end{equation}
\end{prop}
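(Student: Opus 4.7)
The proof of each biconditional splits into a reverse implication handled by an explicit bi-free factorization and a forward implication via the $C^\ast$-construction of Section \ref{sec2.3}. For the reverse directions I verify at the $2$-moment level the factorizations $\mathrm{m}\times\nu^{(2)}=(\mathrm{m}\times\delta_1)\bitimet(\delta_1\times\nu^{(2)})$ for (1) (mirrored for (2)) and $\mathrm{m}\times\mathrm{m}=(\mathrm{m}\times\delta_1)\bitimet(\delta_1\times\mathrm{m})$ for (3). In each of these, one factor has $v_1=I$ and the other has $u_2=I$ in the operator model, so that $u$ and $v$ are built from left and right operators with disjoint slot indices; these commute and their joint vacuum moments factor as prescribed. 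For (4), the complex-conjugate symmetry $m_{p,p}(\nu)=\overline{m_{-p,-p}(\nu)}$ combined with $|m_{n,n}(\nu)|\leq 1$ forces $c:=m_{1,1}(\nu)$ to satisfy either $|c|=1$ or $c=0$; in the first case $\nu=P\bitimet\delta_{(1,c)}$ (checked by a direct moment computation using $(u_1,v_1)\sim P$, $(u_2,v_2)=(I,cI)$), and in the second case the forward direction of (1), applied to $P\bitimet(\mathrm{m}\times\delta_1)$, identifies that expression as $\mathrm{m}\times\mathrm{m}=\nu$, so $P$ is a factor there as well.

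For the forward implication of (1), I realize $\nu=(\mathrm{m}\times\delta_1)\bitimet\nu'$ using a bi-free pair $(u_1,v_1)=(\lambda_1(S_1),I)$ with $S_1$ the Haar multiplication on $L^2(\mathrm{m})$, together with $(u_2,v_2)\sim\nu'$. Then $u=u_1u_2$, $v=v_2$, and $m_{p,q}(\nu)=\langle v_2^q\xi,u^{-p}\xi\rangle$. The right factorization gives $v_2^q\xi=\beta_q\xi+\mathring{k}_q$ with $\beta_q=m_q(\nu^{(2)})$ and $\mathring{k}_q\in\mathcal{\mathring H}_2$. Iteratively expanding $u^{-p}\xi=(u_2^{-1}u_1^{-1})^p\xi$ for $p>0$ via the left factorizations $V_1,V_2$, the first application of $u_1^{-1}$ produces $S_1^{-1}\xi_1\in\mathcal{\mathring H}_1$, and induction shows every term of $u^{-p}\xi$ lies in a free-product tensor whose rightmost factor remains in $\mathcal{\mathring H}_1$: each later $u_1^{-1}$ either modifies that rightmost factor within $\mathcal{\mathring H}_1$ (since $S_1^{-(k+1)}\xi_1\in\mathcal{\mathring H}_1$ for every $k\geq 1$) or prepends a new $\mathcal{\mathring H}_1$-factor, while each $u_2^{-1}$ only prepends $\mathcal{\mathring H}_2$-factors (modulo a scalar coming from $\langle S_2^{-1}\xi_2,\xi_2\rangle$). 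Such a vector is orthogonal to $\mathbb{C}\xi\oplus\mathcal{\mathring H}_2$, forcing $m_{p,q}(\nu)=0$ for $p\neq 0$; together with $m_{0,q}(\nu)=m_q(\nu^{(2)})$ this yields $\nu=\mathrm{m}\times\nu^{(2)}$. The case $p<0$ follows by complex conjugation, (2) is the mirror of (1), and (3) combines (1) and (2) via matching of marginals.

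The forward implication of (4) proceeds analogously with $(u_1,v_1)\sim P$ realized as $u_1=\lambda_1(S_1)$ and $v_1=\rho_1(S_1^\ast)$, so that $v_1\xi$ coincides with the complex conjugate of $u_1\xi$ in $\mathcal{\mathring H}_1\subset L^2(\mathrm{m})$. Expanding $(u_1u_2)^p\xi$ through iterated left factorizations and $(v_1v_2)^q\xi$ through iterated right factorizations, the inner product $\varphi((u_1u_2)^p(v_1v_2)^q)$ can be non-zero only when the $\mathcal{\mathring H}_1$-factors produced by powers of $u_1$ on one side exactly cancel those produced by powers of $v_1$ on the other. This balance forces $p=q$, and the surviving scalar reduces to $m_{1,1}(\nu')^p$, which equals $m_{1,1}(\nu)^p$ by Lemma \ref{basiclem}; thus (\ref{mixedm}) holds. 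The $\bitimes^{\mathrm{op}}$ statements follow by applying (1)--(4) to the coordinate-flipped measure $\nu^\star$ via (\ref{opdef}): since $(\mathrm{m}\times\delta_1)^\star=\mathrm{m}\times\delta_1$, $(\delta_1\times\mathrm{m})^\star=\delta_1\times\mathrm{m}$, and $(\mathrm{m}\times\mathrm{m})^\star=\mathrm{m}\times\mathrm{m}$, items (1)--(3) transfer verbatim, while $(P^\star)^\star=P$ and the reindexing $q\mapsto -q$ in (\ref{mixedm}) yields (\ref{opmixedm}). The main technical obstacle is this last tensor-slot bookkeeping in (4): unlike (1)--(3), both $u_1$ and $v_1$ act non-trivially on the same slot index, so tracking the cancellation of $\mathcal{\mathring H}_1$-factors between the two sides is more intricate than the one-sided vanishing argument used for (1)--(2), and making the combinatorics precise is the step where extra care beyond the proof of Proposition \ref{idempotnet1} is needed.
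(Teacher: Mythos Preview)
Your treatment of (1)--(3) is essentially the paper's argument with the roles of the two factors swapped: you put the idempotent as $\nu_1$ and track that the rightmost tensor factor of $u^{-p}\xi$ stays in $\mathcal{\mathring H}_1$, whereas the paper puts the idempotent as $\nu_2$ and builds explicit spanning sets $Z^+,Z_1^+,Z_2^+$. Both lead to the same orthogonality, and your version is arguably cleaner.

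The real problems are in (4). For the reverse implication, your dichotomy ``$|c|=1$ or $c=0$'' is simply false: the measure $P\circledast(\kappa_c\times\delta_1)$ of Proposition~\ref{deltaID} satisfies \eqref{mixedm} with $m_{1,1}=c$ for every $c\in\mathbb D\cup\mathbb T$, so $|c|$ can be any number in $[0,1]$. The conjugate symmetry $m_{p,p}=\overline{m_{-p,-p}}$ and the bound $|m_{n,n}|\le1$ together yield only $|c|\le1$, nothing more. The paper avoids this entirely by a direct moment check: once the forward direction is known, $\nu\bitimet P$ satisfies \eqref{mixedm} with $m_{1,1}(\nu\bitimet P)=m_{1,1}(\nu)\cdot m_{1,1}(P)=m_{1,1}(\nu)$ by Lemma~\ref{basiclem}, hence $\nu\bitimet P=\nu$.

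For the forward implication you correctly identify the difficulty---$u_1$ and $v_1$ both act on the same Hilbert space---but you do not resolve it; the phrase ``making the combinatorics precise is the step where extra care is needed'' is an admission, not a proof. The paper sidesteps this combinatorics with a single trick you are missing: since $P$ is $\bitimes$-idempotent, one may replace the other factor $\nu_1$ by $\nu_1\bitimet P$ without changing $\nu$. After this replacement \emph{both} factors have vanishing $(1,0)$ and $(0,1)$ moments, so the clean tensor formulas \eqref{up}--\eqref{vp} from the proof of Proposition~\ref{idempotnet1} apply verbatim and give $m_{p,q}(\nu)=\delta_{p,q}m_{1,1}(\nu_1)^p m_{1,1}(\nu_2)^p=\delta_{p,q}m_{1,1}(\nu)^p$ in one line.
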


\begin{remark}
For negative integers $q$, the formula \eqref{mixedm} becomes $m_{p,q}(\nu)=\delta_{p,q}m_{-1,-1}(\nu)^{-p}$, which can be obtained by taking complex conjugate.
\end{remark}

\begin{proof} Write $\nu=\nu_1\bitimet\nu_2$, where neither $\nu_1$ nor $\nu_2$ is $\delta_{(1,1)}$. We shall stay employing the notations for $\nu_1,\nu_2$ in Section \ref{sec2.3} to accomplish the proof.

First, assume that $\nu_2=\mathrm{m}\times\delta_1$. In order to obtain $\nu=\mathrm{m}\times\nu^{(2)}$, it amounts to proving that $m_{p,q}(\nu)=0$ for any $p\in\mathbb{Z}^\times=\mathbb{Z}\backslash\{0\}$ and $q\in\mathbb{N}\cup\{0\}$ by the uniqueness of probability measures on the bi-torus.

To this end, denote $\mathcal{\mathring H}_j=\mathcal{H}_j\ominus\mathbb{C}\xi_j$, $g=S_1\xi_1$, $\alpha=\langle g,\xi_1\rangle$, $\mathring g=g-\alpha\xi_1\in\mathcal{\mathring H}_1$, and
$g_p=S_2^p\xi_2$ for $p\in\mathbb{Z}^\times$. Below, we record some useful observations regarding the moments of the left face $u=u_1u_2$ for the later use. First of all, observe that the set $Z=\{g_p:p\in\mathbb{Z}^\times\}$ is a subset of $\mathcal{\mathring H}_2$ as any non-zero order moment of $\mathrm{m}$ vanishes. Hence for any $p>0$, we have
\begin{align*}
ug_p&=u_1V_2(S_2\otimes I_{\mathcal{H}(\ell,2)})V_2^{-1}g_p \\
&=u_1V_2(S_2\otimes I_{\mathcal{H}(\ell,2)})g_p\otimes\xi \\
&=u_1g_{p+1} \\
&=V_1(S_1\otimes I_{\mathcal{H}(\ell,1)})\xi_1\otimes g_{p+1}=V_1g\otimes g_{p+1}=\alpha g_{p+1}+\mathring g\otimes g_{p+1}.
\end{align*} This demonstrates the inclusion $uZ^+\subset\mathrm{span}(Z^+\cup Z_1^+)$, where $uZ^+$ is the action of the operator $u$ on the set $Z^+=\{g_p:p>0\}$ and
\[Z_1^+=\{\mathring g\otimes z_1\otimes\mathring g\otimes z_2\otimes\cdots\otimes\mathring g\otimes z_n:z_j\in Z^+\;\mathrm{and}\;n\in\mathbb{N}\}.\]
One can also establish the inclusion $uZ_1^+\subset\mathrm{span}(Z_1^+\cup Z_2^+)$, where
\[Z_2^+=\{z\otimes h:z\in Z^+\,\,\mathrm{and}\,\,h\in Z_1^+\},\] by observing that
for any $h\in Z_1^+$,
\begin{align*}
uh=u_1V_2(S_2\otimes I_{\mathcal{H}(\ell,2)})\xi_2\otimes h=u_1g_1\otimes h=\alpha g_1\otimes h+\mathring g\otimes g_1\otimes h.
\end{align*}
For any $z=g_p\in Z^+$ and $h\in Z_1^+$, the identity
\[u(z\otimes h)=u_1g_{p+1}\otimes h=\alpha g_{p+1}\otimes h+\mathring g\otimes g_{p+1}\otimes h\] then says that $uZ_2^+\subset\mathrm{span}(Z_1^+\cup Z_2^+)$. These inclusions and the iteration $u^{p+1}\xi=u(u^p\xi)$ combining with the initial equation
$u\xi=\alpha g_1+\mathring g\otimes g_1$ result in
$\{u^p\xi:p\in\mathbb{N}\}\subset\mathrm{span}(Z^+\cup Z_1^+\cup Z_2^+)$.

In the same vein, one can ascertain that
$\{u^{-p}\xi:p\in\mathbb{N}\}\subset\mathrm{span}(Z^-\cup Z_1^-\cup Z_2^-)$, where $Z^-=\{g_{-p}:p\in\mathbb{N}\}$,
\[Z_1^-=\{z_1\otimes(S_1^{-1}\xi_1)^\circ\otimes z_2\otimes(S_1^{-1}\xi_1)^\circ\otimes\cdots\otimes z_n\otimes(S_1^{-1}\xi_1)^\circ:z_1,\ldots,z_n\in Z^-\},\]
and $Z_2^-=\{h\otimes z:z\in Z^-\;\mathrm{and}\;h\in Z_1^-\}$.

On the other hand, $\nu_2^{(2)}=\delta_1$ shows that $v_2=I_{B(\mathcal{H})}$, the identity in $B(\mathcal{H})$, and so
$(v_1v_2)^q\xi=v_1^q\xi\in\mathbb{C}\xi\oplus\mathcal{\mathring H}_1$, $q\in\mathbb{N}\cup\{0\}$. Thanks to the orthogonality $\{Z,Z_1^{\pm},Z_2^{\pm}\}\perp\mathbb{C}\xi\oplus\mathcal{\mathring H}_1$, we
conclude $m_{p,q}(\nu)=0$ for any $p\in\mathbb{Z}^\times$ and $q\in\mathbb{N}\cup\{0\}$. This verifies the ``only if'' part of (1). The ``if'' part of (1) is obvious.

By similar reasonings, (2) holds. If $\mathrm{m}\times\mathrm{m}$ is a $\bitimes$-factor of $\nu$, then so are distributions $\mathrm{m}\times\delta_1$ and $\delta_1\times\mathrm{m}$, from which we see that (3) holds by (1) and (2).

Finally, we suppose $\nu_2=P$ and justify (4). In view of $P$ being $\bitimes$-idempotent, $\nu_1\bitimet P$ may take the place of $\nu_1$ without affecting the convolution $\nu=\nu_1\bitimet P$. Thus, in the remaining proof, $\nu_1=\nu_1\bitimet P$ stands for this replacement. Since $m_{p,q}(\nu_1)=0=m_{p,q}(\nu_2)$ for $(p,q)=(0,1),(1,0)$, formulas (\ref{up}) and (\ref{vp}), together with Lemma \ref{basiclem}, allow one to see that $m_{p,q}(\nu)=\langle v^q\xi,u^{-p}\xi\rangle=\delta_{p,q}\langle S_1T_1\xi_1,\xi_1\rangle^p\langle S_2T_2\xi_2,\xi_2\rangle^p=\delta_{p,q}m_{1,1}(\nu)^p$ for $(p,q)\in\mathbb{Z} \times (\mathbb N \cup\{0\})$. This furnishes all mixed moments (\ref{mixedm}) of $\nu$.

That $\nu\bitimet P$
has the $\bitimes$-factor $P$ and the established result implies that for any $(p,q)\in\mathbb{Z} \times (\mathbb N \cup\{0\})$, $m_{p,q}(\nu\bitimet P)=\delta_{p,q}m_{1,1}(\nu\bitimet P)^p=\delta_{p,q}m_{1,1}(\nu)^p$ by Lemma \ref{basiclem}. Thus
$m_{p,q}(\nu \bitimet P)=m_{p,q}(\nu)$ or, equivalently, $\nu\bitimet P=\nu$ if (\ref{mixedm}) holds, proving the converse of (4).

All assertions regarding $\bitimes^{\mathrm{op}}$-idempotent factors are direct consequences of (1)-(4), (\ref{opdef}), and the formula $m_{p,q}(\nu^\star)=m_{p,-q}(\nu)$.
\end{proof}

\begin{remark} From $m_{1,1}(\mathrm{m}\times\mathrm{m})=0$, assertion (4) of Proposition \ref{idempotnet2} can be strengthened as that $P$ is the only non-trivial $\bitimes$-idempotent factor of $\nu\in\mathscr{P}_{\mathbb{T}^2}$ if and only if $m_{1,1}(\nu)\neq0$ and (\ref{mixedm}) holds.
\end{remark}

\begin{remark} Recall that given two pairs $(a_1,b_1)$ and $(a_2,b_2)$, one can perform two different types of multiplications on them, one of which is the usual multiplication $(a_1a_2,b_1b_2)$ and the other is the opposite multiplication $(a_1a_2,b_2b_1)$ on the right face of the pairs.
In \cite{diagonal}, the author studied \emph{bi-$\mathrm{R}$-diagonal pairs} and introduced the \emph{bi-Haar unitary pair}, the bipartite pair having distribution $P^\star$. Measures $\nu\in\mathscr{P}_{\mathbb{T}^2}$ satisfying (\ref{opmixedm}) are bi-$\mathrm{R}$-diagonal because of $\nu=\nu\bitimet^{\mathrm{op}}P^\star$ according to Proposition \ref{idempotnet2}. For bi-$\mathrm{R}$-diagonal distributions in the non-bipartite system, we refer the reader to \cite{diagonal}, especially Theorem 4.4 therein.
\end{remark}

For any $c\in\mathbb{D}$, define
\[d\kappa_c(s)=\frac{1-|c|^2}{|1-\bar{c}s|^2}\,d\mathrm{m}(s),\] which is the probability measure on $\mathbb{T}$ induced by the Poisson kernel. It is the normalized Haar measure on $\mathbb{T}$ in case $c=0$. By taking the weak limit we define $\kappa_c=\delta_c$ for $c\in\mathbb T$. Alternatively, $\kappa_c$ with $c\in\mathbb{D}\cup\mathbb{T}$ is the unique probability measure on $\mathbb{T}$ determined by the requirement
\[\int_\mathbb{T}s^p\,d\kappa_c(s)=c^p,\;\;\;\;\;p\in\mathbb{N}.\]
Moments of negative orders of $\kappa_c$ can be acquired by taking the complex conjugate:
$$
\int_\mathbb{T}s^p\,d\kappa_c(s)=\overline{c}^{|p|},\;\;\;\;\;p\in-\mathbb{N}.
$$

Observe that for any $c,d\in\mathbb{D}\cup\mathbb T$, we have
\begin{equation} \label{kappa1}
\nu\bitimet(\kappa_c\times\kappa_d)=\nu\circledast(\kappa_c\times\kappa_d),
\;\;\;\;\;\nu\in\mathscr{P}_{\mathbb{T}^2}.
\end{equation}
To see this, first consider $\nu$ and $\kappa_c\times\kappa_d$ as the distributions of two bi-free commuting unitary faces $(u_1,v_1)$ and $(u_2,v_2)$, respectively, in some $C^*$-probability space $(B(\mathcal{H}),\varphi_\xi)$. We next use the fact that both $(u_2,v_2)$ and $(cI_{B(\mathcal{H})},dI_{B(\mathcal{H})})$ have commuting faces, are bi-free from $(u_1,v_1)$, and  have the same $(p,q)$-moments $c^p d^q$ for $(p,q) \in (\mathbb{N}\cup\{0\})^2$. In view of the universal calculation formula for mixed moments \cite[Lemma 5.2]{V14}, we may replace $(u_2,v_2)$ with $(cI_{B(\mathcal{H})},dI_{B(\mathcal{H})})$ in the computation of
$\varphi_\xi((u_1u_2)^p(v_1v_2)^q)$. This entails $\varphi_\xi((u_1u_2)^p(v_1v_2)^q)
=c^pd^q\varphi_\xi(u_1^pv_1^q)$, and hence
\begin{equation}\label{eq:poisson_kernel}
m_{p,q}(\nu\bitimet(\kappa_c\times\kappa_d))= m_{p,q}(\nu\circledast(\kappa_c\times\kappa_d))
\end{equation}
for $(p,q) \in (\mathbb{N}\cup\{0\})^2$.
Similarly, for $(p,q) \in (\mathbb{N}\cup\{0\})\times (-\mathbb{N}\cup\{0\})$ in \eqref{eq:poisson_kernel}, one can show that the pairs of faces $(u_2,v_2)$ and $(cI_{B(\mathcal{H})},(1/\overline{d})I_{B(\mathcal{H})})$ have the same $(p,q)$-moments $c^p \overline{d}^{|q|}$. The other cases in \eqref{eq:poisson_kernel} can be established by taking complex conjugate. Therefore, the identity \eqref{eq:poisson_kernel} holds for all $(p,q) \in \mathbb Z^2$, justifying (\ref{kappa1}).

A special case of (\ref{kappa1}) is that
\[(\kappa_{c_1}\times\kappa_{d_1})\bitimet
(\kappa_{c_2}\times\kappa_{d_2})=\kappa_{c_1c_2}\times\kappa_{d_1d_2}=
(\kappa_{c_1}\times\kappa_{d_1})\circledast(\kappa_{c_2}\times\kappa_{d_2})\] is valid for any $c_1,c_2,d_1,d_2\in\mathbb{D}\cup \mathbb T$, which implies the following results.

\begin{prop} \label{kappainfdiv}
The measure $\kappa_c\times\kappa_d$ is both $\circledast$- and $\boxtimes\boxtimes$-infinitely divisible for any $c,d\in\mathbb{D}\cup\mathbb{T}$.
\end{prop}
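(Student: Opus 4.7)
The plan is to deduce the proposition directly from the displayed identity
\[
(\kappa_{c_1}\times\kappa_{d_1})\bitimet(\kappa_{c_2}\times\kappa_{d_2}) \;=\; \kappa_{c_1c_2}\times\kappa_{d_1d_2} \;=\; (\kappa_{c_1}\times\kappa_{d_1})\circledast(\kappa_{c_2}\times\kappa_{d_2}),
\]
valid for all $c_1,c_2,d_1,d_2\in\mathbb{D}\cup\mathbb{T}$. By definition, to show that $\kappa_c\times\kappa_d$ is $\bitimes$-infinitely divisible (resp.\ $\circledast$-infinitely divisible), it suffices to produce, for every $n\in\mathbb{N}$, a probability measure whose $n$-fold $\bitimes$-convolution (resp.\ $\circledast$-convolution) equals $\kappa_c\times\kappa_d$.

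The first step is to pick, for each $n$, an $n$-th root $c_n$ of $c$ and $d_n$ of $d$, and check that both lie in $\mathbb{D}\cup\mathbb{T}$. This is immediate: if $c\in\mathbb{D}$ then $|c_n|=|c|^{1/n}<1$, so $c_n\in\mathbb{D}$, and if $c\in\mathbb{T}$ then $|c_n|=1$, so $c_n\in\mathbb{T}$; analogously for $d_n$. Consequently $\kappa_{c_n}\times\kappa_{d_n}$ is a well-defined probability measure on $\mathbb{T}^2$.

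The second step is a straightforward induction using the identity above: applying it $n-1$ times gives
\[
(\kappa_{c_n}\times\kappa_{d_n})^{\bitimes n} \;=\; \kappa_{c_n^n}\times\kappa_{d_n^n} \;=\; \kappa_c\times\kappa_d,
\]
and the same chain of equalities holds with $\bitimes$ replaced by $\circledast$. This exhibits $\kappa_{c_n}\times\kappa_{d_n}$ as an $n$-th $\bitimes$-root (resp.\ $\circledast$-root) of $\kappa_c\times\kappa_d$, completing both infinite divisibility claims.

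There is no real obstacle: the content has already been absorbed into the identity (\ref{kappa1}) and its specialization to products of Poisson-kernel measures. The only point requiring a word of justification is the trivial closure observation that $\mathbb{D}\cup\mathbb{T}$ is stable under taking $n$-th roots, which is what ensures that the candidate $n$-th roots $\kappa_{c_n}\times\kappa_{d_n}$ are among the Poisson-kernel products for which the multiplication rule applies.
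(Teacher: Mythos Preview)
Your proof is correct and follows exactly the approach the paper intends: the proposition is stated immediately after the displayed multiplication rule for Poisson-kernel products with the remark that it ``implies the following results,'' and your argument spells out precisely this implication by exhibiting $\kappa_{c_n}\times\kappa_{d_n}$ as an $n$-th convolution root.
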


\begin{prop} \label{deltaID}
Any probability measure $\nu$ on $\mathbb{T}^2$ with moments satisfying \emph{(\ref{mixedm})} is expressed as $P\circledast(\kappa_c\times\delta_1)$, where $c=m_{1,1}(\nu)$. In particular, $\nu$ is both $\circledast$- and $\bitimes$-infinitely divisible.
\end{prop}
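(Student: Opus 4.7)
The plan is to pin down $\nu$ by matching its $2$-moment sequence against that of the candidate measure $P\circledast(\kappa_c\times\delta_1)$, and then leverage the factorization together with Proposition \ref{kappainfdiv} and the idempotence of $P$ to deduce the two infinite divisibility statements. Uniqueness of probability measures on $\mathbb{T}^2$ by their Fourier coefficients (Stone--Weierstrass) reduces the first assertion to a moment computation.

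First I would spell out the moments of $\nu$ implied by \eqref{mixedm}. The stated formula covers $q\geq0$; for $q<0$, using $m_{p,q}(\nu)=\overline{m_{-p,-q}(\nu)}$ gives $m_{p,q}(\nu)=\delta_{p,q}\overline{c}^{|p|}$ with $c=m_{1,1}(\nu)$. Next I would compute the moments of $P\circledast(\kappa_c\times\delta_1)$. Since the classical convolution multiplies moments, and $m_{p,q}(P)=\delta_{p,q}$ while $m_{p,q}(\kappa_c\times\delta_1)=c^p$ for $p\geq0$ and $\overline{c}^{|p|}$ for $p<0$ (with any $q$), the product coincides exactly with the moments of $\nu$ identified above. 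By the uniqueness clause in Section \ref{sec2.3}, this forces $\nu=P\circledast(\kappa_c\times\delta_1)$.

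For the infinite divisibility claims, the key observation is the identity \eqref{kappa1}, which lets me replace the classical convolution with the bi-free one:
\[
\nu=P\circledast(\kappa_c\times\delta_1)=P\bitimet(\kappa_c\times\delta_1).
\]
Proposition \ref{kappainfdiv} provides, for every $n\in\mathbb{N}$, a probability measure $\eta_n\in\mathscr{P}_{\mathbb{T}^2}$ with $\eta_n^{\bitimes n}=\kappa_c\times\delta_1$ (and similarly for $\circledast$). Since $P$ is $\bitimes$-idempotent by Proposition \ref{idempotnet1}, we have $P=P^{\bitimes n}$, so commutativity and associativity of $\bitimes$ yield
\[
\nu=P^{\bitimes n}\bitimet\eta_n^{\bitimes n}=(P\bitimet\eta_n)^{\bitimes n},
\]
proving $\bitimes$-infinite divisibility. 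The same argument with $\circledast$ in place of $\bitimes$, using that $P$ is $\circledast$-idempotent, gives $\circledast$-infinite divisibility.

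I do not anticipate a real obstacle here: the whole argument is a direct application of the moment characterization of probability measures on $\mathbb{T}^2$, the explicit form of the moments of $P$ and $\kappa_c$ recorded just above, and the already-established identity \eqref{kappa1} together with Propositions \ref{idempotnet1} and \ref{kappainfdiv}. The only small point to be careful about is to verify the moment matching for all four sign combinations of $(p,q)$, which is handled uniformly once one notes that both sides vanish off the diagonal $p=q$.
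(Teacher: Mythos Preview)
Your proposal is correct and follows essentially the same route as the paper: match moments to identify $\nu=P\circledast(\kappa_c\times\delta_1)$, invoke \eqref{kappa1} to rewrite this as $P\bitimet(\kappa_c\times\delta_1)$, and then combine the idempotence of $P$ (for both convolutions) with Proposition \ref{kappainfdiv} to obtain the two infinite divisibility statements. Your write-up is slightly more explicit about the negative-$q$ moments and about the factorization $(P\bitimet\eta_n)^{\bitimes n}$, but the underlying argument is the same.
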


\begin{proof} The fact that $m_p(\kappa_c)=c^p$ for $p\in \mathbb N$ and $m_{p,q}(P)=\delta_{p,q}$ for $(p,q) \in \mathbb Z^2 $ implies $m_{p,q}(P\circledast(\kappa_c\times\delta_1))=\delta_{p,q}c^{p}$ for $(p,q)\in\mathbb{Z}\times (\mathbb N \cup\{0\})$, and hence $\nu =P\circledast(\kappa_c\times\delta_1)$.

The measure $P$ is $\circledast$-infinitely divisible as it is $\circledast$-idempotent. Combining this with Proposition \ref{kappainfdiv} yields that $P\circledast(\kappa_c\times\delta_1)$ is $\circledast$-infinitely divisible. Thanks to the identity $P\circledast(\kappa_c\times\delta_1)=P\bitimet(\kappa_c\times\delta_1)$, which is a consequence of (\ref{kappa1}), similar reasoning works for proving the $\bitimes$-infinite divisibility of $\nu$. This completes the proof.
\end{proof}

A consequence of (\ref{kappa1}) and Proposition \ref{deltaID} is that the following identity holds for every $\nu \in\mathscr{P}_{\mathbb T^2}$:
\begin{equation} \label{eq:identityP}
P\bitimet \nu = P \bitimet (\kappa_{m_{1,1}(\nu)}\times \delta_1) = P \circledast (\kappa_{m_{1,1}(\nu)}\times \delta_1).
\end{equation}

The following is a bi-free multiplicative analog of the classical multiplicative limit theorem.

\begin{thm} \label{characterID}
Let $\{\nu_{nk_n}\}_{n\geq,1\leq k\leq k_n}$ be an infinitesimal triangular array in $\mathscr{P}_{\mathbb{T}^2}$ and $\{\boldsymbol\xi_n\}$ be a sequence in $\mathbb{T}^2$. If the sequence
\begin{equation} \label{ITbicircle}
\delta_{\boldsymbol\xi_n}\bitimet\nu_{n1}\bitimet\cdots
\bitimet\nu_{nk_n}
\end{equation}
has a weak limit $\nu$, then $\nu$ is $\bitimes$-infinitely divisible.
\end{thm}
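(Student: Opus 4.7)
My plan is to prove this by dichotomizing on whether the limit $\nu$ lies in $\mathscr{P}_{\mathbb T^2}^\times$ or not. In the first case, $\nu\in\mathscr P_{\mathbb T^2}^\times$, the statement is immediate from Theorem \ref{limitthmX}: weak convergence of (\ref{ITbicircle}) to an element of $\mathscr P_{\mathbb T^2}^\times$ is already characterized there by Condition \ref{condX1} on $\rho_n$ together with the convergence of $\boldsymbol\gamma_n$, and the limit is presented explicitly in $\bitimes$-L\'evy--Khintchine form (\ref{bi-freeLK}), which is $\bitimes$-infinitely divisible by construction.

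In the second case, $\nu\notin\mathscr P_{\mathbb T^2}^\times$, at least one marginal first moment $m_{\boldsymbol e_j}(\nu)$ vanishes. Taking the $j$-th marginal of (\ref{ITbicircle}) and invoking the identity $(\mu_1\bitimet\mu_2)^{(j)}=\mu_1^{(j)}\boxtimes\mu_2^{(j)}$, the marginal $\nu^{(j)}$ arises as a weak limit on $\mathbb T$ of an infinitesimal $\boxtimes$-array, hence by the classical free multiplicative limit theorem on the circle it belongs to $\mathcal{ID}(\boxtimes)$. Since the Haar measure $\mathrm m$ is the only element of $\mathcal{ID}(\boxtimes)\cap\mathscr P_{\mathbb T}$ with zero first moment, $\nu^{(j)}=\mathrm m$ for every $j$ such that $m_{\boldsymbol e_j}(\nu)=0$. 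I would then compute the joint moments $m_{p,q}(\nu)$ by working with the bipartite operator model from Section \ref{sec2.3}: realize each partial convolution $\delta_{\boldsymbol\xi_n}\bitimet\nu_{n1}\bitimet\cdots\bitimet\nu_{nk_n}$ as the joint distribution of a bipartite unitary pair in a free product $C^*$-probability space, and mimicking the orthogonality-of-tensor-chains calculation in the proof of Proposition \ref{idempotnet1}, use the vanishing $\nu^{(j)}=\mathrm m$ to propagate vanishing constraints on the mixed moments. Passing to the moment limit (valid on the compact $\mathbb T^2$, since moments are integrals of bounded continuous characters), one should conclude that $\nu$ takes one of the four structural forms $\nu=\mathrm m\times\nu^{(2)}$, $\nu=\nu^{(1)}\times\mathrm m$, $\nu=\mathrm m\times\mathrm m$, or $m_{p,q}(\nu)=\delta_{p,q}m_{1,1}(\nu)^p$ on $\mathbb Z\times(\mathbb N\cup\{0\})$.

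Finally, in each of these four forms $\nu\in\mathcal{ID}(\bitimes)$ is a consequence of results already established in Section \ref{sec3}. Proposition \ref{idempotnet2}(1) yields the decomposition $\mathrm m\times\nu^{(2)}=(\mathrm m\times\delta_1)\bitimet(\delta_1\times\nu^{(2)})$; the idempotent factor $\mathrm m\times\delta_1$ is in $\mathcal{ID}(\bitimes)$ trivially, while $\delta_1\times\nu^{(2)}$ inherits $\bitimes$-infinite divisibility from the $\boxtimes$-infinite divisibility of $\nu^{(2)}$ through the identity $(\delta_1\times\eta_N)^{\bitimes N}=\delta_1\times\eta_N^{\boxtimes N}$ (so any $\boxtimes$-$N$th root $\eta_N$ of $\nu^{(2)}$ produces a $\bitimes$-$N$th root $\delta_1\times\eta_N$ of $\delta_1\times\nu^{(2)}$); the second form is handled symmetrically. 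The third form is the $\bitimes$-idempotent $\mathrm m\times\mathrm m$ from Proposition \ref{idempotnet1}. The fourth form is covered by Proposition \ref{deltaID}, which exhibits such a measure as $P\circledast(\kappa_c\times\delta_1)$ and proves it is $\bitimes$-infinitely divisible. The principal obstacle I anticipate is the joint-moment propagation step in the hard case: converting the operator-theoretic orthogonality computation into a rigorous measure-theoretic conclusion ruling out any joint structure other than the four forms listed. This calculation extends the moment analysis at the heart of Proposition \ref{idempotnet1} and is the main technical task I would expect to carry out.
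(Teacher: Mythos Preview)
Your dichotomy into $\nu\in\mathscr P_{\mathbb T^2}^\times$ versus $\nu\notin\mathscr P_{\mathbb T^2}^\times$ matches the paper's overall structure, and your identification of the four structural targets in the second case, together with the references to Propositions \ref{idempotnet1}, \ref{idempotnet2}, and \ref{deltaID}, is correct. However, both halves of your argument contain a genuine gap.

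\textbf{Case $\nu\in\mathscr P_{\mathbb T^2}^\times$: circularity.} You invoke Theorem \ref{limitthmX} to conclude $\bitimes$-infinite divisibility directly. But as the paper notes just before the statement of Theorem \ref{limitthmX}, the symbol $\mathscr P_{\mathbb T^2}^\times$ used here differs from the one in \cite{HW2}: the original result \cite[Theorem 3.4]{HW2} and the companion \cite[Theorem 4.2]{HW2} (which supplies the infinite divisibility of the limit) require the additional hypothesis $m_{1,1}(\nu)\neq0$. It is precisely Theorem \ref{characterID} that renders this extra condition redundant, so invoking Theorem \ref{limitthmX} in its present-paper form to prove case (i) is circular. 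The paper instead proves $m_{1,1}(\nu)\neq0$ by a nontrivial iterative splitting argument: assuming $m_{1,1}(\nu)=0$, one uses Lemma \ref{basiclem} to split the array at an index $\ell_n$ so that both partial convolutions have $m_{1,1}$ tending to $0$, passes to subsequential limits, and repeats; this forces a factor with $|m_{1,0}|=1$ (hence a product $\delta_\alpha\times\cdot$), contradicting $m_{1,1}=0$ and $m_{0,1}\neq0$. This step is the real content of case (i), and your proposal bypasses it.

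\textbf{Case $\nu\notin\mathscr P_{\mathbb T^2}^\times$: the operator-model computation does not apply where you place it.} The orthogonality-of-tensor-chains calculation in Proposition \ref{idempotnet1} (equations (\ref{up})--(\ref{vp})) works because the \emph{factors} have vanishing marginal first moments, so the relevant vectors lie in the orthogonal complements $\mathring{\mathcal H}_j$. In your setting only the \emph{limit} has a vanishing marginal moment; the individual $\nu_{nk}$ do not, and neither do the finite partial convolutions, so one cannot directly run that computation on the array. The paper handles this by the same splitting technique used in case (i): split the array so that both halves have $m_{1,0}\to0$ in the limit, and iterate while tracking $|m_{0,1}|$ to manufacture a factorization $\nu=\nu_1\bitimet\nu_2$ in which $\nu_1^{(2)}=\delta_\alpha$ for some $\alpha\in\mathbb T$. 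Only after producing such a special factor (with one face equal to a scalar) does the operator-model orthogonality argument (\ref{up}) apply to force $m_{p,q}(\nu)=0$ for $p\neq0$, giving $\nu=\mathrm m\times\nu^{(2)}$. You correctly anticipate that this ``joint-moment propagation step'' is the main obstacle, but the mechanism you describe (computing moments of the partial convolutions and passing to the limit) is not the one that works; the splitting/iteration device producing a factor with a point-mass marginal is the missing idea.
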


\begin{proof} We separately consider three possible statuses (i) $m_{1,0}(\nu)\neq0\neq m_{0,1}(\nu)$, (ii) $m_{1,0}(\nu)=0\neq m_{0,1}(\nu)$ (the case $m_{1,0}(\nu)\neq0=m_{0,1}(\nu)$ is treated similarly to (ii)), and (iii) $m_{1,0}(\nu)=0=m_{0,1}(\nu)$.

(i) Once we can prove that $m_{1,1}(\nu)\neq0$, then the $\bitimes$-infinite divisibility of $\nu$ will follow from \cite[Theorem 4.2]{HW2}. Assume to the contrary that $m_{1,1}(\nu)=0$, which together with Lemma \ref{basiclem} implies that as $n\to\infty$,
\[m_{1,1}(\delta_{\boldsymbol\xi_n})m_{1,1}(\nu_{n1})\cdots m_{1,1}(\nu_{nk_n})=m_{1,1}(\delta_{\boldsymbol\xi_n}\bitimet\nu_{n1}
\bitimet\cdots\bitimet\nu_{nk_n})\to0.\] Then there exists a sequence $\{\ell_n\}\subset\mathbb{N}$ so that as $n\to\infty$,
$m_{1,1}(\delta_{\boldsymbol\xi_n})m_{1,1}(\nu_{n1})\cdots m_{1,1}(\nu_{n\ell_n})\to0$ and $m_{1,1}(\nu_{n,\ell_n+1})\cdots m_{1,1}(\nu_{nk_n})\to0$, namely, one has
$m_{1,1}(\delta_{\boldsymbol\xi_n}\bitimet\nu_{n1}
\bitimet\cdots
\bitimet\nu_{n\ell_n})\to0$ and $m_{1,1}(\nu_{n,\ell_n+1}\bitimet\cdots
\bitimet\nu_{nk_n})\to0$ by Lemma \ref{basiclem} again. We can select, for example,
\[\ell_n=\min\big\{1 \leq k \leq k_n: |m_{1,1}(\delta_{\boldsymbol\xi_n}\bitimet\nu_{n1}
\bitimet\cdots
\bitimet\nu_{nk})| \leq |m_{1,1}(\delta_{\boldsymbol\xi_n}\bitimet\nu_{n1}
\bitimet\cdots
\bitimet\nu_{nk_n})|^{1/2}\big\}.\]

One may assume, by passing to a subsequence if necessary, that $\delta_{\boldsymbol\xi_n}\bitimet\nu_{n1}\bitimet\cdots
\bitimet\nu_{n\ell_n}\Rightarrow\nu_1'\in\mathscr{P}_{\mathbb{T}^2}$ and $\nu_{n,\ell_n+1}\bitimet\cdots
\bitimet\nu_{nk_n}\Rightarrow\nu_1''\in\mathscr{P}_{\mathbb{T}^2}$. Then we have $\nu=\nu_1'\bitimet\nu_1''$ and $m_{1,1}(\nu_1')=0=m_{1,1}(\nu_1'')$. Also, the formula $m_{1,0}(\nu)=m_{1,0}(\nu_1')m_{1,0}(\nu_1'')$ indicates that either $|m_{1,0}(\nu_1')|\geq|m_{1,0}(\nu)|^{1/2}$ or $|m_{1,0}(\nu_1'')|\geq|m_{1,0}(\nu)|^{1/2}$ must occur; assume, without loss of generality, that the first inequality is valid. Carrying out the same arguments on $\nu_1'$ allows us to obtain measures $\nu_2',\nu_2''\in\mathscr{P}_{\mathbb{T}^2}$ fulfilling requirements $\nu_1'=\nu_2'\bitimet\nu_2''$, $m_{1,1}(\nu_2')=0=m_{1,1}(\nu_2'')$, and $|m_{1,0}(\nu_2')|\geq|m_{1,0}(\nu_1')|^{1/2}$. Continuing this process then results in the existence of sequences $\{\nu_n'\},\{\nu_n''\}\subset\mathscr{P}_{\mathbb{T}^2}$ for which $\nu_n'=\nu_{n+1}'\bitimet\nu_{n+1}''$, $m_{1,1}(\nu_n')=0=m_{1,1}(\nu_n'')$, and $|m_{1,0}(\nu_{n+1}')|\geq|m_{1,0}(\nu_n')|^{1/2}$ hold.

As noted above, one has $\nu=\nu_n'\bitimet\nu_n'''$ for some $\nu_n'''\in\mathscr{P}_{\mathbb{T}^2}$ and $|m_{1,0}(\nu_n')|\geq|m_{1,0}(\nu)|^{1/2^n}$.
Passing to subsequences if needed again, assume that $\nu_n'\Rightarrow\nu_1\in\mathscr{P}_{\mathbb{T}^2}$ and $\nu_n'''\Rightarrow\nu_2\in\mathscr{P}_{\mathbb{T}^2}$, and so $\nu=\nu_1\bitimet\nu_2$, $m_{1,1}(\nu_1)=0$, and $|m_{1,0}(\nu_1)|=1$. The last identity reveals that $\nu_1=\delta_{\alpha}\times\nu_1^{(2)}$, where $\alpha=m_{1,0}(\nu_1)\in\mathbb{T}$. In addition, making use of $0\neq m_{0,1}(\nu)=m_{0,1}(\nu_1)m_{0,1}(\nu_2)$ yields $m_{0,1}(\nu_1)\neq0$.
However, these discussions would lead to $m_{1,1}(\nu_1)=\alpha m_{0,1}(\nu_1)\neq0$, a contradiction. Hence we must have $m_{1,1}(\nu)\neq0$, as desired.

(ii) Note that the marginal $\nu^{(2)}$ is $\boxtimes$-infinitely divisible by \cite[Theorem 2.1]{BB08}.
The $\bitimes$-infinite divisibility of $\nu$ will follow immediately if one can argue that $\nu=\mathrm{m}\times\nu^{(2)}$. The proof, presented below, is basically similar to that of (1).

First, applying the strategy employed in the first paragraph of (1) to $m_{1,0}(\nu)=0$ indicates the presence of $\ell_n\in\mathbb{N}$ satisfying
$m_{1,0}(\delta_{\boldsymbol\xi_n}\bitimet\nu_{n1}\bitimet\cdots
\bitimet\nu_{n\ell_n})\to0$ and $m_{1,0}(\nu_{n,\ell_n+1}\bitimet\cdots
\bitimet\nu_{nk_n})\to0$ as $n\to\infty$. Assume, dropping a subsequence if necessary, that $\delta_{\boldsymbol\xi_n}\bitimet\nu_{n1}\bitimet\cdots
\bitimet\nu_{n\ell_n}\Rightarrow\nu_1'\in\mathscr{P}_{\mathbb{T}^2}$ and $\nu_{n,\ell_n+1}\bitimet\cdots
\bitimet\nu_{nk_n}\Rightarrow\nu_1''\in\mathscr{P}_{\mathbb{T}^2}$. Thus we have $\nu=\nu_1'\bitimet\nu_1''$ and $m_{1,0}(\nu_1')=0=m_{1,0}(\nu_1'')$.
We may further assume $|m_{0,1}(\nu_1')|\geq|m_{0,1}(\nu)|^{1/2}$. Mimicking the arguments in (1) constructs sequences $\{\nu_n'\},\{\nu_n''\}\subset\mathscr{P}_{\mathbb{T}^2}$ meeting conditions
$\nu=\nu_n'\bitimet\nu_n''$, $m_{1,0}(\nu_n')=0=m_{1,0}(\nu_n'')$, and $|m_{0,1}(\nu_n')|\geq|m_{0,1}(\nu)|^{1/2^n}$.
Passing to subsequences if needed again, assume $\nu_n'\Rightarrow\nu_1\in\mathscr{P}_{\mathbb{T}^2}$ and $\nu_n''\Rightarrow\nu_2\in\mathscr{P}_{\mathbb{T}^2}$. Then we come to that $\nu=\nu_1\bitimet\nu_2$, $m_{1,0}(\nu_1)=0=m_{1,0}(\nu_2)$, and $\nu_1^{(2)}=\delta_\alpha$
for some $\alpha\in\mathbb{T}$.
To proceed the proof, we shall use the notations introduced in Section \ref{sec2.3}.
Since $v_1=\alpha I_{B(\mathcal{H})}$, it follows that $v^q\xi=\alpha^qv_2^q\xi\in\mathbb{C}\xi\oplus\mathcal{\mathring H}_2$ for any $q\in\mathbb{Z}$. Thus, the formula in (\ref{up}) implies that $m_{p,q}(\nu)=\langle v^q\xi,u^{-p}\xi\rangle=0$ for any $(p,q)\in\mathbb{N}\times\mathbb{Z}$, proving $\nu=\mathrm{m}\times\nu^{(2)}$.

(iii) In this case, we have $\nu^{(1)}=\mathrm{m}=\nu^{(2)}$ by \cite[Lemma 6.1]{BerVoicu92} and \cite[Theorem 2.1]{BB08}. Further, one can employ the proof in (ii) to show that there are $\nu_1,\nu_2\in\mathscr{P}_{\mathbb{T}^2}$, which are limiting distributions of sequences of the form in (\ref{ITbicircle}), so that $\nu=\nu_1\bitimet\nu_2$ and $m_{1,0}(\nu_1)=0=m_{1,0}(\nu_2)$. Then $m_{0,1}(\nu_1)m_{0,1}(\nu_2)=m_{0,1}(\nu)=0$. If $m_{0,1}(\nu_1)=0=m_{0,1}(\nu_2)$, then (\ref{up}) and (\ref{vp}) yield that $m_{p,q}(\nu)=\delta_{p,q}m_{1,1}(\nu)^p$ for $(p,q) \in \mathbb Z \times (\mathbb N \cup\{0\})$, whence $\nu$ is $\bitimes$-infinitely divisible by Proposition \ref{deltaID}. For the other case, say $m_{0,1}(\nu_1)\neq0$, the established conclusion in (ii) then shows that $\nu_1=\mathrm{m}\times\nu_1^{(2)}$. In such a situation, the measure $\nu_1$, as well as $\nu$, has the $\bitimes$-factor $\mathrm{m}\times\delta_1$. Thus, Proposition \ref{idempotnet2} says that $\nu=\mathrm{m}\times\mathrm{m}$, which is $\bitimes$-infinitely divisible by Proposition \ref{deltaID}.
\end{proof}

\begin{cor} The set $\mathcal{ID}(\boxtimes\boxtimes)$ is weakly closed.
\end{cor}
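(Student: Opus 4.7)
The plan is to exhibit $\mathcal{ID}(\bitimes)$ as a countable intersection of weakly closed subsets of $\mathscr{P}_{\mathbb{T}^2}$. For each integer $k\geq 1$, let $\phi_k\colon \mathscr P_{\mathbb T^2}\to \mathscr P_{\mathbb T^2}$ be the $k$-th bi-free multiplicative power $\phi_k(\mu)=\mu^{\bitimes k}$. By the very definition of infinite divisibility,
\[
 \mathcal{ID}(\bitimes)=\bigcap_{k\geq 1}\phi_k\bigl(\mathscr P_{\mathbb T^2}\bigr).
\]
Since $\mathbb T^2$ is compact, $\mathscr P_{\mathbb T^2}$ is weakly compact; hence, provided each $\phi_k$ is weakly continuous, each image $\phi_k(\mathscr P_{\mathbb T^2})$ is weakly compact (and a fortiori closed), so the intersection is weakly closed.

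The principal task is therefore to verify the weak continuity of $\phi_k$, or equivalently the joint weak continuity of the operation $(\mu_1,\mu_2)\mapsto \mu_1\bitimes\mu_2$. On the compact bi-torus, Stone--Weierstrass turns weak convergence into convergence of the full $(p,q)$-moment sequence, and the operator-algebraic construction of $\bitimes$ presented in Section \ref{sec2.3} (via the free-product Hilbert-space realization of bi-free bipartite unitary pairs) produces each mixed moment $m_{p,q}(\mu_1\bitimes\mu_2)$ as a fixed polynomial in finitely many $(r,s)$-moments of $\mu_1$ and $\mu_2$. Weak convergence $\mu_{1,n}\Rightarrow \mu_1$ and $\mu_{2,n}\Rightarrow \mu_2$ therefore transfers to $m_{p,q}(\mu_{1,n}\bitimes\mu_{2,n})\to m_{p,q}(\mu_1\bitimes \mu_2)$ for every $(p,q)\in\mathbb Z^2$, whence $\mu_{1,n}\bitimes\mu_{2,n}\Rightarrow \mu_1\bitimes \mu_2$.

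Alternatively, one may argue directly in terms of $k$-divisibility: given $\nu_n\in\mathcal{ID}(\bitimes)$ with $\nu_n\Rightarrow \nu$ and any fixed $k\geq 1$, choose $\mu_{n,k}\in\mathscr P_{\mathbb T^2}$ with $\mu_{n,k}^{\bitimes k}=\nu_n$, extract a weakly convergent subsequence $\mu_{n_j,k}\Rightarrow \eta_k$ by compactness, and invoke the joint continuity above to conclude $\eta_k^{\bitimes k}=\nu$, so $\nu$ is $k$-divisible; as $k$ is arbitrary, $\nu\in \mathcal{ID}(\bitimes)$. The main obstacle in either formulation is the joint weak continuity of $\bitimes$ on the whole of $\mathscr P_{\mathbb T^2}$, since the $\Sigma$-transform machinery is only available on $\mathscr P_{\mathbb T^2}^\times$; the moment-combinatorial route indicated above sidesteps this limitation and yields the corollary at once.
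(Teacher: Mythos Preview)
Your argument is correct. The paper states the corollary without proof, placing it immediately after Theorem \ref{characterID}; the implicit route is to take $\nu_n\in\mathcal{ID}(\bitimes)$ with $\nu_n\Rightarrow\nu$, choose $n$-th $\bitimes$-roots $\mu_n$ of $\nu_n$, and feed the identically distributed array $\nu_{n,k}=\mu_n$ (suitably rotated) into Theorem \ref{characterID}. Your approach is genuinely different: you bypass Theorem \ref{characterID} altogether and use only the joint weak continuity of $\bitimes$ together with the weak compactness of $\mathscr{P}_{\mathbb{T}^2}$. The continuity claim via the universal moment formula is valid --- indeed the paper itself invokes \cite[Lemma~5.2]{V14} for exactly this purpose in establishing \eqref{kappa1}, and tacitly uses joint continuity inside the proof of Theorem \ref{characterID} when concluding $\nu=\nu_1'\bitimet\nu_1''$ from subsequential limits.

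What your route buys is uniformity: no case analysis on whether $\nu\in\mathscr{P}_{\mathbb{T}^2}^\times$ is needed. The paper's route via Theorem \ref{characterID} requires checking that the chosen roots form an infinitesimal array; this is immediate when $\nu\in\mathscr{P}_{\mathbb{T}^2}^\times$ (since $|m_{1,0}(\mu_n)|=|m_{1,0}(\nu_n)|^{1/n}\to1$ and likewise for the second coordinate, so any subsequential limit of $\mu_n$ is a point mass and one may rotate), but is less transparent when $\nu\notin\mathscr{P}_{\mathbb{T}^2}^\times$. Your first formulation, expressing $\mathcal{ID}(\bitimes)$ as $\bigcap_k\phi_k(\mathscr{P}_{\mathbb{T}^2})$ with each $\phi_k$ continuous and $\mathscr{P}_{\mathbb{T}^2}$ compact, is a clean packaging of the same idea.
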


We are now in a position to characterize distributions in $\mathcal{ID}(\bitimes)$ carrying no non-trivial $\bitimes$-idempotent factors.

\begin{thm} \label{IDmoment}
In order that a measure $\nu\in\mathcal{ID}(\bitimes)$ contains no non-trivial $\bitimes$-idempotent factor, it is
necessary and sufficient that $m_{1,0}(\nu)\neq0\neq m_{0,1}(\nu)$, in which case $m_{1,1}(\nu)\neq0$.
\end{thm}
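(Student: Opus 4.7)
The strategy is to read off the equivalence from the classification of $\bitimes$-idempotents catalogued in Propositions \ref{idempotnet1}--\ref{idempotnet2} (trivially on the easy side), to prove the harder direction by a root-and-extract factorization modeled on the proof of Theorem \ref{characterID}, and to establish the $m_{1,1}(\nu)\neq 0$ assertion through a separate compactness argument on $n$-th $\bitimes$-roots of $\nu$.

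The easy direction is essentially a lookup: assuming $m_{1,0}(\nu)\neq 0$ and $m_{0,1}(\nu)\neq 0$, Proposition \ref{idempotnet1} lists the four candidate non-trivial $\bitimes$-idempotents as $\mathrm{m}\times\delta_1$, $\delta_1\times\mathrm{m}$, $\mathrm{m}\times\mathrm{m}$, and $P$, and for each of them Proposition \ref{idempotnet2} says that being a $\bitimes$-factor of $\nu$ forces the vanishing of $m_{1,0}(\nu)$ or $m_{0,1}(\nu)$; hence no such factor exists.

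For the converse I argue the contrapositive and, by symmetry, assume $m_{1,0}(\nu)=0$. For each $n$ I choose a $2^n$-th $\bitimes$-root $\nu_n$ of $\nu$; Lemma \ref{basiclem} forces $m_{1,0}(\nu_n)=0$ (and hence $m_{1,0}(\nu_n^{\bitimes(2^n-1)})=0$), while $|m_{0,1}(\nu_n)|=|m_{0,1}(\nu)|^{1/2^n}\to 1$ when $m_{0,1}(\nu)\neq 0$. Using weak compactness of $\mathscr{P}_{\mathbb{T}^2}$ and weak continuity of $\bitimes$ (inherited from the universal moment formulas of \cite{V14}, since $(p,q)$-moments of a $\bitimes$-product are polynomials in the factors' moments), I extract a common subsequence so that $\nu=\eta_1\bitimes\eta_2$ with $m_{1,0}(\eta_1)=m_{1,0}(\eta_2)=0$. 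In the sub-case $m_{0,1}(\nu)\neq 0$ this produces $|m_{0,1}(\eta_1)|=1$, so $\eta_1^{(2)}=\delta_\beta$ for some $\beta\in\mathbb{T}$; in the bi-free realization of Section \ref{sec2.3}, $v_1=\beta I$ places $v^q\xi$ in $\mathbb{C}\xi\oplus\mathcal{\mathring H}_2$, while $\varphi_\xi(u_j)=m_{1,0}(\eta_j)=0$ for $j=1,2$ validates \eqref{up} and pins $u^{-p}\xi$ inside $(\mathcal{\mathring H}_2\otimes\mathcal{\mathring H}_1)^{\otimes p}$, which is orthogonal to $\mathbb{C}\xi\oplus\mathcal{\mathring H}_2$. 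Therefore $m_{p,q}(\nu)=\langle v^q\xi,u^{-p}\xi\rangle=0$ for every $p\in\mathbb{Z}^\times$, i.e., $\nu=\mathrm{m}\times\nu^{(2)}$, and Proposition \ref{idempotnet2}(1) delivers $\mathrm{m}\times\delta_1$ as a $\bitimes$-factor. In the remaining sub-case $m_{0,1}(\nu)=0$ the same factorization has $m_{0,1}(\eta_1)m_{0,1}(\eta_2)=0$; either a non-vanishing factor lets me invoke the previous paragraph, after which the second marginal is a mean-zero $\boxtimes$-infinitely divisible measure and equals $\mathrm{m}$ by \cite[Lemma 6.1]{BerVoicu92}, yielding $\nu=\mathrm{m}\times\mathrm{m}$, or both vanish and a direct expansion using \eqref{up} together with \eqref{vp} gives the moment pattern \eqref{mixedm}, so Proposition \ref{idempotnet2}(4) exhibits $P$ as a $\bitimes$-factor.

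For $m_{1,1}(\nu)\neq 0$ under the assumption $m_{1,0}(\nu)\neq 0\neq m_{0,1}(\nu)$, suppose for contradiction $m_{1,1}(\nu)=0$ and fix $n$-th $\bitimes$-roots $\nu_n$. Lemma \ref{basiclem} gives $m_{1,1}(\nu_n)=0$ while $|m_{1,0}(\nu_n)|=|m_{1,0}(\nu)|^{1/n}\to 1$ and $|m_{0,1}(\nu_n)|\to 1$. Weak compactness of $\mathscr{P}_{\mathbb{T}^2}$ supplies a subsequence $\nu_{n_k}\Rightarrow\tilde\nu$; continuity of moments then forces $|m_{1,0}(\tilde\nu)|=|m_{0,1}(\tilde\nu)|=1$, so $\tilde\nu=\delta_{(\alpha,\beta)}$ for some $(\alpha,\beta)\in\mathbb{T}^2$, and $m_{1,1}(\tilde\nu)=\alpha\beta\neq 0$ contradicts $m_{1,1}(\tilde\nu)=\lim_k m_{1,1}(\nu_{n_k})=0$. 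The main technical obstacle is the converse step, specifically passing the factorization $\nu=\nu_n\bitimes\nu_n^{\bitimes(2^n-1)}$ to the subsequential weak limits $\eta_1,\eta_2$; once the weak continuity of $\bitimes$ on $\mathscr{P}_{\mathbb{T}^2}$ is granted, the remaining ingredients—weak compactness of $\mathscr{P}_{\mathbb{T}^2}$, closedness of $\mathcal{ID}(\bitimes)$, and the moment identities of Lemma \ref{basiclem}—are already available from earlier in the paper.
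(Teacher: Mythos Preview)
Your proof is correct and follows essentially the same approach as the paper's: both take $n$-th (or $2^n$-th) $\bitimes$-roots of $\nu$ to produce a factorization $\nu=\eta_1\bitimet\eta_2$ with vanishing first-coordinate means, pass to subsequential weak limits, and then invoke the orthogonality identities \eqref{up}--\eqref{vp} to exhibit either $\mathrm{m}\times\delta_1$ or $P$ as a $\bitimes$-factor, while the $m_{1,1}(\nu)\neq0$ assertion is obtained in both by the same root-and-compactness argument. Two small remarks: the multiplicativity of $m_{1,0}$ under $\bitimes$ is the sentence immediately preceding Lemma~\ref{basiclem} rather than the lemma itself, and in your sub-case $m_{0,1}(\nu)=0$ the ``non-vanishing factor'' branch is vacuous (since $m_{0,1}(\nu_n)^{2^n}=0$ forces $m_{0,1}(\nu_n)=0$, hence $m_{0,1}(\eta_1)=m_{0,1}(\eta_2)=0$), so only the ``both vanish'' branch---which you handle correctly---actually occurs.
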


\begin{proof} According to Proposition \ref{idempotnet2}, only the necessity requires a proof.
We merely prove that $\nu$ has a non-trivial $\bitimes$-idempotent factor when $m_{1,0}(\nu)=0$, because the case $m_{0,1}(\nu)=0$ can be handled in the same way. To do so, let $m_{1,0}(\nu)=0$, and consider two possible cases (i) $m_{0,1}(\nu)=0$ and (ii) $m_{0,1}(\nu)\neq0$, which are discussed separately below. Note that $m_{p,0}(\nu)=0$ for all $p\in\mathbb{N}$ since $\nu^{(1)}=m$.

Case (i): Since $\nu^{(j)}=\mathrm{m}$ for $j=1,2$, one can mimic the proof of Proposition \ref{idempotnet1}, especially employ equations (\ref{up}) and (\ref{vp}), to obtain $m_{p,q}(\nu)=\delta_{p,q}m_{1,1}(\nu)^p$ for $(p,q)\in\mathbb{Z}\times (\mathbb N \cup\{0\})$. Hence $P$ is a $\bitimes$-factor of $\nu$ by Proposition \ref{idempotnet2}.

Case (ii): To treat this case, let $\nu_n'\in\mathscr{P}_{\mathbb{T}^2}$ be an $n$-th $\bitimes$-convolution root of $\nu$ for any $n\in \mathbb N$, i.e., $(\nu_n')^{\bitimes n}=\nu$. Then we have
$\nu=\nu_n'\bitimet\nu_n''$, where $\nu_n''=(\nu_n')^{\bitimes(n-1)}$, $m_{1,0}(\nu_n')=0=m_{1,0}(\nu_n'')$ and $|m_{0,1}(\nu_n')|=|m_{0,1}(\nu)|^{1/n}$. If $\nu'$ and $\nu''$ are any weak limits of $\{\nu_n'\}$ and $\{\nu_n''\}$, respectively, then we further obtain
$\nu= \nu' \bitimet \nu''$, $m_{1,0}(\nu')=0=m_{1,0}(\nu'')$, and $|m_{0,1}(\nu')|=1$. This leads to $(\nu')^{(2)}=\delta_\alpha$ for some $\alpha \in \mathbb T$, which is exactly the situation dealt in the last part of the proof (ii) of Theorem \ref{characterID}. Thus we conclude that $\nu=\mathrm{m} \times \nu^{(2)}$, which has the $\bitimes$-idempotent factor $\mathrm{m}\times \delta_1$ by Proposition \ref{idempotnet2}.

Lastly, we turn to argue that $m_{1,1}(\nu)\neq0$ if $m_{1,0}(\nu)\neq0\neq m_{0,1}(\nu)$. Note that any sequence $\{\nu_n\}$
satisfying $\nu=\nu_n^{\bitimes 2^n}$ has a subsequence $\{\nu_{n_\ell}\}$ converging weakly to $\delta_{\boldsymbol\xi}$ for some $\boldsymbol\xi\in\mathbb{T}^2$ (cf. the proof (i) of Theorem \ref{characterID}). Then Lemma \ref{basiclem} implies that $|m_{1,1}(\nu)|^{2^{-n_\ell}}=|m_{1,1}(\nu_{n_\ell})|\to|m_{1,1}(\delta_{\boldsymbol\xi})|=1$, which leads to the desired result.
\end{proof}

Proposition \ref{idempotnet2} and Theorem \ref{IDmoment} readily imply the following.

\begin{cor} \label{ID-exception}
Any measure $\nu$ in $\mathcal{ID}(\bitimes)\backslash\mathscr{P}_{\mathbb{T}^2}^\times$ is either $\nu^{(1)} \times \mathrm{m}$, $\mathrm{m} \times \nu^{(2)}$, $\mathrm{m} \times \mathrm{m}$ or $P\circledast (\kappa_c \times \delta_1)$, where $\nu^{(1)}$ and $\nu^{(2)}$ are in $\mathcal{ID}(\boxtimes)$ with nonzero mean and $c \in (\mathbb D\cup \mathbb T)\setminus\{0\}$.
\end{cor}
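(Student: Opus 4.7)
The plan is to invoke Theorem \ref{IDmoment}: a measure $\nu\in\mathcal{ID}(\bitimes)$ lies outside $\mathscr{P}_{\mathbb{T}^2}^\times$ exactly when $m_{1,0}(\nu)=0$ or $m_{0,1}(\nu)=0$. I would split into three cases according to the vanishing pattern of the pair $(m_{1,0}(\nu),m_{0,1}(\nu))$.

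Case (A), where $m_{1,0}(\nu)=0$ and $m_{0,1}(\nu)\neq 0$, is essentially already established inside the proof of Theorem \ref{IDmoment}(ii): extracting a sequence of $\bitimes$-roots of $\nu$ and passing to weak limits produces a factorization $\nu=\nu'\bitimet\nu''$ with $(\nu')^{(2)}=\delta_\alpha$, and then the tensor-product computation from the construction in Section \ref{sec2.3}, exactly as in the proof of Theorem \ref{characterID}(ii), forces $\nu=\mathrm{m}\times\nu^{(2)}$. The marginal identity $(\mu_1\bitimet\mu_2)^{(j)}=\mu_1^{(j)}\boxtimes\mu_2^{(j)}$ from Section \ref{sec2.3} then guarantees $\nu^{(2)}\in\mathcal{ID}(\boxtimes)$ with nonzero mean $m_{0,1}(\nu)$. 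Case (B), where the zero and nonzero roles are swapped, is symmetric and yields $\nu=\nu^{(1)}\times\mathrm{m}$. For Case (C), where $m_{1,0}(\nu)=0=m_{0,1}(\nu)$, I would pick a $\bitimes$-square root $\mu_2$ of $\nu$ and use Lemma \ref{basiclem} to derive $m_{1,0}(\mu_2)^2=m_{1,0}(\nu)=0$ and likewise $m_{0,1}(\mu_2)=0$. Realizing $\nu$ as the distribution of $(u_1u_2,v_1v_2)$ where $(u_j,v_j)$ has distribution $\mu_2$ as in Section \ref{sec2.3}, one then has $\varphi_\xi(u_j)=\varphi_\xi(v_j)=0$, so equations (\ref{up}) and (\ref{vp}) apply and give
\[
m_{p,q}(\nu)=\langle v^q\xi,u^{-p}\xi\rangle=\delta_{p,q}\,m_{1,1}(\mu_2)^{2p}=\delta_{p,q}\,m_{1,1}(\nu)^{p}
\]
for $(p,q)\in\mathbb{Z}\times(\mathbb{N}\cup\{0\})$, where the final equality is a second application of Lemma \ref{basiclem}. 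If $m_{1,1}(\nu)=0$, this moment formula forces $\nu=\mathrm{m}\times\mathrm{m}$; if $m_{1,1}(\nu)\neq 0$, Proposition \ref{deltaID} delivers $\nu=P\circledast(\kappa_c\times\delta_1)$ with $c=m_{1,1}(\nu)$, and the automatic bound $|m_{1,1}(\nu)|\leq 1$ places $c$ in $(\mathbb{D}\cup\mathbb{T})\setminus\{0\}$.

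The main conceptual obstacle is in Case (C): since $\nu$ is only $\bitimes$-infinitely divisible, not $\bitimes$-idempotent, Proposition \ref{idempotnet1} cannot be quoted directly. The remedy is to run its underlying tensor computation on a square root $\mu_2$ of $\nu$ instead, with the key input that the marginal mean conditions $m_{1,0}(\mu_2)=m_{0,1}(\mu_2)=0$ follow from the hypotheses on $\nu$ via Lemma \ref{basiclem}; no $\boxtimes$-infinite-divisibility analysis of $\mu_2^{(j)}$ is needed. Once Case (C) is in hand, the remainder is bookkeeping: assembling the four listed types and appealing to Propositions \ref{idempotnet2}, \ref{deltaID}, and Theorem \ref{IDmoment} to confirm that nothing else can occur.
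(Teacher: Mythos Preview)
Your proposal is correct and follows essentially the same route as the paper, which simply records the corollary as an immediate consequence of Proposition \ref{idempotnet2} and Theorem \ref{IDmoment}; you have just unpacked the case analysis that those two results encode, and your Case (C) is exactly the square-root argument implicit in Case (i) of the proof of Theorem \ref{IDmoment}. One small citation slip: the multiplicativity of $m_{1,0}$ and $m_{0,1}$ under $\bitimes$ is stated in the paragraph preceding Lemma \ref{basiclem}, not in the lemma itself (which handles $m_{1,1}$ and $m_{1,-1}$).
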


\section{Equivalent Conditions on Limit Theorems} \label{sec4}
This section is devoted to exploring the associations among the conditions introduced in Section \ref{LTcond} and the following one.

\begin{cond} \label{condX2}
Let $\{\rho_n\}$ be a sequence in $\mathscr{M}_{\mathbb{T}^d}^\mathbf{1}$.

\begin{enumerate}[(i)]
\setcounter{enumi}{2}
\item\label{item:condiii} There exists some $\rho\in\mathscr{M}_{\mathbb{T}^d}^\mathbf{1}$ with
$\rho(\{\mathbf{1}\})=0$ so that
$\rho_n\Rightarrow_\mathbf{1}\rho$.

\item\label{item:condiv} The following limits exist in $\mathbb{R}$ for any $\boldsymbol p\in\mathbb{Z}^d$:
\[\lim_{\epsilon\to0}\limsup_{n\to\infty}\int_{\mathscr{U}_\epsilon}
\big\langle\boldsymbol p,\Im\boldsymbol s\big\rangle^2d\rho_n({\boldsymbol s})=Q(\boldsymbol p)=\lim_{\epsilon\to0}\liminf_{n\to\infty}\int_{\mathscr{U}_\epsilon} \big\langle\boldsymbol p,\Im\boldsymbol s\big\rangle^2d\rho_n({\boldsymbol s}).\]
\end{enumerate}
\end{cond}

Condition \ref{condX1} with $d=2$ was used in \cite[Theorem 3.4]{HW2} to prove the limit theorem for the bi-free multiplicative convolution, while Condition \ref{condX2} is beneficial for the corresponding classical limit theorem (see Section \ref{sec6} for more details). The connection and properties of these two conditions are given below.

\begin{prop} \label{equicondX}
{\rm Condition \ref{condX1}} is equivalent to {\rm Condition \ref{condX2}}, in which
\begin{equation} \label{lambdarho}
d\lambda_j({\boldsymbol s})=(1-\Re s_j)d\rho({\boldsymbol s})+\frac{Q(\boldsymbol e_j)}{2}\delta_{\mathbf{1}}(d{\boldsymbol s}),\;\;\;\;\;j=1,\ldots,d,
\end{equation}
\begin{equation} \label{LevyTcond}
\int_{\mathbb{T}^d}\|\mathbf{1}-\Re\boldsymbol s\|\,d\rho({\boldsymbol s})<\infty,
\end{equation}
and the quadratic form $Q(\cdot)=\langle\mathbf{A}\cdot,\cdot\rangle$ on $\mathbb{Z}^d$ is determined by
a positive semi-definitive matrix $\mathbf{A}=(a_{j\ell})$
whose entries are
\begin{equation} \label{constc}
a_{j\ell}=L_{j\ell}-\int_{\mathbb{T}^d}(\Im s_j)(\Im s_\ell)\,d\rho({\boldsymbol s})\in\mathbb{R},\;\;\;\;\;j,\ell=1,\ldots,d.
\end{equation}
Moreover, $a_{jj}=2\lambda_j(\{\mathbf{1}\})$ for $j=1,\ldots,d$.
\end{prop}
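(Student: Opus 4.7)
The plan is to mirror the proof of the equivalence of Condition \ref{cond+1} and Condition \ref{cond+2} in the Euclidean setting, using the local asymptotics
\[
1-\Re s_j=2\sin^2(\arg s_j/2)=\tfrac12(\Im s_j)^2+O((\arg s_j)^4),\qquad (\Im s_j)^2\le 2(1-\Re s_j)\ \text{on}\ \mathbb{T},
\]
so that the weight $1-\Re s_j$ on $\mathbb{T}^d$ plays the role of $x_j^2/(1+x_j^2)$ on $\mathbb{R}^d$ and $(\Im s_j)(\Im s_\ell)$ the role of $x_jx_\ell/[(1+x_j^2)(1+x_\ell^2)]$; every integral reduces, modulo higher-order terms near $\mathbf{1}$, to a known Euclidean one.

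For Condition \ref{condX1}$\Rightarrow$Condition \ref{condX2}, I would define $\rho$ on $\mathbb{T}^d\setminus\{\mathbf{1}\}$ by
\[
d\rho(\boldsymbol s)=\Bigl[\sum_{j=1}^d(1-\Re s_j)\Bigr]^{-1}d\Bigl(\sum_{j=1}^d\lambda_j\Bigr)(\boldsymbol s),
\]
with $\rho(\{\mathbf{1}\})=0$; the denominator is bounded below on any set bounded away from $\mathbf{1}$, placing $\rho$ in $\mathscr{M}_{\mathbb{T}^d}^{\mathbf{1}}$. To verify $\rho_n\Rightarrow_{\mathbf{1}}\rho$, given $f\in C_b^{\mathbf{1}}(\mathbb{T}^d)$ set $g(\boldsymbol s)=f(\boldsymbol s)/\sum_j(1-\Re s_j)$; then $g\in C_b(\mathbb{T}^d)$ and
\[
\int f\,d\rho_n=\sum_{j=1}^d\int g\,d\lambda_{nj}\longrightarrow\sum_{j=1}^d\int g\,d\lambda_j=\int f\,d\rho.
\]
For part (iv) I would fix a continuity radius $\epsilon$ with $\rho(\partial\mathscr{U}_\epsilon)=0$ and split $\int\langle\boldsymbol p,\Im\boldsymbol s\rangle^2\,d\rho_n$ into integrals over $\mathscr{U}_\epsilon$ and its complement: by Proposition \ref{Portman} and the just-established (iii), the complement part converges to $\int_{\mathscr{U}_\epsilon^c}\langle\boldsymbol p,\Im\boldsymbol s\rangle^2\,d\rho$; by bilinear expansion (ii) gives convergence of the full integral to $\sum_{j,\ell}p_jp_\ell L_{j\ell}$. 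Subtracting, the $\mathscr{U}_\epsilon$-integral has a limit in $n$ which, on letting $\epsilon\to 0$ (with integrability secured by $(\Im s_j)^2\le 2(1-\Re s_j)$ and finiteness of $\lambda_j$), produces (iv) together with the formula $Q(\boldsymbol p)=\langle\mathbf{A}\boldsymbol p,\boldsymbol p\rangle$ and \eqref{constc}.

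For the converse, I would take \eqref{lambdarho} as the definition of $\lambda_j$; finiteness of $\int(1-\Re s_j)\,d\rho$ uses the crude bound $1-\Re s_j\le 2$ far from $\mathbf{1}$ and the comparison $1-\Re s_j\le(\Im s_j)^2$ (on $\{\Re s_j\ge 0\}$) close to $\mathbf{1}$, combined with (iv) for $\boldsymbol p=\boldsymbol e_j$ via Fatou on continuity shells. To prove $\lambda_{nj}\Rightarrow\lambda_j$, given $f\in C_b(\mathbb{T}^d)$ I would decompose
\[
\int f\,d\lambda_{nj}=\int_{\mathscr{U}_\epsilon^c}f(1-\Re s_j)\,d\rho_n+f(\mathbf{1})\int_{\mathscr{U}_\epsilon}(1-\Re s_j)\,d\rho_n+\int_{\mathscr{U}_\epsilon}\bigl(f-f(\mathbf{1})\bigr)(1-\Re s_j)\,d\rho_n.
\]
The first summand converges to $\int_{\mathscr{U}_\epsilon^c}f(1-\Re s_j)\,d\rho$ by Proposition \ref{Portman}; the expansion $1-\Re s_j=\tfrac12(\Im s_j)^2+O((\arg s_j)^4)$ combined with (iv) sends $\int_{\mathscr{U}_\epsilon}(1-\Re s_j)\,d\rho_n$ to $Q(\boldsymbol e_j)/2$ in the iterated limit $n\to\infty$ then $\epsilon\to 0$; the last error is bounded by $\omega_f(\epsilon)\int_{\mathscr{U}_\epsilon}(1-\Re s_j)\,d\rho_n$ and vanishes in that limit. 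Passing $\epsilon\to 0$ delivers (i). For (ii), the same splitting applied to $\int\langle\boldsymbol p,\Im\boldsymbol s\rangle^2\,d\rho_n$ shows that $\limsup_n-\liminf_n$ of this integral is $o(1)$ as $\epsilon\to 0$, giving existence of the limit; polarisation produces $L_{j\ell}$ and matching with the resulting double-limit identity forces \eqref{constc}.

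The remaining assertions come for free: the $L^1$-condition \eqref{LevyTcond} follows from $\|\mathbf{1}-\Re\boldsymbol s\|\le\sum_j(1-\Re s_j)$ and the integrability just established; the identity $a_{jj}=2\lambda_j(\{\mathbf{1}\})$ is read off \eqref{lambdarho} since $Q(\boldsymbol e_j)=a_{jj}$; positive semi-definiteness of $\mathbf{A}$ comes from $Q(\boldsymbol p)\ge 0$ as a limit of nonnegative quantities. The principal technical difficulty throughout is the exchange of the limits $n\to\infty$ and $\epsilon\to 0$ near $\mathbf{1}$; the two-sided limsup/liminf formulation in (iv) is precisely the device that legitimises this exchange and simultaneously guarantees that $Q$ extends to a genuine quadratic form, thereby producing the matrix $\mathbf{A}$.
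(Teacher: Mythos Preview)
Your argument is correct and follows essentially the same strategy as the paper's proof: both directions hinge on the local relation $(\Im s_j)^2/(1-\Re s_j)\to 2$ at $\mathbf 1$, the splitting of integrals into $\mathscr U_\epsilon$ and its complement, and Proposition~\ref{Portman} for the outer piece. Your three-term decomposition in the converse direction matches the paper's four-term estimate $D_{n1}+\cdots+D_{n4}$ up to bookkeeping.

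There is one genuine, if small, difference in the forward direction worth noting. The paper defines $\rho$ coordinate-wise via $d\rho = (1-\Re s_j)^{-1}1_{\{s_j\ne 1\}}\,d\lambda_j$, after first checking the compatibility $(1-\Re s_\ell)\,d\lambda_j=(1-\Re s_j)\,d\lambda_\ell$; to prove $\rho_n\Rightarrow_{\mathbf 1}\rho$ it then has to split a test function $f$ supported away from $\mathbf 1$ as $f=f_1+\cdots+f_d$ with each $f_j/(1-\Re s_j)$ continuous, via a partition of unity based on the distance to the sets $\{|\arg s_j|<\delta/d\}$. Your symmetric choice $d\rho=[\sum_j(1-\Re s_j)]^{-1}d(\sum_j\lambda_j)$ is cleaner here: since $\sum_j(1-\Re s_j)$ vanishes only at $\mathbf 1$, the single function $g=f/\sum_j(1-\Re s_j)$ is already in $C_b(\mathbb T^d)$, and $\int f\,d\rho_n=\sum_j\int g\,d\lambda_{nj}\to\sum_j\int g\,d\lambda_j=\int f\,d\rho$ in one line. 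The only caveat is that with your definition the identity $d\lambda_j=(1-\Re s_j)\,d\rho$ on $\mathbb T^d\setminus\{\mathbf 1\}$ (hence \eqref{lambdarho}) is no longer immediate: you need to invoke the compatibility relation above, which follows by passing to the weak limit in $(1-\Re s_\ell)\,d\lambda_{nj}=(1-\Re s_j)\,d\lambda_{n\ell}$. You should state this step explicitly; otherwise \eqref{lambdarho} in the forward direction is left unverified.
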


\begin{proof} Suppose first that Condition \ref{condX1} is satisfied. Then the relation $(1-\Re s_j)d\lambda_\ell=(1-\Re s_\ell)d\lambda_j$, $j,\ell=1,\ldots,d$, guaranteed by the item \eqref{item:condi} of Condition \ref{condX1} ensures that the measure
\begin{equation} \label{rhodef1}
d\rho({\boldsymbol s})=\frac{1_{\{s_j\neq1\}}({\boldsymbol s})}{1-\Re s_j}\,d\lambda_j({\boldsymbol s})
\end{equation} is unambiguous. Also, it satisfies requirements $\rho(\mathbb{T}^d\backslash\mathscr{U}_\epsilon)<\infty$ for any $\epsilon>0$ and (\ref{LevyTcond}). Hence the measure $\rho$ that we just constructed belongs to $\mathscr{M}_{\mathbb{T}^d}^\mathbf{1}$.

To see $\rho_n\Rightarrow_\mathbf{1}\rho$, pick a continuous function $f$ on $\mathbb{T}^d$ with support contained within $\mathbb{T}^d\backslash\mathscr{U}_\delta$ for some $\delta>0$. This $f$ produces $d$ continuous
functions on $\mathbb{T}^d$, which are
\[f_j(\boldsymbol s)=\frac{\mathrm{dist}(U_j,\boldsymbol s)}{\mathrm{dist}(U_1,\boldsymbol s)+\cdots+\mathrm{dist}(U_d,\boldsymbol s)}\cdot 1_{\mathbb{T}^d\backslash\mathscr{U}_\delta}(\boldsymbol s) f(\boldsymbol s),\] where $U_j=\{{\boldsymbol u}\in\mathbb{T}^d:|\arg u_j|<\delta/d\}$ and $\mathrm{dist}(U_j,\boldsymbol s)=\inf\{\|\arg\boldsymbol s-\arg \boldsymbol u\|:\boldsymbol u\in U_j\}$ for $j=1,\ldots,d$. Obviously, the relation $f=f_1+\cdots+f_d$ holds and each $f_j/(1-\Re s_j)$ is continuous on $\mathbb{T}^d$. These observations and the weak convergence $\lambda_{nj}\Rightarrow\lambda_j$ then yield that
\begin{align*}
\int_{\mathbb{T}^d}f({\boldsymbol s})\,d\rho_n({\boldsymbol s})&=\int_{\mathbb{T}^d}\sum_{j=1}^df_j({\boldsymbol s})\,d\rho_n({\boldsymbol s}) \\
&=\sum_{j=1}^d\int_{\mathbb{T}^d}\frac{f_j({\boldsymbol s})}{1-\Re s_j}\,d\lambda_{nj}({\boldsymbol s})\mathop{\longrightarrow}\limits_{n\to\infty}\sum_{j=1}^d\int_{\mathbb{T}^d}\frac{f_j({\boldsymbol s})}{1-\Re s_j}\,d\lambda_j({\boldsymbol s})=\int_{\mathbb{T}^d}f({\boldsymbol s})\,d\rho({\boldsymbol s}).
\end{align*}
Therefore, we have completed the verification of the item \eqref{item:condiii} of Condition \ref{condX2}.

We next demonstrate the validity of the following identities for $1\leq j,\ell\leq d$,
\begin{equation} \label{a}
\lim_{\epsilon\to0}\limsup_{n\to\infty}\int_{\mathscr{U}_\epsilon}(\Im s_j)(\Im s_\ell)\,d\rho_n=
\lim_{\epsilon\to0}\liminf_{n\to\infty}\int_{\mathscr{U}_\epsilon}(\Im s_j)(\Im s_\ell)\,d\rho_n,
\end{equation} which confirms that of Condition \ref{condX2}\eqref{item:condiv}.
To continue, observe that the mapping $s\mapsto(\Im s)^2/(1-\Re s)$ is continuous on $\mathbb{T}$ and at the origin, it takes value
\begin{equation} \label{ImRe}
\lim_{\arg s\to0}\frac{(\Im s)^2}{1-\Re s}=2.
\end{equation} Then (\ref{LevyTcond}) and (\ref{ImRe}) imply that for $j,\ell=1,\ldots,d$,
\[\int_{\mathbb{T}^d}|(\Im s_j)(\Im s_\ell)|\,d\rho\leq\left(\int_{\mathbb{T}^d}(\Im s_j)^2\,d\rho\right)^{1/2}
\left(\int_{\mathbb{T}^d}(\Im s_\ell)^2\,d\rho\right)^{1/2}<\infty.\] In order to get results (\ref{constc}) and (\ref{a}), we examine the following differences which are related to them:
\[D_n(\epsilon)=\int_{\mathbb{T}^d}(\Im s_j)(\Im s_\ell)\,d\rho_n-
\int_{\mathbb{T}^d}(\Im s_j)(\Im s_\ell)\,d\rho-\int_{\mathscr{U}_\epsilon}(\Im s_j)(\Im s_\ell)\,d\rho_n,\] which further split into the sum of
\[I_{n1}(\epsilon)=\int_{\mathbb{T}^d\backslash\mathscr{U}_\epsilon}(\Im s_j)(\Im s_\ell)\,d\rho_n-
\int_{\mathbb{T}^d\backslash\mathscr{U}_\epsilon}(\Im s_j)(\Im s_\ell)\,d\rho\] and
\[I_2(\epsilon)=-\int_{\mathscr{U}_\epsilon}(\Im s_j)(\Im s_\ell)\,d\rho.\]

Apparently, we have $\lim_{\epsilon\to0}I_2(\epsilon)=0$ owing to $(\Im s_j)(\Im s_\ell)\in L^1(\rho)$. Next, take an $\epsilon'\in(\epsilon,2\epsilon]$ with the attribute that $\rho(\partial\mathscr{U}_{\epsilon'})=0$, the presence of which is insured by the finiteness of
the measure $1_{\mathbb{T}^d\backslash\mathscr{U}_\epsilon}\rho$ on $\mathbb{T}^d$. Then applying Proposition \ref{Portman}
to the established result $\rho_n\Rightarrow_\mathbf{1}\rho$ results in
\[\lim_{n\to\infty}\int_{\mathbb{T}^d\backslash\mathscr{U}_{\epsilon'}}(\Im s_j)(\Im s_\ell)\,d\rho_n=
\int_{\mathbb{T}^d\backslash\mathscr{U}_{\epsilon'}}(\Im s_j)(\Im s_\ell)\,d\rho.\]
On the other hand, employing (\ref{ImRe}) and working with the closed subset $F_\epsilon=\{{\boldsymbol s}\in\mathbb{T}^d:\epsilon\leq\|\arg{\boldsymbol s}\|\leq\epsilon'\}$, we come to
\begin{eqnarray} \label{ImsImt}
\left(\limsup_{n\to\infty}\int_{F_\epsilon}|\Im s_j||\Im s_\ell|\,d\rho_n\right)^2 \nonumber&\leq&
\limsup_{n\to\infty}\int_{F_\epsilon}(\Im s_j)^2\,d\rho_n\cdot\int_{F_\epsilon}(\Im s_\ell)^2\,d\rho_n \nonumber \\
&=&\limsup_{n\to\infty}\int_{F_\epsilon}\frac{(\Im s_j)^2}{1-\Re s_j}\,d\lambda_{nj}\cdot\int_{F_\epsilon}\frac{(\Im s_\ell)^2}{1-\Re s_\ell}\,d\lambda_{n\ell} \\
&\leq&8\lambda_j(F_\epsilon)\lambda_\ell(F_\epsilon)\to0\;\;\;\mathrm{as}\;\;\epsilon\to0 \nonumber.
\end{eqnarray} With the help of the facts $(\Im s_j)(\Im s_\ell)\in L^1(\rho)$ and $\mathbb{T}^d\backslash\mathscr{U}_\epsilon=(\mathbb{T}^d\backslash\mathscr{U}_{\epsilon'})\cup F_\epsilon$, we can conclude that $\lim_{\epsilon\to0}\limsup_{n\to\infty}|I_{n1}(\epsilon)|=0$. Consequently, we have shown that $\lim_{\epsilon\to0}\limsup_{n\to\infty}|D_n(\epsilon)|=0$, which also accounts for equations (\ref{constc}) and (\ref{a}) with any indices $j$ and $\ell$.

If $\epsilon'$ is also chosen so that $\lambda_j(\partial\mathscr{U}_{\epsilon'})=0$, then we draw once again from (\ref{ImRe}) that $a_{jj}=2\lambda_j(\{\mathbf{1}\})$ because
\begin{align*}
\limsup_{n\to\infty}\int_{\mathscr{U}_\epsilon}\big|2(1-\Re s_j)-(\Im s_j)^2\big|\,d\rho_n&\leq\limsup_{n\to\infty}\int_{\mathscr{U}_{\epsilon'}}\left|2-\frac{(\Im s)^2}{1-\Re s}\right|d\lambda_{nj} \\
&=\int_{\mathscr{U}_{\epsilon'}}\left|2-\frac{(\Im s_j)^2}{1-\Re s_j}\right|d\lambda_j\mathop{\longrightarrow}\limits_{\epsilon\to0}0.
\end{align*} This conclusion and (\ref{rhodef1}) give (\ref{lambdarho}).

As discussed above, the limits in \eqref{item:condiv} of Condition \ref{condX2} are equal to $\langle\mathbf{A}\boldsymbol p,\boldsymbol p\rangle$ with $\mathbf{A}=(a_{j\ell})$ for any $\boldsymbol p\in\mathbb{Z}^d$. That $\mathbf{A}\geq0$ can be gained by the positivity of the quadratic form $Q$.

Next, we elaborate that Condition \ref{condX2} implies Condition \ref{condX1}. Define $\lambda_j$'s as in (\ref{lambdarho}). These measures thus obtained are all in $\mathscr{M}_{\mathbb{T}^d}$, and the arguments for this go as follows. Select a sequence $\{\epsilon_m\}$ such that $\epsilon_m\downarrow0$ as $m\to\infty$ and
$\rho(\{\|\arg{\boldsymbol s}\|=\epsilon_m\})=0$ for each $m$. Then \eqref{item:condiv}, along with Proposition \ref{Portman}, indicates that for any numbers $m<m'$ both large enough, one has
\[\int_{\{\epsilon_{m'}<\|\arg{\boldsymbol s}\|<\epsilon_m\}}(\Im s_j)^2\,d\rho({\boldsymbol s})\leq1+Q(\boldsymbol e_j).\] Thanks to monotone convergence theorem, (\ref{ImRe}), and the assumption $\rho(\{\mathbf{1}\})=0$, one further gets that for $m$ large enough,
\[\int_{\mathscr{U}_{\epsilon_m}}(1-\Re s_j)\,d\rho({\boldsymbol s})<\infty.\] This proves that $\lambda_j(\mathbb{T}^d)<\infty$ and $(\Im s_j)^2\in L^1(\rho)$ for any $j$.

After the previous preparations, we are in a position to justify the weak convergence $\lambda_{nj}\Rightarrow\lambda_j$. Given a continuous function $f$ on $\mathbb{T}^d$, we shall analyze the following estimate
\[\left|\int_{\mathbb{T}^d}f\,d\lambda_{nj}-\int_{\mathbb{T}^d}f\,d\lambda_j\right|\leq D_{n1}(m)+D_{n2}(m)+D_{n3}(m)+D_{n4}(m),\] where
\begin{align*}
D_{n1}(m)&=\int_{\mathscr{U}_{\epsilon_m}}|f({\boldsymbol s})-f(\mathbf{1})|\,d\lambda_{nj}({\boldsymbol s}), \\
D_{n2}(m)&=|f(\mathbf{1})|\big|\lambda_{nj}(\mathscr{U}_{\epsilon_m})-Q(\boldsymbol e_j)/2\big|, \\
D_{n3}(m)&=\int_{\mathscr{U}_{\epsilon_m}\backslash\{\mathbf{1}\}}|f|\,d\lambda_j({\boldsymbol s}),\;\;\;\mathrm{and} \\
D_{n4}(m)&=\left|\int_{\mathbb{T}^d\backslash\mathscr{U}_{\epsilon_m}}f\,d\lambda_{nj}({\boldsymbol s})-
\int_{\mathbb{T}^d\backslash\mathscr{U}_{\epsilon_m}}f\,d\lambda_j({\boldsymbol s})\right|.
\end{align*} An application of (\ref{ImRe}) enables us to get
\begin{align*}
\lim_{m\to\infty}\limsup_{n\to\infty}|D_{n1}(m)|
\leq Q(\boldsymbol e_j)\cdot\lim_{m\to\infty}\sup_{{\boldsymbol s}\in\mathscr{U}_{\epsilon_m}}\big|f({\boldsymbol s})-
f(\mathbf{1})\big|=0.
\end{align*} Next, observe that one can express the difference $2\lambda_{nj}(\mathscr{U}_{\epsilon_m})-Q(\boldsymbol e_j)$ as the sum
\[\int_{\mathscr{U}_{\epsilon_m}}\big[2(1-\Re s_j)-(\Im s_j)^2\big]d\rho_n({\boldsymbol s})+
\int_{\mathscr{U}_{\epsilon_m}}(\Im s_j)^2\,d\rho_n({\boldsymbol s})-Q(\boldsymbol e_j).\] Since (\ref{ImRe}) and the item \eqref{item:condiv} imply that
\[\lim_{m\to\infty}\limsup_{n\to\infty}\int_{\mathscr{U}_{\epsilon_m}}\big|2(1-\Re s_j)-(\Im s_j)^2\big|\,d\rho_n({\boldsymbol s})=0,\] we come to  $\lim_{m\to\infty}\limsup_{n\to\infty}|D_{n2}(m)|=0$. On the other hand, the finiteness of $\lambda_j(\mathbb{T}^d)$ leads to
\[\lim_{m\to\infty}\limsup_{n\to\infty}D_{n3}(m)\leq\|f\|_\infty\cdot\lim_{m\to\infty}\lambda_j(\mathscr{U}_{\epsilon_m}\backslash\{\mathbf{1}\})=0.\] That we have $\lim_{n\to\infty}D_{n4}(m)=0$ for all $m$ evidently follows from Proposition \ref{Portman}. Putting all these observations together illustrates $\lambda_{nj}\Rightarrow\lambda_j$.

It remains to deal with the item \eqref{item:condii} of Condition \ref{condX1}, in which the integral is divided into the sum
\[\int_{\mathscr{U}_{\epsilon_m}}(\Im s_j)(\Im s_\ell)\,d\rho_n+\int_{\mathbb{T}^d\backslash\mathscr{U}_{\epsilon_m}}(\Im s_j)(\Im s_\ell)\,d\rho_n.\] When $j\neq\ell$, taking $\lim_{m\to\infty}\limsup_{n\to\infty}$ and $\lim_{m\to\infty}\liminf_{n\to\infty}$ of the first integral gives the same value $[Q(\boldsymbol e_j+\boldsymbol e_\ell)-Q(\boldsymbol e_j)-Q(\boldsymbol e_\ell)]/2$, while doing the same thing to the second integral yields
\[\int_{\mathbb{T}^d}(\Im s_j)(\Im s_\ell)\,d\rho\] because of $\rho_n\Rightarrow_\mathbf{1}\rho$ and $(\Im s_j)^2+(\Im s_\ell)^2\in L^1(\rho)$. The same technique also elaborates \eqref{item:condii} when $j=\ell$. This finishes the proof of the proposition.
\end{proof}

An intuitive thought is that measures on $\mathbb{T}^d$ obtained by rotating measures within controllable angles maintain the same structural properties, such as Condition \ref{condX2}, as the original ones. The statement and its rigorous proof are given below:

\begin{prop} \label{perturbthm}
Suppose that $\{\nu_{nk}\}\subset\mathscr{P}_{\mathbb{T}^d}$ is a triangular array for which the measure $\rho_n=\sum_{k=1}^{k_n}\nu_{nk}$ satisfies {\rm Condition \ref{condX2}}. If an array $\{\boldsymbol\theta_{nk}\}\subset(-\pi,\pi]^d$ fulfills the condition
\begin{equation} \label{perturbation}
\lim_{n\to\infty}\sum_{k=1}^{k_n}\big(\mathbf{1}-\cos(\boldsymbol\theta_{nk})\big)=\mathbf{0},
\end{equation} then {\rm Condition \ref{condX2}} is still applicable to measures $\widetilde{\rho}_n(\cdot)=\sum_{k=1}^{k_n}\nu_{nk}(e^{i\boldsymbol\theta_{nk}}\cdot)$. In addition, $\widetilde{\rho}_n\Rightarrow_\mathbf{1}\rho$ if $\rho_n\Rightarrow_\mathbf{1}\rho$, and $\rho_n$ and $\widetilde{\rho}_n$ define the same quadratic form in {\rm Condition \ref{condX2}}\eqref{item:condiv}.
\end{prop}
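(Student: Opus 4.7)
Put $T_{nk}(\boldsymbol s)=e^{-i\boldsymbol\theta_{nk}}\boldsymbol s$, so that $\widetilde\nu_{nk}=\nu_{nk}\circ T_{nk}^{-1}$ and $\int f\,d\widetilde\rho_n=\sum_k\int f\circ T_{nk}\,d\nu_{nk}$. Since every summand in \eqref{perturbation} is nonnegative, $\max_{k}(1-\cos\theta_{nk,j})\to 0$ for each $j$, hence $\eta_n:=\max_k\|\boldsymbol\theta_{nk}\|_\infty\to 0$. My plan is to verify items \ref{item:condiii} and \ref{item:condiv} of Condition \ref{condX2} for $\{\widetilde\rho_n\}$ by bounding the difference from the corresponding quantities for $\{\rho_n\}$.

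For \ref{item:condiii}, given $f\in C_b^{\mathbf 1}(\mathbb T^d)$ supported off some $\mathscr U_\delta$, for $n$ so large that $\eta_n<\delta/2$ both $f$ and $f\circ T_{nk}$ vanish on $\mathscr U_{\delta/2}$, and on its complement uniform continuity gives $|f\circ T_{nk}-f|\le\omega_f(\eta_n)$. Hence
\[
\Big|\int f\,d\widetilde\rho_n-\int f\,d\rho_n\Big|\le\omega_f(\eta_n)\,\rho_n(\mathbb T^d\setminus\mathscr U_{\delta/2})\longrightarrow 0,
\]
because $\omega_f(\eta_n)\to 0$ while $\rho_n(\mathbb T^d\setminus\mathscr U_{\delta/2})$ stays bounded by Proposition \ref{Portman} (taking $\delta$ so that $\rho(\partial\mathscr U_{\delta/2})=0$). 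Combined with $\rho_n\Rightarrow_{\mathbf 1}\rho$, this yields $\widetilde\rho_n\Rightarrow_{\mathbf 1}\rho$.

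For \ref{item:condiv}, let $g(\boldsymbol s)=\langle\boldsymbol p,\Im\boldsymbol s\rangle^2$ and
\[A_{nk}(\boldsymbol s)=\langle\boldsymbol p,\Im(T_{nk}\boldsymbol s)-\Im\boldsymbol s\rangle=\sum_j p_j\bigl[(\cos\theta_{nk,j}-1)\Im s_j-\sin\theta_{nk,j}\Re s_j\bigr].\]
The elementary identity $g\circ T_{nk}-g=2\langle\boldsymbol p,\Im\,\cdot\,\rangle A_{nk}+A_{nk}^2$, together with Cauchy--Schwarz in $\mathbb R^{2d}$, yields the pointwise bound $A_{nk}^2\le 2d\sum_{j}p_j^2(1-\cos\theta_{nk,j})$, so $\sum_k\int A_{nk}^2\,d\nu_{nk}\to 0$ by \eqref{perturbation}. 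Applying Cauchy--Schwarz twice (once for each $\nu_{nk}$-integral, once for the sum over $k$) also gives
\[
\Big|\sum_k\int \langle\boldsymbol p,\Im\boldsymbol s\rangle\,A_{nk}\,d\nu_{nk}\Big|\le\Big(\int g\,d\rho_n\Big)^{1/2}\Big(\sum_k\int A_{nk}^2\,d\nu_{nk}\Big)^{1/2}.
\]

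The main obstacle is the boundedness of $\int g\,d\rho_n$, which is not automatic since $\rho_n(\mathbb T^d)=k_n\to\infty$. To secure it, pick $\epsilon_0>0$ small with $\rho(\partial\mathscr U_{\epsilon_0})=0$: then $\limsup_n\int_{\mathscr U_{\epsilon_0}}g\,d\rho_n<\infty$ by \ref{item:condiv}, while $\int_{\mathbb T^d\setminus\mathscr U_{\epsilon_0}}g\,d\rho_n\to\int_{\mathbb T^d\setminus\mathscr U_{\epsilon_0}}g\,d\rho<\infty$ by Proposition \ref{Portman}. Adding, $\int g\,d\rho_n$ is bounded, so $\int g\,d\widetilde\rho_n-\int g\,d\rho_n\to 0$. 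For every $\epsilon>0$ with $\rho(\partial\mathscr U_\epsilon)=0$, Proposition \ref{Portman} applied to both $\rho_n$ and $\widetilde\rho_n$ on the closed set $\mathbb T^d\setminus\mathscr U_\epsilon$ also gives $\int_{\mathbb T^d\setminus\mathscr U_\epsilon}g\,d\widetilde\rho_n-\int_{\mathbb T^d\setminus\mathscr U_\epsilon}g\,d\rho_n\to 0$, and subtracting yields $\int_{\mathscr U_\epsilon}g\,d\widetilde\rho_n-\int_{\mathscr U_\epsilon}g\,d\rho_n\to 0$. A standard sandwich between $\epsilon$-values with null boundary (which are dense in $(0,\pi)$) extends this to arbitrary $\epsilon$, delivering \ref{item:condiv} for $\widetilde\rho_n$ with the same quadratic form $Q$.
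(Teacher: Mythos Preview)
Your proof is correct, and it takes a genuinely different route from the paper's.

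For item \eqref{item:condiii}, the paper works via the closed/open-set form of Proposition \ref{Portman}: given a closed $F$ bounded away from $\mathbf 1$, it picks a slightly larger closed $F'\supset e^{i\boldsymbol\theta_{nk}}F$ with $\rho(F'\setminus F)<\delta$ and deduces $\limsup_n\widetilde\rho_n(F)\le\rho(F')\le\rho(F)+\delta$. You instead test against $f\in C_b^{\mathbf 1}(\mathbb T^d)$, control $|f\circ T_{nk}-f|$ by the modulus of continuity, and use only the boundedness of $\rho_n(\mathbb T^d\setminus\mathscr U_{\delta/2})$. Both are short; your version avoids the set-geometry bookkeeping.

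For item \eqref{item:condiv}, the paper treats each product $(\Im s_j)(\Im s_\ell)$ separately, comparing $\int_{\mathscr U_\epsilon}$ for $\widetilde\rho_n$ to $\int_{\mathscr U_{2\epsilon}}$ for $\rho_n$ via the explicit inequality $(\Im(e^{-i\theta}s))^2\le(\Im s)^2+2|\sin\theta|\,|\Im s|+\sin^2\theta$ and the reverse inclusion, together with a separate symmetric-difference estimate for $j\ne\ell$. Your argument handles the full quadratic form $g=\langle\boldsymbol p,\Im\boldsymbol s\rangle^2$ at once: the identity $g\circ T_{nk}-g=2\langle\boldsymbol p,\Im\cdot\rangle A_{nk}+A_{nk}^2$ together with $A_{nk}^2\le 2d\sum_j p_j^2(1-\cos\theta_{nk,j})$ gives $\int_{\mathbb T^d}g\,d\widetilde\rho_n-\int_{\mathbb T^d}g\,d\rho_n\to0$ globally; then you subtract the complementary integrals (which match by Portmanteau applied to both sequences) to reach $\mathscr U_\epsilon$. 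This is cleaner and requires no separate treatment of diagonal versus off-diagonal terms. One minor comment: the ``sandwich to arbitrary $\epsilon$'' is not really needed, since $\epsilon\mapsto\limsup_n\int_{\mathscr U_\epsilon}g\,d\widetilde\rho_n$ is monotone and you already have its limit along a set of $\epsilon$'s accumulating at $0$; also, the parenthetical about choosing $\delta$ with $\rho(\partial\mathscr U_{\delta/2})=0$ in the first part is superfluous, as boundedness of $\rho_n(\mathbb T^d\setminus\mathscr U_{\delta/2})$ follows directly from $\limsup_n\rho_n(F)\le\rho(F)$ for closed $F$.
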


\begin{proof} First of all, (\ref{perturbation}) reveals that
$\lim_n\max_k\|\boldsymbol\theta_{nk}\|=0$.
We now argue that $\widetilde{\rho}_n\Rightarrow_\mathbf{1}\rho$ as well by using Proposition \ref{Portman}. To do so, pick a closed subset $F\subset\mathbb{T}^d\backslash\mathscr{U}_r$ for some $r>0$. Since $\rho(F)<\infty$, it follows that given any $\delta>0$, there exists a closed set $F'\subset\mathbb{T}^d\backslash\mathscr{U}_{r/2}$ such that $e^{i\boldsymbol\theta_{nk}}F\subset F'$ for all sufficiently large $n$ and for all $1\leq k\leq k_n$, and $\rho(F'\backslash F)<\delta$. Then
$\widetilde{\rho}_n(F)=\sum_k\nu_{nk}(e^{i\boldsymbol\theta_{nk}}F)\leq\sum_k\nu_{nk}(F')=
\rho_n(F')$ implies that $\limsup_{n\to\infty}\widetilde{\rho}_n(F)\leq\limsup_{n\to\infty}\rho_n(F')\leq
\rho(F')\leq\rho(F)+\delta$. Consequently, we arrive at the inequality $\limsup_{n\to\infty}\widetilde{\rho}_n(F)\leq\rho(F)$. In the same vein, one can show that $\liminf_{n\to\infty}\widetilde{\rho}_n(G)\geq\rho(G)$ for any set $G$ which is open and bounded away from $\mathbf{1}$. Hence $\widetilde{\rho}_n\Rightarrow_\mathbf{1}\rho$ by Proposition \ref{Portman}.

Next, we turn to demonstrate that both $\rho_n$ and $\widetilde{\rho}_n$ bring out the tantamount quantities in (\ref{a}), which asserts that the quadratic form in \eqref{item:condiv} output by them is unchanged on $\mathbb{Z}^d$. Any index $n$ considered below is always sufficiently large. In the case $j=\ell$, we have the estimate
\[
\int_{\mathscr{U}_\epsilon}(\Im s_j)^2\,d\widetilde{\rho}_n({\boldsymbol s})=\sum_{k=1}^{k_n}\int_{e^{i\boldsymbol\theta_{nk}}\mathscr{U}_\epsilon}\big(\Im(e^{-i\theta_{nkj}}s_j)\big)^2\,d\nu_{nk}({\boldsymbol s})\leq\sum_{k=1}^{k_n}\int_{\mathscr{U}_{2\epsilon}}\big(\Im(e^{-i\theta_{nkj}}s_j)\big)^2\,d\nu_{nk}({\boldsymbol s}),\] where we express $\boldsymbol\theta_{nk}=(\theta_{nk1},\ldots,\theta_{nkd})$. The inequality
\begin{equation} \label{mainineq}
\big(\Im(e^{-i\theta_{nkj}}s_j)\big)^2\leq(\Im s_j)^2+2|\sin\theta_{nkj}||\Im s_j|+\sin^2(\theta_{nkj})
\end{equation} will help us to continue with the arguments. Consideration given to the first term on the right-hand side of (\ref{mainineq}) gives
\[\lim_{\epsilon\to0}\limsup_{n\to\infty}\sum_{k=1}^{k_n}\int_{\mathscr{U}_{2\epsilon}}(\Im s_j)^2\,d\nu_{nk}({\boldsymbol s})=a_{jj}\] by the hypothesis, while analyzing the second term results in
\begin{align*}
\sum_{k=1}^{k_n}|\sin\theta_{nkj}|&\cdot\int_{\mathscr{U}_{2\epsilon}}|\Im s_j|\,d\nu_{nk}({\boldsymbol s}) \\
&\leq
\left(\sum_{k=1}^{k_n}\sin^2(\theta_{nkj})\right)^{1/2}\left(\sum_{k=1}^{k_n}\left(\int_{\mathscr{U}_{2\epsilon}}|\Im s_j|\,d\nu_{nk}({\boldsymbol s})\right)^2\right)^{1/2} \\
&\leq\left(\sum_{k=1}^{k_n}\sin^2(\theta_{nkj})\right)^{1/2}\left(\int_{\mathscr{U}_{2\epsilon}}(\Im s_j)^2\,d\rho_n({\boldsymbol s})\right)^{1/2}
\end{align*} by the Cauchy-Schwarz inequality. The simple fact $\sin^2(x)\leq2(1-\cos x)$ for $x\in\mathbb{R}$ and the assumption (\ref{perturbation}) immediately yield that
\[\lim_{\epsilon\to0}\limsup_{n\to\infty}\sum_{k=1}^{k_n}\int_{\mathscr{U}_{2\epsilon}}|\sin\theta_{nkj}||\Im s_j|\,d\nu_{nk}({\boldsymbol s})=0\] and
\[\lim_{\epsilon\to0}\limsup_{n\to\infty}\sum_{k=1}^{k_n}\int_{\mathscr{U}_{2\epsilon}}\sin^2(\theta_{nkj})\,d\nu_{nk}({\boldsymbol s})\leq
\limsup_{n\to\infty}\sum_{k=1}^{k_n}\sin^2(\theta_{nkj})=0.\] These estimates then lead to
\[\lim_{\epsilon\to0}\limsup_{n\to\infty}\int_{\mathscr{U}_\epsilon}(\Im s_j)^2\,d\widetilde{\rho}_n({\boldsymbol s})\leq a_{jj}.\] Employing the opposite inclusion
$\mathscr{U}_{\epsilon/2}\subset e^{-i\boldsymbol\theta_{nk}}\mathscr{U}_\epsilon$ and inequality
\[\big(\Im(e^{-i\theta_{nkj}}s_j)\big)^2\geq(\Im s_j)^2-2(1-\cos\theta_{nkj})-2|\sin\theta_{nkj}||\Im s_j|-\sin^2(\theta_{nkj})\] helps us to prove
\[\lim_{\epsilon\to0}\liminf_{n\to\infty}\int_{\mathscr{U}_\epsilon}(\Im s_j)^2\,d\rho_n({\boldsymbol s})\geq a_{jj}.\]

Up to this point, we have come to that $\widetilde{\rho}_n$ and $\rho_n$ give the same value in (\ref{a}) when $j=\ell$.
Now we deal with the situation $j\neq\ell$. After careful consideration of all available information, the focus is only needed on the summand
\[\sum_{k=1}^{k_n}\int_{e^{i\boldsymbol\theta_{nk}}\mathscr{U}_\epsilon}(\Im s_j)(\Im s_\ell)\,d\nu_{nk}(\boldsymbol s)\] and justifying that
\begin{equation} \label{symdiff}
\lim_{\epsilon\to0}\limsup_{n\to\infty}\sum_{k=1}^{k_n}\int_{(e^{i\boldsymbol\theta_{nk}}\mathscr{U}_\epsilon)\bigtriangleup\mathscr{U}_\epsilon}|\Im s_j||\Im s_\ell|\,d\nu_{nk}(\boldsymbol s)=0,
\end{equation} where $\bigtriangleup$ denotes the operation of symmetric difference on sets. Using the fact $(e^{i\boldsymbol\theta_{nk}}\mathscr{U}_\epsilon)\bigtriangleup\mathscr{U}_\epsilon\subset\{\epsilon/2\leq\|\arg\boldsymbol s\|\leq2\epsilon\}$ and mimicking the proof of (\ref{ImsImt}) allow us to get (\ref{symdiff}) done. This finishes the proof.
\end{proof}

Recall from (\ref{pushforward}) that the push-forward measure $\tau W^{-1}\in\mathscr{M}_{\mathbb{T}^d}^\mathbf{1}$ of a given $\tau\in\mathscr{M}_{\mathbb{R}^d}^\mathbf{0}$
via the wrapping map $W(\boldsymbol x)=e^{i\boldsymbol x}$ from $\mathbb{R}^d$ to $\mathbb{T}^d$ is defined as
\begin{equation} \label{wrapping1}
(\tau W^{-1})(B)=\tau(\{\boldsymbol x\in\mathbb{R}^d:e^{i\boldsymbol x}\in B\}),\;\;\;\;\;B\in\mathscr{B}_{\mathbb{T}^d}.
\end{equation}
A useful and frequently used result regarding $W$ is the change-of-variables formula stating that a Borel function $f$ on $\mathbb{T}^d$ belongs to $L^1(\tau W^{-1})$ if and only if the function $\boldsymbol x\mapsto f(e^{i\boldsymbol x})$ lies in $L^1(\tau)$, and the equation
\begin{equation} \label{wrapping2}
\int_{\mathbb{T}^d}f({\boldsymbol s})\,d(\tau W^{-1})({\boldsymbol s})=\int_{\mathbb{R}^d}f(e^{i\boldsymbol x})\,d\tau(\boldsymbol x)
\end{equation} holds in either case. In the following, we will translate conditions introduced in Section \ref{LTcond} accordingly via the wrapping map $W$.

\begin{prop} \label{+implyX}
Let $\{\tau_n\}$ and $\tau$ be in $\mathscr{M}_{\mathbb{R}^d}^\mathbf{0}$ satisfying {\rm Condition \ref{cond+1}} {\rm(}or {\rm Condition \ref{cond+2}}{\rm)}.
Then \emph{Condition \ref{condX2}}, as well as \emph{Condition \ref{condX1}}, applies to $\rho_n=\tau_nW^{-1}$ and $\rho=1_{\mathbb{T}^d\backslash\{\mathbf{1}\}}\tau W^{-1}$.
Moreover, $\tau_n$ and $\rho_n$ determine the same quadratic form on $\mathbb{Z}^d$, in particular, the same matrix in \eqref{item:condIV} and \eqref{item:condiv}, respectively.
\end{prop}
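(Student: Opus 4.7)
The strategy is to pull Condition \ref{condX2} back along the wrapping map via the change-of-variables formula \eqref{wrapping2}, converting integrals against $\rho_n = \tau_n W^{-1}$ over neighborhoods of $\mathbf{1} \in \mathbb{T}^d$ into integrals against $\tau_n$ over neighborhoods of the lattice $2\pi\mathbb{Z}^d$. Since Conditions \ref{cond+1} and \ref{cond+2} are equivalent, and Conditions \ref{condX1} and \ref{condX2} are equivalent by Proposition \ref{equicondX}, I plan to start from Condition \ref{cond+2} on $\tau_n$ and derive Condition \ref{condX2} for $\rho_n$ and $\rho$.

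Item \eqref{item:condiii} is settled by the continuous mapping statement in Proposition \ref{Portman}: $W$ is continuous on $\mathbb{R}^d$ with $W(\mathbf{0}) = \mathbf{1}$, so $\tau_n W^{-1} \Rightarrow_\mathbf{1} \tau W^{-1}$. Because the topology on $\mathscr{M}_{\mathbb{T}^d}^\mathbf{1}$ does not see mass at $\mathbf{1}$, one can replace the limit by $\rho = 1_{\mathbb{T}^d \setminus \{\mathbf{1}\}} \tau W^{-1}$, which by construction satisfies $\rho(\{\mathbf{1}\}) = 0$ and still receives $\rho_n \Rightarrow_\mathbf{1} \rho$.

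For item \eqref{item:condiv}, the change-of-variables formula and the disjoint decomposition $W^{-1}(\mathscr{U}_\epsilon) = \bigsqcup_{\boldsymbol k \in \mathbb{Z}^d} (\mathscr{V}_\epsilon + 2\pi \boldsymbol k)$, valid for $\epsilon \leq \pi$, give
\[
\int_{\mathscr{U}_\epsilon} \langle \boldsymbol p, \Im \boldsymbol s \rangle^2 \, d\rho_n(\boldsymbol s) = \sum_{\boldsymbol k \in \mathbb{Z}^d} \int_{\mathscr{V}_\epsilon + 2\pi \boldsymbol k} \langle \boldsymbol p, \sin \boldsymbol x \rangle^2 \, d\tau_n(\boldsymbol x).
\]
For the $\boldsymbol k = \mathbf{0}$ term I would use the Taylor bound $|\langle \boldsymbol p, \sin \boldsymbol x \rangle^2 - \langle \boldsymbol p, \boldsymbol x \rangle^2| \leq C_{\boldsymbol p}\, \epsilon^2 \|\boldsymbol x\|^2$ on $\mathscr{V}_\epsilon$, combined with the uniform finiteness of $\limsup_n \int_{\mathscr{V}_\epsilon} \|\boldsymbol x\|^2 \, d\tau_n$ supplied by Condition \ref{cond+2}\eqref{item:condIV}; this makes the discrepancy $O(\epsilon^2)$. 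For the cells with $\boldsymbol k \neq \mathbf{0}$ I would use $|\sin x_j| \leq |x_j - 2\pi k_j| < \epsilon$, the inclusion $\bigcup_{\boldsymbol k \neq \mathbf{0}} (\mathscr{V}_\epsilon + 2\pi \boldsymbol k) \subset \{\|\boldsymbol x\| \geq \pi\}$, and the $\epsilon$-independent uniform bound
\[
\sup_n \tau_n(\{\|\boldsymbol x\| \geq \pi\}) \leq (1 + d/\pi^2) \sup_n \sum_{j=1}^d \sigma_{nj}(\mathbb{R}^d) < \infty,
\]
which follows from Condition \ref{cond+1}\eqref{item:condI}, to bound the tail by $O(\|\boldsymbol p\|^2 \epsilon^2)$.

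Putting the pieces together gives
\[
\lim_{\epsilon \to 0} \limsup_{n \to \infty} \int_{\mathscr{U}_\epsilon} \langle \boldsymbol p, \Im \boldsymbol s \rangle^2 \, d\rho_n = \lim_{\epsilon \to 0} \limsup_{n \to \infty} \int_{\mathscr{V}_\epsilon} \langle \boldsymbol p, \boldsymbol x \rangle^2 \, d\tau_n = Q(\boldsymbol p),
\]
and the same identity with $\liminf$, which verifies \eqref{item:condiv} and identifies the two quadratic forms on $\mathbb{Z}^d$, forcing the common matrix $\mathbf{A}$ (whose entries are determined by the values $Q(\boldsymbol e_j)$ and $Q(\boldsymbol e_j + \boldsymbol e_\ell)$ for $j,\ell = 1, \ldots, d$). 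The main obstacle I anticipate is controlling the $\boldsymbol k \neq \mathbf{0}$ tail: the crude bound $\tau_n(\{\|\boldsymbol x\| \geq \epsilon\}) = O(\epsilon^{-2})$ would not absorb the $\epsilon^2$ coming from $|\sin x_j| \leq \epsilon$, so one must exploit that the union of these cells lies at distance at least $2\pi - \epsilon$ from the origin, which produces the genuinely $\epsilon$-free bound through Condition \ref{cond+1}\eqref{item:condI}.
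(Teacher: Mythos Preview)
Your proposal is correct and follows essentially the same route as the paper: continuous mapping for item \eqref{item:condiii}, then change of variables along $W$ to split $W^{-1}(\mathscr{U}_\epsilon)$ into the central cell $\mathscr{V}_\epsilon$ and the lattice translates, with a Taylor comparison $\sin\boldsymbol x\approx\boldsymbol x$ on the central cell and a tail bound on the translates coming from Condition \ref{cond+1}\eqref{item:condI}. The only notable difference is that for the non-central cells the paper passes through the weak convergence $\sigma_{nm}\Rightarrow\sigma_m$ on sets $\mathscr{D}_{\epsilon m}=\widetilde{\mathscr{V}}_\epsilon\cap\{|x_m|\ge\pi\}$ (requiring a boundary condition $\sigma_m(\partial\mathscr{D}_{\epsilon m})=0$), whereas your direct pointwise bound $|\sin x_j|\le\epsilon$ together with the $\epsilon$-free estimate $\sup_n\tau_n(\{\|\boldsymbol x\|\ge\pi\})<\infty$ is a cleaner shortcut that avoids that technicality.
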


\begin{proof} Suppose that Condition \ref{cond+2} holds for $\tau_n$ and $\tau$, and let $\mathbf{A}=(a_{j\ell})$ represent the matrix produced by these measures in \eqref{item:condIV}. According to Proposition \ref{equicondX}, we shall only elaborate that Condition \ref{condX2} is applicable to $\rho_n$ and $\rho$.

That $\rho_n\Rightarrow_\mathbf{1}\rho$ is clearly valid according to the continuous mapping theorem, Proposition \ref{Portman}. It remains to argue that in Condition \ref{condX2}\eqref{item:condiv}, $\rho_n$ also outputs $\mathbf{A}$.
The simple observation that $e^{i\boldsymbol x}\in\mathscr{U}_\epsilon$ if and only if $\boldsymbol x$ belongs to the set
\begin{equation} \label{Vtilde}
\widetilde{\mathscr{V}}_\epsilon=\bigcup_{\boldsymbol p\in\mathbb{Z}^d}\big\{\boldsymbol x+2\pi\boldsymbol p:\boldsymbol x\in\mathscr{V}_\epsilon\big\}
\end{equation}
and formula (\ref{wrapping2}) help us to establish that for $j,\ell=1,\ldots,d$,
\begin{align*}
\int_{\mathscr{U}_\epsilon}(\Im s_j)(\Im s_\ell)\,d\rho_n({\boldsymbol s})&=\int_{\mathbb{T}^d}1_{\mathscr{U}_\epsilon}({\boldsymbol s})(\Im s_j)(\Im s_\ell)\,d\rho_n({\boldsymbol s}) \\
&=\int_{\mathbb{\mathbb{R}}^d}1_{\mathscr{U}_\epsilon}(e^{i\boldsymbol x})(\Im e^{ix_j})(\Im e^{ix_\ell})\,d\tau_n(\boldsymbol x) =\int_{\widetilde{\mathscr{V}}_\epsilon}\sin(x_j)\sin(x_\ell)\,d\tau_n(\boldsymbol x).
\end{align*}

Observe next that we have $\widetilde{\mathscr{V}}_\epsilon\backslash\mathscr{V}_\epsilon=\bigcup_{m=1}^d\mathscr{D}_{\epsilon m}$, where $\mathscr{D}_{\epsilon m}=\widetilde{\mathscr{V}}_\epsilon\cap\{|x_m|\geq\pi\}$, provided that $\epsilon<\pi$. If we temporarily impose the requirement $\sigma_m(\partial\mathscr{D}_{\epsilon m})=0$ for some $m\in\{1,\ldots,d\}$, then the weak convergence $\sigma_{nm}\Rightarrow\sigma_m$ implies that
\begin{align*}
\limsup_{n\to\infty}\int_{\mathscr{D}_{\epsilon m}}|\sin(x_j)\sin(x_\ell)|\,d\tau_n&=\limsup_{n\to\infty}
\int_{\mathscr{D}_{\epsilon m}}|\sin(x_j)\sin(x_\ell)|\cdot\frac{1+x_m^2}{x_m^2}\,d\sigma_{nm} \\
&=\int_{\mathscr{D}_{\epsilon m}}|\sin(x_j)\sin(x_\ell)|\cdot\frac{1+x_m^2}{x_m^2}\,d\sigma_m\mathop{\longrightarrow}\limits_{\epsilon\to0}0.
\end{align*} This, along with facts $x-\sin x=o(|x|^2)$ as $|x|\to0$ and $x_j^2\in L^1(\sigma_j)$,
leads to
\begin{align*}
\lim_{\epsilon\to0}\limsup_{n\to\infty}\int_{\mathscr{U}_\epsilon}(\Im s_j)(\Im s_\ell)\,d\rho_n({\boldsymbol s})&=\lim_{\epsilon\to0}\limsup_{n\to\infty}\int_{\mathscr{V}_\epsilon}\sin(x_j)\sin(x_\ell)\,d\tau_n(\boldsymbol x) \\
&=\lim_{\epsilon\to0}\limsup_{n\to\infty}\int_{\mathscr{V}_\epsilon}x_jx_\ell\,d\tau_n(\boldsymbol x).
\end{align*} The same arguments also elaborate the identity
\[\lim_{\epsilon\to0}\liminf_{n\to\infty}\int_{\mathscr{U}_\epsilon}(\Im s_j)(\Im s_\ell)\,d\rho_n({\boldsymbol s})=
\lim_{\epsilon\to0}\liminf_{n\to\infty}\int_{\mathscr{V}_\epsilon}x_jx_\ell\,d\tau_n(\boldsymbol x).\] Apparently, the selection of
$\epsilon$ does not vary the validity of these identities, and so we have established that $\rho_n$ generates the matrix $\mathbf{A}$ in \eqref{item:condiv} as well.
\end{proof}

Measures in $\mathscr{M}_{\mathbb{R}^d}^\mathbf{0}$ can be wrapped either clockwise or counterclockwise (cf. (\ref{wrapping1})) in all variables, and consequences, such as Proposition \ref{+implyX}, are not affected at all by this slight change. As a matter of fact, it is also the case when one wraps some variables counterclockwise and others clockwise. Without loss of generality, we shall use the simplest circumstance, the $2$-dimensional opposite wrapping map $W_2^\star\colon\mathbb{R}^2\to\mathbb{T}^2$, $(x_1,x_2)\mapsto(e^{ix_1},e^{-ix_2})$, to illustrate these features. The following result is merely an easy consequence of the continuous mapping theorem, the relations  $(\tau(W_2^\star)^{-1})(B)=(\tau W_2^{-1})(B^\star)=(\tau W_2^{-1})^\star(B)$ for any $B\in\mathscr{B}_{\mathbb{T}^2}$, and Proposition \ref{+implyX}.

\begin{prop} \label{opcond}
If $\{\rho_n\}$ and $\rho$ in $\mathscr{M}_{\mathbb{T}^2}^\mathbf{1}$ fulfill \emph{Condition \ref{condX2}}, then
\begin{enumerate} [$\qquad(1)$]
\item {$\rho_n^\star\Rightarrow_\mathbf{1}\rho^\star$, and}
\item {for any $\boldsymbol p=(p_1,p_2)\in\mathbb{Z}^2$, denoting
by $\boldsymbol p^\star=(p_1,-p_2)$, we have
\[\lim_{\epsilon\to0}\limsup_{n\to\infty}\int_{\mathscr{U}_\epsilon}
\big\langle\boldsymbol p,\Im\boldsymbol s\big\rangle^2d\rho_n^\star({\boldsymbol s})=Q(\boldsymbol p^\star)=\lim_{\epsilon\to0}\liminf_{n\to\infty}\int_{\mathscr{U}_\epsilon}
\big\langle\boldsymbol p,\Im\boldsymbol s\big\rangle^2d\rho_n^\star({\boldsymbol s}).\]}
\end{enumerate} Particularly, if $\{\tau_n\}$ and $\tau$ in $\mathscr{M}_{\mathbb{R}^2}^\mathbf{0}$ satisfy \emph{Condition \ref{cond+1}} \emph{(}or \emph{Condition \ref{cond+2}}\emph{)}, then statements \emph{(1)} and \emph{(2)} above apply to $\rho_n^\star=\tau_n(W_2^\star)^{-1}$ and $\rho^\star=\tau(W_2^\star)^{-1}$.
\end{prop}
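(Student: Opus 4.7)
The plan is to reduce the entire statement to an application of the continuous mapping theorem and a change-of-variables computation, using that the coordinate-flip map $h_{\mathrm{op}}(\boldsymbol s)=(s_1,1/s_2)$ is a homeomorphism of $\mathbb{T}^2$ that fixes $\mathbf{1}$ and that $\mathscr{U}_\epsilon$ is $h_{\mathrm{op}}$-invariant (since $h_{\mathrm{op}}$ preserves $\|\arg\boldsymbol s\|$). The only real work is bookkeeping; there is no genuine obstacle, which is why the statement is advertised as a corollary of the preceding material.

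For assertion (1), I would apply the continuous mapping theorem part of Proposition~\ref{Portman} directly to the map $h_{\mathrm{op}}$. It is continuous everywhere on $\mathbb{T}^2$, and in particular at the origin with $h_{\mathrm{op}}(\mathbf{1})=\mathbf{1}$, so the hypothesis $\rho_n\Rightarrow_{\mathbf{1}}\rho$ from Condition~\ref{condX2}\eqref{item:condiii} immediately yields $\rho_n^\star=\rho_n h_{\mathrm{op}}^{-1}\Rightarrow_{\mathbf{1}}\rho h_{\mathrm{op}}^{-1}=\rho^\star$.

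For assertion (2), the key identities are $h_{\mathrm{op}}^{-1}(\mathscr{U}_\epsilon)=\mathscr{U}_\epsilon$ and $\Im(1/s_2)=-\Im s_2$, which together give
\[
\bigl\langle\boldsymbol p,\Im h_{\mathrm{op}}(\boldsymbol s)\bigr\rangle=p_1\Im s_1-p_2\Im s_2=\bigl\langle\boldsymbol p^\star,\Im\boldsymbol s\bigr\rangle.
\]
Change of variables in the push-forward then converts the integral on the left into
\[
\int_{\mathscr{U}_\epsilon}\bigl\langle\boldsymbol p,\Im\boldsymbol s\bigr\rangle^2\,d\rho_n^\star(\boldsymbol s)=\int_{\mathscr{U}_\epsilon}\bigl\langle\boldsymbol p^\star,\Im\boldsymbol s\bigr\rangle^2\,d\rho_n(\boldsymbol s),
\]
so taking $\limsup_{n\to\infty}$ followed by $\lim_{\epsilon\to 0}$ (and similarly $\liminf$) and invoking Condition~\ref{condX2}\eqref{item:condiv} for $\rho_n$ at the vector $\boldsymbol p^\star$ produces the common value $Q(\boldsymbol p^\star)$.

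For the final ``particularly'' clause, I would use the factorization $W_2^\star=h_{\mathrm{op}}\circ W$, which gives $\tau_n(W_2^\star)^{-1}=(\tau_nW^{-1})h_{\mathrm{op}}^{-1}=(\tau_nW^{-1})^\star$ and likewise for $\tau$. By Proposition~\ref{+implyX}, the measures $\rho_n:=\tau_nW^{-1}$ and $\rho:=1_{\mathbb{T}^2\setminus\{\mathbf{1}\}}\tau W^{-1}$ satisfy Condition~\ref{condX2}, so parts (1) and (2) already proved now apply verbatim to $\rho_n^\star$ and $\rho^\star$. This gives the stated conclusion for the opposite wrapping map and finishes the proof.
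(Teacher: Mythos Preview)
Your proof is correct and follows essentially the same approach as the paper, which states only that the result is an easy consequence of the continuous mapping theorem, the identity $(\tau(W_2^\star)^{-1})(B)=(\tau W_2^{-1})(B^\star)=(\tau W_2^{-1})^\star(B)$, and Proposition~\ref{+implyX}. Your explicit verification that $h_{\mathrm{op}}$ preserves $\mathscr{U}_\epsilon$ and that $\langle\boldsymbol p,\Im h_{\mathrm{op}}(\boldsymbol s)\rangle=\langle\boldsymbol p^\star,\Im\boldsymbol s\rangle$ spells out the change of variables the paper leaves implicit.
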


We add one remark on the item (2) of the preceding proposition: if $Q(\boldsymbol p)=\langle\mathbf{A}\boldsymbol p,\boldsymbol p\rangle$, then $Q(\boldsymbol p^\star)=\langle\mathbf{A}\!^{\mathrm{op}}\boldsymbol p,\boldsymbol p\rangle$, where the $(i,j)$-entry of $\mathbf{A}\!^{\mathrm{op}}$ is $(-1)^{i+j}\mathbf{A}_{ij}$.

\section{Limit Theorems and Bi-free Multiplicative L\'{e}vy Triplet} \label{sec5}
\subsection{Bi-free multiplicative L\'evy-Khintchine representation} Thanks to Proposition \ref{equicondX}, one can correlate the quantity $L_{12}$ and measures $\lambda_j$ given in the formulas (\ref{bi-freeLK}) with the matrix $\mathbf{A}$ and measure $\rho\in\mathscr{M}_{\mathbb{T}^2}^\mathbf{1}$ determined by (\ref{lambdarho}), (\ref{LevyTcond}), and (\ref{constc}). Therefore, instead of working with the parametrization $(\boldsymbol \gamma, \lambda_1,\lambda_2,L_{12})$ for measures in $\mathcal{ID}(\bitimes)\cap\mathscr{P}_{\mathbb{T}^2}^\times$, one may take another parametrization $(\boldsymbol\gamma,\mathbf{A},\rho)$ (with the same $\boldsymbol \gamma$) having the following properties with $d=2$:
\begin{equation} \label{propTD}
\begin{split}
&\boldsymbol\gamma\in\mathbb{T}^d,\,\text{$\mathbf{A}$ is a positive semi-definite $d\times d$ symmetric matrix, and} \\
& \text{$\rho$ is a positive measure on $\mathbb{T}^d$ so that $\rho(\{\mathbf{1}\})=0$ and $\|\mathbf{1}-\Re\boldsymbol s\|\in L^1(\rho)$}.
\end{split}
\end{equation}

We shall refer to $(\boldsymbol\gamma,\mathbf{A},\rho)$ as the \emph{bi-free multiplicative L\'{e}vy triplet} of the corresponding measure in $\mathcal{ID}(\bitimes)\cap\mathscr{P}_{\mathbb{T}^2}^\times$ having (bi-)free $\Sigma$-transforms presented in (\ref{bi-freeLK}), and signify this measure by $\nu_{\bitimes}^{(\boldsymbol\gamma,\mathbf{A},\rho)}$ to comply with the correspondence. This triplet plays the role of the classical multiplicative L\'{e}vy triplet in \eqref{eq:classical_exponent}. We will clarify this in more details in Corollary \ref{cor:limit_free_classical}, where limit theorems between classical and bi-free multiplicative convolutions are examined and in Section \ref{sec7}, where the diagram \eqref{diagram} is shown.

Observe that a measure $\nu$ belongs to $\mathcal{ID}(\bitimes^\mathrm{op})\cap\mathscr{P}_{\mathbb{T}^2}^\times$ if and only if $\nu^\star\in\mathcal{ID}(\bitimes)\cap\mathscr{P}_{\mathbb{T}^2}^\times$ by (\ref{opdef}) and Theorem \ref{IDmoment}.
Thus, we shall denote by $\nu_{\bitimes^{\mathrm{op}}}^{(\boldsymbol\gamma,\mathbf{A},\rho)}$ the measure $\nu$ satisfying $\nu^\star=\nu_{\bitimes}^{(\boldsymbol\gamma^\star,\mathbf{A}\!^\mathrm{op},\rho^\star)}$ and refer to
$(\boldsymbol\gamma,\mathbf{A},\rho)$ as its \emph{opposite bi-free multiplicative L\'{e}vy triplet}. Passing to analytic transforms, we have
\[\Sigma_\nu^\mathrm{op}(z,w)=
\Sigma_{\nu_{\bitimes}^{(\boldsymbol\gamma^\star,\mathbf{A}\!^\mathrm{op},\rho^\star)}}(z,1/w),
\;\;\;\;(z,w)\in\mathbb{D}\times(\mathbb{T}\cup\{0\})^c.\]

In terms of notations introduced above, we reformulate the basic limit theorem \cite[Theorem 3.4]{HW2} on the bi-free multiplicative convolution, including statements for $\bitimes^{\rm op}$.

\begin{thm} \label{limitthmop}
Given an infinitesimal array $\{\nu_{nk}\}\subset\mathscr{P}_{\mathbb{T}^2}^\times$ and a sequence $\{\boldsymbol\xi_n\}\subset\mathbb{T}^2$, define $\boldsymbol\gamma_n$ as in \emph{(\ref{gamman})}.
The following are equivalent.
\begin{enumerate} [$\qquad(1)$]
\item {The sequence
\begin{equation} \label{limitmul}
\delta_{\boldsymbol\xi_n}\bitimet\nu_{n1}\bitimet\cdots\bitimet\nu_{nk}
\end{equation}
converges weakly to some $\nu_{\bitimes}\in\mathscr{P}_{\mathbb{T}^2}^\times$.}
\item {The sequence
\begin{equation} \label{limitmulop}
\delta_{\boldsymbol\xi_n}\bitimet^{\mathrm{op}}\nu_{n1}\bitimet^{\mathrm{op}}\cdots\bitimet^{\mathrm{op}}\nu_{nk}
\end{equation} converges weakly to some $\nu_{\bitimes^{\mathrm{op}}}\in\mathscr{P}_{\mathbb{T}^2}^\times$.}
\item {The measure $\rho_n=\sum_{k=1}^{k_n}\mathring\nu_{nk}$ satisfies
{\rm Condition \ref{condX2}}
{\rm(}or \emph{Condition \ref{condX1}}{\rm)} with $d=2$
and $\lim_{n\to\infty}\boldsymbol\gamma_n=\boldsymbol\gamma$ exists.}
\end{enumerate} If {\rm (1)--(3)} hold, then $\nu_{\bitimes}=\nu_{\bitimes}^{(\boldsymbol\gamma,\mathbf{A},\rho)}$ and $(\nu_{\bitimes^{\mathrm{op}}})^\star=\nu_{\bitimes}^{(\boldsymbol\gamma^\star,\mathbf{A}^{\!\!\mathrm{op}},\rho^\star)}$, where $\rho$ and $\mathbf{A}$ are as in {\rm Condition \ref{condX2}} and {\rm Proposition \ref{equicondX}}, respectively.
\end{thm}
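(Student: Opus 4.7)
The plan is to treat the equivalence $(1)\Leftrightarrow(3)$ as essentially already contained in Theorem \ref{limitthmX}, once Proposition \ref{equicondX} is invoked to interchange Conditions \ref{condX1} and \ref{condX2}, and to obtain $(2)\Leftrightarrow(3)$ together with the explicit description of $(\nu_{\bitimes^{\mathrm{op}}})^\star$ by transferring everything to the ``starred side'' through the coordinate-flip involution $\nu\mapsto\nu^\star$.

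Concretely, I would first iterate the defining identity \eqref{opdef} to rewrite
\[
\delta_{\boldsymbol\xi_n}\bitimet^{\mathrm{op}}\nu_{n1}\bitimet^{\mathrm{op}}\cdots\bitimet^{\mathrm{op}}\nu_{nk_n}
=\bigl(\delta_{\boldsymbol\xi_n^\star}\bitimet\nu_{n1}^\star\bitimet\cdots\bitimet\nu_{nk_n}^\star\bigr)^\star.
\]
Since $h_{\mathrm{op}}$ is a homeomorphism of $\mathbb{T}^2$ whose induced push-forward $\nu\mapsto\nu^\star$ is a topological involution preserving $\mathscr{P}_{\mathbb{T}^2}^\times$ (the two mean conditions $m_{1,0}\neq0$ and $m_{0,1}\neq0$ are invariant under $s_2\mapsto s_2^{-1}=\overline{s_2}$), statement $(2)$ is equivalent to weak convergence of the starred $\bitimes$-sequence to some element of $\mathscr{P}_{\mathbb{T}^2}^\times$, which must then coincide with $(\nu_{\bitimes^{\mathrm{op}}})^\star$.

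Next I would apply Theorem \ref{limitthmX} to the infinitesimal array $\{\nu_{nk}^\star\}$ with anchors $\{\boldsymbol\xi_n^\star\}$ (infinitesimality is preserved by the continuous involution $h_{\mathrm{op}}$). Its hypotheses become the existence of $\lim_n\boldsymbol\gamma_n'$ and Condition \ref{condX2} for the associated sum $\rho_n'$. The heart of the argument is to verify the bookkeeping identities
\[
\boldsymbol b_{nk}'=\boldsymbol b_{nk}^\star,\qquad\mathring{(\nu_{nk}^\star)}=(\mathring\nu_{nk})^\star,\qquad\rho_n'=\rho_n^\star,\qquad\boldsymbol\gamma_n'=\boldsymbol\gamma_n^\star,
\]
each of which rests on the elementary relations $\arg h_{\mathrm{op}}(\boldsymbol s)=(\arg s_1,-\arg s_2)$ and $\Im h_{\mathrm{op}}(\boldsymbol s)=(\Im s_1,-\Im s_2)$ on the $h_{\mathrm{op}}$-symmetric set $\mathscr U_\theta$, together with the fact that $h_{\mathrm{op}}$ is a group homomorphism of $\mathbb T^2$. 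Proposition \ref{opcond} then translates Condition \ref{condX2} for $\rho_n^\star$ (with triplet $(\rho^\star,\mathbf A\!^{\mathrm{op}})$) into the same condition for $\rho_n$ (with triplet $(\rho,\mathbf A)$), and trivially $\boldsymbol\gamma_n^\star\to\boldsymbol\gamma^\star$ iff $\boldsymbol\gamma_n\to\boldsymbol\gamma$. Combining these yields $(2)\Leftrightarrow(3)$ together with the identification $(\nu_{\bitimes^{\mathrm{op}}})^\star=\nu_{\bitimes}^{(\boldsymbol\gamma^\star,\mathbf A\!^{\mathrm{op}},\rho^\star)}$.

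The main obstacle I anticipate is the careful verification of the bookkeeping identities above, particularly $\boldsymbol b_{nk}'=\boldsymbol b_{nk}^\star$, which requires unwinding the definition \eqref{bnk} and confirming that the integration domain $\mathscr U_\theta$ is stable under $h_{\mathrm{op}}^{-1}$, and the parallel computation for $\boldsymbol\gamma_n'$. Once these are established, the equivalences reduce to direct application of Theorem \ref{limitthmX} and Propositions \ref{equicondX} and \ref{opcond}.
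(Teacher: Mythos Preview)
Your proposal is correct and follows essentially the same route as the paper: the paper also notes that $(1)\Leftrightarrow(3)$ is Theorem \ref{limitthmX} (with Proposition \ref{equicondX}), and for $(2)\Leftrightarrow(3)$ it rewrites the $\bitimes^{\mathrm{op}}$-sequence via \eqref{opdef} as the $\star$ of a $\bitimes$-sequence, verifies exactly the bookkeeping identities $\boldsymbol b_{nk}'=\boldsymbol b_{nk}^\star$, $(\nu_{nk}^\star)^\circ=(\mathring\nu_{nk})^\star$, $\rho_n'=\rho_n^\star$, $\boldsymbol\gamma_n'=\boldsymbol\gamma_n^\star$ that you anticipate, and then invokes Theorem \ref{limitthmX} together with Proposition \ref{opcond}.
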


\begin{proof} We only need to prove (2)$\Leftrightarrow$(3). With $\{\boldsymbol b_{nk}\}$ defined in (\ref{bnk}), the equality
\begin{align*}
\exp\left[i\int_{\mathscr{U}_\theta}(\arg\boldsymbol s)\,d\nu_{nk}^\star(\boldsymbol s)\right]=
\exp\left[i\int_{\mathscr{U}_\theta}(\arg s_1,-\arg s_2)\,d\nu_{nk}(\boldsymbol s)\right]=\boldsymbol b_{nk}^\star
\end{align*}
shows that for any Borel set $B$ on $\mathbb{T}^2$,
\[(\nu_{nk}^\star)^\circ(B)=\nu_{nk}^\star(\boldsymbol b_{nk}^\star B)=\nu_{nk}(\boldsymbol b_{nk}B^\star)=\mathring\nu_{nk}(B^\star)=(\mathring\nu_{nk})^\star(B).\] Since the operations $\star$ and $\circ$ acting on $\nu_{nk}$ are interchangeable in order, we adopt the notation $\mathring\nu_{nk}^\star$ instead of $(\nu_{nk}^\star)^\circ=(\mathring\nu_{nk})^\star$ if no confusion can arise.

According to (\ref{opdef}), item (2) holds if and only if
\[\delta_{\boldsymbol\xi_n^\star}\bitimet\nu_{n1}^\star\bitimet\cdots\bitimet\nu_{nk}^\star=
(\delta_{\boldsymbol\xi_n}\bitimet^{\mathrm{op}}\nu_{n1}\bitimet^{\mathrm{op}}\cdots
\bitimet^{\mathrm{op}}\nu_{nk})^\star\Rightarrow
(\nu_{\bitimes^{\mathrm{op}}})^\star.\] This happens if and only if Condition \ref{condX2} applies to the measure $\sum_{k=1}^n\mathring\nu_{nk}^\star=\left(\sum_{k=1}^n\mathring\nu_{nk}\right)^\star$ and the vector
\[\boldsymbol\gamma_n^\star=\boldsymbol\xi_n^\star\exp\left[i\sum_{k=1}^{k_n}\left(\arg\boldsymbol b_{nk}^\star+\int_{\mathbb{T}^2}(\Im\boldsymbol s)\,d
\mathring\nu_{nk}^\star(\boldsymbol s)\right)\right]\] has a limit by Theorem \ref{limitthmX}. Then Proposition \ref{opcond} yields the equivalent (2)$\Leftrightarrow$(3) and proves the last assertion.
\end{proof}

The goal of this section is to provide an alternative description for the $\Sigma$-transform of a measure in $\mathcal{ID}(\bitimes)\cap\mathscr{P}_{\mathbb{T}^2}^\times$ in terms of its bi-free multiplicative L\'evy triplets.  To achieve this, we need some basics. For any $p\in\mathbb{N}$, the function
\[\mathcal{K}_p(s)=\frac{s^p-1-ip\Im s}{1-\Re s}\]
is continuous on $\mathbb{T}$ and equal to $-p^2$ at $s=1$.

\begin{lem} \label{angleineq}
For any $p\in\mathbb{N}$, we have $\|\Im\mathcal{K}_p\|_\infty\leq p^3$ and
\[\int_{-\pi}^\pi\mathcal{K}_p(e^{i\theta})\,d\theta=-2p\pi.\]
\end{lem}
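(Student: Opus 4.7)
\medskip
\noindent\textbf{Proof plan.} Parameterize $s = e^{i\theta}$ and split $\mathcal{K}_p(e^{i\theta})$ into real and imaginary parts:
\[
\Re\mathcal{K}_p(e^{i\theta}) = -\frac{1-\cos(p\theta)}{1-\cos\theta}, \qquad \Im\mathcal{K}_p(e^{i\theta}) = \frac{\sin(p\theta) - p\sin\theta}{1-\cos\theta}.
\]
Both expressions are even/odd in $\theta$ respectively, so the integral identity and the sup-norm bound can be treated independently.

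For the integral, the imaginary part is an odd function of $\theta$, so its contribution to $\int_{-\pi}^{\pi}\mathcal{K}_p(e^{i\theta})\,d\theta$ vanishes. Using $1-\cos\alpha = 2\sin^2(\alpha/2)$, the remaining integral equals
\[
-\int_{-\pi}^{\pi}\frac{\sin^2(p\theta/2)}{\sin^2(\theta/2)}\,d\theta.
\]
The Dirichlet-kernel identity $\frac{\sin(p\theta/2)}{\sin(\theta/2)} = e^{-i(p-1)\theta/2}\sum_{k=0}^{p-1}e^{ik\theta}$ gives $\left|\frac{\sin(p\theta/2)}{\sin(\theta/2)}\right|^2 = \sum_{j,k=0}^{p-1}e^{i(j-k)\theta}$, and integration over $[-\pi,\pi]$ annihilates every off-diagonal exponential, leaving $2\pi p$. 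This yields $\int_{-\pi}^{\pi}\mathcal{K}_p(e^{i\theta})\,d\theta = -2p\pi$.

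For the sup-norm bound, the key step is a telescoping identity for $\sin(p\theta) - p\sin\theta$. Using the product-to-sum formulas, one verifies
\[
\sin((k{+}1)\theta) - \sin(k\theta) - \sin\theta = -4\sin(\theta/2)\sin(k\theta/2)\sin((k{+}1)\theta/2),
\]
for each $k\ge 0$ (the $k=0$ term vanishes), whence summation over $k=1,\dots,p-1$ gives
\[
\sin(p\theta) - p\sin\theta = -4\sin(\theta/2)\sum_{k=1}^{p-1}\sin(k\theta/2)\sin((k{+}1)\theta/2).
\]
Dividing by $1-\cos\theta = 2\sin^2(\theta/2)$ yields
\[
\Im\mathcal{K}_p(e^{i\theta}) = -\frac{2}{\sin(\theta/2)}\sum_{k=1}^{p-1}\sin(k\theta/2)\sin((k{+}1)\theta/2).
\]
Applying the elementary inequality $|\sin(k\alpha)| \leq k|\sin\alpha|$ to one factor and the trivial bound $|\sin(\cdot)|\leq 1$ to the other gives $|\Im\mathcal{K}_p(e^{i\theta})| \leq 2\sum_{k=1}^{p-1}k = p(p-1) \leq p^3$, completing the proof. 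The only mild technical point is recording that $\Im\mathcal{K}_p$ extends continuously to $s=1$ with value $0$, so the bound holds at that point as well; this is visible directly from the representation above.
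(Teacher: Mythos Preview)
Your proof is correct. For the integral identity your argument is essentially the paper's: both reduce to integrating the Fej\'er-type quantity $\frac{1-\cos(p\theta)}{1-\cos\theta}$ via the Dirichlet-kernel expansion $e^{i(1-p)\theta}\sum_{j,k=0}^{p-1}e^{i(j+k)\theta}$ (you write it as $|\sin(p\theta/2)/\sin(\theta/2)|^2$, which is the same expression).

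For the sup-norm bound, however, you take a genuinely different route. The paper argues by induction on $p$, using the recursion
\[
|\Im\mathcal{K}_{p+1}(s)|=\left|\Im s^p-\Im\mathcal{K}_p(s)+\frac{1-\Re s^p}{1-\Re s}\,\Im s\right|\le 1+p^3+p^2\le (p+1)^3,
\]
together with the bound $|(1-\Re s^p)/(1-\Re s)|\le p^2$. Your approach instead produces a closed-form representation of $\Im\mathcal{K}_p$ by telescoping the identity $\sin((k{+}1)\theta)-\sin(k\theta)-\sin\theta=-4\sin(\theta/2)\sin(k\theta/2)\sin((k{+}1)\theta/2)$, and then applies $|\sin(k\alpha)|\le k|\sin\alpha|$ to one factor. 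This is more direct and yields the sharper bound $\|\Im\mathcal{K}_p\|_\infty\le p(p-1)$, which of course implies the paper's $p^3$. The inductive argument is shorter to write down but loses a factor of $p$; your telescoping argument gives the explicit formula at the cost of verifying one trigonometric identity.
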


\begin{proof} In the following arguments, we shall make use of the basic formula
\begin{equation} \label{eq:cos_formula}
\frac{1-\cos(p\theta)}{1-\cos\theta}=e^{i(1-p)\theta}\sum_{j,k=0}^{p-1}e^{i(j+k)\theta}.
\end{equation}
Clearly, we have $\Im\mathcal{K}_1\equiv0$. If $\|\Im\mathcal{K}_p\|_\infty\leq p^3$ for some $p\geq2$, then for $s\neq1$, the inequality $|(1-\Re s^p)/(1-\Re s)|\leq p^2$ following from \eqref{eq:cos_formula} implies that
\[|\Im\mathcal{K}_{p+1}(s)|=\left|\Im s^p-\Im\mathcal{K}_p(s)+\frac{1-\Re s^p}{1-\Re s}\cdot\Im s\right|\leq1+p^3+p^2\leq(p+1)^3.\] By induction, this finishes the proof of the first assertion.
To prove the second assertion, it suffices to show
\[\int_{-\pi}^\pi\frac{1-\cos(p\theta)}{1-\cos\theta}\,d\theta=2p\pi,\] which can be easily obtained by using (\ref{eq:cos_formula}).
\end{proof}

Fix a measure $\nu\in\mathscr{P}_{\mathbb{T}^2}^\times\cap\mathcal{ID}(\boxtimes\boxtimes)$, and suppose that its (bi-)free $\Sigma$-transforms are given as in (\ref{bi-freeLK}). Due to the integral representations, both $u_1$ and $u_2$ are analytic in $\Omega=(\mathbb{C}\backslash\mathbb{T})\cup\{\infty\}$ and $u$ is analytic in $\Omega^2$. Hence the function
\[U_\nu(z,w)=\frac{zw}{1-zw}u(z,w)-\frac{z}{1-z}u_1(z)-\frac{w}{1-w}u_2(w)\] is analytic in $\Omega^2$. If $\nu\in\mathcal{ID}(\boxtimes\boxtimes^\mathrm{op})\cap\mathscr{P}_{\mathbb{T}^2}^\times$, then we define
\[U_\nu^\mathrm{op}(z,w)=U_{\nu^\star}(z,1/w),\] which is also an analytic function in $\Omega^2$.

When $\nu\in\mathcal{ID}(\boxtimes\boxtimes^\mathrm{op})\cap\mathscr{P}_{\mathbb{T}^2}^\times$, one can obtain an equivalent formula for $U_\nu$ in terms of the bi-free multiplicative L\'evy triplet, which we call the \emph{bi-free multiplicative L\'evy-Khintchine representation}. Note that  we acquire the following proof with the help of limit theorems, in spite of the algebraic nature of the statement. Also, it is simpler even though there exists an algebraic proof.

\begin{thm} \label{revisedthm}
Letting $\nu=\nu_{\bitimes}^{(\boldsymbol\gamma,\mathbf{A},\rho)}$, we have
\begin{equation} \label{revisedform}
U_\nu(z,w)=\frac{iz}{1-z}\arg\gamma_1+\frac{iw}{1-w}\arg\gamma_2-N_\nu(z,w)+P_\nu(z,w),
\end{equation}
where
\[N_\nu(z,w)=\frac{a_{11}}{2}\cdot\frac{z(1+z)}{(1-z)^2}+
\frac{a_{12}zw}{(1-z)(1-w)}+\frac{a_{22}}{2}\cdot\frac{w(1+w)}{(1-w)^2}\] and
\[P_\nu(z,w)=(1-z)(1-w)\sum_{\boldsymbol p=\boldsymbol0}^{\boldsymbol\infty}
\left[\int_{\mathbb{T}^2}\big({\boldsymbol s}^{\boldsymbol p}-1-i\langle\boldsymbol p,\Im{\boldsymbol s}\rangle\big)\,d\rho({\boldsymbol s})\right]\!z^{p_1}w^{p_2}.\] Further, letting $\widetilde{\nu}=\nu_{\bitimes^{\mathrm{op}}}^{(\boldsymbol\gamma,\mathbf{A},\rho)}$, we have
\[U_{\widetilde{\nu}}^{\mathrm{op}}(z,w)=U_{\nu_{\bitimes}^{(\boldsymbol\gamma^\star,\mathbf{A}\!^{\mathrm{op}},\rho^\star)}}(z,1/w).\]
\end{thm}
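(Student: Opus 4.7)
The plan is to exploit the limit-theoretic realization of $\nu$ provided by Theorem \ref{limitthmop}: fix an infinitesimal array $\{\nu_{nk}\}$ and $\{\boldsymbol\xi_n\}$ for which $\rho_n = \sum_k \mathring\nu_{nk} \Rightarrow_\mathbf{1}\rho$, $\boldsymbol\gamma_n\to\boldsymbol\gamma$, and the quadratic form in Condition \ref{condX2}\eqref{item:condiv} is $\langle\mathbf{A}\cdot,\cdot\rangle$. Starting from the integral formulas for $u_1, u_2, u$ recorded in Remark \ref{urepre}, I would substitute them into the definition
\[
U_\nu(z,w) = \frac{zw}{1-zw}u(z,w) - \frac{z}{1-z}u_1(z) - \frac{w}{1-w}u_2(w)
\]
to peel off the $-i\arg\gamma_j$ contributions. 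These give the first two terms of \eqref{revisedform}, while the remainder takes the form $\lim_n \int_{\mathbb{T}^2} \mathcal F_{z,w}(\boldsymbol s)\,d\rho_n(\boldsymbol s)$ for an explicit rational kernel $\mathcal F_{z,w}$ that vanishes to second order at $\boldsymbol s = \mathbf 1$.

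Next, I would split this limit integral into a piece localized near $\mathbf 1$ and a piece bounded away from $\mathbf 1$. On the exterior region the weak convergence $\rho_n \Rightarrow_\mathbf 1 \rho$ from Condition \ref{condX2}\eqref{item:condiii} delivers the ordinary integral against $\rho$; on a shrinking neighborhood of $\mathbf 1$, Taylor-expanding $\mathcal F_{z,w}$ to second order in $(\mathbf 1 - \boldsymbol s)$ and invoking Condition \ref{condX2}\eqref{item:condiv} with $\boldsymbol p = \boldsymbol e_1,\,\boldsymbol e_2,\,\boldsymbol e_1 + \boldsymbol e_2$ produces a Gaussian piece whose coefficients are the diagonal entries of $\mathbf A$, together with a residual term proportional to $L_{12}$. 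The relation $a_{12} = L_{12} - \int (\Im s_1)(\Im s_2)\,d\rho$ supplied by Proposition \ref{equicondX} then merges that residual into the $\rho$-integral and collapses the singular coefficients to exactly $-N_\nu(z,w)$.

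To identify the surviving $\rho$-integral with $P_\nu(z,w)$, I would expand the rational kernel as the double geometric series
\[
\frac{(1-z)(1-w)}{(1-zs_1)(1-ws_2)} = (1-z)(1-w)\sum_{\boldsymbol p \geq \mathbf 0} \boldsymbol s^{\boldsymbol p} z^{p_1} w^{p_2},
\]
subtract the compensators $1 + iz\Im s_1/(1-z) + iw\Im s_2/(1-w)$, and interchange sum and integral. Lemma \ref{angleineq} supplies the bound $|s^p - 1 - ip\Im s| \leq p^3(1-\Re s)$, which, together with its two-variable analog, bounds the general term $(\boldsymbol s^{\boldsymbol p} - 1 - i\langle\boldsymbol p, \Im\boldsymbol s\rangle)$ by a multiple of $\|\mathbf 1 - \Re\boldsymbol s\|$; the $L^1(\rho)$ integrability hypothesis from \eqref{propTD} then legitimizes termwise integration and yields locally uniform convergence on $\mathbb D^2$, delivering exactly $P_\nu(z,w)$.

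The main technical obstacle is the bookkeeping around $\boldsymbol s = \mathbf 1$, namely matching the singular contributions arising from the unbounded $\rho_n$-mass concentrated there against the quadratic piece $N_\nu$; the decomposition \eqref{decomeq} applied once in each variable and the conversions furnished by Proposition \ref{equicondX} are the essential tools. The opposite-case assertion $U_{\widetilde\nu}^{\mathrm{op}}(z,w) = U_{\nu_\bitimes^{(\boldsymbol\gamma^\star,\mathbf A\!^{\mathrm{op}},\rho^\star)}}(z,1/w)$ is then immediate from the definition $U_\nu^{\mathrm{op}}(z,w) = U_{\nu^\star}(z,1/w)$ together with the defining relation $\widetilde\nu^\star = \nu_\bitimes^{(\boldsymbol\gamma^\star, \mathbf A\!^{\mathrm{op}}, \rho^\star)}$ that introduces $\nu_{\bitimes^{\mathrm{op}}}^{(\boldsymbol\gamma,\mathbf A,\rho)}$.
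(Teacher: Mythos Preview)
Your proposal is correct and follows essentially the same approach as the paper: realize $\nu$ via the limit theorem, substitute Remark~\ref{urepre} into the definition of $U_\nu$, split the resulting $\rho_n$-integral into a piece near $\mathbf{1}$ (which produces $-N_\nu$) and a piece away from $\mathbf{1}$ (which passes to $\rho$), then expand the surviving kernel as a double geometric series and justify interchange via Lemma~\ref{angleineq}. The only cosmetic difference is that the paper carries out the near-$\mathbf{1}$ extraction by working directly with the weak convergence $\lambda_{nj}\Rightarrow\lambda_j$ of the finite measures $(1-\Re s_j)\,d\rho_n$ rather than phrasing it as a Taylor expansion plus Condition~\ref{condX2}\eqref{item:condiv}, but these are the same computation in different clothing.
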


\begin{proof} First of all, using Remark \ref{urepre} and the function
\[f(z,w,\boldsymbol s)=\frac{zw(1-s_1)(1-s_2)}{(1-zs_1)(1-ws_2)}-\frac{z(1+zs_1)(1-\Re s_1)}{(1-z)(1-zs_1)}-\frac{w(1+ws_2)(1-\Re s_2)}{(1-w)(1-ws_2)},\] one can rewrite $U_\nu$ as
\[U_\nu(z,w)=\frac{iz}{1-z}\arg\gamma_1+\frac{iw}{1-w}\arg\gamma_2+\lim_{n\to\infty}\int_{\mathbb{T}^2}f(z,w,{\boldsymbol s})\,d\rho_n({\boldsymbol s}).\]

In the following, $r>0$ is taken so that $\rho(\partial\mathscr{U}_r)=0$. The continuity of $\boldsymbol s\mapsto f(z,w,\boldsymbol s)$ on $\mathbb{T}^2$ for any fixed $(z,w)\in\mathbb{D}^2$ and Proposition \ref{Portman} imply that
\[\lim_{n\to\infty}\int_{\mathbb{T}^2\backslash\mathscr{U}_r}f(z,w,{\boldsymbol s})\,d\rho_n({\boldsymbol s})=
\int_{\mathbb{T}^2\backslash\mathscr{U}_r}f(z,w,{\boldsymbol s})\,d\rho({\boldsymbol s}).\] Applying dominated convergence theorem to $f(z,w,\cdot)\in L^1(\rho)$, we arrive at
\[\lim_{r\to0}\lim_{n\to\infty}\int_{\mathbb{T}^2\backslash\mathscr{U}_r}f(z,w,{\boldsymbol s})\,d\rho_n({\boldsymbol s})=
\int_{\mathbb{T}^2}f(z,w,{\boldsymbol s})\,d\rho({\boldsymbol s}).\]

On the other hand, thanks to weak convergence $\lambda_{nj}=(1-\Re s_j)\rho_n\Rightarrow\lambda_j$, $j=1,2$, we see that for $\xi\in\mathbb{D}$,
\begin{align*}
\limsup_{n\to\infty}&\left|\int_{\mathscr{U}_r}\frac{1+\xi s_j}{1-\xi s_j}(1-\Re s_j)\,d\rho_n-\frac{a_{jj}}{2}\cdot\frac{1+\xi}{1-\xi}\right| \\
&\leq\limsup_{n\to\infty}
\left(\big|\lambda_{nj}(\mathscr{U}_r)-a_{jj}/2\big|\left|\frac{1+\xi}{1-\xi}\right|+
\int_{\mathscr{U}_r}\left|\frac{1+\xi s_j}{1-\xi s_j}-\frac{1+\xi}{1-\xi}\right|d\lambda_{nj}\right) \\
&\leq\frac{2}{(1-|\xi|)^2}\limsup_{n\to\infty}
\left(\big|\lambda_{nj}(\mathscr{U}_r)-a_{jj}/2\big|+\int_{\mathscr{U}_r}|1-s_j|\,d\lambda_{nj}\right) \\
&=\frac{2}{(1-|\xi|)^2}\left(
\big|\lambda_j(\mathscr{U}_r)-a_{jj}/2\big|+\int_{\mathscr{U}_r}|1-s_j|\,d\lambda_j\right)\mathop{\longrightarrow}\limits_{r\to0}0.
\end{align*} Similarly, one can show that
\[\lim_{r\to0}\lim_{n\to\infty}\int_{\mathbb{T}^2\backslash\mathscr{U}_r}\frac{(1-s_1)(1-s_2)}{(1-zs_1)(1-ws_2)}\,d\rho_n
=\int_{\mathbb{T}^2}\frac{(1-s_1)(1-s_2)}{(1-zs_1)(1-ws_2)}\,d\rho\] and
\[\lim_{r\to0}\limsup_{n\to\infty}\left|\int_{\mathscr{U}_r}\frac{(1-s_1)(1-s_2)}{(1-zs_1)(1-ws_2)}\,d\rho_n
+\frac{a_{12}}{(1-z)(1-w)}\right|=0.\] Next, we shall make use of the equation (\ref{decomeq}).
After some algebraic manipulations, we come to the result
\[\lim_{n\to\infty}\int_{\mathbb{T}^2}f(z,w,{\boldsymbol s})\,d\rho_n({\boldsymbol s})=-N(z,w)+(1-z)(1-w)\int_{\mathbb{T}^2}\widetilde{f}(z,w,{\boldsymbol s})\,d\rho({\boldsymbol s}),\] where
\[\widetilde{f}(z,w,\boldsymbol s)=\frac{1}{(1-zs_1)(1-ws_2)}-\frac{1}{(1-z)(1-w)}-
\frac{iz\Im s_1}{(1-z)^2(1-w)}-
\frac{iw\Im s_2}{(1-z)(1-w)^2}.\]

Lastly, the use of the power series expansion \[\frac{\xi_j}{(1-\xi_1)^2(1-\xi_2)}=\sum_{\boldsymbol p\geq\boldsymbol0}p_j\xi_1^{p_1}\xi_2^{p_2},\;\;\;\;\;\xi_1,\xi_2\in\mathbb{D},\]
allows us to get
\[\int_{\mathbb{T}^2}\widetilde{f}(z,w,{\boldsymbol s})\,d\rho({\boldsymbol s})=\int_{\mathbb{T}^2}\sum_{\boldsymbol p\geq\boldsymbol0}
\big({\boldsymbol s}^{\boldsymbol p}-1-i\langle\boldsymbol p,\Im{\boldsymbol s}\rangle\big)z^{p_1}w^{p_2}\,d\rho({\boldsymbol s}).\]
The operations of integration and summation performed above are interchangeable due to Lemma \ref{angleineq}. Indeed, one can utilize the uniform convergence of the summands to obtain
\begin{align*}
\int\sum_{\boldsymbol p\geq\boldsymbol0}(s_j^{p_j}-1-ip_j\Im s_j)z^{p_1}w^{p_2}\,d\rho&=\sum_{\boldsymbol p\geq\boldsymbol0}\int\mathcal{K}_{p_j}(s)\,d\lambda_j\;z^{p_1}w^{p_2} \\
&=\sum_{\boldsymbol p\geq\boldsymbol0}\int(s_j^{p_j}-1-ip_j\Im s_j)\,d\rho\;z^{p_1}w^{p_2}
\end{align*} and
\[\int\sum_{\boldsymbol p\geq\boldsymbol0}(s_1^{p_1}-1)(s_2^{p_2}-1)z^{p_1}w^{p_2}\,d\rho=
\sum_{\boldsymbol p\geq\boldsymbol0}\int(s_1^{p_1}-1)(s_2^{p_2}-1)\,d\rho\;z^{p_1}w^{p_2}.\]
Putting all these findings together yields the desired result.

According to the definition of $\widetilde{\nu}$, which is characterized by $(\widetilde{\nu})^\star=\nu_{\bitimes}^{(\boldsymbol\gamma^\star,\mathbf{A}\!^{\mathrm{op}},\rho^\star)}$, the last assertion follows from the definition of $U_\nu^\mathrm{op}$.
\end{proof}

Performing the power series expansion to $N_\nu(z,w)$ in Theorem \ref{revisedthm} further yields that
\[\frac{U_\nu(z,w)}{(1-z)(1-w)} = \sum_{\boldsymbol p=\boldsymbol0}^{\boldsymbol\infty}
\left[ i\langle \boldsymbol p, \arg \boldsymbol \gamma\rangle -\frac{1}{2}\langle \mathbf A \boldsymbol p,  \boldsymbol p\rangle+ \int_{\mathbb{T}^2}\big({\boldsymbol s}^{\boldsymbol p}-1-i\langle\boldsymbol p,\Im{\boldsymbol s}\rangle\big)\,d\rho({\boldsymbol s})\right]\!z^{p_1}w^{p_2},\]
which offers the generating series of the exponent of the characteristic function \eqref{eq:classical_exponent} of a measure in $\mathcal{ID}(\mathbb T^2,\circledast)\cap \mathscr{P}_{\mathbb T^2}^\times$. We shall investigate this consistency in more detail in Section 7.

\subsection{Limit theorems via wrapping transformation}
We next present the limit theorem through the wrapping transformation.

\begin{thm} \label{+implyXthm}
Let $(\boldsymbol v,\mathbf{A},\tau)$ be a triplet satisfying \eqref{Levy_triplet}, and let $\{\mu_{nk}\}\subset\mathscr{P}_{\mathbb{R}^2}$ be an infinitesimal triangular array and
$\{\boldsymbol v_n\}$ a sequence of vectors in $\mathbb{R}^2$.
If the sequence in \emph{(\ref{munbifree})} converges weakly to $\mu_{\biplus}^{(\boldsymbol v,\mathbf{A},\tau)}$, then the sequences in \emph{(\ref{limitmul})} and \emph{(\ref{limitmulop})} generated by $\nu_{nk}=\mu_{nk}W^{-1}$ and $\boldsymbol\xi_n=e^{i\boldsymbol v_n}$ converge weakly to $\nu_{\bitimes}^{(\boldsymbol\gamma,\mathbf{A},\rho)}$ and
$\nu_{\bitimes^{\mathrm{op}}}^{(\boldsymbol\gamma,\mathbf{A},\rho)}$, respectively, where
\begin{equation} \label{rhotau}
\rho=1_{\mathbb{T}^2\backslash\{\mathbf{1}\}}(\tau W^{-1})
\end{equation} and
\begin{equation} \label{gammav}
\boldsymbol\gamma=\exp\left[
i\boldsymbol v+i\int_{\mathbb{R}^2}\left(\sin(\boldsymbol x)-\frac{\boldsymbol x}{1+\|\boldsymbol x\|^2}\right)d\tau(\boldsymbol x)\right].
\end{equation}
\end{thm}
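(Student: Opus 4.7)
The strategy is to invoke Theorem~\ref{limitthmop}, for which it suffices to verify that (i) the measures $\rho_n = \sum_{k=1}^{k_n}\mathring\nu_{nk}$ satisfy Condition~\ref{condX2} with the measure $\rho$ from \eqref{rhotau} and the matrix $\mathbf{A}$ from the hypothesis, and (ii) the vectors $\boldsymbol\gamma_n$ defined in \eqref{gamman} tend to the $\boldsymbol\gamma$ of \eqref{gammav}. Once these are established, the simultaneous weak convergences of \eqref{limitmul} and \eqref{limitmulop}, along with the identification of both limits via the prescribed triplets, will follow from Theorem~\ref{limitthmop}, the ``op''-triplet $(\boldsymbol\gamma^\star,\mathbf{A}^{\mathrm{op}},\rho^\star)$ coming out via Proposition~\ref{opcond}.

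For (i), the hypothesis that \eqref{munbifree} converges to $\mu_{\biplus}^{(\boldsymbol v,\mathbf{A},\tau)}$ is, by the bi-free additive limit theorem recalled at the end of Section~\ref{LTcond}, equivalent to Condition~\ref{cond+1} for $\tau_n = \sum_{k}\mathring\mu_{nk}$ with data $(\mathbf{A},\tau)$ together with \eqref{limitv} evaluating to $\boldsymbol v$. Proposition~\ref{+implyX} then transports Condition~\ref{condX2} (with the same $\mathbf{A}$) to the wrapped measures $\tilde\rho_n := \tau_n W^{-1}$ with limit $\rho$. Now $\mathring\mu_{nk}W^{-1}$ is $\nu_{nk}$ rotated by $e^{-i\boldsymbol v_{nk}}$ whereas $\mathring\nu_{nk}$ is $\nu_{nk}$ rotated by $\boldsymbol b_{nk}^{-1}$, so $\tilde\rho_n$ and $\rho_n$ differ only by the rotation angles $\boldsymbol\theta_{nk} := \boldsymbol v_{nk} - \arg\boldsymbol b_{nk}$. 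Using \eqref{wrapping2}, one finds $\boldsymbol\theta_{nk} = -\int_{\widetilde{\mathscr{V}}_\theta\setminus\mathscr{V}_\theta}\arg(e^{i\boldsymbol x})\,d\mu_{nk}(\boldsymbol x)$, whose components are bounded by $\pi\mu_{nk}(\{\|\boldsymbol x\|\geq\pi\})$ (since $\theta<1<\pi$ forces $\widetilde{\mathscr{V}}_\theta\setminus\mathscr{V}_\theta \subset \{\|\boldsymbol x\|\geq\pi\}$). Combining infinitesimality with the uniform boundedness of $\sum_k\mu_{nk}(\{\|\boldsymbol x\|\geq\pi\})$ (inherited from Condition~\ref{cond+1}\eqref{item:condI}) yields $\sum_k\|\boldsymbol\theta_{nk}\|^2\leq\max_k\|\boldsymbol\theta_{nk}\|\cdot\sum_k\|\boldsymbol\theta_{nk}\|\to 0$, whence $\sum_k(\mathbf{1}-\cos\boldsymbol\theta_{nk})\to \mathbf{0}$, and Proposition~\ref{perturbthm} promotes Condition~\ref{condX2} from $\tilde\rho_n$ to $\rho_n$ with the same $(\rho,\mathbf{A})$.

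For (ii), unwrapping \eqref{gamman} and applying \eqref{wrapping2} to the toral integral gives
\[
\arg\boldsymbol\gamma_n\equiv \boldsymbol v_n + \sum_{k=1}^{k_n}\arg\boldsymbol b_{nk} + \sum_{k=1}^{k_n}\int_{\mathbb{R}^2}\sin(\boldsymbol x - \arg\boldsymbol b_{nk})\,d\mu_{nk}(\boldsymbol x)\pmod{2\pi}.
\]
A routine angle-subtraction expansion, controlled by $\max_k\|\arg\boldsymbol b_{nk}\|\to 0$ and by the boundedness of $\sum_k\|\arg\boldsymbol b_{nk}\|$, reduces the right-hand side to $\boldsymbol v_n + \sum_k\boldsymbol v_{nk} + \int\sin\boldsymbol x\,d\tau_n + o(1)$. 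Invoking \eqref{limitv} to replace $\boldsymbol v_n + \sum_k\boldsymbol v_{nk} + \sum_k\int\boldsymbol x/(1+\|\boldsymbol x\|^2)\,d\mathring\mu_{nk}$ by $\boldsymbol v + o(1)$, the target \eqref{gammav} becomes
\[
\lim_n\int_{\mathbb{R}^2}\left(\sin\boldsymbol x - \frac{\boldsymbol x}{1+\|\boldsymbol x\|^2}\right)d\tau_n(\boldsymbol x) = \int_{\mathbb{R}^2}\left(\sin\boldsymbol x - \frac{\boldsymbol x}{1+\|\boldsymbol x\|^2}\right)d\tau(\boldsymbol x),
\]
which I would verify by splitting at a sphere $\mathscr{V}_\epsilon$ with $\tau(\partial\mathscr{V}_\epsilon)=0$: outside, the integrand is continuous and bounded so $\tau_n\Rightarrow_{\mathbf{0}}\tau$ via Proposition~\ref{Portman} applies; inside, the cubic bound $|\sin x - x/(1+x^2)| = O(|x|^3)$ combines with the uniform control on $\int x_j^2/(1+x_j^2)\,d\tau_n$ from Condition~\ref{cond+1}\eqref{item:condI} to make the contribution vanish as $\epsilon\to 0$. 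The main obstacle throughout is this careful bookkeeping of the centering discrepancy between the Euclidean truncation $\mathscr{V}_\theta$ and the angular truncation $\mathscr{U}_\theta$, which governs both the rotational mismatch between $\tilde\rho_n$ and $\rho_n$ in (i) and the computation of the limiting rotation in (ii); once the estimates on $\boldsymbol\theta_{nk}$ above are in hand, the remaining steps are standard Taylor expansions coupled with Proposition~\ref{Portman}.
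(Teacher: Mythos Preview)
Your overall strategy matches the paper's proof closely: transport Condition~\ref{cond+2} to the wrapped measures via Proposition~\ref{+implyX}, then correct the rotational mismatch using Proposition~\ref{perturbthm}, and finally verify the convergence of $\boldsymbol\gamma_n$. Your bound on $\boldsymbol\theta_{nk}$ in part~(i) via $\pi\,\mu_{nk}(\{\|\boldsymbol x\|\geq\pi\})$ is in fact a slightly cleaner route than the paper's Cauchy--Schwarz estimate through the auxiliary measures $\varrho_{nk}$; it yields $\sum_k\|\boldsymbol\theta_{nk}\|^2\to0$ just as well (modulo the harmless passage from $\mu_{nk}$ to $\mathring\mu_{nk}$ using $\max_k\|\boldsymbol v_{nk}\|\to0$).

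There is, however, one genuine slip in part~(ii). You control the angle-subtraction expansion by ``$\max_k\|\arg\boldsymbol b_{nk}\|\to0$ and the boundedness of $\sum_k\|\arg\boldsymbol b_{nk}\|$'', but the latter need not hold: since $\arg\boldsymbol b_{nk}=\boldsymbol v_{nk}+\boldsymbol\theta_{nk}$ and $\sum_k\|\boldsymbol v_{nk}\|$ can diverge (take $\mu_{nk}=\delta_{1/\sqrt{n}}$ for $k=1,\dots,n$, where $\sum_k|v_{nk}|=\sqrt{n}$), a direct expansion in $\arg\boldsymbol b_{nk}$ fails. The fix---which is exactly what the paper does---is to first change variables to $\mathring\mu_{nk}$, so that
\[
\int_{\mathbb{R}^2}\sin(\boldsymbol x-\arg\boldsymbol b_{nk})\,d\mu_{nk}(\boldsymbol x)=\int_{\mathbb{R}^2}\sin(\boldsymbol y-\boldsymbol\theta_{nk})\,d\mathring\mu_{nk}(\boldsymbol y),
\]
and then expand in $\boldsymbol\theta_{nk}$. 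The three error terms $(\boldsymbol\theta_{nk}-\sin\boldsymbol\theta_{nk})$, $\sin\boldsymbol\theta_{nk}\int(1-\Re\boldsymbol s)\,d\mathring\nu_{nk}$, and $(1-\cos\boldsymbol\theta_{nk})\int\Im\boldsymbol s\,d\mathring\nu_{nk}$ are then controlled precisely by $\sum_k(1-\cos\boldsymbol\theta_{nk})\to\mathbf0$, $\max_k|\boldsymbol\theta_{nk}|\to0$ together with the boundedness of $\lambda_{nj}(\mathbb T^2)$, and $\sum_k(1-\cos\boldsymbol\theta_{nk})\to\mathbf0$ respectively---all of which you already have from part~(i). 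Once this correction is made, your argument goes through and coincides with the paper's.
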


\begin{proof} Before carrying out the main proof, let us record some properties instantly inferred from the hypotheses for the later utilization. Because the index $n$ goes to infinity ultimately, it is always big enough whenever mentioned in the proof.

Firstly, observe that $\nu_{nk}$ belongs to $\mathscr{P}_{\mathbb{T}^2}^\times$ and the vector
\begin{equation} \label{newtheta}
\boldsymbol\theta_{nk}=\sum_{\boldsymbol p\in\mathbb{Z}^d\backslash\{\mathbf{0}\}}\int_{\mathscr{V}_\theta}\boldsymbol x\;d\mu_{nk}(\boldsymbol x+2\pi\boldsymbol p)
\end{equation}
satisfies $\lim_{n\to\infty}\max_k\|\boldsymbol\theta_{nk}\|=0$ by the infinitesimality of $\{\mu_{nk}\}$. Secondly, following the notation in (\ref{Vtilde}), an application of (\ref{wrapping2}) gives that
\begin{align*}
{\boldsymbol v}_{nk}+\boldsymbol\theta_{nk}&=
\sum_{\boldsymbol p\in\mathbb{Z}^d}\int_{\mathscr{V}_\theta}\boldsymbol x\;d\mu_{nk}(\boldsymbol x+2\pi\boldsymbol p) \\
&=\int_{\mathbb{R}^d}1_{\{e^{i\boldsymbol x}:\boldsymbol x\in\widetilde{\mathscr{V}}_\theta\}}(e^{i\boldsymbol x})\arg\big(e^{i\boldsymbol x}\big)
\,d\mu_{nk}(\boldsymbol x) \\
&=\int_{\mathbb{T}^d}1_{\{{\boldsymbol s}:\|\arg{\boldsymbol s}\|<\theta\}}({\boldsymbol s})\arg({\boldsymbol s})\,d\nu_{nk}(\boldsymbol s) =\int_{\mathscr{U}_\theta}\arg({\boldsymbol s})\,d\nu_{nk}({\boldsymbol s}).
\end{align*} This result provides us with the relations
$\arg{\boldsymbol b}_{nk}={\boldsymbol v}_{nk}+\boldsymbol\theta_{nk}$ and
\[d\mathring\nu_{nk}({\boldsymbol s})=d(\mathring\mu_{nk}W^{-1})(e^{i\boldsymbol\theta_{nk}}{\boldsymbol s})\] as for any $B\in\mathscr{B}_{\mathbb{T}^2}$, we have
$(\mathring\mu_{nk}W^{-1})(B)=\mu_{nk}(\{e^{i\boldsymbol x}\in e^{i{\boldsymbol v}_{nk}}B\})=\nu_{nk}(e^{i{\boldsymbol v}_{nk}}B)=\mathring\nu_{nk}(e^{-\boldsymbol\theta_{nk}}B)$.

Except for the beforehand mentioned results, the array $\{\boldsymbol\theta_{nk}\}$ in (\ref{newtheta}) also fulfills the condition in (\ref{perturbation}), which will play a dominant role in our arguments. Its proof, provided below, is based on the convergence
$\tau_n=\sum_k\mathring\mu_{nk}\Rightarrow_\mathbf{0}\tau$ and some estimates. For convenience, denote $\boldsymbol\theta_{nk}=(\theta_{nk1},\theta_{nk2})$ and ${\boldsymbol v}_{nk}=(v_{nk1},v_{nk2})$, and view
\[\varrho_{nk}(\cdot)=\sum_{\boldsymbol p\in\mathbb{Z}^d\backslash\{\mathbf{0}\}}\mathring\mu_{nk}(\cdot+2\pi\boldsymbol p)\] as a positive Borel measure on the closure of $\mathscr{V}_{2\theta}$. The infinitesimality of $\{\mathring\mu_{nk}\}$ indicates that we have $\lim_{n\to\infty}\max_{1\leq k\leq k_n}\varrho_{nk}(\mathscr{V}_{2\theta})=0$ and (recall from (\ref{thetarange}) that $\theta\in(0,1)$ is fixed throughout our discussions)
\[\varrho_{nk}(\mathscr{V}_\theta-\boldsymbol v_{nk})\leq\sum_{\boldsymbol p\in\mathbb{Z}^d\backslash\{\mathbf{0}\}}\mathring\mu_{nk}(\mathscr{V}_{2\theta}+2\pi\boldsymbol p)=\mathring\mu_{nk}(\widetilde{\mathscr{V}}_{2\theta}\backslash\mathscr{V}_{2\theta}).\]
This, together with Cauchy-Schwarz inequality, enables us to obtain
\begin{align*}
\sum_{k=1}^{k_n}\theta_{nkj}^2&=\sum_{k=1}^{k_n}\left(\int_{\mathscr{V}_\theta-\boldsymbol v_{nk}}(x_j+v_{nkj})
\;d\varrho_{nk}(\boldsymbol x)\right)^2 \\
&\leq\sum_{k=1}^{k_n}\varrho_{nk}(\mathscr{V}_\theta-\boldsymbol v_{nk})\cdot\int_{\mathscr{V}_\theta-\boldsymbol v_{nk}}(x_j+v_{nkj})^2
\;d\varrho_{nk}(\boldsymbol x) \\
&\leq\theta^2\left(\max_{1\leq k\leq k_n}\varrho_{nk}(\mathscr{V}_{2\theta})\right)\tau_n
\left(\widetilde{\mathscr{V}}_{2\theta}\backslash\mathscr{V}_{2\theta}
\right).
\end{align*} Since $\widetilde{\mathscr{V}}_{2\theta}\backslash\mathscr{V}_{2\theta}$ is bounded away from the origin $\mathbf{1}$ of $\mathbb{T}^2$, $\tau_n\Rightarrow_\mathbf{0}\tau$ leads to $\limsup_n\tau_n(\widetilde{\mathscr{V}}_{2\theta}\backslash\mathscr{V}_{2\theta})<\infty$. Thus, we are able to conclude that $\sum_{k=1}^{k_n}\theta_{nkj}^2\to0$ as $n\to\infty$, yielding (\ref{perturbation}) by the inequality $1-\cos x\leq x^2/2$ on $\mathbb{R}$.

After these preparations, we are ready to present the proof of the theorem. Since (\ref{munbifree}) converges weakly, $\tau_n$ meets Condition \ref{cond+2}, and thus $\rho_n=\tau_n W^{-1}$ satisfies
Condition \ref{condX1} according to Proposition \ref{+implyX}. Then Proposition \ref{perturbthm} consequently
yields that Condition \ref{condX1} also applies to $\widetilde{\rho}_n=\sum_{k=1}^{k_n}\mathring\nu_{nk}$.

To finish the proof, we just need to verify (\ref{gamma}) due to Theorem \ref{limitthmop}. The existence of the limit in (\ref{limitv}) implies that the vector
\[\mathbf{E}_n=i\left[\boldsymbol v_n+\sum_{k=1}^{k_n}\left({\boldsymbol v}_{nk}
+\int_{\mathbb{R}^2}\sin(\boldsymbol x)\;d\mathring\mu_{nk}(\boldsymbol x)\right)\right]\] also has
a limit when $n\to\infty$. Indeed, the limit $-i\lim_{n\to\infty}\mathbf{E}_n$ disintegrates into the sum of that in (\ref{limitv}) and
\[\lim_{n\to\infty}\sum_{k=1}^{k_n}\int_{\mathbb{R}^2}\left(\sin(\boldsymbol x)-\frac{\boldsymbol x}{1+\|\boldsymbol x\|^2}\right)d\mathring\mu_{nk}(\boldsymbol x)=\int_{\mathbb{R}^2}\left(\sin(\boldsymbol x)-\frac{\boldsymbol x}{1+\|\boldsymbol x\|^2}\right)d\tau(\boldsymbol x).\] The validity of the equality displayed above is just because of that the integrand is $O(\|\boldsymbol x\|^3)$ as $\|\boldsymbol x\|\to0$ and the function $\min\{1,\|\boldsymbol x\|^2\}$ is $\tau$-integrable.

In order to go further, we analyze the difference
\[\left(\arg{\boldsymbol b}_{nk}+\int_{\mathbb{T}^2}(\Im{\boldsymbol s})\,d\mathring\nu_{nk}({\boldsymbol s})\right)-
\left({\boldsymbol v}_{nk}+\int_{\mathbb{R}^2}\sin(\boldsymbol x)\,d\mathring\mu_{nk}(\boldsymbol x)\right),\] which, along with the help of equation
\[\int_{\mathbb{R}^2}\sin(\boldsymbol x)\,d\mathring\mu_{nk}(\boldsymbol x)=
\int_{\mathbb{T}^2}\Im(e^{i\boldsymbol\theta_{nk}}{\boldsymbol s})\,d\mathring\nu_{nk}({\boldsymbol s}),\]
becomes
\begin{equation} \label{difference}
(\boldsymbol\theta_{nk}-\sin\boldsymbol\theta_{nk})+\sin(\boldsymbol\theta_{nk})\int_{\mathbb{T}^2}(1-\Re{\boldsymbol s})\,d\mathring\nu_{nk}({\boldsymbol s})
+(1-\cos\boldsymbol\theta_{nk})\int_{\mathbb{T}^2}(\Im{\boldsymbol s})\,d\mathring\nu_{nk}({\boldsymbol s}).
\end{equation}
Using the elementary inequality
\begin{equation} \label{basicineq}
|x-\sin x|\leq1-\cos x,\;\;\;\;\;|x|\leq\pi/4,
\end{equation} we see from the established result that
\[\lim_{n\to\infty}\sum_{k=1}^{k_n}|\theta_{nkj}-\sin\theta_{nkj}|\leq\lim_{n\to\infty}\sum_{k=1}^{k_n}(1-\cos\theta_{nkj})=0.\] For the second term in (\ref{difference}), $\lambda_{nj}=(1-\Re s_j)\widetilde{\rho}_n\Rightarrow\lambda_j\in\mathscr{M}_{\mathbb{T}^2}$ yields that
\[\sum_{k=1}^{k_n}\left|\sin(\theta_{nkj})\int_{\mathbb{T}^2}(1-\Re s_j)\,d\mathring\nu_{nk}(\boldsymbol s)\right|\leq\left(\max_{1\leq k\leq k_n}|\sin\theta_{nkj}|\right)\lambda_{nj}(\mathbb{T}^2)\mathop{\longrightarrow}\limits_{n\to\infty}0.\] As for the last term, we then have
\begin{align*}
\sum_{k=1}^{k_n}(1-\cos\theta_{nkj})\left|\int_{\mathbb{T}^2}(\Im s_j)\,d\mathring\nu_{nk}({\boldsymbol s})\right|\leq\sum_{k=1}^{k_n}(1-\cos\theta_{nkj})\mathop{\longrightarrow}\limits_{n\to\infty}0.
\end{align*} Consequently, we have arrived at that the limit in (\ref{gamma}) exists and equals the vector in (\ref{gammav}). This finishes the proof.
\end{proof}

The employment of the wrapping limit theorem with $\boldsymbol v_n=\mathbf{0}$ gives the following identically distributed limit theorem, which is the bi-free version of \cite[Theorem 3.9]{Ceb16}.

\begin{cor} \label{identicalX1}
Let $(\boldsymbol v,\mathbf{A},\tau)$ be a triplet satisfying \eqref{Levy_triplet}, $\{\mu_n\}$ a sequence in $\mathscr{P}_{\mathbb{R}^2}$, and $\{k_n\}$ a strictly increasing sequence in $\mathbb{N}$.
If $\mu_n^{\biplus k_n}\Rightarrow\mu_{\biplus}^{(\boldsymbol v,\mathbf{A},\tau)}$, then $(\mu_n W^{-1})^{\bitimes k_n}\Rightarrow\nu_{\bitimes}^{(\boldsymbol\gamma,\mathbf{A},\rho)}$ and $(\mu_n W^{-1})^{\bitimes^{\mathrm{op}}k_n}\Rightarrow\nu_{\bitimes^{\mathrm{op}}}^{(\boldsymbol\gamma,\mathbf{A},\rho)}$, where $\boldsymbol\gamma$ and $\rho$ are as in \emph{Theorem \ref{+implyXthm}}.
\end{cor}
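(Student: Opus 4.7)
The plan is to obtain Corollary \ref{identicalX1} as a direct specialization of Theorem \ref{+implyXthm}. Concretely, I would set $\mu_{nk} := \mu_n$ for $1 \leq k \leq k_n$ and choose $\boldsymbol v_n := \mathbf{0}$, so that the sequence in (\ref{munbifree}) collapses to $\mu_n^{\biplus k_n}$ and the hypothesis of Theorem \ref{+implyXthm} on weak convergence to $\mu_{\biplus}^{(\boldsymbol v,\mathbf{A},\tau)}$ is immediately satisfied. Correspondingly, with $\nu_{nk} := \mu_n W^{-1}$ and $\boldsymbol\xi_n := e^{i\mathbf{0}} = \mathbf{1}$, the sequences in (\ref{limitmul}) and (\ref{limitmulop}) become $(\mu_n W^{-1})^{\bitimes k_n}$ and $(\mu_n W^{-1})^{\bitimes^{\mathrm{op}} k_n}$, respectively, so the two conclusions of Theorem \ref{+implyXthm} would deliver exactly the asserted weak limits with $\boldsymbol\gamma$ and $\rho$ prescribed by (\ref{gammav}) and (\ref{rhotau}).

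The only nontrivial point needing justification before invoking Theorem \ref{+implyXthm} is the infinitesimality hypothesis, which in our setup reduces to proving $\mu_n \Rightarrow \delta_{\mathbf{0}}$; note that this uses $k_n \to \infty$, which follows from $\{k_n\}$ being strictly increasing in $\mathbb{N}$. I expect this to be the main obstacle. My plan is to pass to the one-dimensional marginals: the weak convergence $\mu_n^{\biplus k_n} \Rightarrow \mu_{\biplus}^{(\boldsymbol v, \mathbf{A}, \tau)}$ forces each marginal $(\mu_n^{(j)})^{\boxplus k_n}$, $j=1,2$, to converge weakly to a $\boxplus$-infinitely divisible law on $\mathbb{R}$. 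A standard Voiculescu-transform argument (using $\phi_{(\mu_n^{(j)})^{\boxplus k_n}} = k_n\, \phi_{\mu_n^{(j)}}$ together with the continuity theorem for the free $\phi$-transform) then yields $\phi_{\mu_n^{(j)}} \to 0$ locally uniformly on a suitable truncated cone, which gives $\mu_n^{(j)} \Rightarrow \delta_0$ for $j = 1,2$. Since both marginal weak limits are Dirac masses at $0$, the joint weak limit of $\mu_n$ must be $\delta_{(0,0)} = \delta_{\mathbf{0}}$, establishing infinitesimality of $\{\mu_{nk}\}$.

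Once infinitesimality is in hand, no further computation is needed: Theorem \ref{+implyXthm} applies verbatim with the choices $\mu_{nk}, \boldsymbol v_n, \nu_{nk}, \boldsymbol\xi_n$ listed above, and the measures $\boldsymbol\gamma$ and $\rho$ it produces are given by (\ref{gammav}) and (\ref{rhotau}) in terms of the same triplet $(\boldsymbol v, \mathbf{A}, \tau)$ appearing in the hypothesis of the corollary. This matches exactly the parametrization asserted in the statement, so both weak convergences $(\mu_n W^{-1})^{\bitimes k_n} \Rightarrow \nu_{\bitimes}^{(\boldsymbol\gamma,\mathbf{A},\rho)}$ and $(\mu_n W^{-1})^{\bitimes^{\mathrm{op}} k_n} \Rightarrow \nu_{\bitimes^{\mathrm{op}}}^{(\boldsymbol\gamma,\mathbf{A},\rho)}$ follow simultaneously.
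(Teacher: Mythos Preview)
Your proposal is correct and matches the paper's approach exactly: the paper derives the corollary in one line by applying Theorem \ref{+implyXthm} with $\boldsymbol v_n=\mathbf{0}$. Your explicit verification of infinitesimality via the marginal $\phi$-transforms is a point the paper leaves implicit, but your argument for it is sound (in particular, the passage from $\mu_n^{(j)}\Rightarrow\delta_0$ for $j=1,2$ to $\mu_n\Rightarrow\delta_{\mathbf 0}$ is valid since Dirac marginal limits force joint convergence).
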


\begin{exam} \label{BiGaussian}
Given a $2\times2$ real matrix $\mathbf{A}=(a_{ij})\geq0$ with $a_{11}\geq a_{22}>0$, consider planar probability measures
\[\mu_n=\frac{1}{4}(\delta_{\boldsymbol\alpha_n}+\delta_{-\boldsymbol\alpha_n}+\delta_{\boldsymbol\beta_n}+\delta_{-\boldsymbol\beta_n}),\]
where $\boldsymbol\alpha_n=(\sqrt{2\det\mathbf{A}},0)/\sqrt{na_{22}}$ and $\boldsymbol\beta_n=(\sqrt{2}a_{12},\sqrt{2}a_{22})/\sqrt{na_{22}}$. Clearly, $\mathring\mu_n=\mu_n$ for all $n$ and $\tau_n:=n\mu_n\Rightarrow_\mathbf{0}0$ as $n\to\infty$. Furthermore, for any $\theta>0$, if $n$ is large enough, then
\[\int_{\mathscr{V}_\theta}x_j^2\,d\tau_n(\boldsymbol x)=a_{jj}\;\;\;\;\;\;\mathrm{and}\;\;\;\;\;
\int_{\mathscr{V}_\theta}x_1x_2\,d\tau_n(\boldsymbol x)=a_{12}.\] Hence the identically distributed limit theorem introduced in Section \ref{LTcond} indicates that $\mu_n^{\biplus n}$ converges weakly to $\mu_{\biplus}^{(\mathbf{0},\mathbf{A},0)}$, which is known as the bi-free Gaussian distribution with bi-free L\'{e}vy triplet $(\mathbf{0},\mathbf{A},0)$. For the measures
\[\nu_n=\mu_n W^{-1}=\frac{1}{4}(\delta_{e^{i\boldsymbol\alpha_n}}+\delta_{e^{-i\boldsymbol\alpha_n}}+
\delta_{e^{i\boldsymbol\beta_n}}+\delta_{e^{-i\boldsymbol\beta_n}})\in\mathscr{P}_{\mathbb{T}^2},\] a direct verification or an application of Corollary \ref{identicalX1} shows that $\nu_n^{\bitimes n}\Rightarrow\nu_{\bitimes}^{(\mathbf{1},\mathbf{A},0)}$ and
$\nu_n^{\bitimes^{\mathrm{op}}n}\Rightarrow\nu_{\bitimes^{\mathrm{op}}}^{(\mathbf{1},\mathbf{A},0)}$. Analogically, $\nu_{\bitimes}=\nu_{\bitimes}^{(\mathbf{1},\mathbf{A},0)}$ is called the \emph{bi-free multiplicative Gaussian distribution} with L\'{e}vy triplet $(\mathbf{1},\mathbf{A},0)$. Note that in this case the component $P_{\nu_{\bitimes}}$ in the representation (\ref{revisedform}), called the bi-free multiplicative compound Poisson part (see Example \ref{BiPoisson} below), vanishes.
\end{exam}

\begin{exam} \label{BiPoisson}
Given any $r>0$ and $\mu\in\mathscr{P}_{\mathbb{R}^2}$, let
\[\mu_n=(1-r/n)\delta_\mathbf{0}+r/n\mu,\] $\tau_n=n\mu_n$, and $\tau=r1_{\mathbb{R}^2\backslash\{\mathbf{0}\}}\mu$. A straightforward verification reveals that Condition \ref{cond+2} applies to $\tau_n$, $\tau$, and $Q\equiv0$. Hence \cite[Theorem 5.6]{HHW} shows that $\mu_n^{\biplus n}$ converges weakly to the so-called bi-free compound Poisson distribution $\mu_{\biplus}^{(\boldsymbol v,\mathbf{0},\tau)}$ with rate $r$ and jump distribution $\mu$, where
\[\boldsymbol v=r\cdot\int_{\mathbb{R}^2}\frac{\boldsymbol x}{1+\|\boldsymbol x\|^2}\,d\mu(\boldsymbol x).\] Applying Corollary \ref{identicalX1} shows that
\[\big((1-r/n)\delta_\mathbf{1}+r/n(\mu W^{-1})\big)^{\bitimes n}\Rightarrow\nu_{\bitimes}^{(e^{i\boldsymbol u},\mathbf{0},\rho)},\] as well as $(\mu_n W^{-1})^{\bitimes^{\mathrm{op}}n}\Rightarrow\nu_{\bitimes^{\mathrm{op}}}^{(e^{i\boldsymbol u},\mathbf{0},\rho)}$, where $\rho=r1_{\mathbb{T}^2\backslash\{\mathbf{1}\}}(\mu W^{-1})$ and
\[\boldsymbol u=r\int_{\mathbb{R}^2}\sin(\boldsymbol x)\,d\mu(\boldsymbol x).\] Analogous to the planar case, we refer to measures of the form
$\nu_\bitimes=\nu_{\bitimes}^{(e^{i\boldsymbol u},\mathbf{0},r\nu)}$, where $r>0$, $\nu\in\mathscr{P}_{\mathbb{T}^2}$ with $\nu(\{\mathbf{1}\})=0$, and $\boldsymbol u=r\int\Im\boldsymbol sd\nu(\boldsymbol s)$ as
the \emph{bi-free multiplicative compound Poisson distribution} with rate $r$ and jump distribution $\nu$. In (\ref{revisedform}), the bi-free Gaussian component $N_{\nu_\bitimes}\equiv0$.
\end{exam}

\subsection{Limit Theorems for Identically Distributed Case}
The following is a special case of the limit theorem in the context of identically distributed random vectors on the bi-torus.

\begin{prop} \label{identicalX2}
Let $\rho_n=k_n\nu_n$, where $\{\nu_n\}\subset\mathscr{P}_{\mathbb{T}^2}^\times$ and $\{k_n\}\subset\mathbb{N}$ with $k_1<k_2<\cdots$. If $\rho_n$ satisfies \emph{Condition \ref{condX1}} \emph{(}or \emph{Condition \ref{condX2}}\emph{)} and the limit
\[\boldsymbol v=\lim_{n\to\infty}\int_{\mathbb{T}^2}(\Im\boldsymbol\xi)\,d\rho_n(\boldsymbol\xi)\] exists, then
$\nu_n^{\bitimes k_n}\Rightarrow\nu_{\bitimes}^{(e^{i\boldsymbol v},\mathbf{A},\rho)}$ and $\nu_n^{\bitimes^{\mathrm{op}}k_n}\Rightarrow\nu_{\bitimes^{\mathrm{op}}}^{(e^{i\boldsymbol v},\mathbf{A},\rho)}$, where $\rho$ and $\mathbf{A}$ are as in {\rm Condition \ref{condX2}} and {\rm Proposition \ref{equicondX}}, respectively.
\end{prop}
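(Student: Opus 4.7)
The plan is to apply Theorem \ref{limitthmop} to the constant infinitesimal array $\nu_{nk}=\nu_n$ ($1\le k\le k_n$) with the trivial shift $\boldsymbol\xi_n=\boldsymbol 1$. This requires verifying (a) infinitesimality of $\{\nu_n\}$, (b) that $\sum_{k=1}^{k_n}\mathring\nu_{nk}=k_n\mathring\nu_n$ satisfies Condition \ref{condX1} with the same limit measure $\rho$ and matrix $\mathbf A$, and (c) that the vectors $\boldsymbol\gamma_n$ from \eqref{gamman} converge to $e^{i\boldsymbol v}$. Infinitesimality is immediate: Condition \ref{condX1}\eqref{item:condi} makes $\lambda_{nj}(\mathbb{T}^2)$ bounded, and since $1-\Re s_j\ge c(\epsilon)>0$ on $\{|\arg s_j|\ge\epsilon\}$, we obtain $\rho_n(\mathbb{T}^2\setminus\mathscr{U}_\epsilon)=O(1)$ and hence $\nu_n(\mathbb{T}^2\setminus\mathscr{U}_\epsilon)=O(1/k_n)\to 0$.

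For (b), set $\alpha_{nj}=\int_{\mathscr{U}_\theta}\arg s_j\,d\nu_n(\boldsymbol s)$, so $\boldsymbol b_n=e^{i\boldsymbol\alpha_n}$ and $\mathring\nu_n$ is the push-forward of $\nu_n$ under $\boldsymbol s\mapsto\boldsymbol b_n^{-1}\boldsymbol s$. The key quantitative estimate is $k_n\boldsymbol\alpha_n=O(1)$. Multiplying the defining integral by $k_n$ and decomposing $\arg s_j=\Im s_j+(\arg s_j-\Im s_j)$ gives
\[
k_n\alpha_{nj}=\int_{\mathscr{U}_\theta}\Im s_j\,d\rho_n(\boldsymbol s)+\int_{\mathscr{U}_\theta}(\arg s_j-\Im s_j)\,d\rho_n(\boldsymbol s);
\]
the first integral is bounded because the hypothesis gives $\int_{\mathbb{T}^2}\Im s_j\,d\rho_n\to v_j$ while $\int_{\mathbb{T}^2\setminus\mathscr{U}_\theta}\Im s_j\,d\rho_n$ converges via Condition \ref{condX2}\eqref{item:condiii} for a generic $\theta$ with $\rho(\partial\mathscr{U}_\theta)=0$, and the second is bounded using the elementary inequality $|\arg s-\sin\arg s|\le C_\theta(1-\Re s)$ on $\mathscr{U}_\theta$, which yields the bound $C_\theta\lambda_{nj}(\mathbb{T}^2)$. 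Thus $\alpha_{nj}=O(1/k_n)$, whence $k_n(1-\cos\alpha_{nj})=O(1/k_n)\to 0$. Applying Proposition \ref{perturbthm} with $\boldsymbol\theta_{nk}=\boldsymbol\alpha_n$ then transfers Condition \ref{condX1}, along with the limit $\rho$ and the quadratic form $\mathbf A$, from $\rho_n$ to $k_n\mathring\nu_n$.

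For (c), with $\boldsymbol\xi_n=\boldsymbol 1$, formula \eqref{gamman} becomes
\[
\boldsymbol\gamma_n=\exp\!\left[i\Bigl(k_n\boldsymbol\alpha_n+\int\Im(\boldsymbol b_n^{-1}\boldsymbol s)\,d\rho_n(\boldsymbol s)\Bigr)\right].
\]
Expanding $\Im(\bar b_{nj}s_j)=\cos(\alpha_{nj})\Im s_j-\sin(\alpha_{nj})\Re s_j$ and substituting $\int\Re s_j\,d\rho_n=k_n-\lambda_{nj}(\mathbb{T}^2)$, the $j$-th component of the exponent becomes
\[
k_n(\alpha_{nj}-\sin\alpha_{nj})+\lambda_{nj}(\mathbb{T}^2)\sin(\alpha_{nj})+\cos(\alpha_{nj})\int\Im s_j\,d\rho_n,
\]
whose three summands tend respectively to $0$ (since $k_n\alpha_{nj}^3=O(k_n^{-2})$), $0$ (since $\alpha_{nj}\to 0$ and $\lambda_{nj}(\mathbb{T}^2)$ stays bounded), and $v_j$ (by hypothesis and $\cos\alpha_{nj}\to 1$). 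Hence $\boldsymbol\gamma_n\to e^{i\boldsymbol v}$, and Theorem \ref{limitthmop} delivers the two asserted weak limits $\nu_n^{\bitimes k_n}\Rightarrow\nu_{\bitimes}^{(e^{i\boldsymbol v},\mathbf A,\rho)}$ and $\nu_n^{\bitimes^{\mathrm{op}}k_n}\Rightarrow\nu_{\bitimes^{\mathrm{op}}}^{(e^{i\boldsymbol v},\mathbf A,\rho)}$ simultaneously. The chief obstacle is establishing the $O(1)$ bound for $k_n\boldsymbol\alpha_n$; once it is in place, Proposition \ref{perturbthm} and a Taylor expansion around $\boldsymbol\alpha_n=\boldsymbol 0$ handle everything else mechanically.
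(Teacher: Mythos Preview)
Your argument is correct, though it follows a different route than the paper. The paper unwraps to $\mathbb{R}^2$: it sets $\mu_n=\nu_n h^{-1}$ with $h(\boldsymbol s)=\arg\boldsymbol s$, verifies Condition~\ref{cond+2} for $\tau_n=k_n\mu_n$ via the change-of-variables formula and the continuous mapping theorem, invokes the additive bi-free limit theorem of \cite{HHW} to get $\mu_n^{\biplus k_n}\Rightarrow\mu_{\biplus}^{(\boldsymbol v,\mathbf{A},\tau)}$, and then wraps back using Corollary~\ref{identicalX1}. Your proof stays entirely on $\mathbb{T}^2$ and appeals directly to Theorem~\ref{limitthmop}, using Proposition~\ref{perturbthm} to pass from $\rho_n$ to $k_n\mathring\nu_n$. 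This is essentially the same mechanism the paper itself employs in the more delicate Theorem~\ref{identicalX3}; your observation that the bound $k_n\boldsymbol\alpha_n=O(1)$ suffices to trigger Proposition~\ref{perturbthm} and to make the Taylor remainders in $\boldsymbol\gamma_n$ vanish is exactly the idea used there. The advantage of the paper's detour through $\mathbb{R}^2$ is that it reuses the already-established additive theory wholesale; your direct approach is more self-contained and avoids an unnecessary round trip.

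One small remark: in your bound for $\int_{\mathscr{U}_\theta}\Im s_j\,d\rho_n$, you invoke convergence of $\int_{\mathbb{T}^2\setminus\mathscr{U}_\theta}\Im s_j\,d\rho_n$ ``for a generic $\theta$ with $\rho(\partial\mathscr{U}_\theta)=0$''. Since $\theta$ is fixed once and for all in \eqref{thetarange}, this is not literally at your disposal; but you only need boundedness, and that follows immediately from $\bigl|\int_{\mathbb{T}^2\setminus\mathscr{U}_\theta}\Im s_j\,d\rho_n\bigr|\le\rho_n(\mathbb{T}^2\setminus\mathscr{U}_\theta)$, which you have already shown is $O(1)$ in your infinitesimality step.
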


\begin{proof} Let $h\colon\mathbb{T}^2\to(-\pi,\pi]^2$ be the inverse of the wrapping map $W(\boldsymbol x)=e^{i\boldsymbol x}$ restricted to $(-\pi,\pi]^2$, namely,
$h(\boldsymbol\xi)=\arg\boldsymbol\xi$. Further let $\mu_n\in\mathscr{P}_{\mathbb{R}^2}$ and $\tau\in\mathscr{M}_{\mathbb{R}^2}^\mathbf{0}$ be supported on $[-\pi,\pi]^2$ so that $\mu_n=\nu_n h^{-1}$ and $\tau=\rho h^{-1}$. Then $\nu_n=\mu_n W^{-1}$, and $\tau_n=\rho_n h^{-1}\Rightarrow_\mathbf{0}\tau$ by the continuous mapping theorem. Also the property (\ref{LevyTcond}) and the formula (\ref{wrapping2}) show that $\min\{1,\|\boldsymbol x\|^2\}\in L^1(\tau)$. One can utilize (\ref{basicineq}) to justify
\[\lim_{\epsilon\to0}\liminf_{n\to\infty}\int_{\mathscr{U}_\epsilon}(\arg s_j)(\arg s_\ell)\,d\rho_n(\mathbf{s})=
\lim_{\epsilon\to0}\limsup_{n\to\infty}\int_{\mathscr{U}_\epsilon}(\arg s_j)(\arg s_\ell)\,d\rho_n(\mathbf{s}).\] On the other hand, the change-of-variables formula (\ref{wrapping2}) gives the equation
\[\int_{\mathscr{V}_\epsilon}x_jx_\ell\,d\tau_n(\boldsymbol x)=\int_{\mathscr{U}_\epsilon}(\arg s_j)(\arg s_\ell)\,d\rho_n(\mathbf{s}),\] which implies that $\tau_n$ satisfies \eqref{item:condIV} of Condition \ref{cond+2}. Ultimately, observe that
\[\int_{\mathbb{R}^2}\frac{\boldsymbol x}{1+\|\boldsymbol x\|^2}\,d\tau_n(\boldsymbol x)=
\int_{\mathbb{T}^2}(\Im\boldsymbol s)\,d\rho_n(\boldsymbol s)
+\int_{\mathbb{R}^2}\left(\frac{\boldsymbol x}{1+\|\boldsymbol x\|^2}-\sin(\boldsymbol x)\right)d\tau_n(\boldsymbol x)\] has a limit when $n\to\infty$ owing to $\boldsymbol x/(1+\|\boldsymbol x\|^2)-\sin(\boldsymbol x)= O(\|\boldsymbol x\|^3)$ as $\|\boldsymbol x\|\to0$ and $\min\{1,\|\boldsymbol x\|^2\}\in L^1(\tau)$. Thus, $\mu_n^{\boxplus\boxplus k_n}\Rightarrow\mu_{\boxplus\boxplus}^{(\boldsymbol v,\mathbf{A},\tau)}$ by \cite[Theorem 5.6]{HHW}, and so we accomplish the proof by Corollary \ref{identicalX1}.
\end{proof}

We shall also consider the rotated probabilities
\[d\widetilde{\nu}_n(\boldsymbol s)=d\nu_n(\boldsymbol\omega_n\boldsymbol s)\] associated with a sequence $\{\nu_n\}\subset\mathscr{P}_{\mathbb{T}^2}^\times$, where $\boldsymbol\omega_n=(\omega_{n1},\omega_{n2})\in\mathbb{T}^2$ has components
\[\omega_{nj}=\int_{\mathbb{T}^2}s_j\,d\nu_n(\boldsymbol s)\bigg/\left|\int_{\mathbb{T}^2}s_j\,d\nu_n(\boldsymbol s)\right|\]
Through this sort of rotated distributions, we next present the bi-freely identically distributed limit theorem,  which is the bi-free analog of \cite[Proposition 3.6]{Ceb16}.

\begin{thm} \label{identicalX3}
The following are equivalent for a sequence $\{\nu_n\}$ in $\mathscr{P}_{\mathbb{T}^2}^\times$ and a strictly increasing sequence $\{k_n\}$ in $\mathbb{N}$.
\begin{enumerate} [$\quad(1)$]
\item The sequence $\nu_n^{\bitimes k_n}$ converges weakly to
 some $\nu_{\bitimes} \in \mathscr{P}_{\mathbb T^2}^\times$.
\item The sequence $\nu_n^{\bitimes^{\mathrm{op}}k_n}$
converges weakly to some $\nu_{\bitimes^{\mathrm{op}}} \in \mathscr{P}_{\mathbb T^2}^\times$.
\item {\emph{Condition \ref{condX2}} \emph{(}or \emph{Condition \ref{condX1})}
holds for $\rho_n=k_n\widetilde{\nu}_n$ and the following limit exists in $\mathbb T^2$:
\[\boldsymbol\gamma=\lim_{n\to\infty}\big(\omega_{n1}^{k_n},\omega_{n2}^{k_n}\big).\]}
\end{enumerate}
If {\rm(1)--(3)} hold, then $\nu_{\bitimes}=\nu_{\bitimes}^{(\boldsymbol\gamma,\mathbf{A},\rho)}$ and $(\nu_{\bitimes^{\mathrm{op}}})^\star=\nu_{\bitimes}^{(\boldsymbol\gamma^\star,\mathbf{A}^{\!\!\mathrm{op}},\rho^\star)}$, where $\rho$ and $\mathbf{A}$ are respectively as in {\rm Condition \ref{condX2}} and {\rm Proposition \ref{equicondX}}.
\end{thm}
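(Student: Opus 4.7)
The approach is to reduce the identically distributed setting to the general bi-free multiplicative limit Theorem \ref{limitthmop}, applied to the rotated array $\{\widetilde\nu_n\}_{k=1}^{k_n}$ (all $k_n$ copies identical). The rotation makes the mean real and positive, $m_j(\widetilde\nu_n)=|m_{nj}|$. The key algebraic observation is that $\delta_{\boldsymbol\omega_n}=\kappa_{\omega_{n1}}\times\kappa_{\omega_{n2}}$, so the identity \eqref{kappa1} (which extends verbatim to $\bitimes^{\mathrm{op}}$ by the argument given in Section \ref{sec3}) gives $\nu_n=\widetilde\nu_n\bitimes\delta_{\boldsymbol\omega_n}=\widetilde\nu_n\bitimes^{\mathrm{op}}\delta_{\boldsymbol\omega_n}=\widetilde\nu_n\circledast\delta_{\boldsymbol\omega_n}$. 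Iterating, using $\delta_{\boldsymbol a}\bitimes\delta_{\boldsymbol b}=\delta_{\boldsymbol{ab}}$, we obtain
\[
\nu_n^{\bitimes k_n}=\delta_{\boldsymbol\omega_n^{k_n}}\bitimes\widetilde\nu_n^{\bitimes k_n},\qquad
\nu_n^{\bitimes^{\mathrm{op}}k_n}=\delta_{\boldsymbol\omega_n^{k_n}}\bitimes^{\mathrm{op}}\widetilde\nu_n^{\bitimes^{\mathrm{op}}k_n}.
\]

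Applying Theorem \ref{limitthmop} to $\{\widetilde\nu_n\}_{k=1}^{k_n}$ with starting vector $\boldsymbol\xi_n=\boldsymbol\omega_n^{k_n}$ yields the equivalence of (1), (2), and a condition $(3'')$: Condition \ref{condX2} for $\sum_{k=1}^{k_n}\mathring{\widetilde{\nu}}_n$ together with convergence of
\[
\boldsymbol\gamma_n=\boldsymbol\omega_n^{k_n}\exp\Bigl[ik_n\Bigl(\arg\boldsymbol\beta_n+\int_{\mathbb T^2}\Im\boldsymbol s\,d\mathring{\widetilde{\nu}}_n\Bigr)\Bigr],
\]
where $\boldsymbol\beta_n=\exp[i\int_{\mathscr U_\theta}\arg\boldsymbol s\,d\widetilde\nu_n]$ is the common centering. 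Infinitesimality of the array is verified case-by-case: under (3), finiteness of $\rho$ on sets bounded away from $\mathbf 1$ yields $\widetilde\nu_n(\mathscr U_\epsilon^c)\to 0$; under (1) (symmetrically (2)), Lemma \ref{basiclem} gives $|m_{nj}|^{k_n}=|m_j(\nu_n^{\bitimes k_n})|\to|m_j(\nu_\bitimes)|>0$, forcing $|m_{nj}|\to 1$ and $\int(1-\Re s_j)\,d\widetilde\nu_n=1-|m_{nj}|\to 0$, so $\widetilde\nu_n\Rightarrow\delta_{\mathbf 1}$.

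The delicate step is to identify $(3'')$ with $(3)$. Since $\int\Im s_j\,d\widetilde\nu_n=0$, writing $\arg\beta_{nj}=\int_{\mathscr U_\theta}(\arg s_j-\sin\arg s_j)\,d\widetilde\nu_n-\int_{\mathscr U_\theta^c}\sin\arg s_j\,d\widetilde\nu_n$ and using $|x-\sin x|\le C|x|^3$ on $\mathscr U_\theta$ gives
\[
|\arg\beta_{nj}|\le C\theta\int_{\mathscr U_\theta}(\arg s_j)^2\,d\widetilde\nu_n+\widetilde\nu_n(\mathscr U_\theta^c).
\]
Whichever Condition \ref{condX2} is in hand---on $k_n\widetilde\nu_n$ under (3), or on $\sum\mathring{\widetilde{\nu}}_n$ under $(3'')$---provides $k_n\widetilde\nu_n(\mathscr U_\theta^c)=O(1)$ and $k_n\int(\arg s_j)^2\,d\widetilde\nu_n=O(1)$ (for the latter, via $\boldsymbol\beta_n\to\mathbf 1$ secured by infinitesimality alone), whence $|\arg\boldsymbol\beta_n|=O(1/k_n)$ and $k_n(\mathbf 1-\cos\arg\boldsymbol\beta_n)\to\mathbf 0$. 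Proposition \ref{perturbthm} then interchanges the two Conditions \ref{condX2} with identical $(\mathbf A,\rho)$. Finally, using $\int\Im s_j\,d\widetilde\nu_n=0$ once more,
\[
\arg\beta_{nj}+\int\Im s_j\,d\mathring{\widetilde{\nu}}_n=\arg\beta_{nj}-|m_{nj}|\sin\arg\beta_{nj}=O\bigl(|\arg\beta_{nj}|^3+(1-|m_{nj}|)|\arg\beta_{nj}|\bigr)=O(1/k_n^2),
\]
since $k_n(1-|m_{nj}|)\to\lambda_j(\mathbb T^2)<\infty$; hence the correction factor in $\boldsymbol\gamma_n$ is $\exp[iO(1/k_n)]\to 1$, and $\boldsymbol\gamma_n\to\boldsymbol\gamma$ if and only if $\boldsymbol\omega_n^{k_n}\to\boldsymbol\gamma$, with equal limits.

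The final identification $\nu_\bitimes=\nu_\bitimes^{(\boldsymbol\gamma,\mathbf A,\rho)}$ and $(\nu_{\bitimes^{\mathrm{op}}})^\star=\nu_\bitimes^{(\boldsymbol\gamma^\star,\mathbf A^{\mathrm{op}},\rho^\star)}$ is then inherited directly from the last clause of Theorem \ref{limitthmop}. The hard part will be the sharp two-sided bound $|\arg\boldsymbol\beta_n|=O(1/k_n)$ and carrying it out non-circularly: in $(3)\Rightarrow(3'')$ one applies Condition \ref{condX2} on $k_n\widetilde\nu_n$ directly, while in $(3'')\Rightarrow(3)$ one applies it on $\sum\mathring{\widetilde{\nu}}_n$ and leverages the \emph{a priori} smallness $\boldsymbol\beta_n\to\mathbf 1$ (coming from bare infinitesimality $\widetilde\nu_n\Rightarrow\delta_{\mathbf 1}$) to transfer estimates between $k_n\widetilde\nu_n$ and $\sum\mathring{\widetilde{\nu}}_n$.
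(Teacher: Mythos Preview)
Your proposal is correct and follows essentially the same route as the paper's proof: factor out $\delta_{\boldsymbol\omega_n^{k_n}}$, apply Theorem \ref{limitthmop} to the rotated array, and then show that the centering $\boldsymbol\beta_n$ (the paper's $\widetilde{\boldsymbol b}_n$) satisfies $k_n\|\arg\boldsymbol\beta_n\|^2\to 0$ so that Proposition \ref{perturbthm} interchanges Condition \ref{condX2} between $k_n\widetilde\nu_n$ and $k_n\mathring{\widetilde\nu}_n$; finally, the zero-mean identity $\int\Im s_j\,d\widetilde\nu_n=0$ kills the correction $\mathbf E_n$. The paper executes the $(3'')\Rightarrow(3)$ absorption more explicitly via the decomposition $\theta_{nj}=B_{nj}+R_{nj}$ with $|R_{nj}|\le|\theta_{nj}|^2+|\theta_{nj}|^3$, whereas you only gesture at ``transferring estimates''---your sketch is valid but would benefit from spelling out that the $k_n(\arg\beta_{nj})^2$ term arising in the transfer is absorbed on the left since $\arg\beta_{nj}\to 0$; this is exactly the paper's inequality $k_n|\theta_{nj}|(1-|\theta_{nj}|-|\theta_{nj}|^2)\le k_n|B_{nj}|$. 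Your computation $\arg\beta_{nj}+\int\Im s_j\,d\mathring{\widetilde\nu}_n=\arg\beta_{nj}-|m_{nj}|\sin\arg\beta_{nj}$ is a slightly cleaner packaging of the same cancellation the paper obtains, and your infinitesimality check under (1) via $|m_{nj}|^{k_n}\to|m_j(\nu_\bitimes)|$ is equivalent to the paper's $\Sigma$-transform argument.
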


\begin{proof} Only the equivalence (1)$\Leftrightarrow$(3) needs a proof, which relies on Proposition \ref{perturbthm}. First of all, the weak convergence of $\nu_n^{\bitimes k_n}$ to $\nu\in\mathscr{P}_{\mathbb{T}^2}^\times$
yields that $\widetilde{\nu}_n\Rightarrow\delta_{(1,1)}$. Indeed,
$m_{1,0}(\nu_n)^{k_n}=[\Sigma_{\nu_n^{(1)}}(0)]^{-k_n}\to[\Sigma_{\nu^{(1)}}(0)]^{-1}=m_{1,0}(\nu)$ shows that $\omega_{n1}^{k_n}\to m_{1,0}(\nu)/|m_{1,0}(\nu)|=\omega_1$. Since
\[[\Sigma_{\widetilde{\nu}_n^{(1)}}(z)]^{k_n}=\omega_{n1}^{k_n}[
\Sigma_{\nu_n^{(1)}}(z)]^{k_n}\to\omega_1\Sigma_{\nu^{(1)}}(z)=\Sigma_{\widetilde{\nu}^{(1)}}(z)\]
uniformly for $z$ in a neighborhood of zero by \cite[Proposition 2.9]{BerVoicu92}, it follows from $\Sigma_{\widetilde{\nu}^{(1)}}(0)=1$ that $\widetilde{\nu}_n^{(1)}\Rightarrow\delta_1$. In the same vein, one can obtain $\widetilde{\nu}_n^{(2)}\Rightarrow\delta_1$, giving the desired weak convergence.
On other hand, the $\mathscr{M}_{\mathbb{T}^2}^\mathbf{1}$-weak convergence of $\rho_n=k_n\widetilde{\nu}_n$ also implies $\widetilde{\nu}_n\Rightarrow\delta_\mathbf{1}$. In other words, $\widetilde{\nu}_n$ is infinitesimal if assertion (1) or (3) holds.

Write $\nu_n^{\bitimes k_n}=\delta_{\boldsymbol\xi_n}\bitimet\widetilde{\nu}_n^{\bitimes k_n}$
and consider measures
$d\mathring{\widetilde{\nu}}_n(\boldsymbol s)=d\widetilde{\nu}_n(\widetilde{\boldsymbol b}_n\boldsymbol s)$, where $\boldsymbol\xi_n=\boldsymbol\omega_n^{k_n}$ and
\[\widetilde{\boldsymbol b}_n=\exp\left[i\int_{\mathscr{U}_\theta}(\arg\boldsymbol s)\,d\widetilde{\nu}_n(\boldsymbol s)\right].\] Then as indicated in Theorem \ref{limitthmop}, assertion (1) holds if and only if
$\rho_n'=k_n\mathring{\widetilde{\nu}}_n$ satisfies Condition \ref{condX1} and $\boldsymbol\gamma_n=\boldsymbol\xi_n\exp(i\mathbf{E}_n)$ has a finite limit, where
\[\mathbf{E}_n=k_n\left[\arg\widetilde{\boldsymbol b}_n+\int_{\mathbb{T}^2}(\Im\boldsymbol s)\,d\mathring{\widetilde{\nu}}_n(\boldsymbol s)\right].\] The infinitesimality of $\widetilde{\nu}_n$ reveals that $\boldsymbol\theta_n=(\theta_{n1},\theta_{n2})\to\mathbf{0}$ as $n\to\infty$, where
\begin{equation} \label{thetanj}
\theta_{nj}=\arg\widetilde{b}_{nj}=\int_{\mathscr{U}_\theta}\arg s_j\,d\widetilde{\nu}_n(\boldsymbol s).
\end{equation} This simple fact will be often utilized in the following proof, and all the indices $n$ considered below are sufficiently large.
With a view toward applying Proposition \ref{perturbthm} to $\rho_n$ and $\rho_n'$, we shall prove that $\lim_{n\to\infty}k_n\|\boldsymbol\theta_n\|^2=0$.

Now, we argue that $\rho_n'=k_n\mathring{\widetilde{\nu}}_n$ satisfies Condition \ref{condX1} if the same condition (or, equivalently, Condition \ref{condX2} by Proposition \ref{equicondX}) applies to $\rho_n=k_n\widetilde{\nu}_n$. Let $\lambda_{nj}=(1-\Re s_j)\rho_n$. Using the fact
\begin{equation} \label{meanzero}
\int_{\mathbb{T}^2}(\Im s_j)\,d\widetilde{\nu}_n(\boldsymbol s)=0,\;\;\;\;\;j=1,2,
\end{equation} we have
\begin{align*}
k_n\theta_{nj}&=k_n\int_{\mathscr{U}_\theta}(\arg s_j)\,d\widetilde{\nu}_n(\boldsymbol s)-
k_n\int_{\mathbb{T}^2}(\Im s_j)\,d\widetilde{\nu}_n(\boldsymbol s) \\
&=\int_{\mathbb{T}^2}\frac{\arg s_j-\Im s_j}{1-\Re s_j}\,d\lambda_{nj}(\boldsymbol s)-\int_{\mathbb{T}^2\backslash\mathscr{U}_\theta}(\arg s_j)\,d\rho_n(\boldsymbol s).
\end{align*} Then the continuity of the function $s\mapsto(\arg s-\Im s)/(1-\Re s)$ on $\mathbb{T}$ implies that $\limsup_{n\to\infty}k_n|\theta_{nj}|<\infty$, and so $\lim_{n\to\infty}k_n\|\boldsymbol\theta_n\|^2=0$. Thus, $\rho_n'$ meets Condition \ref{condX1} by the relation $d\rho_n'(\boldsymbol s)=d\rho_n(e^{i\boldsymbol\theta_n}\boldsymbol s)$ and Proposition \ref{perturbthm}.

Conversely, suppose that $\rho_n'$ satisfies Condition \ref{condX1}. We first rewrite (\ref{thetanj}) as
\[\arg\widetilde{b}_{nj}=\int_{\widetilde{\boldsymbol b}_n^{-1}\mathscr{U}_\theta}(\arg s_j+\arg\widetilde{b}_{nj})\,d\mathring{\widetilde{\nu}}_n(\boldsymbol s).\] On the other hand, the integral in (\ref{meanzero}) can be decomposed into the sum
\[\Im\widetilde{b}_{nj}-(\Im\widetilde{b}_{nj})\!\!\int_{\mathbb{T}^2}(1-\Re s_j)d\mathring{\widetilde{\nu}}_n(\boldsymbol s)-(1-\Re\widetilde{b}_{nj})\!\!\int_{\mathbb{T}^2}(\Im s_j)d\mathring{\widetilde{\nu}}_n(\boldsymbol s)+\int_{\mathbb{T}^2}(\Im s_j)d\mathring{\widetilde{\nu}}_n(\boldsymbol s).\]
Since $\widetilde{b}_{nj}=\cos\theta_{nj}+i\sin\theta_{nj}$, some simple calculations allow us to obtain
\[\theta_{nj}=\arg \widetilde b_{nj}-\int_{\mathbb{T}^2}(\Im s_j)\,d\widetilde{\nu}_n(\boldsymbol s)=B_{nj}+R_{nj},\] where
\begin{align*}
B_{nj}=-&\theta_{nj}\mathring{\widetilde{\nu}}_n
(\mathbb{T}^2\backslash\widetilde{\boldsymbol b}_n^{-1}\mathscr{U}_\theta)-\int_{\mathbb{T}^2\backslash\widetilde{\boldsymbol b}_n^{-1}\mathscr{U}_\theta}(\arg s_j)\,d\mathring{\widetilde{\nu}}_n(\boldsymbol s) \\
&+\sin(\theta_{nj})\int_{\mathbb{T}^2}
(1-\Re s_j)\,d\mathring{\widetilde{\nu}}_n(\boldsymbol s)+\int_{\mathbb{T}^2}\frac{\arg s_j-\Im s_j}{1-\Re s_j}\,d(1-\Re s_j)\mathring{\widetilde{\nu}}_n(\boldsymbol s)
\end{align*}
and
\[R_{nj}=(\theta_{nj}-\sin\theta_{nj})+(1-\cos\theta_{nj})\int_{\mathbb{T}^2}(\Im s_j)\,d\mathring{\widetilde{\nu}}_n.\] Note that sets $\mathbb{T}^2\backslash\widetilde{\boldsymbol b}_n^{-1}\mathscr{U}_\theta$ are uniformly bounded away from $\mathbf{1}$ for all large $n$, and so $\limsup_{n\to\infty}k_n|B_{nj}|<\infty$ by the $\mathscr{M}_{\mathbb{T}^2}^\mathbf{1}$-convergence assumption of $\rho_n'$ and by the equivalence of Condition \ref{condX1} and Condition \ref{condX2}. That $|R_{nj}|\leq|\theta_{nj}|^3+|\theta_{nj}|^2$ holds for all large $n$ then leads to
\[\limsup_{n\to\infty}k_n|\theta_{nj}|\big[1-|\theta_{nj}|-|\theta_{nj}|^2\big]\leq\limsup_{n\to\infty}
k_n|B_{nj}|<\infty.\] We thus obtain $\limsup_nk_n|\theta_{nj}|<\infty$ and $\lim_nk_n\|\boldsymbol\theta_n\|^2=0$. Consequently, $\rho_n$ satisfies Condition \ref{condX1} by Proposition \ref{perturbthm} again.

Finally, by using (\ref{meanzero}), one can express components of $\boldsymbol E_n=(E_{n1},E_{n2})$ as
\begin{align*}
E_{nj}&=k_n\theta_{nj}+k_n\Im (\widetilde{b}_{nj}^{-1})\int_{\mathbb{T}^2}(\Re s_j)\,d\widetilde{\nu}_n(\boldsymbol s) \\
&=k_n(\theta_{nj}-\sin\theta_{nj})+\sin(\theta_{nj})\int_{\mathbb{T}^2}(1-\Re s_j)\,d\rho_n(\boldsymbol s).
\end{align*} As noted above that $\rho_n$ converges in $\mathscr{M}_{\mathbb{T}^2}^\mathbf{1}$ if and only if so does $\rho_n'$ and that $\lim_{n\to\infty}k_n|\theta_{nj}|^2=0$ in either case. Consequently, we have shown that
$\lim_{n\to\infty}E_{nj}=0$ whenever $\rho_n$ or $\rho_n'$ converges in $\mathscr{M}_{\mathbb{T}^2}^\mathbf{1}$, and so arrived at $\boldsymbol\gamma=\lim\boldsymbol\gamma_n$ if (1) or (3) holds. This completes the proof.
\end{proof}

\begin{remark} In spite of $\delta_{-\mathbf{1}}^{\bitimes2n}=\delta_{\mathbf{1}}$, $2n\delta_{-\mathbf{1}}$ fails to converge in $\mathscr{M}_{\mathbb{T}^2}^\mathbf{1}$. This example demonstrates that in Theorem \ref{identicalX3}, the rotated probabilities $\widetilde{\nu}_n$ are a necessary medium in the convergence criteria of the bi-free multiplicative limit theorem. For the same inference, the converse statement of Proposition \ref{identicalX2} does not hold, yet it does in the additive setting \cite[Theorem 5.6]{HHW}.
\end{remark}

\section{Classical limit theorems on higher dimensional tori} \label{sec6}
This section focuses on the classical limit theorem on the $d$-torus. Recall from \cite{Par67} that a measure $\nu$ in $\mathcal{ID}(\circledast)$ has no non-trivial $\circledast$-idempotent factor if and only if
its characteristic function takes the form
\begin{equation} \label{eq:classical_exponent}
\widehat{\nu}(\boldsymbol p)=\boldsymbol\gamma^{\boldsymbol p}\exp\left(-\frac{1}{2}\langle \mathbf{A}\boldsymbol p,\boldsymbol p\rangle+\int_{\mathbb{T}^d}\big({\boldsymbol s}^{\boldsymbol p}-1-i\langle \boldsymbol p,\Im\boldsymbol s\rangle\big)\,d\rho({\boldsymbol s})\right),\qquad \boldsymbol p \in \mathbb Z^d,
\end{equation} for certain triplet $(\boldsymbol\gamma,\mathbf{A},\rho)$ fulfilling the conditions in (\ref{propTD}).
We shall write $\nu_{\circledast}^{(\boldsymbol\gamma,\mathbf{A},\rho)}$ for this measure, and refer to $\rho$ and $(\boldsymbol\gamma,\mathbf{A},\rho)$ as its multiplicative L\'evy measure and multiplicative L\'evy triplet, respectively, or simply L\'evy measure and L\'evy triplet if no confusion occurs.

A known phenomenon is that a $\circledast$-infinitely divisible distribution on $\mathbb{T}^d$ may have various L\'evy measures. For example, it was pointed out in \cite{Ceb16} that when $d=1$,
\begin{equation} \label{Cebron's}
\nu_{\circledast}^{(1,0,\pi\delta_i)}=\nu_{\circledast}^{(1,0,\pi\delta_{-i})}.
\end{equation} Some examples of non-unique Haar absolutely continuous L\'evy measures were provided in \cite[Proposition 3.8]{CG08}.
In the following, we first characterize L\'{e}vy triplets that cause the same $\circledast$-infinitely divisible distribution. This will be a crucial ingredient in the main theorem of this section, a limit theorem for infinitesimal triangular arrays.

Some terminologies are used in the following lemma: a Borel set $B\subset\mathbb{T}^d$ and a function $f$ defined on $\mathbb{T}^d$ are called \emph{even} if $B^{-1}=B$ and $f(\boldsymbol s^{-1})=f(\boldsymbol s)$ for all $\boldsymbol s\in\mathbb{T}^d$, respectively.

\begin{lem} \label{lem:smooth}
Let $\rho_1$ and $\rho_2$ be L\'evy measures on $\mathbb{T}^d$, that is, non-negative measures satisfying $\rho_k(\{\mathbf{1}\})=0$ and $\mathbf1-\Re \boldsymbol s\in L^1(\rho_k)$ for $k=1,2$. Suppose that
\begin{equation} \label{integralrho}
\Re\int_{\mathbb{T}^d}(1-\boldsymbol s^{\boldsymbol p})\,d\rho_1(\boldsymbol s)=\Re\int_{\mathbb{T}^d}(1-\boldsymbol s^{\boldsymbol p})\,d\rho_2(\boldsymbol s),\;\;\;\;\; \boldsymbol
p\in\mathbb{Z}^d.
\end{equation}
Then $\rho_1(B)=\rho_2(B)$ for any even Borel set $B\subset\mathbb{T}^d$.
\end{lem}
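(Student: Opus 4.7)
The plan is to reduce to symmetric $\rho_k$ by averaging against inversion, and then encode each $\rho_k$ by a finite Borel measure whose Fourier coefficients can be read off the hypothesis.

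First I would replace each $\rho_k$ by its symmetrization $\rho_k^{\mathrm{sym}}(B) := \tfrac{1}{2}\rho_k(B) + \tfrac{1}{2}\rho_k(B^{-1})$. Because $\Re\boldsymbol s^{-\boldsymbol p} = \Re\boldsymbol s^{\boldsymbol p}$, the hypothesis \eqref{integralrho} is preserved under this averaging, and because every even $B$ satisfies $\rho_k^{\mathrm{sym}}(B) = \rho_k(B)$, the conclusion of the lemma is equivalent to $\rho_1^{\mathrm{sym}} = \rho_2^{\mathrm{sym}}$ as measures on $\mathscr{B}_{\mathbb{T}^d}$. So without loss of generality I may assume each $\rho_k$ is invariant under $\boldsymbol s\mapsto\boldsymbol s^{-1}$.

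The main obstacle is now that $\rho_k$ need not be finite, so ordinary Fourier analysis on $\mathbb{T}^d$ is not directly available. To sidestep this I would reweight to finite Borel measures
\[
d\tilde\rho_k(\boldsymbol s) := \Bigl(\sum_{j=1}^d (1-\Re s_j)\Bigr)\,d\rho_k(\boldsymbol s);
\]
these are finite thanks to $\mathbf{1}-\Re\boldsymbol s \in L^1(\rho_k)$, the weight is strictly positive off $\mathbf{1}$, and $\rho_k(\{\mathbf{1}\}) = 0$, so $\rho_k$ is recovered from $\tilde\rho_k$ via division by the weight. Hence it suffices to prove $\tilde\rho_1 = \tilde\rho_2$, which by Stone--Weierstrass reduces to equality of Fourier coefficients $\widehat{\tilde\rho_k}(\boldsymbol p) = \int\boldsymbol s^{\boldsymbol p}\,d\tilde\rho_k$ for all $\boldsymbol p\in\mathbb{Z}^d$.

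The final step is the computation of those Fourier coefficients. Using the product-to-sum identity
\[
\Re\boldsymbol s^{\boldsymbol p}\cdot(1-\Re s_j) = -(1-\Re\boldsymbol s^{\boldsymbol p}) + \tfrac{1}{2}(1-\Re\boldsymbol s^{\boldsymbol p+\boldsymbol e_j}) + \tfrac{1}{2}(1-\Re\boldsymbol s^{\boldsymbol p-\boldsymbol e_j})
\]
I would write $\Re\widehat{\tilde\rho_k}(\boldsymbol p)$ as a finite linear combination of terms $\int(1-\Re\boldsymbol s^{\boldsymbol q})\,d\rho_k$ with $\boldsymbol q\in\{\boldsymbol p\}\cup\{\boldsymbol p\pm\boldsymbol e_j:1\le j\le d\}$; these do not depend on $k$ by \eqref{integralrho}. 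Meanwhile $\Im\widehat{\tilde\rho_k}(\boldsymbol p)=0$, because $\Im\boldsymbol s^{\boldsymbol p}(1-\Re s_j)$ is antisymmetric under $\boldsymbol s\mapsto\boldsymbol s^{-1}$ while $\rho_k$ is now symmetric. Integrability of $1-\Re\boldsymbol s^{\boldsymbol q}$ against $\rho_k$, which is needed to justify the manipulations, follows from the telescoping estimate $|1-\boldsymbol s^{\boldsymbol q}|\le\sum_j|q_j|\cdot|1-s_j|$ combined with $|1-s_j|^2 = 2(1-\Re s_j)$ and Cauchy--Schwarz, yielding $1-\Re\boldsymbol s^{\boldsymbol q} \le \|\boldsymbol q\|_1^2\sum_j(1-\Re s_j)\in L^1(\rho_k)$. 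Once this routine bookkeeping is in place, the argument closes purely algebraically.
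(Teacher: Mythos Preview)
Your argument is correct and takes a genuinely different route from the paper. The paper works directly with the signed measure $\rho=\rho_1-\rho_2$, tests it against even $C^\infty$ functions $f$ vanishing near $\mathbf{1}$, approximates each such $f$ by truncated Fourier cosine series $f_n$, observes that $\int(f_n-f_n(\boldsymbol 0))\,d\rho=0$ by hypothesis, and then passes to the limit via a Taylor estimate $|f_n(\boldsymbol x)-f_n(\boldsymbol 0)|\le C\|\boldsymbol x\|^2$ together with dominated convergence. Your approach instead symmetrizes, reweights by $w(\boldsymbol s)=\sum_j(1-\Re s_j)$ to produce \emph{finite} measures $\tilde\rho_k$, and recovers all Fourier coefficients of $\tilde\rho_k$ algebraically from the given quantities via the product-to-sum identity; no limiting procedure is needed at all.

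What each buys: your argument is cleaner and more elementary for this lemma, replacing analysis (rapid decay of Fourier coefficients, uniform Taylor bounds, DCT) by a single trigonometric identity. The paper's method, on the other hand, is more flexible: essentially the same template is reused later (Proposition~\ref{prop:unique}) to handle \emph{odd} test functions and deduce full equality $\rho_1=\rho_2$ under a stronger hypothesis, a setting where your symmetrization trick does not apply and the weighted-Fourier computation would have to be reworked. One minor remark on your write-up: the inequality $1-\Re\boldsymbol s^{\boldsymbol q}\le\|\boldsymbol q\|_1^2\sum_j(1-\Re s_j)$ is correct, but the cleanest way to see it is via $1-\Re\boldsymbol s^{\boldsymbol q}=\tfrac12|1-\boldsymbol s^{\boldsymbol q}|^2$ followed by your telescoping bound squared and Cauchy--Schwarz; as written the chain ``$|1-s_j|^2=2(1-\Re s_j)$ and Cauchy--Schwarz'' is a bit compressed.
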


\begin{proof} Let $\rho=\rho_1-\rho_2$. First, the assumption $\mathbf1-\Re \boldsymbol s\in L^1(\rho_k)$ shows that each integral in (\ref{integralrho}) is a finite number. It is enough to prove, by a standard argument of functions' approximations, that the following applies to any even $C^\infty$-functions $f$ vanishing in a neighborhood of $\mathbf{1}\in\mathbb{T}^d$:
\begin{equation} \label{evenf}
\int_{(-\pi,\pi]^d}f(e^{i\boldsymbol x})\,d\rho(e^{i\boldsymbol x})=0.
\end{equation} For notational convenience, we adopt $f(\boldsymbol x)$ with $\boldsymbol x\in(-\pi,\pi]^d$ instead of using $f(\boldsymbol s)$, $\boldsymbol s\in\mathbb{T}^d$. Let $f_n$ be the truncated Fourier cosine series of $f$:
\[f_n(\boldsymbol  x)=\sum_{\boldsymbol p\in\mathbb Z^d,\,\|\boldsymbol p\|\le n}a_{\boldsymbol p} \cos \langle\boldsymbol p, \boldsymbol  x\rangle,
\;\;\;\;\;a_{\boldsymbol p}=\frac{1}{(2\pi)^d}\int_{(-\pi,\pi]^d}f(\boldsymbol x) \cos \langle\boldsymbol p,\boldsymbol x\rangle\,d\boldsymbol x.\]
Since
\[f_n(\boldsymbol x)-f_n(\boldsymbol 0)=\sum_{\substack{\boldsymbol  p\in\mathbb Z^d,\,0<\|\boldsymbol p\|\le n}}a_{\boldsymbol p}(\cos\langle\boldsymbol ,\boldsymbol x\rangle-1),\]
hypothesis (\ref{integralrho}) yields that
\begin{equation} \label{eq:cos}
\int_{(-\pi,\pi]^d\setminus\{\boldsymbol0\}}\frac{f_n(\boldsymbol x)-f_n(\boldsymbol 0)}{\|\boldsymbol  x\|^2}\cdot\|\boldsymbol x\|^2\,d\rho(e^{i\boldsymbol x})=0,\;\;\;\;\;n\in\mathbb{N}.
\end{equation}
On the other hand, integration by parts yields that $a_{\boldsymbol  p}$ is rapidly decreasing in the sense that for every $k\in\mathbb N$, there exists a constant $C_k>0$ such that $|a_{\boldsymbol p}| \le C_k(1+\|\boldsymbol p\|)^{-k}$. This implies $\|\partial^{\boldsymbol \alpha} f_n-\partial^{\boldsymbol \alpha} f\|_{L^\infty}\to0$ as $n\to\infty$ for any $\boldsymbol \alpha\in(\mathbb{N}\cup\{0\})^d$. In particular, we have $f_n(\boldsymbol 0)\to0$ and $f_n(\boldsymbol x)-f_n(\boldsymbol 0)\to f(\boldsymbol x)$ pointwisely as $n\to\infty$. Furthermore, there is a constant $C>0$ neither depending on $n$ nor $\boldsymbol x$ such that
\[|f_n(\boldsymbol x) - f_n(\boldsymbol 0)| \leq C \|\boldsymbol x\|^2, \qquad \boldsymbol x \in  (-\pi,\pi]^d,\]
thanks to Taylor's theorem $f_n(\boldsymbol x)-f_n(\boldsymbol 0) = (1/2)\sum_{i,j=1}^d\partial^{\boldsymbol e_i +\boldsymbol e_j} f_n(c \boldsymbol x) x_i x_j$ for some $c=c_{n,\boldsymbol x} \in (0,1)$ and the fact that $\{\|\partial^{\boldsymbol \alpha} f_n\|_{L^\infty}\}_{n=1}^\infty$ is uniformly bounded for every fixed $\boldsymbol \alpha$. Moreover, $(1-\cos x_j)/x_j^2\to2$ as $x_j\to0$ for any $j$ gives that $\|\boldsymbol x\|\in|\rho|$.  Finally, applying the dominated convergence theorem to \eqref{eq:cos} yields the desired result (\ref{evenf}).
\end{proof}

We utilize the symbol $a\equiv b\mod c$ that means $a-b=nc$ for some $n\in\mathbb{Z}$.

\begin{prop} \label{prop:claim}
The following statements are equivalent for two L\'evy triplets $(\boldsymbol\gamma,\mathbf A,\rho)$ and $(\boldsymbol\gamma',\mathbf A',\rho')$.
\begin{enumerate} [$\qquad(1)$]
\item $\nu_{\circledast}^{(\boldsymbol\gamma,\mathbf A,\rho)}
=\nu_{\circledast}^{(\boldsymbol\gamma',\mathbf A',\rho')}$.

\item\label{item:non-unique} $\boldsymbol\gamma=\boldsymbol\gamma'$, $\mathbf A=\mathbf A'$,
and for all $\boldsymbol p \in\mathbb{Z}^d$,
\[\int_{\mathbb{T}^d} \big(\boldsymbol s^{\boldsymbol p}-1-i \langle \boldsymbol p, \Im(\boldsymbol s)\rangle\big)\,d\rho(\boldsymbol s)\equiv\int_{\mathbb{T}^d}\big({\boldsymbol s}^{\boldsymbol p}-1-i\langle \boldsymbol p,\Im(\boldsymbol s)\rangle\big)
\,d\rho'(\boldsymbol s)\mod 2\pi i.\]

\item $\boldsymbol\gamma=\boldsymbol\gamma'$, $\mathbf A=\mathbf A'$, $\rho(B)=\rho'(B)$ for
all even Borel subsets $B \subset\mathbb{T}^d$, and for all $\boldsymbol p\in\mathbb{Z}^d$,
\begin{equation} \label{eq:claim}
\int_{\mathbb{T}^d}\Im(\boldsymbol s^{\boldsymbol p}-\langle \boldsymbol p, \boldsymbol s\rangle )\,d\rho(\boldsymbol s)\equiv\int_{\mathbb{T}^d}\Im(\boldsymbol s^{\boldsymbol p}-\langle \boldsymbol p, \boldsymbol s\rangle )\,d\rho'(\boldsymbol s)\mod2\pi.
\end{equation}
\end{enumerate}
%Moreover, if $\nu_{\circledast}^{(\gamma_1,a_1,\rho_1)}=\nu_{\circledast}^{(\gamma_2,a_2,\rho_2)}$, then $\rho_1(\mathbb{T})=\rho_2(\mathbb{T})\in [0,\infty]$.
\end{prop}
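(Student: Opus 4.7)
My plan is to prove the cycle (2)$\Rightarrow$(1)$\Rightarrow$(2)$\Leftrightarrow$(3). The implication (2)$\Rightarrow$(1) is immediate upon inserting the common $\boldsymbol\gamma$ and $\mathbf A$ into \eqref{eq:classical_exponent} and invoking that exponentials are insensitive to perturbations by $2\pi i\mathbb Z$. The heart of the argument is (1)$\Rightarrow$(2), which splits into three steps: extract $\boldsymbol\gamma$, then $\mathbf A$, and then derive the mod-$2\pi i$ congruence.

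For Step~1, I plug $\boldsymbol p = \boldsymbol e_j$ into \eqref{eq:classical_exponent}. Since $\int(s_j - 1 - i\Im s_j)\,d\rho = -\int(1 - \Re s_j)\,d\rho$ is a finite real number by the L\'evy-measure hypothesis, the exponent in \eqref{eq:classical_exponent} is real, hence $\widehat\nu(\boldsymbol e_j) = \gamma_j\, r_j$ with $r_j\in(0,\infty)$; equality $\widehat\nu(\boldsymbol e_j)=\widehat{\nu'}(\boldsymbol e_j)$ then forces $\gamma_j = \widehat\nu(\boldsymbol e_j)/|\widehat\nu(\boldsymbol e_j)| = \gamma_j'$, giving $\boldsymbol\gamma = \boldsymbol\gamma'$. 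For Step~2, with $\boldsymbol\gamma^{\boldsymbol p}$ cancelled, $\widehat\nu = \widehat{\nu'}$ gives equality of the real parts of the exponents for each $\boldsymbol p \in \mathbb Z^d$; specializing to $\boldsymbol p = N\boldsymbol v$ with $\boldsymbol v \in \mathbb Z^d$ produces
\[
\tfrac{1}{2}\langle (\mathbf A - \mathbf A')\boldsymbol v, \boldsymbol v\rangle
 = \frac{1}{N^2}\int_{\mathbb T^d}\big(1 - \cos(N\langle \boldsymbol v, \arg \boldsymbol s\rangle)\big)\,(d\rho - d\rho')(\boldsymbol s).
\]
The integrand is dominated in absolute value by $\tfrac12\langle \boldsymbol v, \arg \boldsymbol s\rangle^2$, which lies in $L^1(\rho)\cap L^1(\rho')$ because the sharp inequality $\theta^2 \le (\pi^2/2)(1-\cos\theta)$ on $(-\pi,\pi]$ gives $\|\arg \boldsymbol s\|^2 \le (\pi^2/2)\sum_{j=1}^d (1 - \Re s_j)$. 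Dominated convergence sends the right-hand side to $0$, whence $\langle (\mathbf A - \mathbf A')\boldsymbol v,\boldsymbol v\rangle=0$ for every $\boldsymbol v\in\mathbb Z^d$ and thus $\mathbf A = \mathbf A'$ by symmetry of $\mathbf A$ and $\mathbf A'$. In Step~3, with $\boldsymbol\gamma$ and $\mathbf A$ matched, the surviving identity $\exp(E(\boldsymbol p))=\exp(E'(\boldsymbol p))$ translates directly into the mod-$2\pi i$ congruence of the complex integrals.

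For (2)$\Leftrightarrow$(3): separating the congruence in (2) into real and imaginary parts, the real parts coincide exactly (as $i\langle \boldsymbol p, \Im \boldsymbol s\rangle$ is purely imaginary), yielding $\Re\int(1 - \boldsymbol s^{\boldsymbol p})\,d\rho = \Re\int(1 - \boldsymbol s^{\boldsymbol p})\,d\rho'$ for all $\boldsymbol p$, to which Lemma~\ref{lem:smooth} applies and produces $\rho(B) = \rho'(B)$ on all even Borel $B$; the imaginary parts give precisely \eqref{eq:claim}. Conversely, the symmetric measure $\rho_{s} = \tfrac12(\rho + \rho\circ\phi^{-1})$, where $\phi(\boldsymbol s)=\boldsymbol s^{-1}$, is determined by the values $\rho(B) = \rho_s(B)$ on even sets (via $\rho_s(B)=\tfrac12[\rho_s(B\cup B^{-1})+\rho_s(B\cap B^{-1})]$ with $B\cup B^{-1}$ and $B\cap B^{-1}$ even), so $\rho$ and $\rho'$ agreeing on even sets forces $\rho_s=\rho_s'$ and hence $\int f\,d\rho = \int f\,d\rho_s = \int f\,d\rho_s' = \int f\,d\rho'$ for every even $f\in L^1(\rho)\cap L^1(\rho')$. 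Applying this to $f(\boldsymbol s) = \Re(\boldsymbol s^{\boldsymbol p} - 1)$, which is even since $\boldsymbol s^{-\boldsymbol p} = \overline{\boldsymbol s^{\boldsymbol p}}$, recovers the real-part equality; combined with \eqref{eq:claim} this assembles into the full mod-$2\pi i$ congruence in (2).

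The main obstacle is Step~2 of the (1)$\Rightarrow$(2) direction: isolating the quadratic form $\mathbf A$ from the oscillatory integrals while $\rho$ is only assumed locally finite away from $\mathbf 1$. This hinges on producing a $\rho$-integrable dominator for $N^{-2}(1 - \cos(N\langle \boldsymbol v, \arg \boldsymbol s\rangle))$, which is exactly what the sharp comparison $\|\arg\boldsymbol s\|^2\le (\pi^2/2)\|\mathbf 1-\Re\boldsymbol s\|_1$ delivers.
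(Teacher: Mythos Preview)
Your proof is correct and follows essentially the same approach as the paper's: extracting $\boldsymbol\gamma$ from $\widehat\nu(\boldsymbol e_j)$, recovering $\mathbf A$ via a dominated-convergence limit along $\boldsymbol p=N\boldsymbol v$ using $1-\cos(N\theta)\le N^2\theta^2/2$, and deducing (2)$\Leftrightarrow$(3) by separating real and imaginary parts together with Lemma~\ref{lem:smooth}. The only cosmetic difference is that the paper treats the diagonal and off-diagonal entries of $\mathbf A$ separately (via $|\widehat\nu(n\boldsymbol e_k)|^{1/n^2}$ and $|\widehat\nu(n\boldsymbol e_j+n\boldsymbol e_k)|^{1/n^2}$), whereas you handle all $\boldsymbol v\in\mathbb Z^d$ at once.
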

\begin{remark}\label{rem:unique}
\begin{enumerate}[(i)]

\item The uniqueness of the Gaussian components is generally valid
on a locally compact abelian group, see \cite[Theorem 8.1]{Par67}. For the sake of completeness, we include a proof for the special group $\mathbb{T}^d$.

\item The uniqueness of the drift term depends on the choice of the compensating
function of the L\'evy-Khintchine representation. For instance, if we choose the alternative representation
$$
\widehat \nu(n) = \gamma^n \exp\left(-\frac{1}{2}a n^2 +\int_{\mathbb{T}}\left(s^n-1-in\left[\Im s + \frac{1}{2}(\Im s)^3\right] \right)\,d\rho(s)\right)
$$
in $d=1$, then the drift term is not unique anymore, since both triplets $(1,0,\pi\delta_i)$ and $(-1,0,\pi\delta_{-i})$ give the same distribution.
\end{enumerate}
\end{remark}

\begin{proof} We adopt notations such as $\boldsymbol\gamma = (\gamma_1\dots, \gamma_d)$ and $\mathbf A = (a_{ij})_{i,j=1}^d$ in the proof. For simplicity, let $\nu$ stand for $\nu_{\circledast}^{(\boldsymbol\gamma,\mathbf A,\rho)}$.

Statement (2) reveals that all moments of $\nu$ and $\nu_{\circledast}^{(\boldsymbol\gamma',\mathbf A',\rho')}$ coincide, and hence (2)$\Rightarrow$(1). Conversely, if (1) holds, then the identity
\[
\widehat{\nu}(\boldsymbol e_k)=\gamma_k\exp\left(-\frac{a_{kk}}{2}+\int_{\mathbb{T}^d}\big(\Re(s_k)-1\big)\,d\rho(\boldsymbol s)\right)
\]
shows that $\gamma_k=\widehat{\nu}(\boldsymbol e_k)/ |\widehat{\nu}(\boldsymbol e_k)|=\gamma_k'$. Furthermore, the inequality $1-\cos (n x)\leq n^2x^2/2$ for any $n\in\mathbb{Z}$ and the dominated convergence theorem allow us to obtain that
\[e^{-a_{kk}/2}=\lim_{n\to\infty}|\widehat{\nu}(n\boldsymbol e_k)|^{1/n^2},\]
which leads to $a_{kk}=a_{kk}'$. Similarly, the result
\[
e^{-a_{jk}}=e^{(a_{jj}+a_{kk})/2}\lim_{n\to\infty}|\widehat{\nu}(n\boldsymbol e_j+n\boldsymbol e_k)|^{1/n^2},\;\;\;\;\;j \neq k,\] implies $a_{jk}=a_{jk}'$. Hence we have established (1)$\Rightarrow$(2). Statements (2) and (3) are equivalent by treating real and imaginary parts separately and by using Lemma \ref{lem:smooth}.
\end{proof}

As we have seen that the drift part $\boldsymbol\gamma$ and Gaussian component $\mathbf{A}$ in the representation (\ref{eq:classical_exponent}) are always unique, namely the non-uniqueness of L\'evy triplets is solely due to the L\'evy measures. This observation leads to the following definition.

\begin{pdef} Let $\rho$ be a L\'evy measure on $\mathbb T^d$.
The symbol $\mathcal{L}(\rho)$ stands for the collection of those measures serving as L\'{e}vy measures for $\nu_{\circledast}^{(\mathbf1,\mathbf0,\rho)}$.
\end{pdef}

Detailed discussions of the uniqueness of the L\'{e}vy measure will be presented in Section \ref{sec8}. For Now, let us focus on limit theorems. Given an infinitesimal triangular array $\{\nu_{nk}\}_{n\geq1,\,1\leq k \leq k_n}\subset\mathscr{P}_{\mathbb{T}^d}$ and a sequence $\{\boldsymbol\xi_n\}\subset\mathbb{T}^d$, it was proved in \cite[Theorem 5.2]{Par67} that the limit of the sequence
\begin{equation} \label{circlelimit}
\nu_n=\delta_{\boldsymbol\xi_n}\circledast\nu_{n1}\circledast\cdots\circledast\nu_{nk_n},
\;\;\;\;\;n=1,2,3,\dots,
\end{equation}
if it does converge weakly, belongs to $\mathcal{ID}(\circledast)$.
Some basic algebraic works then show that for $\boldsymbol p\in\mathbb{Z}^d$ and for large $n$,
\begin{align} \label{prod}
\widehat{\nu}_n(\boldsymbol p)&=\boldsymbol\xi_n^{\boldsymbol p}\prod_{k=1}^{k_n}{\boldsymbol b}_{nk}^{\boldsymbol p}\,\widehat{\mathring\nu_{nk}}(\boldsymbol p) \\
&=\exp\left(i\langle\boldsymbol p,\arg\boldsymbol\xi_n\rangle+i\sum_{k=1}^{k_n}\langle\boldsymbol p,\arg{\boldsymbol b}_{nk}\rangle+\sum_{k=1}^{k_n} \log\widehat{\mathring\nu_{nk}}(\boldsymbol p)\right) \nonumber.
\end{align} Here, $\log\widehat{\mathring\nu_{nk}}(\boldsymbol p)$ is recognized as the principal value of the logarithm function since $\widehat{\mathring\nu_{nk}}(\boldsymbol p)$ is close to $1$ for all sufficiently large $n$. In order to derive the criteria ensuring the weak convergence of $\nu_n$, we consider the quantity
\[z_{nk}(\boldsymbol p)=\widehat{\mathring\nu_{nk}}(\boldsymbol p)-1,\;\;\;\;\;\boldsymbol p\in\mathbb{Z}^d,\] which has a non-positive real part and satisfies $\lim_{n\to\infty}\max_{1\leq k\leq k_n}|z_{nk}(\boldsymbol p)|\to0$. More useful information regarding this array $\{z_{nk}\}$ is recorded below.

\begin{lem} \label{lem:re-im}
Let $\{\boldsymbol e_j\}_{j=1}^d$ be the standard basis of $\mathbb{Z}^d$. Then
there exists a constant $C(\theta,d)>0$, depending on $\theta$ and the dimension $d$ only, such that
for any $\boldsymbol p\in\mathbb{Z}^d\backslash\{\boldsymbol0\}$,
\[\big|\Im z_{nk}(\boldsymbol p)\big|\leq\big|\Re z_{nk}(\boldsymbol p)\big|+C(\theta,d)\|\boldsymbol p\|\sum_{j=1}^d
\big|\Re z_{nk}(\boldsymbol e_j)\big|\]
holds for all sufficiently large $n$ and $1\leq k\leq k_n$.
\end{lem}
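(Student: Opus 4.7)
The plan is to express $\Im z_{nk}(\boldsymbol p) = \int_{\mathbb T^d}\sin\langle\boldsymbol p,\arg\boldsymbol s\rangle\,d\mathring\nu_{nk}(\boldsymbol s)$ as a linearization $\langle\boldsymbol p,\arg\boldsymbol s\rangle$ plus a trigonometric remainder, and to control the two pieces separately using (a) the approximate centering of $\mathring\nu_{nk}$ inherited from the defining identity $\arg\boldsymbol b_{nk}=\int_{\mathscr U_\theta}\arg\boldsymbol s\,d\nu_{nk}$ and (b) the pointwise inequality $|t-\sin t|\le(\pi/2)(1-\cos t)$ valid on $|t|\le\pi$. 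The factor of $\|\boldsymbol p\|$ in the conclusion will come from the linear piece $\langle\boldsymbol p,\int\arg\boldsymbol s\,d\mathring\nu_{nk}\rangle$, while the $|\Re z_{nk}(\boldsymbol p)|$ term on the right arises naturally from the trigonometric remainder.

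The first preparatory step is a Chebyshev-type inequality: from the covering $\mathbb T^d\setminus\mathscr U_\epsilon\subset\bigcup_{j=1}^d\{|\arg s_j|\ge\epsilon/\sqrt d\}$ and the pointwise bound $\mathbf 1_{|\arg s_j|\ge a}\le(1-\cos\arg s_j)/(1-\cos a)$ for $a\in(0,\pi]$, one obtains $\mathring\nu_{nk}(\mathbb T^d\setminus\mathscr U_\epsilon)\le K(\epsilon,d)\sum_{j}|\Re z_{nk}(\boldsymbol e_j)|$. The second preparation is approximate centering: performing the change of variables $\boldsymbol t=\boldsymbol b_{nk}\boldsymbol s$ and using $\int_{\mathscr U_\theta}\arg\boldsymbol t\,d\nu_{nk}=\arg\boldsymbol b_{nk}$, for $n$ large enough that $\|\arg\boldsymbol b_{nk}\|<\theta/2$ one writes $\int\arg\boldsymbol s\,d\mathring\nu_{nk}$ as the sum of a tail integral over $\mathbb T^d\setminus\mathscr U_\theta$ and a symmetric-difference integral over $\boldsymbol b_{nk}^{-1}\mathscr U_\theta\triangle\mathscr U_\theta$, each of whose absolute value is bounded by $\pi\sqrt d\cdot\nu_{nk}(\mathbb T^d\setminus\mathscr U_{\theta/2})$ and hence, via the first step applied to $\mathring\nu_{nk}$, by $C_1(\theta,d)\sum_j|\Re z_{nk}(\boldsymbol e_j)|$.

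With these in hand I would decompose, setting $\theta=\langle\boldsymbol p,\arg\boldsymbol s\rangle$,
\[\Im z_{nk}(\boldsymbol p)=\Bigl\langle\boldsymbol p,\int\arg\boldsymbol s\,d\mathring\nu_{nk}\Bigr\rangle-\int(\theta-\sin\theta)\,d\mathring\nu_{nk}.\]
The first summand is bounded in absolute value by $\|\boldsymbol p\|\cdot C_1\sum_j|\Re z_{nk}(\boldsymbol e_j)|$. For the remainder, I would split the domain at $|\theta|=\pi$: on $\{|\theta|\le\pi\}$ the inequality $|t-\sin t|\le(\pi/2)(1-\cos t)$ integrates directly to $(\pi/2)|\Re z_{nk}(\boldsymbol p)|$; on $\{|\theta|>\pi\}$, which is contained in $\{\|\arg\boldsymbol s\|>\pi/(\|\boldsymbol p\|\sqrt d)\}$, I use $|t-\sin t|\le|t|+1\le 2\|\boldsymbol p\|\|\arg\boldsymbol s\|$ combined with the $L^2$-estimate $\int\|\arg\boldsymbol s\|^2\,d\mathring\nu_{nk}\le(\pi^2/2)\sum_j|\Re z_{nk}(\boldsymbol e_j)|$ (from $(\arg s_j)^2\le(\pi^2/2)(1-\cos\arg s_j)$ for $|\arg s_j|\le\pi$), Cauchy--Schwarz, and Young's inequality balanced against $|\Re z_{nk}(\boldsymbol p)|\le C\|\boldsymbol p\|^2\sum_j|\Re z_{nk}(\boldsymbol e_j)|$. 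The hard part will be precisely this second regime: the naive bounds produce a quadratic factor $\|\boldsymbol p\|^2$, and reducing it to a linear $\|\boldsymbol p\|$ on the right-hand side requires trading the square of $\|\boldsymbol p\|$ against one copy of $|\Re z_{nk}(\boldsymbol p)|$ through Young's inequality, after which one collects the three contributions and takes $C(\theta,d)$ to be the maximum of the constants produced along the way.
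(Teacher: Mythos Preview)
Your decomposition has two problems that prevent it from reaching the stated inequality.

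\textbf{The constant on $|\Re z_{nk}(\boldsymbol p)|$ is wrong.} On the region $\{|t|\le\pi\}$ you invoke $|t-\sin t|\le(\pi/2)(1-\cos t)$, which after integration produces $(\pi/2)\,|\Re z_{nk}(\boldsymbol p)|$. The lemma, however, requires coefficient exactly~$1$ on this term. You cannot absorb the excess $(\pi/2-1)|\Re z_{nk}(\boldsymbol p)|$ into $C(\theta,d)\|\boldsymbol p\|\sum_j|\Re z_{nk}(\boldsymbol e_j)|$: the only available bound on $|\Re z_{nk}(\boldsymbol p)|$ in terms of the $|\Re z_{nk}(\boldsymbol e_j)|$ is the quadratic one $|\Re z_{nk}(\boldsymbol p)|\le C\|\boldsymbol p\|^{2}\sum_j|\Re z_{nk}(\boldsymbol e_j)|$ (coming from $1-\cos t\le t^2/2$), and no linear bound of this type holds in general.

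\textbf{The ``bad'' region $\{|t|>\pi\}$ is not handled.} Your Cauchy--Schwarz estimate together with the $L^2$-bound on $\|\arg\boldsymbol s\|$ gives
\[
\int_{\{|t|>\pi\}}|t-\sin t|\,d\mathring\nu_{nk}\;\le\;C\,\|\boldsymbol p\|^{2}\sum_j|\Re z_{nk}(\boldsymbol e_j)|,
\]
and the vague appeal to ``Young's inequality balanced against $|\Re z_{nk}(\boldsymbol p)|$'' cannot repair this: an \emph{upper} bound $|\Re z_{nk}(\boldsymbol p)|\le C\|\boldsymbol p\|^2\sum_j|\Re z_{nk}(\boldsymbol e_j)|$ does not let you trade a $\|\boldsymbol p\|^2$-term for a $\|\boldsymbol p\|$-term plus a copy of $|\Re z_{nk}(\boldsymbol p)|$.

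Both difficulties trace to the same strategic choice: you split the domain along the $\boldsymbol p$-dependent locus $\{|t|=\pi\}$. The paper instead splits at a \emph{fixed} radius $\mathscr U_{\theta/2}$. On the tail $\mathbb T^d\setminus\mathscr U_{\theta/2}$ it does not try to control $|t-\sin t|$ at all; it simply uses $|\Im\boldsymbol s^{\boldsymbol p}|\le1$, so the tail contribution is $\mathring\nu_{nk}(\mathbb T^d\setminus\mathscr U_{\theta/2})$, a quantity independent of $\boldsymbol p$ that Chebyshev bounds by $C(\theta,d)\sum_j|\Re z_{nk}(\boldsymbol e_j)|$. On the bulk $\mathscr U_{\theta/2}$ it uses the \emph{sharp} pointwise inequality $|\sin t-t|\le 1-\cos t$ (coefficient~$1$, not $\pi/2$), which integrates to at most $|\Re z_{nk}(\boldsymbol p)|$, and separately controls $\bigl|\int_{\mathscr U_{\theta/2}}\langle\boldsymbol p,\arg\boldsymbol s\rangle\,d\mathring\nu_{nk}\bigr|$ by the approximate-centering argument you correctly describe. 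The fixed-radius split is exactly what makes the linear $\|\boldsymbol p\|$ come out; your $\boldsymbol p$-dependent split forces you into the quadratic regime.
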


\begin{proof} In our proof, $n$ is always large enough. Firstly, the relation $\arg\boldsymbol s=-\arg\boldsymbol b_{nk}+\arg(\boldsymbol b_{nk}\boldsymbol s)$ for ${\boldsymbol s}\in\mathscr{U}_{\theta/2}$ and the inclusion $\boldsymbol b_{nk}\mathscr{U}_{\theta/2}\subset\mathscr{U}_\theta$ show that
\begin{align*}
\left|\int_{\mathscr{U}_{\theta/2}}(\arg s_j)\,d\mathring\nu_{nk}\right|
&=\left|-\mathring\nu_{nk}(\mathscr{U}_{\theta/2})\arg b_{nkj}-\int_{\mathscr{U}_\theta\backslash(\boldsymbol b_{nk}\mathscr U_{\theta/2})}(\arg s_j)\,d\nu_{nk}+\arg b_{nkj}\right| \\
&=\left|\mathring\nu_{nk}(\mathbb{T}^d\backslash\mathscr{U}_{\theta/2})\arg b_{nkj}-\int_{\mathscr{U}_\theta\setminus(\boldsymbol b_{nk}\mathscr U_{\theta/2})}(\arg s_j)\,d\nu_{nk}\right|  \\
&\leq 2\mathring\nu_{nk}(\mathbb{T}^d\backslash\mathscr{U}_{\theta/4}).
\end{align*} Secondly, the validity of the inequality $|\Im{\boldsymbol s}^{\boldsymbol p}-\langle\boldsymbol p,\arg{\boldsymbol s}\rangle|\leq1-\Re{\boldsymbol s}^{\boldsymbol p}$ for ${\boldsymbol s}\in\mathscr{U}_{\theta/2}$ by (\ref{thetarange}) and (\ref{basicineq}) leads to
\begin{align*}
\big|\Im z_{nk}(\boldsymbol p)\big|
&\leq\int_{\mathscr{U}_{\theta/2}}\big|\Im\boldsymbol s^{\boldsymbol p}-\langle\boldsymbol p,\arg{\boldsymbol s}\rangle\big|\,d\mathring\nu_{nk}
+\left|\int_{\mathscr{U}_{\theta/2}}\langle\boldsymbol p,\arg{\boldsymbol s}\rangle\,d\mathring\nu_{nk}\right|+\mathring\nu_{nk}(\mathbb{T}^d\backslash \mathscr{U}_{\theta/2}) \\
&\leq|\Re z_{nk}(\boldsymbol p)|+3\|\boldsymbol p\|\mathring\nu_{nk}(\mathbb{T}^d
\backslash\mathscr{U}_{\theta/4}).
\end{align*}
Finally, combining the inequality offered above and the following one
\begin{align*}
\mathring\nu_{nk}(\mathbb{T}^d\backslash\mathscr{U}_{\theta/4})&\leq\sum_{j=1}^d\int_{\{|\arg s_j|\geq\theta/(4\sqrt{d})\}}1\;d\mathring\nu_{nk}({\boldsymbol s}) \\
&\leq\frac{1}{1-\cos\big(\theta/(4\sqrt{d})\big)}\sum_{j=1}^d\int_{\{|\arg s_j|\geq\theta/(4\sqrt{d})\}}(1-\Re s_j)\,d\mathring\nu_{nk}({\boldsymbol s})\\
&\leq\frac{1}{1-\cos\big(\theta/(4\sqrt{d})\big)}\sum_{j=1}^d\big|\Re z_{nk}(\boldsymbol e_j)\big|
\end{align*} yield the desired result.
\end{proof}

\begin{thm} \label{ClimitthmX}
Let $(\boldsymbol\gamma,\mathbf{A},\rho)$ be a multiplicative L\'evy triplet. Given an infinitesimal triangular array $\{\nu_{nk}\}\subset\mathscr{P}_{\mathbb{T}^d}$, define $\rho_n=\sum_{k=1}^{k_n}\mathring\nu_{nk}$ and $\boldsymbol\gamma_n$ as in \emph{(\ref{gamman})}.
Then the sequence \emph{(\ref{circlelimit})} converges weakly to $\nu_{\circledast}^{(\boldsymbol\gamma,\mathbf{A},\rho)}$ if and only if the following statements hold:
\begin{enumerate} [$\qquad(1)$]
\item \label{item:cond1} $\lim_{n\to\infty}\boldsymbol\gamma_n=\boldsymbol\gamma$,
\item the limit
\[\lim_{\epsilon\to0}\limsup_{n\to\infty}\int_{\mathscr{U}_\epsilon}(\Im s_j)(\Im s_\ell)\,d\rho_n=\lim_{\epsilon\to0}\liminf_{n\to\infty}\int_{\mathscr{U}_\epsilon}(\Im s_j)(\Im s_\ell)\,d\rho_n\]
exists and equals the $(j,\ell)$-entry of $\mathbf{A}$, and
\item\label{item:cond3} $\{1_{\mathbb T^d \setminus\{\mathbf 1\}}\rho_n\}$ is
$\tilde{\mathscr{M}}_{\mathbb T^d}^{\mathbf1}$-relatively compact and all of its limit points are contained in $\mathcal{L}(\rho)$.
\end{enumerate}
\end{thm}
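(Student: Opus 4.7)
\medskip

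\noindent\textbf{Proof plan.}
The strategy is to work with the logarithm of the characteristic function \eqref{prod} and compare it term by term with the L\'evy--Khintchine formula \eqref{eq:classical_exponent}. By infinitesimality, $\max_{k}|z_{nk}(\boldsymbol p)|\to 0$ and $\Re z_{nk}(\boldsymbol p)\leq 0$, so
\[\log\bigl(1+z_{nk}(\boldsymbol p)\bigr)=z_{nk}(\boldsymbol p)+O\bigl(|z_{nk}(\boldsymbol p)|^2\bigr).\]
Lemma \ref{lem:re-im}, together with the elementary bound $|z_{nk}(\boldsymbol p)|^2\leq 2|z_{nk}(\boldsymbol p)|\cdot\max_{k}|z_{nk}(\boldsymbol p)|$ and the boundedness of $\sum_k|\Re z_{nk}(\boldsymbol e_j)|=\int(1-\Re s_j)\,d\rho_n$ (an a priori ingredient that each direction must guarantee), makes the total error $\sum_k O(|z_{nk}(\boldsymbol p)|^2)$ vanish. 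Using the definition \eqref{gamman} of $\boldsymbol\gamma_n$ and the identity $\sum_k z_{nk}(\boldsymbol p)=\int_{\mathbb T^d}(\boldsymbol s^{\boldsymbol p}-1)\,d\rho_n$, formula \eqref{prod} reduces, modulo $2\pi i$, to
\[\log\widehat\nu_n(\boldsymbol p)=i\langle\boldsymbol p,\arg\boldsymbol\gamma_n\rangle+\int_{\mathbb T^d}\bigl(\boldsymbol s^{\boldsymbol p}-1-i\langle\boldsymbol p,\Im\boldsymbol s\rangle\bigr)\,d\rho_n(\boldsymbol s)+o(1).\]

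\medskip

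\noindent\textbf{Sufficiency of (1)--(3).} I would split the above integral as $\int_{\mathscr{U}_\epsilon}+\int_{\mathbb T^d\setminus\mathscr{U}_\epsilon}$. The Taylor expansion
\[\boldsymbol s^{\boldsymbol p}-1-i\langle\boldsymbol p,\Im\boldsymbol s\rangle=-\tfrac{1}{2}\langle\boldsymbol p,\Im\boldsymbol s\rangle^2+O_{\boldsymbol p}(\|\arg\boldsymbol s\|^3)\]
on $\mathscr{U}_\epsilon$, together with condition (2), yields the limit $-\tfrac{1}{2}\langle\mathbf A\boldsymbol p,\boldsymbol p\rangle$ upon letting first $n\to\infty$ and then $\epsilon\to0$; the cubic error is absorbed because $\|\arg\boldsymbol s\|^3\le C\epsilon\sum_j(1-\Re s_j)$ on $\mathscr U_\epsilon$ and $\int(1-\Re s_j)\,d\rho_n$ is bounded by condition (3). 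On the complement $\mathbb T^d\setminus\mathscr U_\epsilon$, fixing any subsequence of $\{1_{\mathbb T^d\setminus\{\mathbf 1\}}\rho_n\}$ that converges (by condition (3)) to some $\rho'\in\mathcal L(\rho)$ in $\tilde{\mathscr M}_{\mathbb T^d}^{\mathbf 1}$, Proposition \ref{Portman} gives convergence of the integral for all but countably many $\epsilon$. Sending $\epsilon\to 0$ afterwards and using $\|\mathbf 1-\Re\boldsymbol s\|\in L^1(\rho')$ (via dominated convergence), the remaining piece tends to $\int(\boldsymbol s^{\boldsymbol p}-1-i\langle\boldsymbol p,\Im\boldsymbol s\rangle)\,d\rho'$. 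Since $\rho'\in\mathcal L(\rho)$, Proposition \ref{prop:claim} forces this integral to agree with the one against $\rho$ modulo $2\pi i$. Combined with (1), we obtain $\widehat\nu_n(\boldsymbol p)\to\widehat{\nu_\circledast^{(\boldsymbol\gamma,\mathbf A,\rho)}}(\boldsymbol p)$; that every such subsequence produces the same limit forces convergence of the full sequence.

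\medskip

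\noindent\textbf{Necessity of (1)--(3).} Conversely, assume $\nu_n\Rightarrow\nu_\circledast^{(\boldsymbol\gamma,\mathbf A,\rho)}$. The convergence of $|\widehat\nu_n(\boldsymbol e_j)|$, read off as $\sum_k\log|1+z_{nk}(\boldsymbol e_j)|^2$, forces $\int(1-\Re s_j)\,d\rho_n$ to stay bounded; by the Chebyshev-type estimate
\[\rho_n(\mathbb T^d\setminus\mathscr U_\epsilon)\le\sum_j\tfrac{1}{1-\cos(\epsilon/\sqrt d)}\int(1-\Re s_j)\,d\rho_n,\]
the family $\{1_{\mathbb T^d\setminus\{\mathbf 1\}}\rho_n\}$ is tight and hence relatively compact in $\tilde{\mathscr M}_{\mathbb T^d}^{\mathbf 1}$ by \cite[Theorem 2.7]{mapping}. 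After passing to a subsequence, let $\rho'$ be a limit point. Taking this same subsequence, the sufficiency calculation runs in reverse: boundedness of $\int(1-\Re s_j)\,d\rho_n$ together with the inequalities $(\Im s_j)^2\le 2(1-\Re s_j)$ makes the family $\{\int_{\mathscr U_\epsilon}\langle\boldsymbol p,\Im\boldsymbol s\rangle^2\,d\rho_n\}$ bounded, and along a further subsequence its $\liminf$ and $\limsup$ as $\epsilon\to0$ exist. Isolating the Gaussian piece via the identity derived in the first paragraph and invoking the uniqueness statement in Proposition \ref{prop:claim}(ii), the quadratic-form value must coincide with $\langle\mathbf A\boldsymbol p,\boldsymbol p\rangle$ for every $\boldsymbol p\in\mathbb Z^d$; polarization then yields (2), and in particular the value is the same for every such subsequence, giving (2) in full. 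The limit of $\boldsymbol\gamma_n$ is recovered by the same identity once the Gaussian part is fixed, and uniqueness of $\boldsymbol\gamma$ in Proposition \ref{prop:claim} forces (1). Finally, with (1)--(2) now in hand, every limit point $\rho'$ of $\{1_{\mathbb T^d\setminus\{\mathbf 1\}}\rho_n\}$ must lie in $\mathcal L(\rho)$ by Proposition \ref{prop:claim}, proving (3).

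\medskip

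\noindent\textbf{Main obstacle.} The delicate step is the necessity direction: since L\'evy measures are not unique, one must cleanly separate the small-jump contribution (which produces $\mathbf A$) from the large-jump contribution (which produces some $\rho'\in\mathcal L(\rho)$) and verify, using the uniqueness of $\boldsymbol\gamma$ and $\mathbf A$ guaranteed by Proposition \ref{prop:claim}, that the two pieces extracted from any convergent subsequence are consistent with the prescribed $(\boldsymbol\gamma,\mathbf A)$. A secondary technical nuisance is the pervasive $\mathrm{mod}\,2\pi i$ ambiguity in $\log\widehat\nu_n(\boldsymbol p)$, which must be tracked carefully so that the cancellation between the drift term $i\langle\boldsymbol p,\arg\boldsymbol\gamma_n\rangle$ and the imaginary part of $\int\bigl(\boldsymbol s^{\boldsymbol p}-1-i\langle\boldsymbol p,\Im\boldsymbol s\rangle\bigr)d\rho_n$ is unambiguous.
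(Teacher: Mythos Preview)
Your proposal is correct and follows essentially the same strategy as the paper: linearize $\log(1+z_{nk}(\boldsymbol p))\approx z_{nk}(\boldsymbol p)$ (the paper packages this step via \cite[Lemma 2.1]{Wan08}), control imaginary parts through Lemma~\ref{lem:re-im}, obtain relative compactness of $\{\rho_n\}$ from boundedness of $\int(1-\Re s_j)\,d\rho_n$, and identify the triplet along convergent subsequences using Proposition~\ref{prop:claim}.

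One point you gloss over deserves mention. In the necessity direction you write that the quadratic-form value ``is the same for every such subsequence, giving (2) in full.'' The paper makes this passage from subsequences to the full sequence explicit by observing that $(\Im s_j)(\Im s_\ell)$ is an \emph{even} function on $\mathbb T^d$, so by Proposition~\ref{prop:claim}(3) every limit point $\tilde\rho\in\mathcal L(\rho)$ satisfies $\int(\Im s_j)(\Im s_\ell)\,d\tilde\rho=\int(\Im s_j)(\Im s_\ell)\,d\rho$. This forces both $\lim_n\int_{\mathbb T^d}(\Im s_j)(\Im s_\ell)\,d\rho_n$ and $\lim_n\int_{\mathbb T^d\setminus\mathscr U_\epsilon}(\Im s_j)(\Im s_\ell)\,d\rho_n$ to exist for the \emph{whole} sequence (not merely along subsequences), from which (2) follows directly. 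Your compactness-based reasoning reaches the same conclusion, but the even-function observation is the cleanest way to close the gap. A second minor point: in the sufficiency direction, boundedness of $\int(1-\Re s_j)\,d\rho_n$ comes from conditions (2) \emph{and} (3) together (condition (2) controls the integral over $\mathscr U_\epsilon$ via $(\Im s_j)^2\sim 2(1-\Re s_j)$, condition (3) the complement), not from (3) alone.
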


\begin{proof} Suppose that $\nu_n\Rightarrow\nu=\nu_{\circledast}^{(\boldsymbol\gamma,\mathbf{A},\rho)}$, i.e., $\widehat{\nu_n}(\boldsymbol p)\to\widehat{\nu}(\boldsymbol p)\neq0$ at every point $\boldsymbol p\in\mathbb{Z}^d$. We first infer from (\ref{prod}) that
\[\lim_{n\to\infty}\sum_{k=1}^{k_n}\log\big|\widehat{\mathring\nu_{nk}}(\boldsymbol p)\big|\in(-\infty,0].\] Writing $x_{nk}(\boldsymbol p)=\Re z_{nk}(\boldsymbol p)$ and $y_{nk}(\boldsymbol p)=\Im z_{nk}(\boldsymbol p)$, we have the estimate
\begin{align*} \label{eq:log}
\log|\widehat{\mathring\nu_{nk}}(\boldsymbol p)|&=\frac{1}{2}\log\big(1+2x_{nk}(\boldsymbol p)+x_{nk}(\boldsymbol p)^2+y_{nk}(\boldsymbol p)^2\big) \\
&\leq\frac{1}{2}\big(2x_{nk}(\boldsymbol p)+x_{nk}(\boldsymbol p)^2+y_{nk}(\boldsymbol p)^2\big).
\end{align*} Then the employment of Lemma \ref{lem:re-im} shows that
\begin{align*}
\sum_{j=1}^d y_{nk}(\boldsymbol e_j)^2\leq\left(\sum_{j=1}^d|y_{nk}(\boldsymbol e_j)|\right)^2&\leq C(\theta,d)^2\left(\sum_{j=1}^dx_{nk}(\boldsymbol e_j)\right)^2 \\
&\leq dC(\theta,d)^2\sum_{j=1}^dx_{nk}(\boldsymbol e_j)^2.
\end{align*} For $n$ sufficiently large such that $|x_{nk}(\boldsymbol e_j)|\leq 2^{-1}\min\{1,1/(dC(\theta,d)^2)\}$ for $1\leq k\leq k_n$ and $j=1,\ldots,d$, we further obtain that
\[\sum_{j=1}^d \log|\widehat{\mathring\nu_{nk}}(\boldsymbol e_j)|\leq\frac{1}{2}\sum_{j=1}^dx_{nk}(\boldsymbol e_j)\leq0.\]
Thus, $\{\sum_{k=1}^{k_n}x_{nk}(\boldsymbol e_j)\}_{n=1}^\infty$ is a bounded sequence for each $j=1,\ldots,d$. This also says that for each $j$, the sequence of positive measures
$d\lambda_{nj}({\boldsymbol s})=(1-\Re s_j)d\rho_n({\boldsymbol s})$
has uniformly bounded total mass $\lambda_{nj}(\mathbb{T}^d)=-\sum_{k=1}^{k_n}x_{nk}(\boldsymbol e_j)$.

Next, let $\widetilde{\lambda}_j$ be a weak limit point of $\{\lambda_{nj}\}$, and suppose that $\lambda_{n_m j}\Rightarrow\widetilde{\lambda}_j$ for some subsequence $\{\lambda_{n_m j}\}$.
One can quickly see from the first part of the proof of Proposition \ref{equicondX} that
$\rho_{n_m}$ converges to certain $\widetilde{\rho}\in\tilde{\mathscr{M}}_{\mathbb{T}^d}^\mathbf{1}$.
Also, observe that $|y_{n_m k}(\boldsymbol e_j)|\leq(1+C(\theta,d))|x_{n_m k}(\boldsymbol e_j)|+S_{n_m k}^{(j)}$
and $\log\widehat{\mathring\nu_{n_m k}}(\boldsymbol e_j)=z_{n_m k}(\boldsymbol e_j)(1+\epsilon_{n_m k}^{(j)})$, where $S_{n_m k}^{(j)}=C(\theta,d)\sum_{j'\neq j}|x_{n_m k}(\boldsymbol e_{j'})|$ satisfies
$\sup_{m\geq1}\sum_{k=1}^{k_{n_m}}S_{n_m k}^{(j)}<\infty$ and $\sup_{1\leq k\leq k_{n_m}}|\epsilon_{n_m k}^{(j)}|\to0$ as $n_m\to\infty$ for all $j$. Hence we can deduce from \cite[Lemma 2.1]{Wan08} that
\begin{align}
\lim_{m\to\infty}\widehat{\nu_{n_m}}(\boldsymbol e_j)&=\lim_{m\to\infty}\exp\left(i\langle\boldsymbol e_j,\arg\boldsymbol\xi_{n_m}\rangle+i
\sum_{k=1}^{k_{n_m}}\langle\boldsymbol e_j,\arg\boldsymbol b_{n_m k}\rangle+\sum_{k=1}^{k_{n_m}}\log\widehat{\mathring\nu_{n_m k}}(\boldsymbol e_j)\right) \nonumber \\
&=\lim_{m\to\infty}\exp\left(i\langle\boldsymbol e_j,\arg\boldsymbol\xi_{n_m}\rangle+i\sum_{k=1}^{k_{n_m}}\langle\boldsymbol e_j,\arg \nonumber\boldsymbol b_{n_m k}\rangle+\sum_{k=1}^{k_{n_m}}z_{n_m k}(\boldsymbol e_j)\right).
\end{align}
Since $\sum_{k=1}^{k_{n_m}}x_{n_mk}(\boldsymbol e_j)$
converges as $m\to\infty$, we arrive at
\[\tilde\gamma_j:=\lim_{m\to\infty}\exp\left(i\langle\boldsymbol e_j,\arg\boldsymbol\xi_{n_m} \rangle+i\sum_{k=1}^{k_{n_m}}\langle\boldsymbol e_j,\arg\boldsymbol b_{n_m k}\rangle+i\sum_{k=1}^{k_{n_m}}y_{n_m k}(\boldsymbol e_j)\right)\in\mathbb{T}.\]

Next, making use of the fact (\ref{ImRe}) allows us to obtain
\[\lim_{\epsilon\to0}\liminf_{m\to\infty}\int_{\mathscr{U}_\epsilon}(\Im s_j)^2\,d\rho_{n_m}({\boldsymbol s})=2\widetilde\lambda_j(\{\mathbf{1}\})=
\lim_{\epsilon\to0}\limsup_{m\to\infty}\int_{\mathscr{U}_\epsilon}(\Im s_j)^2\,d\rho_{n_m}({\boldsymbol s}).\] This verifies the situation $j=\ell$ and $n=n_m$ in (2); denote the limit by $\widetilde{a}_{jj}$.
We now proceed to the proof of the case when $j\neq\ell$ and $n=n_m$ in (2). First of all, we see from Lemma \ref{lem:re-im} and \cite[Lemma 2.1]{Wan08} again that for any $\boldsymbol p\in\mathbb{Z}^d\backslash\{\boldsymbol0\}$,
\begin{align}
\lim_{m\to\infty}\widehat{\nu_{n_m}}(\boldsymbol p)
&=\lim_{m\to\infty}\exp\left(i\langle\boldsymbol p,\arg\boldsymbol\xi_{n_m}\rangle+i\sum_{k=1}^{k_{n_m}}\langle\boldsymbol p,\arg\boldsymbol b_{n_m k}\rangle+
\sum_{k=1}^{k_{n_m}}z_{n_m k}(\boldsymbol p)\right) \notag \\
&=\lim_{m\to\infty}\exp\left(i\left\langle\boldsymbol p,\arg\boldsymbol\xi_{n_m}+\sum_{k=1}^{k_{n_m}}\arg\boldsymbol b_{n_m k}+\int_{\mathbb{T}^d}(\Im\boldsymbol s)\,d\rho_{n_m}({\boldsymbol s})\right\rangle\right. \label{eq:conv} \\
&\quad\quad\quad\quad\quad\left.+\int_{\mathbb{T}^d}\big(\boldsymbol s^{\boldsymbol p}-1-i\langle\boldsymbol p,\Im\boldsymbol s\rangle\big)\,d\rho_{n_m}({\boldsymbol s}\big)\right) \notag \\
&=\widetilde{\boldsymbol\gamma}^{\boldsymbol p}\lim_{m\to\infty}\exp\left(\int_{\mathbb{T}^d}\big({\boldsymbol s}^{\boldsymbol p}-1-i\langle\boldsymbol p,\Im\boldsymbol s\rangle\big)\,d\rho_{n_m}({\boldsymbol s})\right), \notag
\end{align} where $\widetilde{\boldsymbol\gamma}=(\widetilde{\gamma}_1,\ldots,\widetilde{\gamma}_d)$. Particularly, taking $\boldsymbol p=\boldsymbol e_j+\boldsymbol e_\ell$ with $j\neq\ell$ in \eqref{eq:conv} yield that
\[\lim_{m\to\infty}\int_{\mathbb{T}^d}\big((\Re s_j)(\Re s_\ell)-1-(\Im s_j)(\Im s_\ell)\big)\,d\rho_{n_m}({\boldsymbol s})\] exists.
This, along with $\lambda_{n_m j}\Rightarrow\widetilde\lambda_j$ and the decomposition $(\Re s_j)(\Re s_\ell)-1=(\Re s_j-1)\Re s_\ell +\Re s_\ell-1$, further yields the existence of
\[
\widetilde L_{j\ell}=\lim_{m \to\infty}\int_{\mathbb{T}^d}(\Im s_j)(\Im s_\ell)\,d\rho_{n_m}({\boldsymbol s}).\] Consequently, we infer from $\rho_{n_m}\Rightarrow_\mathbf{1}\tilde\rho$ that the limits in assertion (2) with $n=n_m$ exist; let $\widetilde{a}_{j\ell}$ stand for the limit.

Cutting the domain $\mathbb{T}^d$ in \eqref{eq:conv} into the union of $\mathscr{U}_\epsilon$ and $\mathbb{T}^d\backslash\mathscr{U}_\epsilon$ and then letting $\epsilon\to0$ give
\[\lim_{m\to\infty}\widehat{\nu_{n_m}}(\boldsymbol p)=\widetilde{\boldsymbol\gamma}^{\boldsymbol p}\exp\left(-\frac{1}{2}\langle\widetilde{\mathbf{A}}\boldsymbol p,\boldsymbol p\rangle
+\int_{\mathbb{T}^d}\big({\boldsymbol s}^{\boldsymbol p}-1-i\langle\boldsymbol p,\Im\boldsymbol s\rangle\big)\,d\tilde\rho({\boldsymbol s})\right),\]
where $\widetilde{\mathbf{A}}=(\widetilde{a}_{j\ell})$.
Therefore, we have arrived at $\nu_\circledast^{(\boldsymbol\gamma, \mathbf{A},\rho)}=\nu_\circledast^{(\widetilde{\boldsymbol\gamma},\widetilde{\mathbf{A}},\tilde\rho)}$, proving that  $\widetilde{\boldsymbol\gamma}=\boldsymbol\gamma$ and $\widetilde{\mathbf{A}}=\mathbf{A}$ by Proposition \ref{prop:claim}, and that assertion (3) holds.
The above arguments also prove assertion (1) since the limit $\boldsymbol \gamma$ does not depend on the choice of the subsequence.

To deal with any arbitrary sequence in assertion (2), firstly note that $\rho (B)=\widetilde \rho(B)$ for even Borel sets $B$ by Proposition \ref{prop:claim}, and hence $\int f d\rho = \int f d\widetilde \rho$ is valid for the even function $f(\boldsymbol s) =(\Im s_j)(\Im s_\ell)$. Then
we derive from Proposition \ref{equicondX} that
\begin{equation} \label{limitA1}
\lim_{m\to\infty}\int_{\mathbb{T}^d}(\Im s_j)(\Im s_\ell)\,d\rho_{n_m}({\boldsymbol s}) = \widetilde a_{j\ell} + \int_{\mathbb{T}^d}(\Im s_j)(\Im s_\ell)\,d\widetilde \rho({\boldsymbol s})  = a_{j\ell} + \int_{\mathbb{T}^d}(\Im s_j)(\Im s_\ell)\,d\rho({\boldsymbol s}),
\end{equation} which reveals that the limit displayed above has nothing to do with the choice of the subsequence $\{\rho_{n_m}\}_{m\geq1}$. Thus, (\ref{limitA1}) holds for the whole sequence $\{\rho_n\}$. Similar reasonings show
\begin{equation} \label{limitA2}
\lim_{n\to\infty}\int_{\mathbb T ^d \setminus\mathscr{U}_\epsilon}(\Im s_j)(\Im s_\ell)\,d\rho_{n}({\boldsymbol s}) = \int_{\mathbb T ^d \setminus\mathscr{U}_\epsilon}(\Im s_j)(\Im s_\ell)\,d\rho({\boldsymbol s})
\end{equation}
for every $\epsilon>0$ such that $\rho(\partial \mathscr{U}_\epsilon)=0$. Finally, combining \eqref{limitA1}, \eqref{limitA2}, and a part of the proof of Proposition \ref{equicondX} starting from \eqref{a} concludes assertion (2).

Conversely, if \eqref{item:cond1}-\eqref{item:cond3} hold and if some subsequence of $\{\rho_n\}$ converges to a certain $\tilde\rho\in\mathcal{L}(\rho)$, then the arguments above show that for any $\boldsymbol p\in\mathbb{Z}^d$,
\begin{align*}
L(\boldsymbol p):&=\lim_{n\to\infty}\exp\left(i\langle\boldsymbol p,\arg\boldsymbol\xi_n\rangle+i\sum_{k=1}^{k_n}\langle\boldsymbol p,\arg\boldsymbol b_{nk}\rangle+
\sum_{k=1}^{k_n}z_{nk}(\boldsymbol p)\right) \\
&=\boldsymbol\gamma^{\boldsymbol p}\exp\left(-\frac{1}{2}\langle\mathbf{A}\boldsymbol p,\boldsymbol p\rangle
+\int_{\mathbb{T}^d}\big({\boldsymbol s}^{\boldsymbol p}-1-i\langle\boldsymbol p,\Im{\boldsymbol s}\rangle\big)\,d\tilde\rho({\boldsymbol s})\right).
\end{align*} The last term equals $\widehat{\nu}({\boldsymbol p})$, where $\nu=\nu_\circledast^{(\boldsymbol\gamma,\mathbf{A},\rho)}$, due to $\tilde\rho\in\mathcal{L}(\rho)$. Since the sequence
$\{\sum_{k=1}^{k_n}\mathring\nu_{nk}(\mathbb{T}^d\backslash\mathscr{U}_\epsilon)\}_{n=1}^\infty$
is uniformly bounded for any $\epsilon>0$ by the relative compactness of $\{\rho_n|_{\mathbb T^d \setminus \{\mathbf1\}}\}$ and by \cite[Theorem 2.7]{mapping}, we infer from Lemma \ref{lem:re-im} and \cite[Lemma 2.1]{Wan08} to deduce that $\lim_{n\to\infty}\widehat{\nu}_n(\boldsymbol p)=L(\boldsymbol p)$. Thus, $\nu_n$ converges weakly to $\nu_{\circledast}^{(\boldsymbol\gamma,\mathbf{A},\rho)}$.
\end{proof}

The following corollary, derived from Theorem \ref{limitthmX} and Theorem \ref{classicalmul}, supplies the link between classical and bi-free limit theorems on the bi-torus. The attentive reader can also notice that the hypothesis $\mathcal{L}(\rho)=\{\rho\}$ is redundant in the implication (2)$\Rightarrow$(1).

\begin{cor}\label{cor:limit_free_classical} Let $\{\nu_{nk}\}\subset\mathscr{P}_{\mathbb{T}^2}$ be infinitesimal, $\{\boldsymbol\xi_n\}\subset\mathbb{T}^2$, and $(\boldsymbol\gamma,\mathbf{A},\rho)$ be a multiplicative L\'{e}vy triplet such that $\mathcal{L}(\rho)=\{\rho\}$. With the notations in \emph{(\ref{gamman})} and \emph{(\ref{rhon})} for $d=2$, the following statements are equivalent:
\begin{enumerate} [$\qquad(1)$]
\item\label{LCF1}
{$\delta_{\boldsymbol\xi_n}\circledast\nu_{n1}\circledast\cdots\circledast\nu_{nk_n}
\Rightarrow\nu_\circledast^{(\boldsymbol\gamma,\mathbf{A},\rho)}$;}
\item\label{LCF2} {$\delta_{\boldsymbol\xi_n}\bitimet\nu_{n1}\bitimet\cdots\bitimet\nu_{nk_n}
\Rightarrow\nu_{\bitimes}^{(\boldsymbol\gamma,\mathbf{A},\rho)}$;}
\item $\lim_{n\to\infty}\boldsymbol\gamma_n=\boldsymbol\gamma$,
$\rho_n\Rightarrow_\mathbf{1}\rho$, and
\[\lim_{\epsilon\to0}\limsup_{n\to\infty}\int_{\mathscr{U}_\epsilon}
\big\langle\boldsymbol p,\Im\boldsymbol s\big\rangle^2d\rho_n({\boldsymbol s})=\langle\mathbf{A}\boldsymbol p,\boldsymbol p\rangle =\lim_{\epsilon\to0}\liminf_{n\to\infty}\int_{\mathscr{U}_\epsilon}
\big\langle\boldsymbol p,\Im\boldsymbol s\big\rangle^2d\rho_n({\boldsymbol s}),\;\;\;\;\;\boldsymbol p\in\mathbb{Z}^d.\]
\end{enumerate}
\end{cor}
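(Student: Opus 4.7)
The plan is to derive the three-way equivalence by routing through the convergence criteria of the two limit theorems already established in the paper---Theorem \ref{limitthmX} for $\bitimes$ and Theorem \ref{ClimitthmX} for $\circledast$---and bridging their formulations via Proposition \ref{equicondX}. The hypothesis $\mathcal{L}(\rho)=\{\rho\}$ will be used only to resolve the non-uniqueness of the classical L\'evy measure encountered in Theorem \ref{ClimitthmX}\eqref{item:cond3}.

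For the equivalence \eqref{LCF2} $\Leftrightarrow$ (3), I would apply Theorem \ref{limitthmX} with $d=2$: this says that \eqref{LCF2} is equivalent to $\rho_n$ satisfying Condition \ref{condX1} together with $\lim_n\boldsymbol\gamma_n=\boldsymbol\gamma$. By Proposition \ref{equicondX}, Condition \ref{condX1} can be rephrased as Condition \ref{condX2}, whose part \eqref{item:condiii} is the asserted convergence $\rho_n\Rightarrow_\mathbf{1}\rho$ and whose part \eqref{item:condiv} produces the quadratic form $Q(\boldsymbol p)=\langle\mathbf{A}\boldsymbol p,\boldsymbol p\rangle$; polarization of the identity $\langle\boldsymbol p,\Im\boldsymbol s\rangle^2=\sum_{j,\ell}p_jp_\ell(\Im s_j)(\Im s_\ell)$ shows this is equivalent to the entrywise limit condition of (3). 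Notably, this step never invokes $\mathcal{L}(\rho)=\{\rho\}$, which is why the hypothesis is redundant for \eqref{LCF2}$\Rightarrow$\eqref{LCF1}.

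For \eqref{LCF1} $\Leftrightarrow$ (3), I would apply Theorem \ref{ClimitthmX}. In the direction (3)$\Rightarrow$\eqref{LCF1}: since $\rho_n\Rightarrow_\mathbf{1}\rho$ with $\rho(\{\mathbf{1}\})=0$, the sequence $\{1_{\mathbb{T}^2\setminus\{\mathbf{1}\}}\rho_n\}$ is relatively compact in $\tilde{\mathscr{M}}_{\mathbb{T}^2}^\mathbf{1}$ with the unique limit point $\rho\in\mathcal{L}(\rho)$; combined with $\boldsymbol\gamma_n\to\boldsymbol\gamma$ and the quadratic form identity, Theorem \ref{ClimitthmX} delivers \eqref{LCF1}. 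For \eqref{LCF1}$\Rightarrow$(3): Theorem \ref{ClimitthmX} yields $\boldsymbol\gamma_n\to\boldsymbol\gamma$, the entrywise quadratic form identity, and relative compactness of $\{1_{\mathbb{T}^2\setminus\{\mathbf{1}\}}\rho_n\}$ with all subsequential limits in $\mathcal{L}(\rho)$; the hypothesis $\mathcal{L}(\rho)=\{\rho\}$ collapses these limits to the single point $\rho$, and a standard Urysohn subsequence argument in the Polish space $\tilde{\mathscr{M}}_{\mathbb{T}^2}^\mathbf{1}$ upgrades this to $\rho_n\Rightarrow_\mathbf{1}\rho$.

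The only point requiring care is the coherence of the triplet parametrization: the parameters $(\boldsymbol\gamma,\mathbf{A},\rho)$ produced by the bi-free side (through Theorem \ref{limitthmX} and Proposition \ref{equicondX}) must index the very same classical distribution $\nu_\circledast^{(\boldsymbol\gamma,\mathbf{A},\rho)}$ via \eqref{eq:classical_exponent}. This alignment is precisely the content of the bi-free multiplicative L\'evy--Khintchine representation in Theorem \ref{revisedthm}, whose power series expansion recovers the classical exponent \eqref{eq:classical_exponent} coefficientwise, so no additional identification is needed. Beyond this bookkeeping, no genuine obstacle arises; the entire proof is a matter of lining up three already-established characterizations under the uniqueness hypothesis $\mathcal{L}(\rho)=\{\rho\}$.
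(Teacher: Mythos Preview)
Your proposal is correct and follows essentially the same approach as the paper, which simply states that the corollary is derived from Theorem~\ref{limitthmX} and Theorem~\ref{ClimitthmX} (together with the remark that $\mathcal{L}(\rho)=\{\rho\}$ is redundant for \eqref{LCF2}$\Rightarrow$\eqref{LCF1}). Your invocation of Theorem~\ref{revisedthm} for ``coherence of the parametrization'' is unnecessary---the notations $\nu_{\bitimes}^{(\boldsymbol\gamma,\mathbf{A},\rho)}$ and $\nu_\circledast^{(\boldsymbol\gamma,\mathbf{A},\rho)}$ are defined independently in Sections~\ref{sec5} and~\ref{sec6} using the same triplet data \eqref{propTD}, so no identification step is required---but this is harmless.
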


The one-dimensional multiplicative limit theorem, which was pointed out in the remark to \cite[Corollary 4.2]{Wan08}, is a consequence of Corollary \ref{ClimitthmX}, e.g., by considering product measures.

\begin{cor} \label{1Dcase}
Let $\{\nu_{nk}\}\subset\mathscr{P}_{\mathbb{T}}$ be infinitesimal, $\{\xi_n\}\subset\mathbb{T}$, and $(\gamma,a,\rho)$ be a multiplicative L\'{e}vy triplet such that $\mathcal{L}(\rho)=\{\rho\}$. With the notations in \emph{(\ref{gamman})} and \emph{(\ref{rhon})} for $d=1$, the following statements are equivalent:
\begin{enumerate} [$\qquad(1)$]
\item {$\delta_{\xi_n}\circledast\nu_{n1}\circledast\cdots\circledast\nu_{nk_n}
\Rightarrow\nu_\circledast^{(\gamma,a,\rho)}$;}
\item {$\delta_{\xi_n}\boxtimes\nu_{n1}\boxtimes\cdots\boxtimes\nu_{nk_n}
\Rightarrow\nu_{\boxtimes}^{(\gamma,a,\rho)}$;}
\item $\lim_{n\to\infty}\gamma_n=\gamma$, $\rho_n\Rightarrow_1\rho$, and
\[\lim_{\epsilon\to0}\limsup_{n\to\infty}\int_{\mathscr{U}_\epsilon}
\big(\Im s)^2d\rho_n({ s})=a =\lim_{\epsilon\to0}\liminf_{n\to\infty}\int_{\mathscr{U}_\epsilon}
\big(\Im s)^2d\rho_n({ s}).\]
\end{enumerate}
\end{cor}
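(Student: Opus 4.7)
The plan is to derive the one-dimensional statement from Corollary \ref{cor:limit_free_classical} by tensoring with $\delta_1$. Introduce the embedding $\iota\colon\mathbb{T}\to\mathbb{T}^2$, $s\mapsto(s,1)$, and set $\tilde\nu_{nk}=\nu_{nk}\times\delta_1$, $\tilde{\boldsymbol\xi}_n=(\xi_n,1)$, $\tilde{\boldsymbol\gamma}=(\gamma,1)$, $\tilde{\mathbf{A}}=\mathrm{diag}(a,0)$, and $\tilde\rho=\rho\iota^{-1}$. The array $\{\tilde\nu_{nk}\}\subset\mathscr{P}_{\mathbb{T}^2}$ is infinitesimal, and $(\tilde{\boldsymbol\gamma},\tilde{\mathbf{A}},\tilde\rho)$ satisfies \eqref{propTD}.

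The first step is to verify that both convolutions respect the product structure. The classical identity $(\mu\times\delta_1)\circledast(\mu'\times\delta_1)=(\mu\circledast\mu')\times\delta_1$ is immediate from multiplying Fourier coefficients, while the bi-free identity $(\mu\times\delta_1)\bitimet(\mu'\times\delta_1)=(\mu\boxtimes\mu')\times\delta_1$ follows from realizing $\mu\times\delta_1$ as the joint distribution of the bipartite pair $(u,I)$ with $u$ being the multiplication operator on $L^2(\mu)$, as constructed in Section \ref{sec2.3}: the bi-freeness of $(u_1,I),(u_2,I)$ reduces to the freeness of $u_1,u_2$, and $(u_1u_2,I)$ then has joint distribution $(\mu_1\boxtimes\mu_2)\times\delta_1$. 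Because the second marginal of any such convolution is $\delta_1$, its weak limit, when it exists, is again of the form $\mu\times\delta_1$. Hence statements (1) and (2) of the one-dimensional claim translate exactly to statements (1) and (2) of Corollary \ref{cor:limit_free_classical} for the lifted array, with the one-dimensional limit recovered as the first marginal of the two-dimensional one.

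Next I would translate the convergence criterion (3). Since $\mathring{\tilde\nu}_{nk}=\mathring\nu_{nk}\times\delta_1$, we have $\tilde\rho_n=\rho_n\iota^{-1}$ and $\tilde{\boldsymbol\gamma}_n=(\gamma_n,1)$. The continuous mapping theorem in Proposition \ref{Portman} applied to $\iota$ and to the projection $\pi_1\colon\mathbb{T}^2\to\mathbb{T}$ identifies $\rho_n\Rightarrow_1\rho$ with $\tilde\rho_n\Rightarrow_{\mathbf{1}}\tilde\rho$, and $\gamma_n\to\gamma$ with $\tilde{\boldsymbol\gamma}_n\to\tilde{\boldsymbol\gamma}$. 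Since $\Im s_2=0$ on the support of $\tilde\rho_n$, we have $\langle\boldsymbol p,\Im\boldsymbol s\rangle^2=p_1^2(\Im s_1)^2$, so the two-dimensional quadratic form condition with $\langle\tilde{\mathbf{A}}\boldsymbol p,\boldsymbol p\rangle=p_1^2a$ collapses to its one-dimensional counterpart; inspecting \eqref{eq:classical_exponent} and \eqref{bi-freeLK} further confirms that the first marginals of $\nu_\circledast^{(\tilde{\boldsymbol\gamma},\tilde{\mathbf{A}},\tilde\rho)}$ and $\nu_{\bitimes}^{(\tilde{\boldsymbol\gamma},\tilde{\mathbf{A}},\tilde\rho)}$ are $\nu_\circledast^{(\gamma,a,\rho)}$ and $\nu_\boxtimes^{(\gamma,a,\rho)}$, respectively.

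The main obstacle is verifying that the one-dimensional hypothesis $\mathcal{L}(\rho)=\{\rho\}$ promotes to $\mathcal{L}(\tilde\rho)=\{\tilde\rho\}$, which is what Corollary \ref{cor:limit_free_classical} requires. Given $\tilde\rho'\in\mathcal{L}(\tilde\rho)$, Proposition \ref{prop:claim} applied with $\boldsymbol p=(0,q)$ yields $\Re\int_{\mathbb{T}^2}(1-s_2^q)\,d\tilde\rho'=\Re\int_{\mathbb{T}^2}(1-s_2^q)\,d\tilde\rho=0$ for every $q\in\mathbb{Z}$; at $q=1$ the non-negative integrand $1-\cos(\arg s_2)$ has zero integral, so $\tilde\rho'$ is concentrated on $\mathbb{T}\times\{1\}$. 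Writing $\tilde\rho'=\rho'\iota^{-1}$ and choosing $\boldsymbol p=(p,0)$ then produces the congruence relating $\rho'$ and $\rho$ required by Proposition \ref{prop:claim} in one dimension, whence $\rho'\in\mathcal{L}(\rho)=\{\rho\}$ and therefore $\tilde\rho'=\tilde\rho$. Applying Corollary \ref{cor:limit_free_classical} to $\{\tilde\nu_{nk}\}$ then closes the required chain of equivalences.
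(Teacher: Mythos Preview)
Your proof is correct and follows exactly the route the paper indicates: the paper simply states that the one-dimensional case ``is a consequence of Corollary \ref{cor:limit_free_classical}, e.g., by considering product measures,'' and you have carried out precisely this reduction in full detail. The verification that $\mathcal{L}(\rho)=\{\rho\}$ lifts to $\mathcal{L}(\tilde\rho)=\{\tilde\rho\}$ is the only nontrivial point the paper leaves implicit, and your argument via Proposition \ref{prop:claim} with $\boldsymbol p=(0,1)$ is the natural one.
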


Apparently, the non-uniqueness of L\'evy measures is the exclusive obstruction for reaching the equivalence of limit theorems, thus complementing the work of Chistyakov and G\"otze \cite[Theorems 2.3 and 2.4]{CG08}. Corollaries presented above motivate us to scrutinize the uniqueness of L\'evy measures, which will be the principal theme of Section \ref{sec8}.

\section{Homomorphisms between infinitely divisible distributions}\label{sec7}
This section will provide the explanation of the diagram (\ref{diagram}). The bijection $\Lambda\colon\mathcal{ID}(\ast)\to \mathcal{ID}(\biplus)$ was already defined in \cite{HHW}, specifically,
$$
\Lambda(\mu_*^{(\boldsymbol v,\mathbf{A},\tau)})=\mu_\biplus^{(\boldsymbol v,\mathbf{A},\tau)}.
$$

If $\nu=\mu_*^{(\boldsymbol v,\mathbf{A},\tau)}W^{-1}$, then (\ref{LKrepre}) and (\ref{wrapping2}) show that
\begin{align*}
\widehat{\nu}(\boldsymbol p)&=\int_{\mathbb{R}^d}e^{i\langle\boldsymbol p,\boldsymbol x\rangle}\,d\mu_*^{(\boldsymbol v,\mathbf{A},\tau)}(\boldsymbol x) \\
&=\exp\left[i\langle\boldsymbol p,\boldsymbol v\rangle-\frac{1}{2}\langle\mathbf{A}\boldsymbol p,\boldsymbol p\rangle+\int_{\mathbb{R}^d}
\left(e^{i\langle\boldsymbol p,\boldsymbol x\rangle}-1-\frac{i\langle\boldsymbol p,\boldsymbol x\rangle}{1+\|\boldsymbol x\|^2}\right)d\tau(\boldsymbol x)\right] \\
&={\boldsymbol\gamma}^{\boldsymbol p}\exp\left[-\frac{1}{2}\langle\mathbf{A}\boldsymbol p,\boldsymbol p\rangle+\int_{\mathbb{T}^d}
\big(\boldsymbol s^{\boldsymbol p}-1-i\langle\boldsymbol p,\Im\boldsymbol s\rangle\big)d\rho({\boldsymbol s})\right],
\end{align*} where $\rho$ and $\boldsymbol\gamma$ are respectively given in (\ref{rhotau}) and (\ref{gammav}). Putting it differently, the wrapping map induces a homomorphism $W_\ast\colon\mathcal{ID}(*)\to\mathcal{ID}(\circledast)$ satisfying
\begin{equation} \label{W*}
W_\ast(\mu_*^{(\boldsymbol v,\mathbf{A},\tau)})=\nu_\circledast^{(\boldsymbol\gamma,\mathbf{A},\rho)}.
\end{equation}

Motivated by the observation (\ref{W*}), we analogously define $W_{\biplus}:\mathcal{ID}(\biplus)\to\mathcal{ID}(\bitimes)$ as
\[W_{\biplus}(\nu_{\biplus}^{(\boldsymbol v,\mathbf{A},\tau)})=\nu_{\bitimes}^{(\boldsymbol\gamma,\mathbf{A},\rho)},\] where $\boldsymbol\gamma$ and $\rho$ are given as before. It was shown in Theorem \ref{+implyXthm} that the weak convergence of (\ref{munbifree}) to some $\nu_{\biplus}^{(\boldsymbol v,\mathbf{A},\tau)}$ implies that (\ref{bifreemul}) converges weakly to $W_{\biplus}(\nu_{\biplus}^{(\boldsymbol v,\mathbf{A},\tau)})$.

For the last ingredient $\Gamma\colon\mathcal{ID}(\bitimes)\to
\mathcal{ID}(\circledast)$, recall from Proposition \ref{deltaID} that $\bitimes$-idempotent elements also belong to $\mathcal{ID}(\circledast)$, which leads to the following definition:

\begin{pdef} Given a distribution $\nu\in\mathcal{ID}(\bitimes)$, define $\Gamma(\nu)\in\mathscr{P}_{\mathbb{T}^2}$ as follows.
If $\nu=\nu_{\bitimes}^{(\boldsymbol\gamma,\mathbf{A},\rho)}$, then $\Gamma(\nu)=\nu_\circledast^{(\boldsymbol\gamma,\mathbf{A},\rho)}$. For $\nu\in\mathscr{P}_{\mathbb{T}^2}\backslash\mathscr{P}_{\mathbb{T}^2}^\times$, define $\Gamma(\nu)=\nu$ if $\nu=P\bitimes(\kappa_c \times \delta_1)$, and define $\Gamma(\nu)=\mathrm{m} \times \Gamma_1(\nu^{(2)})$ if $\nu = \mathrm{m} \times \nu^{(2)}$
and $\Gamma(\nu)=\Gamma_1(\nu^{(1)}) \times \mathrm{m}$ if $\nu = \nu^{(1)} \times \mathrm{m}$. Here $\Gamma_1\colon\mathcal{ID}(\mathbb{T},\boxtimes)\to\mathcal{ID}(\mathbb{T},\circledast)$ is the homomorphism introduced in \cite[Definition 3.3]{Ceb16} (which was denoted by $\Gamma$ therein).
\end{pdef}

One can check that $\Gamma\colon\mathcal{ID}(\bitimes)\to
\mathcal{ID}(\circledast)$ is a homomorphism and that the diagram (\ref{diagram}) commutes. The latter result comes from the definition, while the former one requires convolution identities in Section \ref{sec3}. For example, if $\mu=P\bitimes(\kappa_c \times \delta_1)$ and $\nu=\mathrm{m}\times\nu^{(2)}$ with $\nu^{(2)} \in \mathcal{ID}(\boxtimes)\cap\mathscr{P}_{\mathbb{T}}^\times$, then
$$
\mu \bitimet \nu=P\bitimet (\mathrm{m} \times \nu^{(2)})=\mathrm{m}\times \mathrm{m},
$$
where the last equality can be confirmed by the use of \eqref{eq:identityP} and computing moments. On the other hand,
$$
\Gamma(\mu) \circledast \Gamma(\nu)=\mu \circledast (\mathrm{m} \times \Gamma_1(\nu^{(2)})) = P \circledast( \mathrm{m} \times \Gamma_1(\nu^{(2)})) = \mathrm{m}\times \mathrm{m},
$$
where the last equality is again obtained by computing moments. Consequently, we arrive at $\Gamma(\mu \bitimet \nu) = \Gamma(\mu) \circledast \Gamma(\nu)$.

This map $\Gamma$ is neither injective nor surjective as we have
$\nu_{\circledast}^{((1,0),\mathbf0,\pi\delta_{(i,0)})} = \nu_{\circledast}^{((1,0),\mathbf0, \pi \delta_{(-i,0)})}$ due to \eqref{Cebron's} and $P \circledast (\mu \times \delta_1)$ lies in $\mathcal{ID}(\circledast)\backslash\Gamma(\mathcal{ID}(\bitimes))$ for any $\mu \in \mathcal{ID}(\mathbb T,\circledast) \backslash\{\kappa_c:c \in \mathbb D \cup \mathbb T\}$. Further, $\Gamma$ is not weakly continuous. More strongly, we prove the following.

\begin{prop}
\begin{enumerate}[\rm(1)]
\item\label{not-cont1} The restriction of $\Gamma_1$ to the set
$\mathcal{ID}(\boxtimes)\cap\mathscr{P}_{{\mathbb T}}^\times$ has no weakly continuous extension to $\mathcal{ID}(\boxtimes)$.

\item\label{not-cont2} The restriction of $\Gamma$
to the set $\mathcal{ID}(\boxtimes\boxtimes)\cap\mathscr{P}_{{\mathbb T}^2}^\times$ has no weakly continuous extension to $\mathcal{ID}(\boxtimes\boxtimes)$.
\end{enumerate}
\end{prop}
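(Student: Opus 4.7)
The strategy, uniform for (1) and (2), is to exhibit two sequences in the set on which $\Gamma_1$ (resp.\ $\Gamma$) is already defined, both converging weakly to a common limit outside that set, yet whose images converge to two distinct classical distributions. Any weakly continuous extension would have to assign a single value at that boundary point, so no such extension can exist. The one-dimensional case is decisive thanks to the Bercovici-Voiculescu classification: on $\mathbb{T}$, $\mathrm{m}$ is the only element of $\mathcal{ID}(\boxtimes)$ with vanishing first moment, so the boundary set $\mathcal{ID}(\boxtimes)\setminus\mathscr{P}_{\mathbb{T}}^\times$ reduces to the single point $\{\mathrm{m}\}$.

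For (1), take the two free multiplicative L\'evy triplets $(1,0,n\delta_{-1})$ and $(1,0,n\mathrm{m})$, both fulfilling $\rho(\{1\})=0$ and $1-\Re s \in L^1(\rho)$. Set $\mu_n=\nu_\boxtimes^{(1,0,n\delta_{-1})}$ and $\mu_n'=\nu_\boxtimes^{(1,0,n\mathrm{m})}$, which lie in $\mathcal{ID}(\boxtimes)\cap\mathscr{P}_{\mathbb{T}}^\times$ because $m_1 = 1/\Sigma(0) = \exp\bigl(-\int(1-\Re s)\,d\rho\bigr)$ is nonzero; explicitly, $m_1(\mu_n)=e^{-2n}$ and $m_1(\mu_n')=e^{-n}$, both tending to $0$. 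By weak compactness of $\mathscr{P}_{\mathbb{T}}$ and weak closedness of $\mathcal{ID}(\boxtimes)$, every subsequential weak limit of either sequence is $\boxtimes$-infinitely divisible with zero first moment, and so equals $\mathrm{m}$; thus $\mu_n,\mu_n'\Rightarrow\mathrm{m}$. On the classical side, (\ref{eq:classical_exponent}) yields $\widehat{\Gamma_1(\mu_n)}(p)=\exp(n[(-1)^p-1])$, equal to $1$ for even $p$ and $e^{-2n}\to 0$ for odd $p$, so $\Gamma_1(\mu_n)\Rightarrow\tfrac{1}{2}(\delta_1+\delta_{-1})$. Since $\int_\mathbb{T} s^p\,d\mathrm{m}=\int_\mathbb{T}\Im s\,d\mathrm{m}=0$ for $p\neq 0$, one obtains $\widehat{\Gamma_1(\mu_n')}(p)=e^{-n}\to 0$ for every $p\neq 0$, whence $\Gamma_1(\mu_n')\Rightarrow\mathrm{m}$. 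The two classical limits differ, proving (1).

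For (2), lift the preceding construction to $\mathbb{T}^2$ by setting $\widetilde{\mu}_n=\mu_n\times\delta_1$ and $\widetilde{\mu}_n'=\mu_n'\times\delta_1$. Each factor is $\boxtimes$-infinitely divisible with non-zero first moment, so both products belong to $\mathcal{ID}(\bitimes)\cap\mathscr{P}_{\mathbb{T}^2}^\times$; in fact their bi-free L\'evy triplets are concentrated on the axis $\{s_2=1\}$, which yields the routine factorization $\Gamma(\eta\times\delta_1)=\Gamma_1(\eta)\times\delta_1$ upon comparing (\ref{revisedform}) with (\ref{eq:classical_exponent}). Consequently $\widetilde{\mu}_n,\widetilde{\mu}_n'\Rightarrow\mathrm{m}\times\delta_1\in\mathcal{ID}(\bitimes)\setminus\mathscr{P}_{\mathbb{T}^2}^\times$, while $\Gamma(\widetilde{\mu}_n)\Rightarrow\tfrac{1}{2}(\delta_1+\delta_{-1})\times\delta_1$ and $\Gamma(\widetilde{\mu}_n')\Rightarrow\mathrm{m}\times\delta_1$ are distinct, proving (2). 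The only non-computational ingredient throughout is the classification fact that $\mathrm{m}$ is the sole $\boxtimes$-infinitely divisible measure on $\mathbb{T}$ with zero mean; the remainder reduces to direct calculation with characteristic functions.
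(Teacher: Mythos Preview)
Your argument is correct and follows essentially the same strategy as the paper's: exhibit two sequences in $\mathcal{ID}(\boxtimes)\cap\mathscr{P}_{\mathbb{T}}^\times$ converging to $\mathrm{m}$ whose images under $\Gamma_1$ converge to distinct limits, then lift to $\mathbb{T}^2$ via products with $\delta_1$. The only notable difference is the choice of the second sequence: the paper uses the Poisson kernels $\kappa_c$ (showing directly that $\Gamma_1^0(\kappa_c)=\kappa_c$ and letting $c\to0$), whereas you use $\nu_\boxtimes^{(1,0,n\mathrm{m})}$; both choices yield $\Gamma_1$-images tending to $\mathrm{m}$. Your justification that $\mu_n,\mu_n'\Rightarrow\mathrm{m}$ via weak closedness of $\mathcal{ID}(\boxtimes)$ together with the Bercovici--Voiculescu fact that $\mathrm{m}$ is the unique $\boxtimes$-infinitely divisible law with zero mean is slightly more streamlined than the paper's explicit factorisation $\nu_n=\nu_n'\boxtimes\nu_n'$, but the content is the same.
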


\begin{proof} Since $\Gamma(\mu^{(1)} \times \mu^{(2)}) = \Gamma_1(\mu^{(1)}) \times \Gamma_1(\mu^{(2)})$ for $\mu^{(1)},\mu^{(2)}\in \mathcal{ID}(\boxtimes)\cap\mathscr{P}_{\mathbb T}^\times$, assertion \eqref{not-cont2} follows immediately from \eqref{not-cont1}.

Suppose that $\Gamma_1^0:=\Gamma_1|_{\mathcal{ID}(\boxtimes)\cap\mathscr{P}_{{\mathbb T}}^\times}$ has a weakly continuous extension $\tilde \Gamma_1$ to $\mathcal{ID}(\boxtimes)$.
Observe that $\kappa_c\in\mathcal{ID}(\boxtimes)\cap \mathscr{P}_{{\mathbb T}}^\times$ and $\Gamma_1^0(\kappa_c)=\kappa_c$ for any $c\in(\mathbb D \cup\mathbb T)\backslash\{0\}$. The latter identity is shown below.
From the moments $m_p(\kappa_c)=c^p$ for $p\in \mathbb N$, the formula
$$
\Sigma_{\kappa_c}(z) = \frac{1}{c} = \frac{1}{c/|c|} \exp\left[(-\log |c|)\int_{\mathbb T^\times} \frac{1+sz}{1-sz}(1-\Re s)\, \frac{d\mathrm{m}(s)}{1-\Re s} \right]
$$
yields that $\kappa_c$ has $(c/|c|,0, \rho)$, where $\rho(d s)= [-\log|c|/(1-\Re s)]\mathrm{m}(ds)$ on $\mathbb{T}^\times$, as its free multiplicative L\'evy triplet (also known as $\boxtimes$-characteristic triplet in \cite[p.2437]{Ceb16}). On the other hand, Lemma \ref{angleineq} says that the same triplet $(c/|c|,0,\rho)$ also serves as the classical multiplicative L\'evy triplet of $\kappa_c$. Thus we have shown that $\Gamma_1^0(\kappa_c)=\kappa_c$. That $\kappa_c\Rightarrow \mathrm{m}$ as $c\to0$ allows us to further obtain $\tilde\Gamma_1(\mathrm{m})=\mathrm{m}$.

Next, denote by $\nu_n$ the probability distribution in $\mathcal{ID}(\boxtimes)\cap \mathscr{P}_{{\mathbb T}}^\times$ having the free multiplicative L\'evy triplet $(1,0,n\delta_{-1})$, and let $\mu_n=\Gamma_1^0(\nu_n)$. Then (\ref{eq:classical_exponent}) shows that for any $p\in\mathbb{Z}$,
\[\widehat\mu_n(p)=\exp\big[n((-1)^{p}-1)\big)]=\begin{cases} 1, & \text{$p$ is even}, \\
e^{-2n}, & \text{$p$ is odd},
\end{cases}\] which readily implies that $\mu_n\Rightarrow(\delta_{-1}+\delta_{1})/2$. However, we will explain in the next paragraph that $\nu_n\Rightarrow\mathrm{m}$, which apparently leads to a contradiction.

To see why $\nu_n\Rightarrow\mathrm{m}$, select a weakly convergent subsequence of $\{\nu_n\}$ (still denoted by $\{\nu_n\}$ in the remaining arguments) and denote the weak limit by $\nu$. Let $\nu_n'$ be the probability measure having the free multiplicative L\'evy triplet $(1,0,(n/2)\delta_{-1})$. Passing to a further subsequence we may assume that $\nu_n'$ weakly converge to $\nu'$. Then letting $n\to\infty$ in the identity
$\nu_n=\nu_n'\boxtimes\nu_n'$ gives $\nu=\nu'\boxtimes\nu'$. On the other hand, we see from (\ref{bi-freeLK}) or from \cite[Section 2.5]{Ceb16} that $\Sigma_{\nu_n'}(0)=e^n$, i.e., $m_1(\nu_n')=e^{-n}\to0$ as $n\to\infty$ by Remark \ref{urepre}, whence $m_{1}(\nu')=0$. By the definition of freeness, we can further conclude that $m_{p}(\nu)=0$ for all $p\in \mathbb Z\backslash\{0\}$ or, equivalently, $\nu=\mathrm{m}$.
\end{proof}

\section{Uniqueness of L\'{e}vy-Khintchine Representation on $\mathbb{T}$} \label{sec8}

We investigate in which circumstances the L\'{e}vy triplet of an infinitely divisible distribution on the circle is unique. For simplicity, the discussions will be limited to dimension one.

We first establish two results which will not be used in this paper, but are helpful for understanding the non-uniqueness of L\'evy measures. Note that those two results can be easily generalized to higher dimensions.

\begin{prop} Let $\rho_1$ and $\rho_2$ be L\'evy measures on $\mathbb T$ such that $\nu_{\circledast}^{(1,0,\rho_1)}=\nu_{\circledast}^{(1,0,\rho_2)}$. If $\rho_1$ is Haar absolutely continuous, Haar singular continuous, or discrete, then $\rho_2$ has the same property.
\end{prop}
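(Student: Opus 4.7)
The plan is to leverage the one-dimensional specialization of Proposition \ref{prop:claim}, which in particular asserts that $\rho_1(B)=\rho_2(B)$ for every even Borel set $B\subset\mathbb T$. Denote by $\sigma\colon\mathbb T\to\mathbb T$, $s\mapsto s^{-1}$, the involution, and write $\sigma_*\rho$ for the pushforward. For any Borel set $B$, the sets $B\cup B^{-1}$ and $B\cap B^{-1}$ are even, hence
\[
\rho_1(B\cup B^{-1})=\rho_2(B\cup B^{-1}) \qquad\text{and}\qquad \rho_1(B\cap B^{-1})=\rho_2(B\cap B^{-1}),
\]
which yields $\rho_1(B)+\rho_1(B^{-1})=\rho_2(B)+\rho_2(B^{-1})$, i.e.\ $\rho_1+\sigma_*\rho_1=\rho_2+\sigma_*\rho_2$. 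Setting $\eta:=\rho_2-\rho_1$ (a signed Borel measure, $\sigma$-finite since $\rho_1$ and $\rho_2$ are finite outside any neighbourhood of $\mathbf 1$), we obtain the antisymmetry $\sigma_*\eta=-\eta$.

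Next I would control the Jordan decomposition $\eta=\eta^+-\eta^-$. Positivity of $\rho_2=\rho_1+\eta$ forces $\eta\geq -\rho_1$, and the antisymmetry then gives $\eta=-\sigma_*\eta\leq\sigma_*\rho_1$. Consequently,
\[
\eta^-\leq\rho_1 \qquad\text{and}\qquad \eta^+\leq\sigma_*\rho_1.
\]
Note that the Haar measure $\mathrm{m}$ on $\mathbb T$ is $\sigma$-invariant, so each of the three classes in the statement (Haar absolutely continuous, Haar singular continuous, discrete) is stable under $\sigma_*$. In particular, if $\rho_1$ lies in a given class, so does $\sigma_*\rho_1$, and by domination the positive measures $\eta^+$ and $\eta^-$ lie in the same class.

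Finally, I would conclude case by case from the identity $\rho_2=\rho_1+\eta^+-\eta^-$. When $\rho_1\ll\mathrm{m}$, both $\eta^\pm\ll\mathrm{m}$, hence $\rho_2\ll\mathrm{m}$. When $\rho_1$ is discrete, all summands are discrete, so $\rho_2$ is discrete. For the singular continuous case, choose Lebesgue-null Borel sets carrying $\rho_1$, $\eta^+$ and $\eta^-$ respectively; their union is still Lebesgue-null and carries $\rho_2$, so $\rho_2\perp\mathrm{m}$. Moreover, for every $s\in\mathbb T$ one has $\rho_2(\{s\})=\rho_1(\{s\})+\eta^+(\{s\})-\eta^-(\{s\})=0$ since all three terms vanish by continuity of $\rho_1$ and the domination bounds. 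Thus $\rho_2$ is singular continuous. I do not anticipate a serious obstacle: the whole argument rests on Proposition \ref{prop:claim} plus routine measure-theoretic bookkeeping, the only mild care being that $\rho_1,\rho_2$ are $\sigma$-finite rather than finite, which does not affect the Jordan decomposition or Lebesgue decomposition used above.
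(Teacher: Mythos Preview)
Your argument is correct. Both your approach and the paper's rest on the same key input from Proposition~\ref{prop:claim}, namely that $\rho_1(B)=\rho_2(B)$ for every even Borel set $B$, but you extract the conclusion differently. The paper proceeds directly and case by case: for instance, in the singular continuous case it notes that if $\rho_1$ is carried by a Haar-null set $B$, then the even set $\mathbb T\setminus(B\cup B^{-1})$ has $\rho_1$-measure zero, hence $\rho_2$-measure zero, so $\rho_2$ lives on the Haar-null set $B\cup B^{-1}$; atomlessness follows from $\rho_2(\{\xi,\xi^{-1}\})=\rho_1(\{\xi,\xi^{-1}\})=0$. The other cases are handled by the same one-line trick with appropriate even sets. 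Your route is more structural: from agreement on even sets you deduce the symmetrization identity $\rho_1+\sigma_*\rho_1=\rho_2+\sigma_*\rho_2$, pass to the antisymmetric signed difference $\eta$, and use the domination $\eta^-\le\rho_1$, $\eta^+\le\sigma_*\rho_1$ to transfer the class to $\eta^\pm$ and hence to $\rho_2=(\rho_1-\eta^-)+\eta^+$. This gives a uniform treatment of all three cases at once and makes explicit why each class is preserved (hereditary under domination, closed under $\sigma_*$ and addition). The only point requiring a word of care, which you flag, is that $\rho_1,\rho_2$ are merely $\sigma$-finite; this is harmless since the entire discussion can be carried out on the even annuli $\{\,|\arg s|\ge\epsilon\,\}$ where both measures are finite, and the Jordan pieces patch consistently.
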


\begin{proof} This is a direct consequence of the fact that $\rho_1(A)=\rho_2(A)$ for any even Borel set $A$, established in Proposition \ref{prop:claim}. If $\rho_1$, for example, is Haar singular continuous, i.e., supported on a Borel subset $B$ with zero Haar measure, then the support of $\rho_2$ is contained in $B\cup B^{-1}$, which has zero Haar measure. Further, $\rho_2$ is atomless since $\rho_1(\{\xi,\xi^{-1}\})=\rho_2(\{\xi,\xi^{-1}\})=0$ for any $\xi\in\mathbb{T}$, showing that $\rho_2$ is singular continuous as well. We leave the verification of the other two cases to the reader.
\end{proof}

\begin{prop}\label{prop:unique}
Suppose that $\rho_1$ and $\rho_2$ are L\'evy measures. Then $\rho_1=\rho_2$ if and only if
\[
\int_\mathbb{T}\big(s^n-1-in\Im(s)\big)\,d\rho_1(s) = \int_\mathbb{T}\big(s^n-1-in\Im(s)\big) \,d\rho_2(s),\qquad n \in \mathbb Z.
\]
\end{prop}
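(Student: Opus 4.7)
The forward implication is immediate. For the converse, set $\nu=\rho_1-\rho_2$, viewed as a linear functional via $\int f\,d\nu:=\int f\,d\rho_1-\int f\,d\rho_2$ whenever both integrals converge. The plan is to show $\int h\,d\nu=0$ for every $h\in C^\infty(\mathbb T)$ vanishing in a neighborhood of $\mathbf 1$. Such $h$ are dense in $C_c(\mathbb T\setminus\{\mathbf 1\})$, so the Radon-measure uniqueness principle would then give $\rho_1=\rho_2$ on $\mathbb T\setminus\{\mathbf 1\}$, and $\rho_k(\{\mathbf 1\})=0$ upgrades this to equality on all of $\mathbb T$.

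Given such an $h$, expand it as a Fourier series $h(s)=\sum_{n\in\mathbb Z}c_n s^n$ whose coefficients decay faster than any polynomial. Writing $g_n(s):=s^n-1-in\Im s$, the hypothesis reads $\int g_n\,d\nu=0$ for every $n\in\mathbb Z$. Because $h$ vanishes near $\mathbf 1$, one has $h(\mathbf 1)=\sum c_n=0$ and $h'(\mathbf 1)=\sum n c_n=0$; inserting $s^n=1+in\Im s+g_n(s)$ into the Fourier series therefore produces
\[
h(s)=h(\mathbf 1)+ih'(\mathbf 1)\Im s+\sum_{n\in\mathbb Z}c_n g_n(s)=\sum_{n\in\mathbb Z}c_n g_n(s).
\]
For the truncation $h_N:=\sum_{|n|\le N}c_n s^n$, the matched combination $H_N:=h_N-h_N(\mathbf 1)-ih_N'(\mathbf 1)\Im s$ equals the finite sum $\sum_{|n|\le N}c_n g_n$, so linearity and the hypothesis give $\int H_N\,d\nu=0$.

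The decisive step is passing to the limit $N\to\infty$ inside the integral. Setting $\phi_N(\theta):=h_N(e^{i\theta})$, we have $H_N(e^{i\theta})=\phi_N(\theta)-\phi_N(0)-\phi_N'(0)\sin\theta$, a combination engineered to kill the zeroth- and first-order Taylor data of $\phi_N$ at $\theta=0$. Taylor expansion then yields $|H_N(e^{i\theta})|\le C\theta^2$ for $|\theta|\le 1$, with $C$ independent of $N$ because rapid decay of $c_n$ gives uniform-in-$N$ control of $\|\phi_N''\|_\infty$ and $\|\phi_N'''\|_\infty$; a crude global bound gives $|H_N|\le K$ uniformly on $\mathbb T$. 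The majorant $C(\theta^2\wedge 1)$ lies in $L^1(\rho_1+\rho_2)$: locally $\theta^2\le 2(1-\cos\theta)\in L^1(\rho_k)$, and globally $\rho_k(\{|\theta|\ge\delta\})<\infty$ for any $\delta>0$. Dominated convergence delivers $\int h\,d\nu=\lim_N\int H_N\,d\nu=0$, as required. The main obstacle is precisely this uniform majorant: the compensator $1+in\Im s$ built into $g_n$, mirrored in $H_N$, is what forces the quadratic vanishing of $H_N$ at $\mathbf 1$ needed to match the sole integrability reserve of a L\'evy measure, namely $1-\Re s\in L^1(\rho_k)$.
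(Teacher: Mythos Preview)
Your proof is correct and follows essentially the same strategy as the paper's: approximate a smooth test function vanishing near $\mathbf 1$ by truncated Fourier sums, subtract off the zeroth- and first-order data at $\mathbf 1$ so that the truncations coincide with finite linear combinations of the $g_n$, obtain a uniform $O(\theta^2)$ bound via Taylor's theorem and rapid coefficient decay, and pass to the limit by dominated convergence against the L\'evy integrability $1-\Re s\in L^1(\rho_k)$. The paper organizes the argument by splitting $h$ into even and odd parts (reusing Lemma~\ref{lem:smooth} for the cosine side and treating the sine side separately), whereas you handle both at once with the full complex Fourier series; this is a cosmetic difference, not a different idea. One tiny slip: the inequality $\theta^2\le 2(1-\cos\theta)$ goes the wrong way for small $\theta$; what you need (and what suffices) is $\theta^2\le C(1-\cos\theta)$ on $(-\pi,\pi]$ for some constant $C$ (e.g.\ $C=\pi^2/2$), which follows from $1-\cos\theta=2\sin^2(\theta/2)\ge 2\theta^2/\pi^2$.
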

\begin{proof} We sketch the proof of the ``if'' part. Let $\rho=\rho_1-\rho_2$. The proof of Lemma \ref{lem:smooth} says that $\int_{\mathbb T} f d\rho=0$ for even $f \in C^\infty$ vanishing near 0, so we only need to prove that the same holds for odd $g \in C^\infty$ vanishing near 0. Now $g$ can be approximated by the truncated Fourier sine series $g_N(x)=\sum_{n=1}^N b_n \sin nx$. Since $g_N'(0)$ converges to $g'(0)=0$, function $g_N(x)-g_N'(0)\sin x= \sum_{n=1}^N b_n(\sin nx -n \sin x)$ converges to $g_N(x)$ pointwisely. We can prove that $|g_N(x)-g_N'(0)\sin x| \leq C x^2$ on $(-\pi,\pi]$ for some constant $C>0$ independent of $N$ and $x$. The remaining arguments are similar to the corresponding proof of Lemma \ref{lem:smooth}.
\end{proof}
\begin{remark}
In view of Propositions \ref{prop:claim} and \ref{prop:unique}, two distinct L\'evy measures $\rho_1$ and $\rho_2$ give the same $\circledast$-infinitely divisible distribution if and only if there is a \emph{non-zero} function $f\colon \mathbb Z \to \mathbb Z$ such that for any $n\in \mathbb Z$,
\[
\int_\mathbb{T}\big(s^n-1-in\Im(s)\big)\,d\rho_1(s) = \int_\mathbb{T}\big(s^n-1-in\Im(s)\big) \,d\rho_2(s) +2\pi i f(n).
\]
\end{remark}

Some $\circledast$-infinitely divisible distributions have unique L\'evy measures, such as the wrapped normal distributions on the circle and delta measures; see the proof of   \cite[Corollary 4.2]{Wan08}. We are concerned with discovering other measures in $\mathcal{ID}(\circledast)$ having this feature, namely, seeking for L\'evy measures $\rho$ for which the set
\begin{equation*}
\mathcal{L}(\rho)=\big\{\widetilde{\rho}:\nu_{\circledast}^{(1,0,\rho)} =\nu_{\circledast}^{(1,0,\widetilde{\rho})}\big\},
\end{equation*}
introduced in Section \ref{sec6}, is a singleton:

\begin{pdef}
A L\'evy measure $\rho$ is said to be L-unique if $\mathcal{L}(\rho)=\{\rho\}$.
\end{pdef}

\begin{thm} \label{L-unique}
Let $\varphi\in(0,\pi]$, $c,d\geq0$, and $\alpha=e^{i\varphi}$. Then $c\delta_\alpha+d\delta_{\bar{\alpha}}$ is L-unique if $\cos\varphi=-1$ or $\cos\varphi$ does not belong to the set of dyadic rational numbers
\begin{equation} \label{dyadic}
\mathbf{D}=\left\{\frac{a}{2^b}:a,b\in\mathbb{Z},\,b\geq0\right\}.
\end{equation}
\end{thm}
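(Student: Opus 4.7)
My plan is to combine Proposition \ref{prop:claim} with a structural analysis of the $\mathbb{Z}$-submodule of $\mathbb{R}$ generated by certain Chebyshev-polynomial values. First, for any $\tilde\rho \in \mathcal{L}(\rho)$ with $\rho = c\delta_\alpha + d\delta_{\bar\alpha}$, Proposition \ref{prop:claim} forces $\tilde\rho$ to agree with $\rho$ on every even Borel set. Testing this against $\{\beta,\beta^{-1}\}$ for $\beta\in\mathbb{T}$ shows that $\tilde\rho$ is supported on $\{\alpha,\bar\alpha\}$ with $\tilde\rho(\{\alpha,\bar\alpha\})=c+d$, so $\tilde\rho = c'\delta_\alpha + d'\delta_{\bar\alpha}$ with $c',d'\geq 0$ and $c'+d' = c+d$. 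When $\cos\varphi=-1$, we have $\alpha=\bar\alpha=-1$ and $\tilde\rho=(c+d)\delta_{-1}=\rho$ is immediate; thus I may assume $\cos\varphi\in(-1,1)\setminus\mathbf{D}$ and $\alpha\neq\bar\alpha$.

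Writing $b_p := U_{p-1}(\cos\varphi) - p = \sin(p\varphi)/\sin\varphi - p$ (with $U_n$ the Chebyshev polynomial of the second kind), a direct computation using $\Im(\alpha^p) = \sin(p\varphi) = -\Im(\bar\alpha^p)$ gives
\[
\int_{\mathbb T}\Im(s^p - ps)\,d\rho(s) - \int_{\mathbb T}\Im(s^p - ps)\,d\tilde\rho(s) = E\sin\varphi\cdot b_p, \qquad E := (c-d)-(c'-d').
\]
The condition \eqref{eq:claim} in Proposition \ref{prop:claim} then forces $E\sin\varphi\cdot b_p \in 2\pi\mathbb{Z}$ for every $p\in\mathbb{Z}$, i.e.\ $E\sin\varphi\cdot M_0\subset 2\pi\mathbb{Z}$, where $M_0 := \sum_{p\in\mathbb Z}\mathbb{Z}\,b_p$. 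Since every subgroup of $2\pi\mathbb{Z}$ is cyclic and multiplication by the nonzero constant $E\sin\varphi$ is an isomorphism, whenever $M_0$ fails to be cyclic we must have $E=0$, and combined with $c'+d'=c+d$ this yields $\tilde\rho=\rho$.

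The main obstacle is therefore to prove $M_0$ is non-cyclic under $\cos\varphi\notin\mathbf{D}$. Setting $y := 2\cos\varphi$ and using the identity $U_{p-1}(1)=p$, I rewrite $b_p = V_{p-1}(y) - V_{p-1}(2)$, where $V_{p-1}(z) := U_{p-1}(z/2) \in \mathbb{Z}[z]$ is monic of degree $p-1$. Since $\{V_{p-1}\}_{p\geq 1}$ is a $\mathbb{Q}$-basis of $\mathbb{Q}[z]$, one obtains
\[
M_0\otimes_{\mathbb{Z}}\mathbb{Q} = \{f(y)-f(2):f\in\mathbb{Q}[z]\} = (y-2)\,\mathbb{Q}[y].
\]
If $y$ is irrational then $y\neq 2$ and $\mathbb{Q}[y]$ has $\mathbb{Q}$-dimension $\geq 2$ (spanned by $1,y$), so $M_0$ has $\mathbb{Z}$-rank $\geq 2$ and is non-cyclic. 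If $y$ is rational, write $y=a/b$ in lowest terms; the assumption $\cos\varphi\notin\mathbf{D}$ guarantees $b\geq 2$ and that $b$ has an odd prime factor. Placing $V_{p-1}(a/b)$ over the common denominator $b^{p-1}$, monicity of $V_{p-1}$ makes the numerator congruent to $a^{p-1}\pmod{b}$, which is coprime to $b$; the same holds for $b_p$, since subtracting the integer $p$ preserves the reduced denominator. Hence $b_p$ has denominator exactly $b^{p-1}$, which grows without bound, whereas any cyclic subgroup of $\mathbb{Q}$ has uniformly bounded denominators. Thus $M_0$ is non-cyclic in both cases, completing the proof.
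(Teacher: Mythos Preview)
Your proof is correct and follows essentially the same route as the paper: reduce via Proposition~\ref{prop:claim} to $\tilde\rho=c'\delta_\alpha+d'\delta_{\bar\alpha}$ with $c'+d'=c+d$, then use the Chebyshev identity $\sin(p\varphi)=\sin\varphi\,U_{p-1}(\cos\varphi)$ together with $\mathbb{Q}$-linear independence in the irrational case and unbounded denominators in the rational non-dyadic case. Your packaging through the $\mathbb{Z}$-module $M_0=\sum_p\mathbb{Z}\,b_p$ and the substitution $y=2\cos\varphi$ (which makes $V_{p-1}(z)=U_{p-1}(z/2)$ monic in $\mathbb{Z}[z]$ and pins down the reduced denominator of $b_p$ exactly as $b^{p-1}$) gives a slightly cleaner and more unified presentation than the paper's case-by-case treatment, but the underlying argument is the same.
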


\begin{proof} Denote $\rho=c\delta_\alpha+d\delta_{\bar{\alpha}}$ and let $\widetilde{\rho}\in\mathcal{L}(\rho)$. According to Proposition \ref{prop:claim}, $\widetilde{\rho}=c'\delta_\alpha+d'\delta_{\bar{\alpha}}$ with $c,d'\geq0$ and $c'+d'=c+d$. If $\varphi=\pi$, then $\alpha=\bar\alpha$ shows that $\widetilde{\rho}=\rho$. From now on, we assume that $\varphi\in(0,\pi)$.

Suppose first that $\cos\varphi$ is irrational.
Taking $n=2$ and $n=3$ in \eqref{eq:claim} with $\rho_1=\rho$ and $\rho_2=\widetilde{\rho}$ implies that
\[(c-d-c'+d')\Im(\alpha^2-2\alpha)\equiv0\mod2\pi\] and
\[(c-d-c'+d')\Im(\alpha^3-3\alpha)\equiv0\mod2\pi.\] Note that $\Im(\alpha^2-2\alpha)\neq0\neq\Im(\alpha^2-3\alpha)$. Combining these observations with $c+d=c'+d'$ results in
\begin{equation} \label{c=c'}
\frac{\ell}{\Im(\alpha^2-2\alpha)}=\frac{c-c'}{\pi}=\frac{m}{\Im(\alpha^3-3\alpha)}
\end{equation}
for some $\ell,m\in\mathbb{Z}$. Then after some simple computations we arrive at
$2\ell(\cos \varphi+1)=m$, which, along with the irrationality of $\cos\varphi$, yields that we must have $\ell=0=m$. Hence $c'=c$ and $d'=d$ by (\ref{c=c'}), i.e., $\widetilde{\rho}=\rho$.

Ultimately, assume that $\cos\varphi$ is rational but not dyadic. Then $\cos\varphi=p/q$, where $p,q$ are coprime integers and $q=2^kq'$ with $q'\geq3$ odd and $k\geq0$ an integer. Rewrite the equation \eqref{eq:claim} as that for all $n\in\mathbb{N}$,
\begin{equation} \label{eq:L-unique1}
(c-c')\big(\sin(n\varphi)-n\sin\varphi\big)\equiv0\mod\pi.
\end{equation} We shall make use of the useful formula $\sin(n\varphi)=\sin(\varphi)U_{n-1}(\cos\varphi)$ to continue the proof. Here, $U_n(x)$ is the Chebyshev polynomial, which is an integer-coefficient polynomial of degree $n$ with the leading coefficient $2^n$. Hence we have
\begin{equation} \label{eq:L-unique2}
U_{n-1}(\cos\varphi)-n=\frac{p_n}{2^{k_n}(q')^{n-1}},
\end{equation}
where $k_n\geq0$ is an integer, and $p_n$ is an integer coprime from $2^{k_n}(q')^{n-1}$. Choosing $n=2$ in \eqref{eq:L-unique1} gives that $(c-c')\sin(\varphi)/\pi\in\mathbb{Q}$. Unless $c'=c$, we infer from \eqref{eq:L-unique2} that
\[\big(U_{n-1}(\cos\varphi)-n\big)\cdot\frac{(c-c')\sin\varphi}{\pi}\]
cannot be an integer for all sufficiently large $n$. Therefore $\widetilde{\rho}=\rho$.
\end{proof}

In some exceptional angles, one can directly compute and identify the set $\mathcal{L}(c\delta_\alpha+d\delta_{\bar\alpha})$.

\begin{prop} \label{prop:exceptions}
For $c,d\geq0$ and $\varphi$, let $\alpha =e^{i\varphi}$ and $\rho=c\delta_\alpha+d\delta_{\bar{\alpha}}$.
\begin{enumerate} [$\qquad(1)$]
\item Then
$\mathcal{L}(\rho)=\left\{(c-\frac{2\pi \ell}{\sqrt{3}})\delta_{\alpha} +(d+\frac{2\pi \ell}{\sqrt{3}})\delta_{\bar{\alpha}}: \ell\in\mathbb{Z},  -[\frac{\sqrt{3}d}{2\pi}] \leq \ell \leq [\frac{\sqrt{3}c}{2\pi}]\right\}$ if $\varphi=\pi/3$.
\item Then $\mathcal{L}(\rho)=\left\{(c-\frac{\pi \ell}{2})\delta_{\alpha} +(d+\frac{\pi \ell}{2})\delta_{\bar{\alpha}}:
\ell\in\mathbb{Z},
-[\frac{2d}{\pi}]\leq \ell \leq  [\frac{2c}{\pi}]\right\}$ if $\varphi= \pi/2$.
\item Then $\mathcal{L}(\rho)=\left\{(c-\frac{2\pi \ell}{3\sqrt{3}})\delta_{\alpha} +(d+\frac{2\pi
\ell}{3\sqrt{3}})\delta_{\bar{\alpha}}: \ell \in\mathbb{Z},
-[\frac{3\sqrt{3}d}{2\pi}] \leq \ell\leq  [\frac{3\sqrt{3}c}{2\pi}]\right\}$ if $\varphi= 2\pi/3$.
\end{enumerate}
\end{prop}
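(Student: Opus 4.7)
The plan is to apply Proposition \ref{prop:claim} to reduce the claim to a single congruence condition on the one free parameter $t := c - c'$, and then to identify, for each of the three angles, the cyclic subgroup of $\mathbb R$ generated by $\{\sin(p\varphi) - p\sin\varphi : p \in \mathbb Z\}$. First, let $\widetilde\rho \in \mathcal L(\rho)$. Since $\{\alpha,\bar\alpha\}$ is an even Borel set, part (3) of Proposition \ref{prop:claim} forces $\widetilde\rho$ to be supported on $\{\alpha,\bar\alpha\}$ with total mass $c+d$, so $\widetilde\rho = c'\delta_\alpha + d'\delta_{\bar\alpha}$ for some $c',d'\geq 0$ with $c'+d' = c+d$. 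Using $\Im(\alpha^p) = \sin(p\varphi) = -\Im(\bar\alpha^p)$, condition \eqref{eq:claim} reduces, for every $p\in\mathbb Z$, to
\[
(c-d-c'+d')\bigl(\sin(p\varphi) - p\sin\varphi\bigr) \equiv 0 \pmod{2\pi}.
\]
Because $c'+d' = c+d$ yields $c-d-c'+d' = 2t$, this is equivalent to
\[
t\cdot \bigl(\sin(p\varphi) - p\sin\varphi\bigr) \in \pi\mathbb Z \qquad \text{for all } p \in \mathbb Z.
\]

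Next, I would compute, for each of the three exceptional angles, the subgroup $G_\varphi \subset \mathbb R$ generated by the set $\{\sin(p\varphi) - p\sin\varphi : p \in \mathbb Z\}$. In all three cases one has $\sin(p\varphi) = \epsilon_p \sin\varphi$ with $\epsilon_p \in \{-1,0,1\}$, so $\sin(p\varphi) - p\sin\varphi = (\sin\varphi)(\epsilon_p - p)$. For $\varphi = \pi/3$, the sequence $(\epsilon_p)$ has period $6$, giving $\epsilon_p - p = 0,-1,-3,-5,-6,-6,\ldots$, whose gcd is $1$; hence $G_{\pi/3} = (\sqrt 3/2)\mathbb Z$. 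For $\varphi = \pi/2$, $\epsilon_p$ has the same parity as $p$, so $\epsilon_p - p$ is always even and the value $-2$ is achieved at $p = 2$, giving $G_{\pi/2} = 2\mathbb Z$. For $\varphi = 2\pi/3$, a case-by-case check modulo $3$ shows that $\epsilon_p \equiv p \pmod 3$, so $\epsilon_p - p \in 3\mathbb Z$, and the value $-3$ is achieved at $p = 2$; hence $G_{2\pi/3} = (3\sqrt 3/2)\mathbb Z$.

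The conclusion is now immediate: the admissible values of $t$ form the dual group $(\pi/g_\varphi)\mathbb Z$, where $g_\varphi$ is the positive generator of $G_\varphi$, producing $t \in (2\pi/\sqrt 3)\mathbb Z$, $t \in (\pi/2)\mathbb Z$, and $t \in (2\pi/(3\sqrt 3))\mathbb Z$ respectively. Writing $t = \ell \cdot (\pi/g_\varphi)$, one has $c' = c - t$ and $d' = d + t$, and the non-negativity constraints $c',d' \geq 0$ translate directly into the displayed ranges $-[\sqrt 3 d/(2\pi)] \leq \ell \leq [\sqrt 3 c/(2\pi)]$, $-[2d/\pi] \leq \ell \leq [2c/\pi]$, and $-[3\sqrt 3 d/(2\pi)] \leq \ell \leq [3\sqrt 3 c/(2\pi)]$. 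The only substantive step is the modular observation that $\epsilon_p \equiv p \pmod 3$ when $\varphi = 2\pi/3$, which is what ``triples'' the step size compared to $\varphi = \pi/3$ (where $\epsilon_2 - 2 = -1$ is coprime to everything), and analogously the parity cancellation in the $\pi/2$ case doubles the step; verifying these divisibility facts is the main technical point, and everything else is bookkeeping.
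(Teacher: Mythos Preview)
Your proof is correct and follows exactly the route the paper indicates: reduce via Proposition \ref{prop:claim} to $\widetilde\rho=c'\delta_\alpha+d'\delta_{\bar\alpha}$ with $c'+d'=c+d$, then analyze \eqref{eq:claim}. The paper stops there and leaves the computation to the reader, so your explicit determination of the group $G_\varphi=\langle\sin(p\varphi)-p\sin\varphi:p\in\mathbb Z\rangle$ and its generator is precisely the missing detail; the organizing observation $\sin(p\varphi)=\epsilon_p\sin\varphi$ with $\epsilon_p\in\{-1,0,1\}$ and the gcd computations $\gcd(\epsilon_p-p)=1,2,3$ for $\varphi=\pi/3,\pi/2,2\pi/3$ are clean and correct.
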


\begin{remark} For these three angles, $\mathcal{L}(\rho)$ is a finite set. Moreover, the L-uniqueness of $c\delta_\alpha+d\delta_{\bar\alpha}$ depends on $c$ and $d$. For example $c\delta_i + d\delta_{-i}$ is L-unique if and only if $c,d\in[0,\pi/2)$.
\end{remark}

\begin{proof}  Let $\widetilde{\rho}\in \mathcal{L}(\rho)$.
By Proposition \ref{prop:claim}, $\widetilde{\rho}= c' \delta_\alpha + d' \delta_{\bar{\alpha}}$ with $c'+d'=c+d$. Then we can directly analyze the equations \eqref{eq:claim} to get the all possible $\widetilde{\rho}$. The details are left to the reader.
\end{proof}

We now present a statement for L\'evy measures supported on a finite set, which is slightly weaker than Theorem \ref{L-unique} when the number of atoms is two or one.

\begin{thm} \label{L-unique2}
Let $\varphi_k \in (0,\pi)$ and let $c_k,d_k \geq0$ for $k=1,2,\dots, m$, and set $\alpha_k=e^{i\varphi_k}$. If $\cos\varphi_1,\dots,\cos\varphi_m$ are algebraically independent over $\mathbb{Q}$, then the L\'evy measure $\sum_{k=1}^m(c_k\delta_{\alpha_k}+d_k\delta_{\bar{\alpha}_k})$ is L-unique.
\end{thm}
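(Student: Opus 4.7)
The plan is to extend the argument of Theorem \ref{L-unique} to the multi-atom setting, with Kronecker--Weyl equidistribution providing the key new ingredient. By Proposition \ref{prop:claim}, any $\widetilde{\rho}\in \mathcal{L}(\rho)$ has the form $\widetilde{\rho} = \sum_{k=1}^m (c_k' \delta_{\alpha_k} + d_k' \delta_{\bar\alpha_k})$ with $c_k' + d_k' = c_k + d_k$. Setting $e_k = c_k - c_k'$, so that $d_k - d_k' = -e_k$, the task reduces to proving every $e_k = 0$. A short computation of $\int_\mathbb{T}(s^n - 1 - in\Im s)\,d(\rho - \widetilde{\rho})$ combined with \eqref{eq:claim} yields
\[\sum_{k=1}^m e_k\bigl(\sin(n\varphi_k) - n\sin\varphi_k\bigr) \in \pi\mathbb{Z}, \qquad n \in \mathbb{N}.\]

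To neutralize the unbounded linear-in-$n$ term, I would take the difference of this relation at $n+1$ and at $n$. Using the identity $\sin((n+1)\varphi) - \sin(n\varphi) = 2\sin(\varphi/2)\cos((n+\tfrac12)\varphi)$, the resulting sequence
\[T_n := 2\sum_{k=1}^m e_k \sin(\varphi_k/2)\cos((n+\tfrac12)\varphi_k) - S, \qquad S := \sum_{k=1}^m e_k \sin\varphi_k,\]
lies in $\pi\mathbb{Z}$ for every $n$ and is bounded in $n$. Assuming that $1, \varphi_1/(2\pi), \dots, \varphi_m/(2\pi)$ are linearly independent over $\mathbb{Q}$, Kronecker--Weyl equidistribution implies that the orbit $((n+\tfrac12)\varphi_1, \dots, (n+\tfrac12)\varphi_m)\bmod 2\pi$ is dense in $\mathbb{T}^m$, so the sequence $\{T_n\}$ is dense in the image of the continuous function $G(\theta_1, \dots, \theta_m) = 2\sum_k e_k \sin(\varphi_k/2)\cos\theta_k - S$, which is a compact interval. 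Since $\pi\mathbb{Z}$ is discrete, that interval must collapse to a single point, forcing $G$ to be constant on $\mathbb{T}^m$. Differentiating yields $e_k\sin(\varphi_k/2) = 0$, and because $\varphi_k \in (0,\pi)$ we conclude $e_k = 0$ for each $k$.

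The main obstacle is verifying the $\mathbb{Q}$-linear independence of $1, \varphi_1/(2\pi), \dots, \varphi_m/(2\pi)$; this is precisely where the algebraic independence hypothesis is used. Suppose there is a nontrivial relation $2\pi q_0 + \sum_k q_k \varphi_k = 0$ with $q_i \in \mathbb{Z}$ and, without loss of generality, $q_1 \neq 0$. Taking cosines and iteratively applying the angle-addition formula gives
\[T_{|q_1|}(x_1) = P(x_2, \dots, x_m, y_2, \dots, y_m),\]
where $x_k = \cos\varphi_k$, $y_k = \sin\varphi_k$, $T_{|q_1|}$ is the Chebyshev polynomial of the first kind, and $P \in \mathbb{Z}[x_2, \dots, x_m, y_2, \dots, y_m]$ has the property that each monomial contains an even total number of $y$-factors. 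Setting $L = \mathbb{Q}(x_2, \dots, x_m, y_2, \dots, y_m)$, which is algebraic over $\mathbb{Q}(x_2, \dots, x_m)$ via $y_k^2 = 1 - x_k^2$, the algebraic independence of $x_1, \dots, x_m$ over $\mathbb{Q}$ implies that $x_1$ is transcendental over $L$, hence $\mathbb{Q}(x_1) \cap L = \mathbb{Q}$. Therefore $T_{|q_1|}(x_1) \in \mathbb{Q}$, contradicting the transcendence of $x_1$ since $T_{|q_1|}$ has degree $|q_1| \geq 1$.
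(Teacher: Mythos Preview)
Your proof is correct and takes a genuinely different route from the paper's.

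The paper proceeds purely algebraically: writing $v_k=\pi^{-1}(c_k-c_k')\sin\varphi_k$, the constraints \eqref{eq:claim} at $n=2,\dots,m+1$ and at $n=2,\dots,m,m+2$ give two $m\times m$ linear systems $A\boldsymbol v=(\ell_1,\dots,\ell_m)^t$ and $B\boldsymbol v=(\ell_1,\dots,\ell_{m-1},\ell_{m+1})^t$ whose coefficient matrices are built from Chebyshev polynomials $U_{n-1}(\cos\varphi_k)-n$. The determinants of $A$ and $B$ are computed explicitly as Vandermonde-type products, and Cramer's rule yields a polynomial identity in $\cos\varphi_1,\dots,\cos\varphi_m$ that, by algebraic independence, forces all the integers $\ell_j$ to vanish; invertibility of $A$ then gives $\boldsymbol v=0$.

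Your approach replaces this determinant calculus with a dynamical argument: differencing the constraints produces a bounded sequence in $\pi\mathbb{Z}$, and Kronecker--Weyl density on $\mathbb{T}^m$ forces that sequence to be constant, hence each $e_k=0$. The algebraic independence hypothesis enters only to establish the rational independence of $1,\varphi_1/(2\pi),\dots,\varphi_m/(2\pi)$, which you handle cleanly via the Chebyshev relation $T_{|q_1|}(x_1)\in L$ and transcendence of $x_1$ over $L$. (Your remark that $P$ has even total $y$-degree is true but not actually needed; membership in $L$ suffices.) Your argument is shorter and more conceptual, while the paper's method is self-contained and stays entirely within polynomial algebra without invoking equidistribution.
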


\begin{proof} First, we infer from Proposition \ref{prop:claim} that any element in $\mathcal{L}(\rho)$, where  $\rho=c_k\delta_{\alpha_k}+d_k\delta_{\bar{\alpha}_k}$, is of the form
\[\widetilde{\rho}=\sum_{k=1}^m (c_k'\delta_{\alpha_k}+d_k'\delta_{\bar{\alpha}_k}),\qquad c_k+d_k= c_k'+d_k'.\] Also, the equation \eqref{eq:claim} reads that for any $n\in\mathbb{N}$,
\begin{equation} \label{L-unique2eq}
\sum_{k=1}^m\big[U_{n-1}(\cos\varphi_k)-n\big]v_k\in\mathbb{Z},
\end{equation}
where $v_k=\pi^{-1}(c_k-c_k')\sin\varphi_k$ and $U_n$, as before, is the Chebyshev polynomial of degree $n$.
Then equation (\ref{L-unique2eq}) with $n=2,3,\dots,m+1$ tells us that the $m$-tuple $(\cos\varphi_1,\dots,\cos\varphi_m)$ is a solution to the equation
\begin{equation} \label{eq:matrix1}
\begin{pmatrix}
U_1(x_1)-2 & \cdots  &U_1(x_m)-2 \\
\vdots & \ddots & \vdots \\
U_m(x_1)- (m+1) & \cdots  &U_m(x_m)-(m+1)
\end{pmatrix}\boldsymbol v
=
\begin{pmatrix}
\ell_1 \\
\vdots\\
\ell_m
\end{pmatrix},
\end{equation} where $\boldsymbol v=(v_1,\ldots,v_m)^t$ and $(\ell_1,\dots,\ell_m)\in\mathbb{Z}^m$. Note that $U_n(1)=n+1$. Hence the determinant of the coefficient matrix $A$ in \eqref{eq:matrix1} is of the form
\begin{equation}\label{eq:det1}
\det(A) = c_m \prod_{i >j} (x_i - x_j) \prod_{i=1}^m (x_i-1)
\end{equation}
as it is a polynomial of degree $m(m+1)/2$, and it vanishes
when $x_i=x_j$ for any $i\neq j$ and when $x_i=1$ for any $i$. Further, the coefficient of $x_1x_2^2\cdots x_m^{m}$ is $c_m=2^{1+2+\cdots+m}$. Consequently, if all $x_i$'s are mutually distinct, then $A$ is invertible. Similarly, letting $n=2,3,\dots,m,m+2$ in \eqref{L-unique2eq} gives
\begin{equation}\label{eq:matrix2}
\begin{pmatrix}
U_1(x_1)-2 & \cdots  &U_1(x_m)-2 \\
\vdots & \vdots & \vdots \\
U_{m-1}(x_1)- m & \cdots  &U_{m-1}(x_m)-m  \\
U_{m+1}(x_1)- (m+2) & \cdots  &U_{m+1}(x_m)-(m+2)
\end{pmatrix}\boldsymbol v
=
\begin{pmatrix}
\ell_1 \\
\vdots\\
\ell_{m-1} \\
\ell_{m+1}
\end{pmatrix},
\end{equation}
where $\ell_{m+1}\in\mathbb{Z}$. Similar to the previous case, the determinant of the coefficient matrix $B$ in \eqref{eq:matrix2} admits the form
\begin{equation}
\det(B)=2c_m(x_1+\cdots+x_m+a)\prod_{i >j}(x_i-x_j)\prod_{i=1}^m(x_i-1),
\end{equation}
where $a$ is a real number. The additional linear term $x_1+\cdots+x_m+a$ appears since now the degree of the determinant increases by $1$. To determine the unknown $a$ (in fact, it is enough and easier to show that $a$ is rational), it is useful to introduce a new variable $y_k=x_k-1$. More precisely, using the expression
\[U_n(x)-(n+1)=\sum_{j=1}^n \frac{(n+j+1)!}{(n-j)!(2j+1)!}(2y)^j,\qquad y=x-1,\]
and comparing the coefficients of $y_1y_2^2\cdots y_m^m$ in $\det(B)$ deduces that $a=1$.

Cramer's rule implies that $(\det A)^{-1}\det A_m=v_m=(\det B)^{-1}\det(B_m)$, where $A_m$ and $B_m$ are matrices obtained by replacing the $m$-th columns of $A$ and $B$ with the vectors $(\ell_1,\dots,\ell_{m-1},\ell_m)^t$ and $(\ell_1,\dots,\ell_{m-1},\ell_{m+1})^t$, respectively. After some cancellations, we obtain the equation
\begin{align*}
&2(x_1 + \cdots + x_m+1) \left|\begin{matrix}
U_1(x_1)-2 & \cdots & U_1(x_{m-1})-2&\ell_1 \\
\vdots & \ddots & \vdots \\
U_{m-1}(x_1)- m & \cdots  &U_{m-1}(x_{m-1})- m &\ell_{m-1}  \\
U_m(x_1)- (m+1) & \cdots &U_m(x_{m-1})- (m+1) &\ell_m
\end{matrix}\right|  \\
&\qquad =
\left|\begin{matrix}
U_1(x_1)-2 & \cdots &U_1(x_{m-1})-2 &\ell_1 \\
\vdots & \ddots & \vdots \\
U_{m-1}(x_1)- m & \cdots &U_{m-1}(x_{m-1})- m  &\ell_{m-1}  \\
U_{m+1}(x_1)- (m+2) & \cdots &U_{m+1}(x_{m-1})- (m+2)  &\ell_{m+1}
\end{matrix}\right|.
\end{align*}
Since $\cos \varphi_1,\dots,\cos\varphi_m$ are algebraically independent and satisfy this equation, the polynomials of both sides must be identical. Comparing the coefficients of $x_1^2 x_2^3\cdots x_{m-1}^{m}x_m$ results in $\ell_1=0$. Calculating the coefficients of $x_1x_2^3x_3^4\cdots x_{m-1}^{m}x_m$ yields that $\ell_2=0$. Similarly, one can prove that $\ell_3=\cdots=\ell_m=0$. These findings and \eqref{eq:matrix1} imply that $\boldsymbol v=\boldsymbol0$, proving $c_k'=c_k$ and $d_k'=d_k$ for all $k=1,\dots,m$.
\end{proof}

Denote by $\mathscr{P}_\mathbb{T}^\times$ the set of probability measures on $\mathbb{T}$ having non-zero mean.

\begin{cor} The set of $\circledast$-infinitely divisible distributions on $\mathbb{T}$ having L-unique L\'evy measures is weakly dense in $\mathcal{ID}(\circledast)\cap\mathscr{P}_\mathbb{T}^\times$.
\end{cor}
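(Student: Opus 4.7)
The plan is to approximate any $\nu_\circledast^{(\gamma,a,\rho)}\in\mathcal{ID}(\circledast)\cap\mathscr{P}_\mathbb{T}^\times$ by $\circledast$-infinitely divisible distributions whose L\'evy measures are finite atomic measures supported at points with algebraically independent absolute-argument cosines, and then invoke Theorem~\ref{L-unique2}. Note first that $\mathcal{ID}(\circledast)\cap\mathscr{P}_\mathbb{T}^\times$ is precisely the image of the parametrization $(\gamma,a,\rho)\mapsto\nu_\circledast^{(\gamma,a,\rho)}$ with $(\gamma,a,\rho)$ as in \eqref{propTD}: any such distribution has $|\widehat\nu(1)|=\exp(-a/2-\int(1-\Re s)\,d\rho)>0$, so $\nu$ lies in $\mathscr{P}_\mathbb{T}^\times$.

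The key reduction is to replace the approximation of the $\sigma$-finite measure $\rho$ by the standard approximation of the finite measure $\lambda:=(1-\Re s)\rho\in\mathscr{M}_\mathbb{T}$, which satisfies $\lambda(\{1\})=0$. Partition $\mathbb{T}$ into finitely many Borel arcs $A_1^{(n)},\ldots,A_{m_n}^{(n)}$ of diameter less than $1/n$, choose representatives $\xi_k^{(n)}\in A_k^{(n)}\setminus\{\pm1\}$, and form $\lambda_n=\sum_k \lambda(A_k^{(n)})\delta_{\xi_k^{(n)}}$. Uniform continuity on $\mathbb{T}$ yields $\lambda_n\Rightarrow\lambda$ in $\mathscr{M}_\mathbb{T}$. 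Define $\rho_n$ by $d\rho_n(s)=(1-\Re s)^{-1}\,d\lambda_n(s)$ on $\mathbb{T}\setminus\{1\}$; this is a discrete L\'evy measure with $\int(1-\Re s)\,d\rho_n=\lambda(\mathbb{T})<\infty$. For each $p\in\mathbb{Z}$, a direct Taylor expansion at $s=1$ shows that $f_p(s):=(s^p-1-ip\Im s)/(1-\Re s)$ extends to a bounded continuous function on $\mathbb{T}$ taking value $-p^2$ at $s=1$. Hence
\[\int_\mathbb{T}(s^p-1-ip\Im s)\,d\rho_n(s)=\int_\mathbb{T}f_p\,d\lambda_n\longrightarrow\int_\mathbb{T}f_p\,d\lambda=\int_\mathbb{T}(s^p-1-ip\Im s)\,d\rho(s),\]
which by \eqref{eq:classical_exponent} gives $\nu_\circledast^{(\gamma,a,\rho_n)}\Rightarrow\nu_\circledast^{(\gamma,a,\rho)}$.

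Next, to ensure L-uniqueness, I would perturb each atom $\xi_k^{(n)}=e^{i\eta_k^{(n)}}$ to a nearby $\tilde\xi_k^{(n)}=e^{i\tilde\eta_k^{(n)}}$ with $\tilde\eta_k^{(n)}\in(-\pi,\pi)\setminus\{0\}$, inductively choosing $\cos|\tilde\eta_k^{(n)}|\in(-1,1)$ so that it is transcendental over $\mathbb{Q}(\cos|\tilde\eta_1^{(n)}|,\ldots,\cos|\tilde\eta_{k-1}^{(n)}|)$. This is possible since the real numbers algebraic over a given countable subfield of $\mathbb{C}$ form a countable, hence nowhere dense, subset of any open interval; the perturbation size can be made arbitrarily small. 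Setting $\tilde\lambda_n=\sum_k\lambda(A_k^{(n)})\delta_{\tilde\xi_k^{(n)}}$ and $\tilde\rho_n=\tilde\lambda_n/(1-\Re s)$, the same argument yields $\tilde\lambda_n\Rightarrow\lambda$ and hence $\nu_\circledast^{(\gamma,a,\tilde\rho_n)}\Rightarrow\nu_\circledast^{(\gamma,a,\rho)}$. Since $\tilde\rho_n$ is now of the form treated by Theorem~\ref{L-unique2}, it is L-unique, completing the density proof.

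The main technical obstacle is controlling the approximation near the origin $1\in\mathbb{T}$: the integrand $s\mapsto s^p-1-ip\Im s$ does not have support bounded away from $1$, so weak convergence in $\mathscr{M}_\mathbb{T}^\mathbf{1}$ alone is insufficient for the convergence of the characteristic exponent. The factoring through the finite measure $\lambda=(1-\Re s)\rho$ together with the continuous extension of $f_p$ to all of $\mathbb{T}$ bypasses this issue; the subsequent perturbation to algebraic independence is then a routine density argument, plus avoiding the excluded values $s=\pm1$.
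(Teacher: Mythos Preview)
Your proof is correct and follows the same strategy as the paper: approximate the L\'evy measure by finitely supported ones, perturb the atoms so that the cosines of their arguments are algebraically independent, and apply Theorem~\ref{L-unique2}. The paper's argument is terser, merely asserting that any L\'evy measure can be approximated in the $\mathscr{M}_{\mathbb T}^1$-topology by discrete ones and that algebraically dependent tuples form a countable (hence avoidable) set; your version is more explicit about why the approximation of L\'evy measures actually yields weak convergence of the associated $\nu_\circledast^{(\gamma,a,\rho_n)}$, via the passage to the finite measure $\lambda=(1-\Re s)\rho$ and the bounded continuous quotient $f_p=\mathcal{K}_p$ (this is exactly the function appearing in Lemma~\ref{angleineq}).
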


\begin{proof} The set $A$ of tuples $\boldsymbol\alpha=(\alpha_1,\alpha_2,\dots)\in\bigcup_{m\geq1}\mathbb{T}^{m}$ of finite length (denoted by $|\boldsymbol\alpha|$), for which the tuple $(\cos (\arg \alpha_1),  \cos( \arg \alpha_2),\dots)$ is algebraically dependent, is countable since the set of polynomials over $\mathbb{Q}$ is countable. Any L\'evy measure can be approximated in the $\mathscr{M}_{\mathbb T}^1$-topology by discrete ones, which further can be approximated by discrete measures $\sum_{k=1}^{|\boldsymbol \alpha|}(c_k\delta_{\alpha_k}+d_k\delta_{\bar{\alpha}_k})$ with $\boldsymbol\alpha\in(\bigcup_{m\geq1}\mathbb{T}^{m})\setminus A$.
\end{proof}

\section*{Acknowledgments} The first-named author is granted by JSPS kakenhi (B) 15K17549 and 19K14546, while the second-named author is supported by the Ministry of Science and Technology of Taiwan under the research grant MOST 106-2628-M-110-002-MY4. This research is an outcome of Joint Seminar supported by JSPS and CNRS under the Japan-France Research Cooperative Program.

\end{document}